\newcommand{\EE}{\mathbb{E}}
\newcommand{\PP}{\mathbb{P}}
\newcommand{\Var}{\mathbb{V}}
\newcommand{\NN}{\mathbb{N}}
\newcommand{\RR}{\mathbb{R}}
\renewcommand{\P}{\mathbb{P}}
\renewcommand{\ss}{\mathfrak{z}}
\newcommand{\dt}{\mathcal{D}^{(\ss)}}
\newcommand{\ndt}{\overline{\mathcal{D}}^{(\ss)}}
\newcommand{\mkomit}[1]{{\color{orange}#1}}
\newcommand{\mkor}[2]{#2}
\newcommand{\nnu}{n_{\mkomit{\nu}}}
\newcommand{\mint}[1]{I^r(s)}
\newcommand{\rabs}{\mathfrak{r}}
\newcommand{\auxy}{y}
\newcommand{\auxY}{Y}
\newcommand{\yabs}{{\bf y}}
\newcommand{\trad}{T^{rad}_0}
\newcommand{\tang}{T^{ang}_0}
\newcommand{\phis}{\phi^{(\ss)}}
\newcommand{\hatr}{\widehat{r}_0}
\newcommand{\hatt}{\widehat{\theta}_0}
\newcommand{\JJ}{J}
\newcommand{\II}[1]{I(#1)}
\newcommand{\tII}[1]{\widetilde{I}(#1)}
\newcommand{\tI}{\widetilde{I}}
\newcommand{\KA}{\kappa_A}
\newcommand{\KR}{\kappa_R}
\newcommand{\auxc}{\mathfrak{C}}
\newcommand{\auxl}{\mathfrak{L}}
\newtheorem{theorem}{Theorem}
\newtheorem{lemma}[theorem]{Lemma}
\newtheorem{observation}[theorem]{Observation}
\newtheorem{fact}[theorem]{Fact}
\newtheorem{proposition}[theorem]{Proposition}
\newtheorem{corollary}[theorem]{Corollary}
\newtheorem{remark}[theorem]{Remark}
\DeclareMathOperator{\cosech}{cosech}
\DeclareMathOperator{\cotanh}{coth}
  \pgfplotsset{compat=1.9}
\newcommand{\mycom}[2]{\hypertarget{#1}{#2}\global\@namedef{mycom@#1}{#2}}
\newcommand{\linktomycom}[1]{%
\@ifundefined{mycom@#1}{\textbf{??}\@latex@warning{Reference `#1' on page \thepage \space undefined}}%
{\hyperlink{#1}{\@nameuse{mycom@#1}}}%
}
\begin{document}
\nolinenumbers
\title{Tail bounds for detection times in mobile hyperbolic graphs}

\author[1]{Marcos Kiwi\thanks{Depto.~de Ingenier\'ia Matem\'atica and Centro de Modelamiento Matem\'atico (CNRS IRL2807), Univ.~Chile, (\texttt{mkiwi@dim.uchile.cl}). Gratefully acknowledges support by ACE210010 and FB210005, BASAL funds for centers of excellence from ANID-Chile, and by GrHyDy ANR-20-CE40-0002.}}
\author[2]{Amitai Linker\thanks{Depto.~de Matem\'aticas, Facultad de Ciencias Exactas, Univ.\ Andr\'es Bello, (\texttt{amitai.linker@unab.cl}). Gratefully acknowledges support by IDEXLYON of Univ.\ de Lyon (Programme Investissements d'Avenir ANR16-IDEX-0005), and by DFG project number 425842117.}} 
\author[3]{Dieter Mitsche\thanks{Institut Camille Jordan, Univ.\ Jean Monnet, Univ.\ de Lyon and IMC, Pontif\'{i}cia Univ.\ Cat\'{o}lica de Chile, (\texttt{dmitsche@gmail.com}). Gratefully acknowledges support by grant GrHyDy ANR-20-CE40-0002 and by IDEXLYON of Univ.\} de Lyon (Programme Investissements d'Avenir ANR16-IDEX-0005).}}
\affil[1]{Univ.\ Chile}
\affil[2]{Univ.\ Andres Bello}
\affil[3]{Univ.\ de Lyon and PUC Chile}
\maketitle 

\begin{abstract}
Motivated by Krioukov et al.'s model of random hyperbolic graphs~\cite{KPKVB10} for real-world networks, and inspired by the analysis of a dynamic model of graphs in Euclidean space by Peres et al.~\cite{Peres2010}, we introduce a dynamic model of hyperbolic graphs in which vertices are allowed to move according to a Brownian motion maintaining the distribution of vertices in hyperbolic space invariant. For different parameters of the speed of angular and radial motion, we analyze tail bounds for detection times of a fixed target and obtain a complete picture, for very different regimes, of how and when the target is detected: as a function of the time passed, we characterize the subset of the hyperbolic space where particles typically detecting the target are initially located. Our analysis shows that our dynamic model exhibits a phase transition as a function of the relation of angular and radial speed.

We overcome several substantial technical difficulties not present in Euclidean space, and provide a complete picture on tail bounds. On the way, moreover, we obtain results for a class of one-dimensional continuous processes with drift and reflecting barrier, concerning the time they spend within a certain interval. We also derive improved bounds for the tail of independent sums of Pareto random variables.
\end{abstract}

\section{Introduction}\label{intro}
\emph{Random Geometric Graphs (RGGs)} are a family of spatial networks that have been intensively studied as models of communication networks, in particular sensor networks, see for example~\cite{Akyildiz}.
In this model, an almost surely finite number of vertices is distributed in a metric space according to some 
probability distribution and two vertices are joined by an edge if the distance between them is at most a given parameter called
radius.
Typically, the metric space is the $d$-dimensional unit cube or torus (often with $d=2$) and points are chosen according to a Poisson point process of given intensity. 
While simple, RGGs do capture relevant characteristics of some real world networks, for instance, non-negligible clustering coefficient.
However, RGGs fail to exhibit other important features such as scale-freeness and non-homogeneous vertex degree distribution, both of which are staple features
of a large class of networks loosely referred to as "social networks" that are meant to encompass networks such as the Internet, citation networks, friendship relation among individuals, etc. 

A network model that naturally exhibits clustering and scale-freeness is the \emph{Random Hypebolic Graph (RHG)} model introduced by Krioukov et al.~\cite{KPKVB10} where vertices of the network are points in a bounded region of the hyperbolic plane, and connections exist if their hyperbolic distance is small. In~\cite{BPK10}, a surprisingly good maximum likelihood fit of the hyperbolic model was shown for the embedding of the network corresponding to the autonomous systems of the Internet,
drawing much attention, interest, and follow-up work on the model (see the section on related work below). 

It has been recognized that in many applications of geometric network models the entities represented by vertices are not fixed in space but mobile. One way in which this has been addressed is to assume that the vertices of the network move according to independent Brownian motions, giving rise to what Peres et al. in~\cite{Peres2010}) call the \emph{mobile geometric graph} model.
Mobility, and more generally dynamic models, are even more relevant in the context of social networks. Thus, it is natural to adapt the mobile geometric graph setting to the hyperbolic graph context and assume the vertices of the latter graphs again move according to independent Brownian motions but in hyperbolic space. This gives rise, paraphrasing Peres et al., to the \emph{mobile hyperbolic graph} model. 
We initiate the study of this new model by focusing on the fundamental
problem of \emph{detection}, that is, the time until a fixed (non-mobile) added target vertex becomes non-isolated in the (evolving) hyperbolic graph.
We will do this; but in fact do much more. In order to discuss our contributions in detail we first need to precisely describe the model we introduce and formalize the main problem we address in our study. 


\subsection{The mobile hyperbolic graph model}\label{sec:model}
We first introduce the model of Krioukov et al.~\cite{KPKVB10} in its Poissonized version (see also~\cite{GPP12} for the same description in the so called uniform model): for each $n \in \mathbb{N}^+$, consider a Poisson point process $\mathcal{P}$ on the hyperbolic disk of radius $R :=2 \log (n/\nu)$ for some positive constant~$\nu \in \mathbb{R}^+$ ($\log$ denotes here and throughout the paper the natural logarithm).
The intensity function $\mu$ at polar coordinates $(r,\theta)$ for 
  $0\leq r\leq R$ and $-\pi \leq \theta < \pi$ is equal to $n f(r,\theta)$, where $f(r,\theta)$ is given by
\begin{align*}
f(r,\theta) & := \begin{cases}
  \displaystyle
 \frac{\alpha \sinh(\alpha r)}{2\pi(\cosh(\alpha R)-1)}, 
  &\text{if $0\leq r\leq R$}, \\[2ex]
  0, & \text{otherwise.}
  \end{cases}
\end{align*}
In other words, the angle and radius are chosen independently; the former uniformly at random in $(-\pi,\pi]$ and the latter with density proportional to $\sinh(\alpha r)$. 
Next, identify the points of the Poisson process with vertices
and define the following graph $G_n:=(V_n,E_n)$ 
where $V_n:=\mathcal{P}$. For $P, P'\in V_n$, $P \neq P'$, with polar coordinates $(r,\theta)$ and $(r',\theta')$ respectively, there is an edge in $E_n$ with endpoints 
  $P$ and $P'$ provided the hyperbolic distance $d_H$ between $P$ and $P'$ is such that  $d_H\leq R$, where $d_H$ is obtained by solving 
\begin{equation}\label{eqn:coshLaw}
\cosh d_H := \cosh r\cosh r'-
  \sinh r\sinh r'\cos( \theta{-}\theta').
\end{equation}
In particular, note that $\EE(|V_n|)=n$.


Henceforth, we denote the point whose radius is $0$ by $O$ and refer to it as the \emph{origin}. For a point $P$ and $r\geq 0$ we let $B_P(r)$ denote the ball centered at $P$ of radius $r$, that is, the set of all points at hyperbolic distance less than $r$ from $P$. 
Also, we henceforth denote the boundary of $B_P(r)$ by $\partial B_P(r)$.

\medskip
To define a dynamic version of Krioukov et al.'s model we consider an initial configuration~$\mathcal{P}$ of the Poissonized model in $B_O(R)$ and then associate to each $x_0:=(r_0,\theta_0)\in\mathcal{P}$ a particle that evolves independently of other particles following a trajectory given in radial coordinates by $x_t:=(r_t,\theta_t)$ at time $t$. At a microscopic level a natural choice for the movement of a particle is that of a random walk in $B_O(R)$ with $\partial B_O(R)$ acting as a reflecting boundary, and where at each step the particle can move either in the radial or angular direction. Assuming that the movement does not depend on the angle 
and that there is no angular drift, we conclude that at a macroscopic level particles should move according to a generator of the form
\[\Delta_{h} = \frac{1}{2}\frac{\partial^2}{\partial r^2}+\frac{\alpha}{2}\frac{1}{\tanh(\alpha r)}\frac{\partial}{\partial r}+\frac12\sigma^2_\theta(r)\frac{\partial^2}{\partial\theta^2},\]
where the drift term in the radial component is chosen so that $f(r)drd\theta$ remains the stationary distribution of the process (this can be checked using the Fokker-Planck equation). The function $\sigma^2_{\theta}(\cdot)$ is unrestricted and relates to the displacement given by the angular movement at a given position $(r,\theta)$. In this sense a natural choice is to take $\sigma^2_\theta(r)$ proportional to $\sinh^{-2}(r)$ which follows by taking the displacement proportional to the hyperbolic perimeter at that radius. An alternative, however, is to take $\sigma^2_\theta(r)$ proportional to $\sinh^{-2}(\alpha r)$ 
corresponding to the (re-scaled) Brownian motion in hyperbolic space.
In order to capture both settings, we introduce an additional parameter 
$\beta$ and work throughout this paper with the following generalized generator:
\begin{equation}\label{truegenerator}
	\Delta_{h} := \frac{1}{2}\frac{\partial^2}{\partial r^2}+\frac{\alpha}{2}\frac{1}{\tanh(\alpha r)}\frac{\partial}{\partial r}+\frac{1}{2\sinh^2(\beta r)}\frac{\partial^2}{\partial\theta^2}
	\end{equation}
where $\beta>0$ is a new parameter related to the velocity of the angular movement which can alternatively be understood as a deformation of the underlying space.
We shall see that thus enhancing our model yields a broader range of behavior and exhibits phase transition phenomena  
(for detection times this is explained in detail in our next section where we summarize our main results).

Fix now an initial configuration of particles $\mathcal{P}$ located at points in $B_O(R)$. 
We denote by $\PP_{x_0}$ the law of a particle initially placed at a given point $x_0:=(r_0, \theta_0)\in B_O(R)$. 
We have one more fixed target $Q$, located at the boundary of $B_O(R)$ (that is, $r_Q=R$), and at angular coordinate $\theta_Q:=0$.
For any $s>0$, let  $\mathcal{P}_s\subseteq\mathcal{P}$ be the set of points that have \emph{detected} the target $Q$ by time $s$: that is, for each $P \in \mathcal{P}_s$ there exists a time instant $0 \le t \le s$, so that $x_t \in B_{Q}(R)$. Note that if $t=0$, then $P$ might be in the interior of $B_Q(R)$, whereas if~$t > 0$, the first instant at which $P$ detects $Q$ is when $P$ is at the boundary of $B_Q(R)$, that is $x_t\in\partial B_Q(R)$. This instant $t$ is also called the \emph{hitting time} of $B_Q(R)$ by particle $P$. 
The \emph{detection time} of $Q$, denoted by $T_{det}$, is then defined as the minimum hitting time of  $B_{Q}(R)$ among all initially placed particles. We are particularly interested in the tail probability
$
\PP_{x_0}(T_{det}\ge\ss)
$
for different values of $\ss$ (note that $\ss$ is a function on $n$). In words, we are interested in the tail probability that no particle initially located at position $x_0$ detects the target $Q$ by time $\ss$.

Observe that any given particle $P$ evolving according to the generator $\Delta_h$ specified in~\eqref{truegenerator} will eventually detect the target at some point, so that $\PP(T_{det}\ge\ss)\to 0$ when $\ss\to\infty$ as soon as there is at least one particle. Following what was done in~\cite{Peres2010} for a similar model on Euclidean space, our main result determines the speed at which $\PP(T_{det}>\ss)$ tends to zero as a function of $\ss$.
We consider the same setting of~\cite{Peres2010}, that is $\ss/\EE(T_{det})\to \infty$ but we have to deal with several additional, both qualitatively and quantitatively different, new issues that arise due to the dependency 
of $\ss$ on $n$.

\subsection{Main results}\label{sec:results}
In this section, we present the main results we obtain regarding tail probabilities for detection time, both for the mobile hyperbolic graph model we introduced in the previous section and for two restricted instances: one where the radial coordinate of particles does not change over time and another where the angular coordinate  does not change. We also discuss the relation between the main results and delve into the insights they provide concerning the dominant mechanism (either angular movement, radial movement or a combination of both) that explain the different asymptotic regimes, depending on the relation between $\alpha$ and~$\beta$. We point out that we do not present in this section a complete list of other significant results, in particular the ones that provide a detailed idea of the initial location of particles that typically detect the target. These last results will be discussed at the start of Sections~\ref{sec:angular}, \ref{sec:radial}, and~\ref{sec:mix}
where, in fact, we prove slightly stronger results than the ones stated in this section. Neither do we delve here into those results concerning one-dimensional processes with non-constant drift and a reflecting barrier that might be useful in other settings and thus of independent interest (these results are found in Section~\ref{sec:radial}).


\smallskip
We begin with the statement describing the (precise) behavior of the detection time tail probability depending on how the model parameters $\alpha$ and $\beta$ relate to each other:\footnote{We use the standard Bachmann--Landau asymptotic (in $n$) notation of $O(\cdot)$, $o(\cdot)$, $\Omega(\cdot)$, $\omega(\cdot)$, $\Theta(\cdot)$,  with all terms inside asymptotic expressions being positive.}
\begin{theorem}\label{thm:intro-mixed}
Let $\alpha\in (\frac12,1]$, $\beta >0$,  $\ss:=\ss(n)$, and assume that particles move according to the generator 
$\Delta_h$ in~\eqref{truegenerator}.
Then, the following hold:
\begin{enumerate}[(i)]
    \item\label{thm:mixed-itm-ssmall} 
    For $\beta\leq\frac12$, if $\ss=\Omega((e^{\beta R}/n)^2)\cap O(1)$, then
$\displaystyle
\PP (T_{det} \ge \ss)=\exp\big({-}\Theta(ne^{-\beta R}\sqrt{\ss})\big).\label{ssmall}
$
\smallskip
\item
For $\beta\leq\frac12$ and $\ss=\Omega(1)$ the tail exponent depends on the relation between $\alpha$ and $2\beta$ as follows: 
\begin{enumerate}
    \item\label{thm:mixed-itm1}
    For $\alpha<2\beta$, if $\ss=O(e^{\alpha R})$, then 
    $\displaystyle \PP (T_{det} \ge \ss)=\exp\big({-}\Theta(n e^{-\beta R}\ss^{\frac{\beta}{\alpha}})\big)$.
    
    \smallskip    
    \item\label{thm:mixed-itm2} 
    For $\alpha=2\beta$, if $\ss=O(e^{\alpha R}/(\alpha R))$, then $\displaystyle\PP (T_{det} \ge \ss)=\exp\big({-}\Theta(ne^{-\beta R}\sqrt{\ss\log\ss})\big)$.

    \smallskip
    \item\label{thm:mixed-itm3}
    For $\alpha>2\beta$, if $\ss= O(e^{2\beta R})$, then
    $\displaystyle
    \PP (T_{det} \ge \ss)=\exp\big({-}\Theta(ne^{-\beta R}\sqrt{\ss})\big).$
\end{enumerate}
\item\label{thm:mixed-itm4}
For $\beta>\tfrac{1}{2}$, if $\ss= \Omega(1)\cap O(e^{\alpha R})$, then
    $\displaystyle
    \PP (T_{det} \ge \ss)=\exp\big({-}\Theta(\ss^{\frac{1}{2\alpha}})\big)$.
\end{enumerate}
\end{theorem}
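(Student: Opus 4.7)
The proof reduces to estimating the exponent
\[
\Lambda(\ss) := n\int_{B_O(R)} f(x_0)\,\rho(x_0,\ss)\,dx_0,
\]
where $\rho(x_0,\ss) := \PP_{x_0}(T_Q \le \ss)$ is the single-particle hitting probability of $B_Q(R)$. By the Poisson structure of the initial configuration $\mathcal{P}$ and the independence of particle trajectories, $\PP(T_{det}\ge\ss) = \exp(-\Lambda(\ss))$, so each of the five asymptotics in (i)--(iii) is equivalent to matching upper and lower bounds on $\Lambda(\ss)$ of the stated shape.

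For a single particle, the generator~\eqref{truegenerator} decouples: the radial coordinate is an autonomous one-dimensional reflected diffusion on $[0,R]$ with drift $\tfrac{\alpha}{2}\cotanh(\alpha r)$, while conditionally on the radial path the angular coordinate is a time-changed Brownian motion with clock $t\mapsto\int_0^t\sinh^{-2}(\beta r_s)\,ds$. Using this, I would sandwich $\rho(x_0,\ss)$ between two tractable quantities: for the upper bound, require that $(r_t,\theta_t)$ visits the lens $B_Q(R)$, which by~\eqref{eqn:coshLaw} is approximately the region $|\theta|\lesssim 2e^{-r/2}$; for the lower bound, bound detection from below by the intersection of the radial coordinate reaching a prescribed near-boundary region by some intermediate time (using the radial hitting-time tails from Section~\ref{sec:radial}) and of the angular coordinate hitting the corresponding window in the remaining time (using the angular hitting-time estimates from Section~\ref{sec:angular}). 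The product form of the generator makes the factorization rigorous.

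The case analysis then selects the dominant starting radius $r^*(\ss)$ in each regime. Items~(i) and~(ii)(c) correspond to the optimum lying in the boundary layer $r_0\in[R-O(1),R]$: a typical particle there has angular spread $\sim\sqrt{\ss}\,e^{-\beta R}$ while the angular window is $\sim e^{-R/2}$, so for $\beta\le\tfrac12$ the former dominates and the per-particle detection probability is of order $\sqrt{\ss}\,e^{-\beta R}$; multiplying by the $\Theta(n)$ particles in this layer produces the $ne^{-\beta R}\sqrt{\ss}$ exponent, and the lower restriction $\ss=\Omega((e^{\beta R}/n)^2)$ in~(i) is exactly the threshold at which the expected number of detectors becomes $\Omega(1)$. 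Item~(ii)(a) instead has an interior optimum: for $\alpha<2\beta$, balancing the growing density $e^{\alpha(r_0-R)}$ against the $r_0$-dependent exploration rate $e^{-\beta r_0}$ places the mass of the integral at a radius where a Laplace-type evaluation produces $ne^{-\beta R}\ss^{\beta/\alpha}$, with the borderline case~(ii)(b) at $\alpha=2\beta$ producing the familiar logarithmic correction $\sqrt{\log\ss}$ from integration across the balance point. For $\beta>\tfrac12$ in item~(iii), the angular diffusion at any fixed radius is fast enough that, within time $\ss$, the angular coordinate mixes many times over, so $\rho(x_0,\ss)$ collapses to the probability that the radial coordinate ever enters a near-boundary region where detection becomes certain; integrating the resulting radial hitting-time tail against the stationary density $\sinh(\alpha r)$ yields the $n$-free exponent $\ss^{1/(2\alpha)}$, the $n$-independence reflecting that the bottleneck is a single radial excursion rather than a population-level effect.

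The main technical obstacle is the one-dimensional radial analysis feeding into all five estimates: obtaining sharp two-sided hitting-time and occupation-time tails for a reflected diffusion with nonconstant drift, uniformly in the starting radius and time horizon, with constants matching between the upper and lower bound inside the exponent. This is what the independent-interest results announced in Section~\ref{sec:radial} are designed to supply. A secondary difficulty is verifying that the upper $\ss$-restrictions in~(ii)(a)--(c) and~(iii) coincide with the point at which $\Lambda(\ss)$ reaches order $n$, so that the stated regime neither undershoots nor overshoots its natural range, and that the boundary contributions from $r_0$ near $0$ or $R$ do not disrupt the chosen optimum; this is particularly delicate at the transition $\alpha=2\beta$, where the logarithmic factor arises from a cancellation of exponents at $r^*$.
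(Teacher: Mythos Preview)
Your reduction to $\Lambda(\ss)$ via the Poisson structure and the radial/angular decoupling are correct and match the paper's Section~\ref{sec:strategy}, and your account of cases~(i) and~(ii)(c) is essentially right. However, the mechanisms you propose for cases~(ii)(a) and~(iii) are incorrect and would not yield the stated exponents.

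For case~(iii) ($\beta > \tfrac12$): your claim that ``the angular diffusion at any fixed radius is fast enough that the angular coordinate mixes many times over'' is backwards. The angular diffusion coefficient is $\sinh^{-2}(\beta r)$, which \emph{decreases} as $\beta$ increases; for $\beta > \tfrac12$ the angular motion near the boundary is negligibly slow, and detection is driven by the radial component alone (cf.\ the discussion after Theorem~\ref{thm:radialmain}). A particle at angle $\theta_0$ detects by having its radius drop to $\phi^{-1}(|\theta_0|)$, which for most particles means reaching a radius substantially \emph{smaller} than $R$, against the drift. The $n$-independence of the exponent is not due to a ``single radial excursion'' bottleneck but simply to the identity $n e^{-R/2} = \nu = \Theta(1)$: the relevant $\phis$ equals $e^{-R/2}\ss^{1/(2\alpha)}$ here, so $\mu(\dt) = \Theta(n\phis) = \Theta(\ss^{1/(2\alpha)})$.

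For case~(ii)(a) ($\alpha < 2\beta \le 1$): a Laplace balance of the density $e^{\alpha(r_0-R)}$ against the angular rate $e^{-\beta r_0}$ over the \emph{starting} radius sits at $r_0 = R$ (since $\alpha > \beta$) and yields only the case-(ii)(c) exponent $ne^{-\beta R}\sqrt{\ss}$. The correct mechanism (Proposition~\ref{mixedLowerBoundschico}) is that a typical particle starts near $R$, makes a radial excursion down to $\rho \approx R - \tfrac{1}{\alpha}\log\ss$ (which happens with $\Theta(1)$ probability over time $\ss$), and then exploits the enhanced angular speed $e^{-\beta\rho} = e^{-\beta R}\ss^{\beta/\alpha}$ at that depth; the optimization is over the \emph{excursion} depth, not the initial radius. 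The paper formalizes this via the set $\dt(\kappa)$ of Theorem~\ref{thm:mixedLarge} together with $\int \PP_{x_0}(T_{det}\le\ss)\,d\mu = \Theta(\mu(\dt))$; the technical core of the upper bound is Proposition~\ref{prop:merged-varianza}, which controls the tail of the accumulated angular variance $\II{\ss} = \int_0^\ss \cosech^2(\beta r_s)\,ds$ by decomposing the radial path into independent excursions and bounding a sum of Pareto-tailed variables (Lemma~\ref{lem:cotapower}).
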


\begin{remark}
Since we are working with the Poissonized model, the probability of not having any particle to begin with is of order $e^{-\Theta(n)}$, and on this event the detection time is infinite. The reader may check that replacing the upper bound of $\ss$ in each of the previous theorem cases gives a probability of this order, which explains the asymptotic upper bounds on $\ss$. 
\end{remark}

%
Observe that by Theorem~\ref{thm:intro-mixed}, for $\ss=\Theta((e^{\beta R}/n)^{2})$  in the case $\beta\le \frac{1}{2}$, and for $\ss=\Theta(1)$ in the case $\beta>\frac{1}{2}$  we recover a tail exponent of order $\Theta(1)$, showing that the expected detection time occurs at values of $\ss$ of said order. It follows that at $\beta=\frac{1}{2}$ there is a phase transition in the qualitative behavior of the  model; for $\beta<\frac{1}{2}$, since $(e^{\beta R}/n)^2=o(1)$, the detection becomes asymptotically ``immediate" whereas for $\beta>\frac{1}{2}$ the target can remain undetected for an amount of time of order $\Theta(1)$. To explain this change in behavior notice that even though for any $\beta>0$ the value of $\sigma^{2}_{\theta}(r):=\sinh^{-2}(\beta r)$ is minuscule near the boundary of $B_O(R)$ (where particles spend most of the time due to the radial drift), decreasing~$\beta$ does increase $\sigma_{\theta}(\cdot)$ dramatically, allowing for a number of particles tending to infinity, to detect the target immediately. Since for small values of $\ss$ all but a few particles remain ``radially still" near the boundary, we deduce that there must be some purely angular movement responsible for the detection of the target, to which we can associate the tail exponent $ne^{-\beta R}\sqrt{\ss}$ seen in~\eqref{ssmall} of Theorem~\ref{thm:intro-mixed}. The same tail exponent appears in Case~\eqref{thm:mixed-itm3} of Theorem~\ref{thm:intro-mixed} for large $\ss$ when $\beta$ is again sufficiently small (smaller than $\frac{\alpha}{2}$), and again the purely angular movement is responsible for the detection of the target. 
To make the understanding of the observed distinct behaviors even more explicit we study two simplified models where particles are restricted to evolve solely through their angular or radial movement, respectively.
\begin{theorem}[angular movement]\label{thm:angularMain}
Let $\alpha\in (\frac12,1]$, $\beta>0$, $\ss:=\ss(n)$, and assume that particles move according to the generator
  \begin{equation*}\label{anggenerator}
	\Delta_{ang} := \frac{1}{2\sinh^2(\beta r)}\frac{\partial^2}{\partial\theta^2}.
	\end{equation*}
If $\sqrt{\ss}\leq\frac{\pi}{2}(1-o(1))e^{\beta R}$, then 
\[
\P(T_{det}\geq \ss) = 
\begin{cases}
\exp\Big({-}\Theta\Big(\mkor{\nu}{1}+n\Big(\frac{\sqrt{\ss}}{e^{\beta R}}\Big)^{1\wedge \frac{\alpha}{\beta}}\Big)\Big), & \text{if $\alpha\neq\beta$,} \\[8pt]
\exp\Big({-}\Theta\Big(\mkor{\nu}{1}+n\Big(\frac{\sqrt{\ss}}{e^{\beta R}}\Big)^{1\wedge \frac{\alpha}{\beta}}\log\big(\frac{e^{\beta R}}{1+\sqrt{\ss}}\big)\Big)\Big), & \text{if $\alpha=\beta$.}
\end{cases}
\]
\end{theorem}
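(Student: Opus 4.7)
The plan is to compute the tail probability via a Poissonization identity. Since the initial configuration $\mathcal{P}$ is a Poisson point process with intensity $nf$, the subset of particles that have detected $Q$ by time $\ss$ is again Poisson, so
\[
\P(T_{det}\ge\ss)=\exp\!\big({-}n\,\EE_X[\P_X(\tau\le\ss)]\big),
\]
where $X\sim f$ and $\tau$ is the hitting time of $B_Q(R)$. Under $\Delta_{ang}$ the radial coordinate is conserved, so a particle reduces to an angular Brownian motion $\theta_t$ on $\RR/2\pi\ZZ$ with variance rate $\sigma^2(r_0):=\sinh^{-2}(\beta r_0)$, started uniformly. Using~\eqref{eqn:coshLaw} with $r_Q=R$, the event $d_H(x_t,Q)\le R$ simplifies to $|\theta_t|\le\phi(r_0)$, where
\[
\phi(r_0)=\arccos\!\big(\coth R\,\tanh(r_0/2)\big)\sim 2e^{-r_0/2}\quad (r_0\to\infty),
\]
and the problem reduces to a single one-dimensional hitting problem on the circle.

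For a uniform-start BM on $\RR/2\pi\ZZ$ with variance $\sigma^2$, standard heat-kernel estimates give
\[
\P_{r_0}(\tau\le\ss)\asymp \min\!\Big\{1,\;\tfrac{1}{\pi}\big(\phi(r_0)+\sigma(r_0)\sqrt{\ss}\big)\Big\}.
\]
The $\phi(r_0)$ term captures particles initially placed in $B_Q(R)$, while $\sigma(r_0)\sqrt{\ss}$ captures the typical angular displacement of the BM; the assumption $\sqrt{\ss}\le(\pi/2)(1-o(1))e^{\beta R}$ ensures the cap at $1$ is not reached for $r_0=R$. Setting $r=R-y$, $\delta:=\sqrt{\ss}/e^{\beta R}$, and $y^\ast:=\beta^{-1}\log(1/\delta)$ (the radial scale at which $\sigma(r_0)\sqrt{\ss}\asymp\pi$), and using the asymptotic radial density $\sim\alpha e^{-\alpha y}$, the Poissonization exponent splits into three contributions:
\[
 n\!\int_0^R\alpha e^{-\alpha y}\phi(R{-}y)\,dy\;+\;n\!\int_0^{y^\ast}\alpha e^{-\alpha y}\sigma(R{-}y)\sqrt{\ss}\,dy\;+\;n\!\int_{y^\ast}^R\alpha e^{-\alpha y}\,dy.
\]

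A direct evaluation gives the desired asymptotics. The first integral is dominated by $y\approx 0$ and contributes $\Theta(ne^{-R/2})=\Theta(\nu)=\Theta(1)$, which is the additive "$1$" in the exponent. The second equals, up to constants, $\delta\,\alpha\int_0^{y^\ast}e^{(\beta-\alpha)y}dy$ and evaluates to $\Theta(\delta)$ for $\alpha>\beta$, $\Theta(\delta\log(1/\delta))$ for $\alpha=\beta$, and $\Theta(\delta^{\alpha/\beta})$ for $\alpha<\beta$. The third contributes $\Theta(\delta^{\alpha/\beta})$, which is subdominant except in the case $\alpha<\beta$ where it simply matches the second. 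Noting that $\log(1/\delta)\asymp\log(e^{\beta R}/(1+\sqrt{\ss}))$ produces the claimed exponents.

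The main obstacle lies in establishing the two-sided estimate for $\P_{r_0}(\tau\le\ss)$ uniformly in $r_0$. For $r_0\le R-y^\ast$ one has $\sigma(r_0)\sqrt{\ss}\gtrsim\pi$, so the BM wraps around the circle and the usual reflection bounds on $\RR$ fail; fortunately, the radial density assigns total mass only $O(\delta^{\alpha/\beta})$ to this region, so a crude $\P_{r_0}(\tau\le\ss)=\Theta(1)$ suffices there. Matching lower bounds in the transition regime $r_0\approx R-y^\ast$ require comparing the exact circle heat-kernel sum with its Gaussian approximation, and additional care is needed to control the remainder terms in $\phi(r_0)\sim 2e^{-r_0/2}$ for small $r_0$, a region that carries negligible radial mass and hence can be absorbed into the constants.
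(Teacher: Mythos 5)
Your proposal is correct and follows essentially the same route as the paper: the Poissonization identity is \eqref{eqn:main}, your two-sided estimate $\P_{r_0}(\tau\le\ss)\asymp\min\{1,\phi(r_0)+\sigma(r_0)\sqrt{\ss}\}$ is precisely the angular average of the reflection-principle bound \eqref{eqn:angular-exitt} underlying Theorem~\ref{thm:mainangular}, and your radial split at $y^\ast$ (the paper's $\widehat{r}$) reproduces the case analysis of Lemma~\ref{lem:angular-muDt}, including the $\Theta(ne^{-R/2})$ contribution of $B_Q(R)$ and the logarithmic factor when $\alpha=\beta$. The only difference is presentational: you integrate out the angle directly instead of passing through $\dt(\kappa)$ and the uniform pointwise bounds, which suffices for this statement (though it does not yield the stronger uniform conclusions of Theorem~\ref{thm:intro-Dt}).
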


Observe that the tail exponent appearing in the case $\alpha>\beta$ of the previous theorem is the same as the one appearing in Case~\eqref{thm:mixed-itm3} (where $\beta\leq \frac{1}{2}<\alpha$) and Case~\eqref{thm:mixed-itm-ssmall} (where $\beta<\frac{\alpha}{2}<\alpha$) of Theorem~\ref{thm:intro-mixed}: this is no coincidence. Our proof shows that a purely angular movement is responsible for detection in those cases. Note also that the other exponents contemplated in Theorem~\ref{thm:intro-mixed} are not present in Theorem~\ref{thm:angularMain}. 

\smallskip
We consider next the result of our analysis of the second simplified model where particles move only radially:
\begin{theorem}[radial movement]{\label{thm:radialmain}}
Let $\alpha\in(\tfrac{1}{2},1]$, $\beta>0$, $\ss:=\ss(n)$,
and assume that particles move according to the generator 
\begin{equation*}\label{radialgenerator}
	\Delta_{rad} := \frac{1}{2}\frac{\partial^2}{\partial r^2}+\frac{\alpha}{2}\frac{1}{\tanh(\alpha r)}\frac{\partial}{\partial r}.
\end{equation*}
For every $0<c<\frac{\pi}{2}$, there is a $C>0$ such that if  $\ss\ge C$ and $\ss^{\frac{1}{2\alpha}}\le (\frac{\pi}{2}-c)e^{\frac{R}{2}}$, then
\[
\P(T_{det}\geq \ss) = \exp\big({-}\Theta(\ss^{\frac{1}{2\alpha}})\big).
\]
\end{theorem}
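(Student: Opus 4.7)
The plan is to exploit the Poisson structure: since particles evolve independently, by a standard thinning argument
\[
\P(T_{det}\ge\ss)=e^{-\Lambda(\ss)},\qquad\Lambda(\ss):=n\int_0^R\!\int_{-\pi}^{\pi} f(r_0,\theta_0)\,p(r_0,\theta_0)\,d\theta_0\,dr_0,
\]
where $p(r_0,\theta_0)$ is the probability that the particle initially at $(r_0,\theta_0)$ detects $Q$ within time $\ss$. The task therefore reduces to showing $\Lambda(\ss)=\Theta(\ss^{1/(2\alpha)})$. Because $\Delta_{rad}$ preserves each particle's angular coordinate, detection becomes a one-dimensional first-passage problem for the radial trajectory $\rho_t$.

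I first identify the detection region. Imposing $\cosh d_H\le\cosh R$ in~\eqref{eqn:coshLaw} and using $\cosh r-1=2\sinh^2(r/2)$ yields the clean criterion $(r,\theta_0)\in B_Q(R)\iff \tanh(r/2)\le \tanh R\cos\theta_0$. Hence every radius detects when $|\theta_0|\le 2e^{-R/2}(1+o(1))$; for $|\theta_0|\in(2e^{-R/2}(1+o(1)),\pi/2)$ detection requires $r\le r_+(\theta_0):=2\operatorname{artanh}(\tanh R\cos\theta_0)=2\log(2/|\theta_0|)+O(1)$; and for $|\theta_0|>\pi/2$ only $r=0$ detects. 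Setting $X_t:=R-\rho_t$ and $\bar a(\theta_0):=R-r_+(\theta_0)$, a particle at angle $\theta_0$ detects iff $X_t$ reaches level $\bar a(\theta_0)$ by time $\ss$.

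The core input is the hitting-time tail of $X_t$. This diffusion is reflected at $0$ with drift $-\tfrac\alpha2\coth(\alpha(R-X))=-\tfrac\alpha2(1+o(1))$ throughout the bulk, and its unique stationary distribution coincides (up to a normalizing constant) with the initial radial density $f(r_0)\propto\sinh(\alpha r_0)$ reparametrized through $X_0=R-r_0$, which in the constant-drift approximation has density $\alpha e^{-\alpha x}(1+o(1))$ for $x$ bounded away from $R$. Using the independent-interest one-dimensional reflected-diffusion estimates developed in Section~\ref{sec:radial}, I establish the two-sided stationary hitting-time bound
\[
\P\bigl(\tau_{\bar a}\le\ss\bigr)=\Theta\bigl(\min(1,\ss\,e^{-\alpha\bar a})\bigr)\qquad(\ss\ge 1,\ \bar a\in[0,R]),
\]
where $\tau_{\bar a}:=\inf\{t\ge 0:X_t\ge\bar a\}$ and $X_0$ is drawn from the stationary distribution. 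The upper bound comes from the relaxation estimate $\EE[\tau_{\bar a}\mid X_0=0]=\Theta(\alpha^{-2}e^{\alpha\bar a})$ via Markov's inequality; the lower bound combines the initial-detector contribution $\P(X_0\ge\bar a)=\Theta(e^{-\alpha\bar a})$ with an excursion argument, namely that the process makes $\Omega(\ss)$ effectively independent attempts in time $\ss$, each reaching $\bar a$ with probability $\Omega(e^{-\alpha\bar a})$.

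Finally I integrate. The crossover $\ss e^{-\alpha\bar a}=1$ corresponds to $|\theta_0|=|\theta_0|^*:=2e^{-R/2}\ss^{1/(2\alpha)}$, and by hypothesis $|\theta_0|^*\le\pi-2c$. The inner region $|\theta_0|\le|\theta_0|^*\wedge(\pi/2)$ contributes $\Theta(\ss^{1/(2\alpha)})$, using $ne^{-R/2}=\nu$: for $|\theta_0|^*\le\pi/2$ directly as $n\cdot|\theta_0|^*=2\nu\,\ss^{1/(2\alpha)}$, and for $|\theta_0|^*>\pi/2$ because $n=\Theta(e^{R/2})=\Theta(\ss^{1/(2\alpha)})$. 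For the outer range $|\theta_0|\in(|\theta_0|^*,\pi/2)$ (when non-empty), $e^{-\alpha\bar a(\theta_0)}=\Theta(e^{-\alpha R}|\theta_0|^{-2\alpha})$ and since $2\alpha>1$ the integral $\int_{|\theta_0|^*}^{\pi/2}|\theta|^{-2\alpha}\,d\theta$ is dominated by its lower endpoint, again giving $\Theta(\ss^{1/(2\alpha)})$; angles $|\theta_0|\ge\pi/2$ contribute only $O(1)$ from near-origin initial detectors. Summing, $\Lambda(\ss)=\Theta(\ss^{1/(2\alpha)})$. The main obstacle is obtaining the uniform two-sided hitting-time bound: one must handle both the near-boundary regime (essentially constant-drift reflected BM) and the deep-interior regime where $\coth(\alpha r)\gg 1$ because $\bar a\approx R$ forces $\rho_t$ to reach small radii where the drift diverges (producing at worst logarithmic corrections that do not affect the overall $\Theta$); this is exactly the role of the paper's independent-interest one-dimensional reflected-diffusion analysis.
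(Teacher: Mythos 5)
Your route is, at bottom, the paper's own: the thinning identity, the reduction at each fixed angle to a one-dimensional hitting problem for the radial diffusion with reflecting barrier, the observation that the Poisson radial density is the stationary law of that diffusion, an excursion-from-the-boundary lower bound with $\Omega(\ss)$ attempts each succeeding with probability $\Omega(e^{-\alpha\bar a})$ (this is exactly the coupling in Proposition~\ref{prop:rad-upperBnd}), and a final angular integration whose crossover angle $\Theta(e^{-R/2}\ss^{\frac{1}{2\alpha}})$ reproduces the measure computation of Lemma~\ref{lem:radialmeasure}; the packaging through a stationary-start hitting probability per angle, rather than through the set $\dt(\kappa)$ and the uniform/integral bounds of Theorem~\ref{thm:rad}, is only cosmetic.

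There is, however, one genuine gap, and it sits at the heart of your upper bound. You justify $\PP_0(\tau_{\bar a}\le\ss)=O(\ss e^{-\alpha\bar a})$ by ``the relaxation estimate $\EE[\tau_{\bar a}\mid X_0=0]=\Theta(\alpha^{-2}e^{\alpha\bar a})$ via Markov's inequality''. Markov's inequality bounds $\PP(\tau_{\bar a}>t)$ from above; a lower bound on a mean never yields an upper bound on the probability that the hitting time is \emph{small}, so this step is not a valid inference (it would be valid only after proving that $\tau_{\bar a}$ from the boundary is approximately memoryless, which itself needs an argument). The correct tool is the exponential-moment bound of Part~\eqref{radial:itm:phi1} of Lemma~\ref{lemmaradial}: applying $\PP(\tau\le\ss)\le e^{\lambda\ss}\,\EE(e^{-\lambda\tau})$ with $\lambda\approx 1/\ss$ gives precisely $O((1+\ss)e^{-\alpha\bar a})$, as in~\eqref{eqn:radial-upper-aux2}; and to promote this to your stationary-start statement $\PP_\pi(\tau_{\bar a}\le\ss)=O(\ss e^{-\alpha\bar a})$ you should integrate that Laplace-transform bound over the radial law (this is the integral-bound computation in the proof of Proposition~\ref{prop:rad-lowerBnd}), rather than decompose into ``reach $\bar a$ before the boundary, else restart at the boundary'', which costs an extra factor $\Theta(\bar a)$ (harmless after the angular integration, but it is cleaner to avoid it). With that repair, together with the time-budget check for the successful excursion (Part~\eqref{radial:itm:phi4} of Lemma~\ref{lemmaradial}, which is where $\ss\ge C$ enters) and the restriction $\rabs_0=\Omega(1)$ guaranteed by $\ss^{\frac{1}{2\alpha}}\le(\frac{\pi}{2}-c)e^{\frac{R}{2}}$ for the per-excursion success probability, your argument goes through and coincides with the paper's.
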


Once more observe that the tail exponent in the $\beta>\frac12$ case of Theorem~\ref{thm:intro-mixed} is the same as the one in Theorem~\ref{thm:radialmain}, and once more this is not a coincidence: our proof shows that when $\beta>\frac{1}{2}$ the detection of the target is the result of the radial motion of particles alone.
In contrast, Cases~\eqref{thm:mixed-itm1} and~\eqref{thm:mixed-itm2} in Theorem~\ref{thm:intro-mixed}, when $\frac{1}{2}<\alpha\leq2\beta\leq1$ 
yield new tail exponents not observed neither in  Theorem~\ref{thm:angularMain} nor in Theorem~\ref{thm:radialmain}, and which is larger than those given in said results. Since a larger tail exponent means a larger probability of an early detection, in this case, neither the angular nor the radial component of the movement dominates the other, but they rather work together to detect the target quicker: in Case~\eqref{thm:mixed-itm1}, the proof reveals that the radial movement is responsible for pulling particles sufficiently far away from the boundary to a radial value, where $\sigma_{\theta}(r):=\sinh^{-2}(\beta r)$ becomes sufficiently large (although still small) so that with a constant probability at least one particle has a chance of detecting at this radial value through its angular motion. Finally, for Case~\eqref{thm:mixed-itm2}  in Theorem~\ref{thm:intro-mixed} we see a tail exponent of the form $ne^{-\beta R}\sqrt{\ss}$ (as expected when taking $\alpha\nearrow 2\beta$ in Case~\eqref{thm:mixed-itm1} or $\alpha\searrow 2\beta$ in Case~\eqref{thm:mixed-itm3}) but accompanied by a logarithmic correction.  This correction becomes clear when inspecting the proof: it is a result of the balance between the effect of the radial drift and the contribution of the angular movement of particles at distinct radii whose effect is that the contributions to the total angular movement coming from the time spent by a particle within a narrow band centered at a given radius are all about the same, adding up and giving the additional logarithmic factor.
 
\medskip 
Our main theorem, that is Theorem~\ref{thm:intro-mixed}, provides insight on \textit{how unlikely} it is for the target~$Q$ to remain undetected until time $\ss$, and as discussed before, the proof strategy as well as our remarks provide additional information about the \textit{dominant detection mechanisms} (that is, either by angular movement only, by radial movement only, or by a combination of the two). Moreover,  through our techniques we can obtain more precise information showing in each case \textit{where} particles must be initially located in order to detect~$Q$ before time $\ss$ with probability bounded away from zero. Specifically, given a parameter $\kappa>0$ (independent of $n$) 
we can construct a parametric family of sets $\dt(\kappa)$ depending on $\ss:=\ss(n)$ and the parameters of the model, such that for every $x_0\in\dt(\kappa)$ and sufficiently large $n$, the probability $\PP_{x_0}(T_{det}\ge\ss)$ is at least a constant that depends on the parameter $\kappa$. Even further, we will show that $\mu(\dt(\kappa))$ is of the same order as the tail exponents, which implies that asymptotically the best chance for $Q$ to remain undetected by time $\ss$ is to find these sets initially empty of points.
To be precise, we show the following meta-result:
\begin{theorem}{\label{thm:intro-Dt}}
Let $\alpha\in (\frac12,1]$, $\beta > 0$ and $\ss:=\ss(n)$. For every case considered in Theorems~\ref{thm:intro-mixed},~\ref{thm:angularMain} and~\ref{thm:radialmain} with the corresponding hypotheses, and under the additional assumption of $\ss=\omega(1)$ in the case of Theorem~\ref{thm:intro-mixed}, there is an explicit parametric family of sets $\dt:=\dt(\kappa)$ which is increasing in $\kappa$, and which satisfies:
\begin{enumerate}[(i)]
    \item\label{lowerDt} (Uniform lower bound) For every $\kappa$ sufficiently large and $n$ sufficiently large, 
    \[
    \inf_{x_0\in\dt\!(\kappa)}\PP_{x_0}(T_{det}\ge\ss)
    \]
    is bounded from below by a positive expression that depends only on $\kappa$ and the parameters of the model.
    
    \item\label{upperDt} (Uniform upper bound) For every $\kappa$ sufficiently large and $n$ 
    sufficiently large
    \[
    \sup_{x_0\in\ndt\!(\kappa)}\PP_{x_0}(T_{det}\ge\ss)
    \]
    is bounded from above by a positive expression that depends only on $\kappa$ and the parameters of the model, and that tends to $0$ as $\kappa \to \infty$.
    
    \item\label{integralDt} (Integral bound) Fix $\kappa=C$ for some sufficiently large constant $C>0$. For $n$ sufficiently large, we have 
    \[
    \int_{x_0\in\ndt\!(C)}\PP_{x_0}(T_{det}\le\ss)d\mu(x_0) = O(\mu(\dt(C))).
    \]
\end{enumerate}
\end{theorem}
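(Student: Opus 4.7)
The plan is to prove Theorem~\ref{thm:intro-Dt} as a meta-theorem assembled from parallel analyses in each of the regimes identified by Theorems~\ref{thm:intro-mixed},~\ref{thm:angularMain}, and~\ref{thm:radialmain}. First I would define $\dt(\kappa)$ case by case so that in each regime the set captures exactly the ``difficult-to-detect-from'' initial positions compatible with the dominant detection mechanism. In the purely angular regime, $\dt(\kappa)$ consists of those $(r_0,\theta_0)$ with $r_0$ close to $R$ and $|\theta_0|$ exceeding roughly $\kappa$ times $\sqrt{\ss}/\sinh(\beta r_0)$, the typical angular excursion of the time-changed Brownian angle in time $\ss$. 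In the purely radial regime, $\dt(\kappa)$ is a band near $r_0=R$ lying inside the angular width of $B_Q(R)$, with the radial range determined by the one-dimensional hitting-time estimates announced for Section~\ref{sec:radial}. In each mixed regime from Theorem~\ref{thm:intro-mixed}, $\dt(\kappa)$ becomes a union of such regions at a continuum of intermediate radii, reflecting the trade-off between how far inward a particle must move radially before its angular diffusivity $\sinh^{-2}(\beta r)$ becomes large enough to cover the residual angular gap.

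Item~\eqref{lowerDt} is then established by fixing $x_0\in\dt(\kappa)$ and bounding below the probability that the trajectory $x_t$ fails to hit $\partial B_Q(R)$ before time $\ss$. In the simplified models of Theorems~\ref{thm:angularMain} and~\ref{thm:radialmain} this factors into independent one-coordinate estimates; in the mixed model I would first condition on the radial trajectory and use the hitting-time bounds of Section~\ref{sec:radial} to restrict to the good event that the radius stays in a prescribed band, and then apply a time-change argument to control the angular displacement via a standard concentration inequality for one-dimensional Brownian motion with time-dependent diffusivity. Item~\eqref{upperDt} is its dual: for $x_0\in\ndt(\kappa)$ at least one of the two coordinates is closer to the target than typical, and matching upper bounds on the corresponding hitting times yield a detection probability of at least $1-g(\kappa)$ with $g(\kappa)\to 0$ as $\kappa\to\infty$.

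Item~\eqref{integralDt} follows from~\eqref{upperDt} by decomposing $\ndt(C)$ into dyadic shells of the form $\dt(2^{k+1}C)\setminus\dt(2^kC)$ for $k\ge 0$. On each shell the detection probability is at most the corresponding bound $g(2^kC)$, which decays exponentially in $k$ at a rate governed by the tail exponent of the regime, while the hyperbolic measure of the shell grows at most polynomially in $2^k$ (a direct consequence of the explicit parametric description of $\dt(\kappa)$). Choosing $C$ sufficiently large makes the resulting sum dominated by its first term, yielding the claimed $O(\mu(\dt(C)))$ bound.

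The main obstacle will lie in the mixed regimes~\eqref{thm:mixed-itm1}--\eqref{thm:mixed-itm2}, where, unlike in the factored models, the angular and radial diffusions are coupled through the radius-dependent coefficient $\sinh^{-2}(\beta r)$. The optimal detection strategy involves an inward radial excursion followed by an angular approach, so the lower bound requires showing that with constant probability the radial component fails to reach the intermediate radius where the angular motion becomes efficient, while simultaneously the angular component does not benefit opportunistically from any transient radial excursion. Matching the tail exponents $ne^{-\beta R}\ss^{\beta/\alpha}$ and $ne^{-\beta R}\sqrt{\ss\log\ss}$ of Theorem~\ref{thm:intro-mixed} up to the constant factor absorbed into $\kappa$ is where the delicate interplay between the radial hitting-time estimates from Section~\ref{sec:radial} and the angular concentration must be carried out precisely, and this is what drives the explicit form of $\dt(\kappa)$ in those regimes.
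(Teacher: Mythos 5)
Your proposal inverts the role of $\dt(\kappa)$, and this is not a harmless relabeling. In the paper $\dt(\kappa)$ is the region of starting points from which detection by time $\ss$ is \emph{likely}: it contains $B_Q(R)$, it is increasing in $\kappa$, $\inf_{x_0\in\dt(\kappa)}\PP_{x_0}(T_{det}\le\ss)$ is bounded below in terms of $\kappa$, $\sup_{x_0\in\ndt(\kappa)}\PP_{x_0}(T_{det}\le\ss)$ tends to $0$ as $\kappa\to\infty$ (this is the intended reading of items~\eqref{lowerDt}--\eqref{upperDt}, as implemented in Theorems~\ref{thm:mainangular}, \ref{thm:rad} and~\ref{thm:mixedLarge}), and $\mu(\dt(\kappa))$ is precisely the tail exponent fed into the Poissonization identity~\eqref{eqn:main}. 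Your sets (``$|\theta_0|$ exceeding roughly $\kappa\sqrt{\ss}/\sinh(\beta r_0)$'', the ``difficult-to-detect-from'' positions) are essentially the complement: they are decreasing in $\kappa$, contradicting the monotonicity required in the statement, and they break items~\eqref{upperDt} and~\eqref{integralDt} even on your own terms. Concretely, under your definition $\ndt(\kappa)$ is the near-target region of angular half-width of order $\kappa\sqrt{\ss}e^{-\beta r_0}$; a point at its outer edge detects by time $\ss$ with probability $\Theta(\Phi(-\kappa))\to 0$, so your claim that every $x_0\in\ndt(\kappa)$ detects with probability at least $1-g(\kappa)$ with $g(\kappa)\to0$ is false, and your item~\eqref{integralDt} reduces to $\int_{\ndt}\PP_{x_0}(T_{det}\le\ss)d\mu=O(\mu(B_O(R)))=O(n)$, which is vacuous. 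With the orientation flipped, the meta-theorem can no longer recover the exponents $\mu(\dt)\asymp ne^{-\beta R}\sqrt{\ss}$, $ne^{-\beta R}\ss^{\beta/\alpha}$, $\ss^{1/(2\alpha)}$, etc.\ of Theorems~\ref{thm:intro-mixed}, \ref{thm:angularMain}, \ref{thm:radialmain}, which is its entire purpose.

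Beyond the orientation, the sketch does not engage with the steps the paper actually has to carry out. For the mixed regimes the upper bounds are not a matter of ``one coordinate being closer than typical'': the paper encloses the particle in a box $\mathcal{B}(x_0)\subseteq\overline{B}_Q(R)$, separates the exit time into a radial part (Lemma~\ref{lemmaradial}) and an angular part driven by the random clock $\II{\ss}=\int_0^{\ss}\cosech^2(\beta r_u)du$, and controls the tail of $\II{\ss}$ by cutting the radial path into excursions and bounding the resulting sums of Pareto-tailed contributions (Propositions~\ref{prop:mainuppermixed} and~\ref{prop:merged-varianza}, Lemma~\ref{lem:cotapower}); your plan says nothing about how $\PP_{x_0}(\II{\ss}\ge\sigma)$ would be bounded, which is the heart of the matter. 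For the lower bound in the case $\alpha=2\beta$, the logarithmic factor comes from showing that the time spent in each radial annulus contributes comparably to the angular variance, which rests on the discretized coupling and second-moment argument of Section~4.4 (Proposition~\ref{prop:coupling}, Corollary~\ref{radial:cor:coupling}); conditioning on the radius staying in one band, as you propose, does not produce this. Finally, your dyadic-shell derivation of item~\eqref{integralDt} from item~\eqref{upperDt} is a plausible alternative in principle (the decay rates $\Phi(-\kappa)$, $\kappa^{-2\alpha}$, $\kappa^{-\alpha/(\beta\wedge\frac12)}$ beat the at most linear growth of $\mu(\dt(\kappa))$ in $\kappa$), but it requires estimates of $\mu(\dt(\kappa))$ and validity of the uniform bounds along the whole dyadic range, none of which you verify; the paper instead integrates explicit pointwise bounds directly. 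As it stands, the proposal would not yield Theorem~\ref{thm:intro-Dt}.
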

The added value of this last theorem is that it reveals that the set~$\dt(\kappa)$  
can roughly be interpreted as the regions where particles \emph{typically detecting} the target are initially located. Moreover, each of the 
Theorems~\ref{thm:intro-mixed}, \ref{thm:angularMain}, and~\ref{thm:radialmain} follow from the instances of Theorem~\ref{thm:intro-Dt} regarding angular, radial and unrestricted movement, respectively, thus
revealing the unified approach that we take throughout the paper as explained in detail in Subsection~\ref{sec:strategy}. 

Establishing the uniform lower bounds for the different cases of Theorem~\ref{thm:intro-Dt} requires, especially in Case~\eqref{thm:mixed-itm2} of Theorem~\ref{thm:intro-mixed}, a delicate coupling with a discretized auxiliary process. For the integral upper bounds a careful analysis has to be performed: we thereby need to calculate hitting times of Brownian motions in the hyperbolic plane with a reflecting barrier where we make use of results on tails of independent sums of Pareto random variables (that differ greatly according to the exponent).

\smallskip
Finally, let us point out that from a technical point of view, the additional difficulties we encounter compared to Euclidean space are substantial: in~\cite{Peres2010} the authors address the Euclidean analogue of the problem studied in this paper by relating the detection probability to the expected volume of the $d$-dimensional Wiener sausage (that is, the neighborhood of the trace of Brownian motion up to a certain time moment -- see also the related work section below). To the best of our knowledge, however, the volume of the Wiener sausage of Brownian motion with a reflecting boundary in hyperbolic space is not known. Even further, our proof techniques give significantly more information: with our proof we are able to characterize the subregions of the hyperbolic space such that particles initially located therein are likely to detect the target up to a certain time moment, whereas particles located elsewhere are not - with the information of the pure volume of the Wiener sausage we would not be able to do so. 
The radial drift towards the boundary of $B_O(R)$ entails many additional technical difficulties that do not arise in Euclidean space (see also the related work section below for work being done there). Moreover, our setup contains a reflecting barrier at the boundary, thereby adding further difficulty to the analysis.

\mbox{\ }

\subsection{Structure and strategy of proof}\label{sec:strategy}
Recall that in our mobile hyperbolic graph model
we consider an initial configuration $\mathcal{P}$ of the Poissonized model in $B_O(R)$ and then associate to each $x_0:=(r_0,\theta_0)\in\mathcal{P}$ a particle that evolves independently of other particles according to the generator $\Delta_h$ with a reflecting barrier at $\partial B_0(R)$.
Denote by $\mathcal{P}_{\ss}$ the Poisson point process obtained from $\mathcal{P}$ by retaining only particles having detected the target by time $\ss$.
Since points move independently of each other, each particle initially placed at $x_0\in B_O(R)$ belongs to $\mathcal{P}_{\ss}$ independently with probability $\P_{x_0}(T_{det}\leq \ss)$ and hence $\mathcal{P}_{\ss}$ is a thinned Poisson point process on $B_O(R)$ with intensity measure
\[d\mu_{\ss}(x)\;=\;\P_{x_0}(T_{det}\leq \ss)d\mu(x).\]
Noticing that the event $\{T_{det}\geq \ss\}$ is equivalent to $\{\mathcal{P}_{\ss}=\emptyset\}$, we have
\begin{equation}\label{eqn:main}
    \P(T_{det}\geq \ss)=\exp\Big(-\int_{B_O(R)}\P_{x_0}(T_{det}\leq \ss)d\mu(x_0)\Big)
\end{equation}
and hence in order to obtain lower and upper bounds on $\P(T_{det}\geq \ss)$ we will compute the dependence of this integral as a function of $\ss$.
In fact, we can say a bit more on the way we go about determining the value of the said integral. Specifically, we partition the range of integration $B_O(R)$ into $\dt(\kappa)$ and 
$\ndt(\kappa)$ and observe that the three parts of Theorem~\ref{thm:intro-Dt} together immediately imply that
\[
\int_{B_O(R)}\P_{x_0}(T_{det}\leq \ss)d\mu(x_0) = \Theta(\mu(\dt(\kappa)))
\]
thus reducing, via~\eqref{eqn:main}, the computation of tail bounds for detection times to fixing the parameter $\kappa$ equal to a constant and determining the expected number of particles initially in $\dt(\kappa)$, that is, computing $\mu(\dt(\kappa))$. 
So, the structure of the proof of our main results is the following: we determine a suitable candidate set $\dt(\kappa)$, compute $\mu(\dt(\kappa))$, and establish an adequate version of Theorem~\ref{thm:intro-Dt} for the specific type of movement of particles considered (angular, radial or a combination of both).

\subsection{Related work}\label{sec:relatedWork}
After the introduction of random hyperbolic graphs by Krioukov et al.~\cite{KPKVB10}, the model was then first analyzed in a mathematically rigorous way by Gugelmann et al.~\cite{GPP12}: they proved that for the case $\alpha > \frac12$ the distribution of the degrees follows a power law, they showed that the clustering coefficient is bounded away from $0$, and they also obtained the average degree and maximum degree in such networks. The restriction $\alpha>\frac12$ guarantees that the resulting graph has bounded average degree (depending on $\alpha$ and $\nu$ only): if $\alpha<\frac12$, then the degree sequence is so heavy tailed that this is impossible (the graph is with high probability connected in this case, as shown in~\cite{BFM13b}). 
The power-law degree distribution of random hyperbolic graphs is equal to $2\alpha+1$ (see~\cite[Theorem 2.2]{GPP12}) which, for $\alpha\in (\frac12, 1]$, belongs to the interval $(2,3]$, and this is the interval where the best fit power-law exponent of social networks typically falls into (see~\cite[p.~69]{barabasiLinked}).
If $\alpha>1$, then as the number of vertices grows, the largest component of a random hyperbolic graph has sublinear size (more precisely, its order is $n^{1/(2\alpha)+o(1)}$, see~\cite[Theorem~1.4]{BFM13} and~\cite{Diel}). On the other hand, it is known that for $\frac12 < \alpha < 1$, with high probability 
a hyperbolic random graph of expected order~$n$ has a connected component whose order is also linear in $n$~\cite[Theorem~1.4]{BFM13} and the second largest component has size $\Theta(\log^{\frac{1}{1-\alpha}} n)$~\cite{km19},  which justifies referring to the linear size component as \emph{the giant component}. 
More precise results including a law of large numbers for the largest component in these networks were established in~\cite{FMLaw}. Further results on the static version of this model include results on the diameter~\cite{km15, fk15, MS19}, on the spectral gap~\cite{KM18}, on typical distances~\cite{ABF}, on the clustering coefficient~\cite{CF16, Clustering}, on bootstrap percolation~\cite{KL} and on the contact process~\cite{Contact}.

No dynamic model of random hyperbolic graphs has been proposed so far. In the \emph{Boolean model} (that is similar to random geometric graphs but now the underlying metric space is~$\RR^d$ and, almost surely, there is an infinite number of vertices)
the same question  of the tail probability of detection time, as well as the questions of coverage time (the time to detect a certain region of $\mathbb{R}^d$), percolation time (the time a certain particle needs to join particle of the infinite component) and broadcast time (the time it takes to broadcast a message to all particles) were studied by Peres et al.~\cite{Peres2010}: the authors therein show that for a Poisson process of intensity $\lambda$, and a target performing independent Brownian motion, as $t \to \infty$, 
$$
\P{(T_{det} \ge t)} = \exp\Big(- 2\pi \lambda\frac{t}{\log t}(1+o(1))\Big),
$$
where a target is detected if a particle is at Euclidean distance at most $r$, for some arbitrary but fixed $r$ (in the three other mentioned contexts the interpretations are similar: we say that an interval is covered if each point of the interval has been at distance at most $r$ from some particle at some time instant before $t$; we say that a particle joins a component if it is at distance at most $r$ from another particle of the component, and we say that a message can be sent between two particles if they are at distance $r$). Note that the probability holds only as $t \to \infty$, and in this case the main order term of the tail probability does not depend on $r$, as shown in~\cite{Peres2010}. Stauffer~\cite{Stauffer} generalized detection of a mobile target moving according to any continuous function and showed that for a Poisson process of sufficiently high intensity $\lambda$ over $\mathbb{R}^d$ (with the same detection setup) the target will eventually be detected almost surely, whereas for small enough $\lambda$, with positive probability the target can avoid detection forever. The somewhat inverse question of isolation of the target, that is, the time it takes for a (possibly) moving target (again for a Poisson process of intensity $\lambda$ in $\mathbb{R}^d$) until no other vertex is at distance $r$ anymore and the target becomes isolated, was then studied by Peres et al.~\cite{Isolation}: it was shown therein that the best strategy for the target to stay isolated as long as possible in fact is to stay put (again with the same setup for being isolated).

Also for the Boolean model, the question of detection time was already addressed by Liu et al.~\cite{Liu} when each particle moves continuously in a fixed randomly chosen direction: they showed that the time it takes for the set of particles to detect a target is exponentially distributed with expectation depending on the intensity $\lambda$ (where detection again means entering the sensing area of the target). For the case of a stationary target as discussed here, as observed in Kesidis et al.~\cite{Kesidis} and in Konstantopoulos~\cite{Konstant} the detection time can be deduced
from classical results on continuum percolation: namely, in this case it follows from Stoyan et al.~\cite{Stoyan} that
$$
\P{(T_{det} \ge t)} = e^{-\lambda \EE(vol(W_r(t)))},
$$
where $vol(W_r(t))$ is the volume of the Wiener sausage of radius $r$ up to time $t$ (equivalent to being able to detect at distance at most $r$), and which in the case of Euclidean space is known quite precisely~\cite{Spitzer, Bere}. 

A study of dynamic random geometric graphs prior to the paper of Peres et al.~was undertaken by D\'{i}az et al.~\cite{DMP09}: the authors therein consider a random geometric graph whose radius is close to the threshold of connectivity and analyze in this setup the lengths of time intervals of connectivity and disconnectivity. Even earlier, the model of Brownian motion in infinite random geometric graphs (in the "dynamic Boolean model") was studied by van den Berg et al.~\cite{vdBerg} who showed that for a Poisson intensity above the critical intensity for the existence of an infinite component, almost surely an infinite component exists at all times (here the radii of particles are not fixed to $r$, but rather i.i.d.~random variables following a certain distribution).
More generally, the question of detecting an immobile target (or escaping from a set of immobile traps) when performing random walks on a square lattice is well studied (here, in contrast to the previous papers, detecting means to be exactly at the same position in the lattice); escaping from a set of mobile traps was recently analyzed as well, for both questions we refer to Athreya et al.~\cite{Athreya} and the references therein.

\subsection{Organization}
Section~\ref{preliminaries} gathers facts and observations that will be used throughout the paper. Section~\ref{sec:angular} then analyzes the simplified model where the particles' movement is restricted to angular movement only, at the same time illustrating in the simplest possible setting our general proof strategy. Section~\ref{sec:radial} then considers the model where the particles' movement is restricted to radial movement only. In Section~\ref{sec:mix} we finally address the mixed case, that is, the combined case of both radial and angular movement of particles. Specifically, in Section~\ref{sec:Lower}, we look at quick detection mechanisms and, in Section~\ref{sec:Upper}, we show that the detection mechanisms found  are asymptotically optimal in all cases, that is, no other strategy can detect the target asymptotically faster.
In Section~\ref{sec:conclusion} we briefly comment on future work.

\subsection{Global conventions}
Throughout all of this paper $\alpha$ is a fixed constant such that $\frac12<\alpha\le 1$
(the reason to consider only this range is that only therein hyperbolic random graphs are deemed to be reasonable models of real-world networks -- see the discussion in Section~\ref{sec:relatedWork}). 
Furthermore, in this article both $\beta$ and $\nu$ are strictly positive constants. In order to avoid tedious repetitions and lengthy statements, we will not reiterate these facts. 
Also, wherever appropriate, we will hide $\alpha$, $\beta$, and $\nu$ within asymptotic notation.
Moreover, throughout the paper, we let~$\ss:=\ss(n)$ be a non-negative function depending on $n$.

In order to avoid using ceiling and floors, we will always assume $R:=2\ln(n/\nu)$ is an integer. Since all our claims hold asymptotically, doing so has no impact on the stated results. 
All asymptotic expressions are with respect to $n$, and wherever it makes sense, inequalities should be understood as valid asymptotically.

\section{Preliminaries}\label{preliminaries}
In this section we collect basic facts and a few useful observations together with a bit of additional notation.
First, for a region $\Omega$ of $B_O(R)$, that is, $\Omega\subseteq B_O(R)$, let $\mu(\Omega)$ denote the expected number of points $\mathcal{P}$ in $\Omega$. The first basic fact we state here gives the expected number of points $\mathcal{P}$ at distance at most $r$ from the origin, 
as well as the expected number of points at distance at most $R$ from a given point $Q \in B_{O}(R)$:
\begin{lemma}[{\cite[Lemma~3.2]{GPP12}}]\label{lem:muBall}
If $0\leq r\leq R$, then
  $\mu(B_{O}(r)) = n e^{-\alpha(R-r)}(1+o(1))$.
Moreover, if $Q \in B_O(R)$ is such that $r_Q:=r$, then 
for $C_{\alpha}:=2\alpha/(\pi (\alpha-\frac12))$,
\[
\mu(B_Q(R) \cap B_O(R)) = n C_{\alpha} e^{-\frac{r}{2}}(1 + O(e^{-(\alpha-\frac12)r}+e^{-r})).
\]
\end{lemma}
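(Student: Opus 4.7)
The plan is to treat the two claims separately, with the first being a direct one-dimensional computation and the second requiring a slicing argument combined with the hyperbolic law of cosines.

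For the first claim, I would begin by exploiting the factorization of the density $f(r,\theta)$ in~\eqref{eqn:coshLaw}-adjacent definition: since $f$ is uniform in $\theta$, integration of $\mu = n f$ over $B_O(r)$ gives
\[
\mu(B_O(r)) \;=\; n\int_0^{r}\alpha\sinh(\alpha r')\,dr'\cdot\frac{1}{\cosh(\alpha R)-1} \;=\; n\,\frac{\cosh(\alpha r)-1}{\cosh(\alpha R)-1}.
\]
Then I would expand $\cosh(\alpha R)-1=\tfrac12 e^{\alpha R}(1+O(e^{-\alpha R}))$ and similarly $\cosh(\alpha r)-1=\tfrac12 e^{\alpha r}(1-e^{-\alpha r})^2+O(1)$, which combine to give the announced $n\,e^{-\alpha(R-r)}(1+o(1))$ in the relevant regime.

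For the second claim I would slice $B_Q(R)\cap B_O(R)$ by the radial coordinate. Taking $\theta_Q=0$ by symmetry, the hyperbolic law of cosines~\eqref{eqn:coshLaw} tells me that a point $(r',\theta)$ belongs to $B_Q(R)$ iff $\cos\theta\ge \xi(r'):=\frac{\cosh r\cosh r'-\cosh R}{\sinh r\sinh r'}$. A direct check shows $\xi(r')\le 1$ always holds (this is the condition $|r-r'|\le R$), while $\xi(r')\ge -1$ is equivalent to $r+r'\ge R$. Hence for $r'<R-r$ the whole circle at radius $r'$ lies in $B_Q(R)$ and its total $\mu$-mass is bounded by $\mu(B_O(R-r))=O(ne^{-\alpha r})=ne^{-r/2}\cdot O(e^{-(\alpha-\frac12)r})$ by the first part, which fits into the claimed error. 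For $r'\in[R-r,R]$ the angular slice has half-width $\theta_{r'}:=\arccos\xi(r')$, so
\[
\mu(B_Q(R)\cap B_O(R)) \;=\; O(ne^{-\alpha r}) + \frac{n\alpha}{\pi(\cosh(\alpha R)-1)}\int_{R-r}^{R}\theta_{r'}\,\sinh(\alpha r')\,dr'.
\]

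The heart of the proof is an asymptotic expansion of $\theta_{r'}$. Writing $y:=e^{R-r-r'}\in(0,1]$ and substituting the exponential forms of $\cosh$ and $\sinh$, a short calculation gives $1-\xi(r')=2y(1+O(y)+O(e^{-2r})+O(e^{-2r'}))$, whence $2\sin^2(\theta_{r'}/2)$ has the same expansion, and inverting,
\[
\theta_{r'} \;=\; 2\,e^{(R-r-r')/2}\bigl(1+O(e^{R-r-r'})+O(e^{-2r})+O(e^{-2r'})\bigr).
\]
Plugging in $\sinh(\alpha r')=\tfrac12 e^{\alpha r'}(1+O(e^{-2\alpha r'}))$ and $\cosh(\alpha R)-1\sim \tfrac12 e^{\alpha R}$, the main term becomes $\frac{2n\alpha}{\pi}e^{-\alpha R}e^{(R-r)/2}\int_{R-r}^{R}e^{(\alpha-\frac12)r'}dr'$, which evaluates to $n\,C_\alpha\,e^{-r/2}(1-e^{-(\alpha-\frac12)r})$. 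The three correction terms in $\theta_{r'}$ then have to be tracked through the same integral: the $O(e^{R-r-r'})$ factor produces an integrand $\propto e^{(\alpha-\frac32)r'}$ contributing $O(ne^{-\alpha r})=ne^{-r/2}\cdot O(e^{-(\alpha-\frac12)r})$; the $O(e^{-2r})$ factor produces a contribution of order $ne^{-r/2}\cdot O(e^{-2r})$; and the $O(e^{-2r'})$ factor, after integration, gives a contribution of order at most $ne^{-r/2}\cdot O(e^{-(\alpha-\frac12)r})$. Summing all of this yields the stated error $O(e^{-(\alpha-\frac12)r}+e^{-r})$.

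The step I expect to be the main obstacle is the bookkeeping of error terms in the expansion of $\theta_{r'}$ and in the subsequent integral against $\sinh(\alpha r')$, since several seemingly different error scales appear ($e^{R-r-r'}$, $e^{-2r}$, $e^{-2r'}$, and the truncation error at the endpoint $r'=R-r$) and one has to verify that each of them, after multiplication by the prefactor $ne^{-\alpha R}$ and integration, is absorbed into the claimed $O(e^{-(\alpha-\frac12)r}+e^{-r})$ factor. Everything else is a routine computation in hyperbolic polar coordinates.
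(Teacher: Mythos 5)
Your proposal is correct, and it is essentially the proof of the cited source: the paper does not prove this lemma itself but imports it from~\cite{GPP12}, where the argument is exactly your radial slicing — the exact identity $\mu(B_O(r))=n(\cosh(\alpha r)-1)/(\cosh(\alpha R)-1)$ for the first part, and for the second part the angle expansion $\theta_{r'}=2e^{(R-r-r')/2}\bigl(1+O(e^{R-r-r'})+O(e^{-2r})+O(e^{-2r'})\bigr)$ (the general form of Part~\eqref{itm:phi4} of Lemma~\ref{lem:phi}) integrated against the radial density over $[R-r,R]$, with the mass of $B_O(R-r)$ absorbed into the error. The bookkeeping you flagged does close as you describe — the one step worth spelling out is that on the range $r'\ge R-r$ one has $e^{-(R+r-r')}\le e^{R-r-r'}$ and $e^{-(R-r+r')}\le e^{R-r-r'}$, which is what lets the cross terms in the expansion of $1-\cos\theta_{r'}$ be absorbed into your stated $O$-terms — and, as in the original statement, the $(1+o(1))$ of the first claim only has content when $r=\omega(1)$.
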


We also use the following Chernoff bounds for Poisson random variables:
\begin{theorem}[Theorem A.1.15 of~\cite{AlonSpencer}]
Let $P$ have Poisson distribution with mean $\mu$. Then, for every $\varepsilon > 0$,
$$
\P(P \le \mu(1-\varepsilon)) \le e^{-\varepsilon^2 \mu/2}
$$
and
$$
\P(P \ge \mu(1+\varepsilon)) \le \left(e^{\varepsilon} (1+\varepsilon)^{-(1+\varepsilon)} \right)^{\mu}.
$$
\end{theorem}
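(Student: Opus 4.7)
The plan is the standard Chernoff method: apply Markov's inequality to $e^{\pm tP}$ and exploit the explicit moment generating function of a Poisson random variable. From the series definition one checks directly that $\EE[e^{tP}] = \exp(\mu(e^{t}-1))$ for every $t\in\RR$; this is the sole probabilistic input to the proof.

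For the upper tail, I take any $t>0$ and write
\[
\P(P \ge \mu(1+\varepsilon)) \le e^{-t\mu(1+\varepsilon)}\,\EE[e^{tP}] = \exp\!\big(\mu(e^{t}-1-t(1+\varepsilon))\big),
\]
and then optimize the exponent over $t>0$. Setting its derivative to zero gives $t=\ln(1+\varepsilon)$, and substituting back yields exactly the bound $\big(e^{\varepsilon}(1+\varepsilon)^{-(1+\varepsilon)}\big)^{\mu}$ stated in the theorem.

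For the lower tail I instead apply Markov to $e^{-tP}$ with $t>0$, which (after trivially disposing of $\varepsilon\ge 1$, where $\P(P\le\mu(1-\varepsilon))$ is either $e^{-\mu}$ or zero, both dominated by $e^{-\varepsilon^{2}\mu/2}$) leads to
\[
\P(P \le \mu(1-\varepsilon)) \le e^{t\mu(1-\varepsilon)}\,\EE[e^{-tP}] = \exp\!\big(\mu(e^{-t}-1+t(1-\varepsilon))\big),
\]
minimized at $t=-\ln(1-\varepsilon)>0$ for $\varepsilon\in(0,1)$, with resulting exponent $-\mu\varepsilon-\mu(1-\varepsilon)\ln(1-\varepsilon)$. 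To reduce this to the Gaussian-type form $e^{-\varepsilon^{2}\mu/2}$, the only remaining input is the elementary inequality $(1-\varepsilon)\ln(1-\varepsilon)\ge -\varepsilon+\varepsilon^{2}/2$ on $(0,1)$, obtained by multiplying the series $\ln(1-\varepsilon)=-\sum_{k\ge 1}\varepsilon^{k}/k$ by $1-\varepsilon$ and noticing that the terms of order $\varepsilon^{k}$ with $k\ge 3$ assemble into a nonnegative series. No step presents a genuine obstacle; both the optimization and the Taylor-type comparison are entirely routine, which is precisely why the result is quoted from \cite{AlonSpencer} rather than proved in detail.
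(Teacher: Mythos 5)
Your proof is correct, and it is exactly the canonical Chernoff/moment-generating-function argument: compute $\EE[e^{\pm tP}]=\exp(\mu(e^{\pm t}-1))$, apply Markov, optimize at $t=\pm\ln(1\pm\varepsilon)$, and for the lower tail compare $-\varepsilon-(1-\varepsilon)\ln(1-\varepsilon)$ to $-\varepsilon^2/2$ via the power series of $(1-\varepsilon)\ln(1-\varepsilon)$, which has nonnegative coefficients from $\varepsilon^3$ on. The paper does not prove this statement at all --- it quotes it verbatim from Alon--Spencer (Theorem A.1.15), and the proof given there is the same MGF computation you carried out, so there is no divergence in approach to report.
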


\medskip
Throughout this paper, we denote by $\theta_R(r,r')$ the maximal angle between two points at (hyperbolic) distance at most $R$ and radial coordinates  $0\leq r,r'\leq R$. By~\eqref{eqn:coshLaw}, for $r+r'\geq R$, we have
\begin{equation}\label{eqn:angle}
\cosh R = \cosh r\cosh r'-\sinh r\sinh r'\cos \theta_R(r,r').
\end{equation}
Henceforth, consider the mapping $\phi:[0,R]\to [\theta_R(R,R),\pi/2]$ such that $\phi(r):=\theta_R(R,r)$.
For the sake of future reference, we next collect some simple as well as some known facts concerning~$\phi(\cdot)$.
\begin{lemma}\label{lem:phi}
The following hold:
\begin{enumerate}[(i)]
\item\label{itm:phi1} $\cos(\phi(r))=\coth R\cdot\tanh(\frac{r}{2})$.
\item\label{itm:phi2} $\phi(\cdot)$ is differentiable (hence, also continuous) and decreasing.
\item\label{itm:phi3} $\phi(0)=\frac{\pi}{2}$.
\item\label{itm:phiInv} $\phi(\cdot)$ is invertible and its inverse is differentiable (hence, continuous) and decreasing.
\item\label{itm:phi4} $\phi(r)=2e^{-\frac{r}{2}}(1\pm O(e^{-r}))$. 
\end{enumerate}
\end{lemma}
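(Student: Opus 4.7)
Most parts follow from a single computation obtained by specializing (\ref{eqn:angle}) to $r' = R$: since $\cosh R = \cosh R\cosh r - \sinh R\sinh r\cos\phi(r)$, one solves
\[
\cos\phi(r) \;=\; \coth R\cdot\frac{\cosh r - 1}{\sinh r},
\]
and the half-angle identities $\cosh r - 1 = 2\sinh^{2}(r/2)$ and $\sinh r = 2\sinh(r/2)\cosh(r/2)$ collapse the fraction to $\tanh(r/2)$, giving (\ref{itm:phi1}). For (\ref{itm:phi2}) and (\ref{itm:phi3}) I would note that $\coth R\cdot\tanh(r/2)$ is smooth and strictly increasing in $r$, ranging from $0$ at $r=0$ to $\coth R\cdot\tanh(R/2) < 1$ at $r=R$ (the inequality is equivalent to $\cosh R\sinh(R/2) < \sinh R\cosh(R/2)$, i.e.\ to $\cosh R < 2\cosh^{2}(R/2)$, which is the duplication formula). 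Composing with $\arccos$ on this image then gives a differentiable strictly decreasing function with $\phi(0) = \arccos 0 = \pi/2$. Part (\ref{itm:phiInv}) follows from strict monotonicity and the inverse function theorem, since the derivative obtained from (\ref{itm:phi1}) does not vanish on $(0,R)$.

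The only nontrivial step is (\ref{itm:phi4}), which requires tracking error terms uniformly in $r\in [0,R]$. Starting from $2\sin^{2}(\phi(r)/2) = 1 - \coth R\cdot\tanh(r/2)$, I would expand
\[
\coth R \;=\; 1 + O(e^{-2R}), \qquad \tanh(r/2) \;=\; 1 - 2e^{-r} + O(e^{-2r}),
\]
multiply, and use the bound $e^{-2R}\le e^{-2r}$ valid for $r\le R$ to absorb every stray term of order $e^{-2R}$, $e^{-2r}$, or $e^{-r-2R}$ into a single $O(e^{-2r})$. The result is $\sin^{2}(\phi(r)/2) = e^{-r}\bigl(1 + O(e^{-r})\bigr)$. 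Taking a square root and then inverting using $\arcsin x = x + O(x^{3})$, together with the uniform comparison $\sin y \asymp y$ on $[0,\pi/4]$, gives $\phi(r)/2 = e^{-r/2}(1+O(e^{-r}))$, which is (\ref{itm:phi4}).

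The main obstacle is precisely the asymptotic bookkeeping in (\ref{itm:phi4}): the identity is asserted uniformly over $r\in[0,R]$, whereas the factor $(1\pm O(e^{-r}))$ is merely $(1\pm O(1))$ when $r$ is of constant order. One must verify that the claim is consistent in both regimes, where it reduces to $\phi(r) = \Theta(1)$ for $r = O(1)$ and to the genuinely sharp asymptotic $\phi(r) \sim 2e^{-r/2}$ for $r$ large, and that a single error expression $O(e^{-r})$ in fact suffices throughout $[0,R]$. The uniform inequality $e^{-2R}\le e^{-2r}$ on this range is what makes that possible.
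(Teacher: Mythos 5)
Your proposal is correct. For parts (i)--(iv) it is essentially the paper's argument: the paper also obtains (i) by setting $d_H=r'=R$ in the hyperbolic law of cosines and invoking the half-angle formula, gets (ii) from the sign of the derivative of $\arccos(\coth R\cdot\tanh(\tfrac r2))$ (citing a remark in earlier work rather than writing out the computation, as you do), and deduces (iii) and (iv) exactly as you do; your only cosmetic slip is saying the derivative is nonvanishing "on $(0,R)$" when in fact it is nonvanishing on all of $[0,R]$ (the value $\coth R\tanh(r/2)$ stays in $[0,1)$), which is what one wants for differentiability of the inverse up to the endpoints. The genuine difference is part (v): the paper simply cites it as a special case of a known result (Lemma~3.1 of the Gugelmann--Panagiotou--Peter paper), whereas you derive it directly from (i). Your derivation is sound, and the uniformity issue you flag is handled correctly: the exact bounds $|\tanh(\tfrac r2)-(1-2e^{-r})|\le 2e^{-2r}$ and $\coth R=1+O(e^{-2R})$ together with $e^{-2R}\le e^{-2r}$ give $\sin^2(\phi(r)/2)=e^{-r}(1+O(e^{-r}))$ uniformly on $[0,R]$, and the square root and $\arcsin$ steps preserve a relative error of $O(e^{-r})$ because $\phi(r)/2\in(0,\pi/4]$; for $r=O(1)$ the statement is anyway only a boundedness claim. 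So your route buys a self-contained proof of (v) at the cost of a page of elementary asymptotic bookkeeping that the paper outsources to a citation.
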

\begin{proof}
The first part follows from~\eqref{eqn:coshLaw} taking $d_H:=R$, $r':=R$, and by the hyperbolic tangent half angle formula. 
The second part follows because the derivative with respect to $r$ of $\arccos(\coth R\cdot\tanh(\frac{r}{2}))$
exists and is negative except when $r=R$ (for details see~\cite[Remark 2.1]{km19}).
The third part follows directly from the first part, and the fourth part follows immediately from
the preceding two parts.
The last part is a particular case of a more general result~\cite[Lemma 3.1]{GPP12}.
\end{proof}

\medskip
To conclude this section, we introduce some notation that we will use throughout the article. For a subset $\Omega$ of $B_O(R)$ we let~$\overline{\Omega}$ denote its complement with respect to $B_O(R)$, i.e., $\overline{\Omega}:=B_{O}(R)\setminus\Omega$.
Thus, $\overline{B}_O(r)$ denotes $B_{O}(R)\setminus B_O(r)$. 

\section{Angular movement}\label{sec:angular}
In this section we consider angular movement only.
As it will become evident, restricting the motion to angular
movement alone simplifies the analysis considerably, 
compared to the general case, and in fact also compared
to restricting movement radially.
The simpler setting considered in this section is ideal for
illustrating our general strategy for
deriving tail bounds on detection times.
It is also handy for introducing some notation and approximations we shall 
use throughout the rest of the paper.
In later sections we will follow the same general strategy
argument but encounter technically more challenging obstacles.
In contrast, the calculations involved in the study of
the angular movement only case can be mostly reduced to
ones involving known facts about one dimensional Brownian motion.

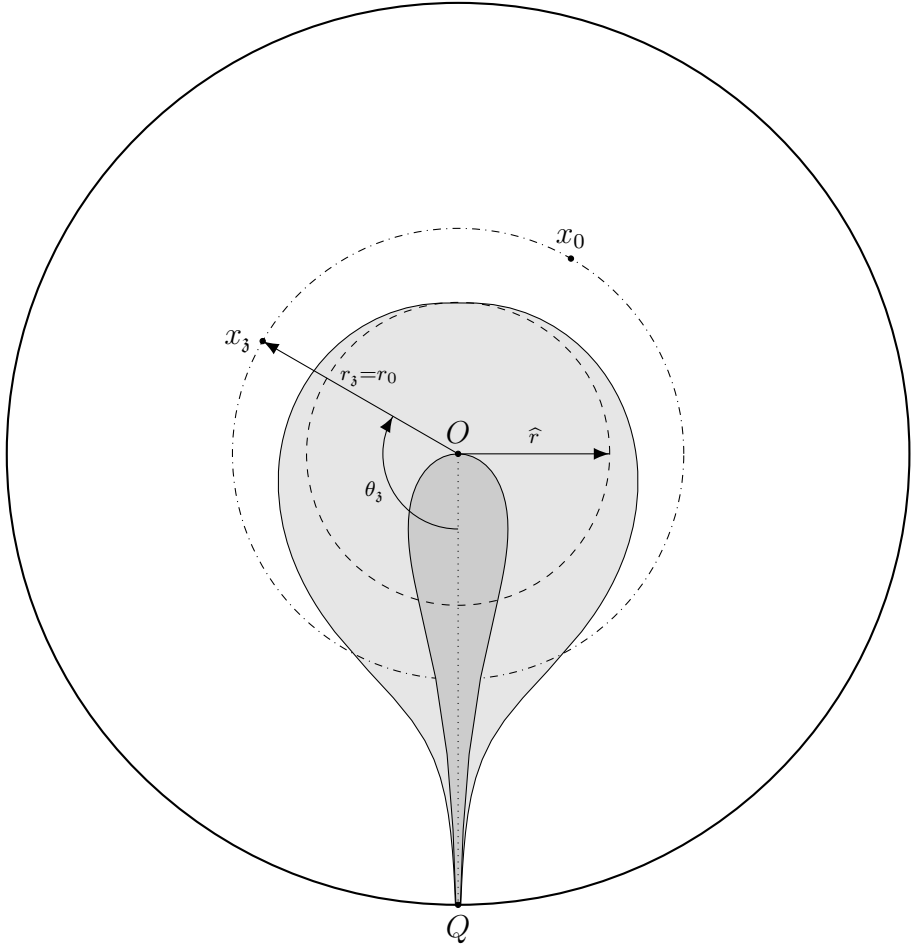
\begin{figure}
  \begin{center}
    \begin{tikzpicture}[x=1cm,y=1cm,scale=0.5,
     decoration={
       markings,
       mark=at position 1 with {\arrow[scale=1.5,black]{latex}};
     }
   ]
      \def\c{12}
      \def\barr{4}
      \def\rrho{6}
      \def\radp{6}
      \def\phip{150}
      \def\phi0{60}
      \node[inner sep=0] (O) at (\c,\c) {};
      \node[inner sep=0] (P) at (2,4) {};
      \node[inner sep=0] (Q) at (\c,0) {};
      

      \draw[fill=black] (O) circle (0.035);

      \node[above] at (O) {$O$};
      \draw[fill=black] (O) ++(\phi0-360:\radp) circle (0.07) node[above] {$x_0$};

      \draw[fill=gray!20] (11.929cm,0.000cm) -- (11.909cm,0.480cm) -- (11.884cm,0.961cm) -- (11.849cm,1.441cm) -- (11.803cm,1.922cm) -- (11.740cm,2.404cm) -- (11.653cm,2.887cm) -- (11.531cm,3.373cm) -- (11.359cm,3.865cm) -- (11.115cm,4.371cm) -- (10.769cm,4.906cm) -- (10.281cm,5.504cm) -- (9.604cm,6.239cm) -- (8.717cm,7.267cm) -- (8.385cm,7.723cm) -- (8.218cm,7.979cm) -- (8.051cm,8.258cm) -- (7.888cm,8.562cm) -- (7.732cm,8.892cm) -- (7.586cm,9.251cm) -- (7.456cm,9.640cm) -- (7.347cm,10.062cm) -- (7.267cm,10.518cm) -- (7.222cm,11.006cm) -- (7.223cm,11.527cm) -- (7.244cm,11.798cm) -- (7.281cm,12.076cm) -- (7.334cm,12.360cm) -- (7.406cm,12.648cm) -- (7.497cm,12.941cm) -- (7.610cm,13.235cm) -- (7.747cm,13.530cm) -- (7.908cm,13.823cm) -- (8.095cm,14.113cm) -- (8.309cm,14.396cm) -- (8.552cm,14.669cm) -- (8.825cm,14.929cm) -- (9.127cm,15.172cm) -- (9.459cm,15.394cm) -- (9.820cm,15.590cm) -- (10.209cm,15.755cm) -- (10.625cm,15.884cm) -- (11.065cm,15.971cm) -- (11.525cm,16.012cm) -- (12.000cm,16.015cm) -- (12.475cm,16.012cm) -- (12.935cm,15.971cm) -- (13.375cm,15.884cm) -- (13.791cm,15.755cm) -- (14.180cm,15.590cm) -- (14.541cm,15.394cm) -- (14.873cm,15.172cm) -- (15.175cm,14.929cm) -- (15.448cm,14.669cm) -- (15.691cm,14.396cm) -- (15.905cm,14.113cm) -- (16.092cm,13.823cm) -- (16.253cm,13.530cm) -- (16.390cm,13.235cm) -- (16.503cm,12.941cm) -- (16.594cm,12.648cm) -- (16.666cm,12.360cm) -- (16.719cm,12.076cm) -- (16.756cm,11.798cm) -- (16.777cm,11.527cm) -- (16.778cm,11.006cm) -- (16.733cm,10.518cm) -- (16.653cm,10.062cm) -- (16.544cm,9.640cm) -- (16.414cm,9.251cm) -- (16.268cm,8.892cm) -- (16.112cm,8.562cm) -- (15.949cm,8.258cm) -- (15.782cm,7.979cm) -- (15.615cm,7.723cm) -- (15.283cm,7.267cm) -- (14.396cm,6.239cm) -- (13.719cm,5.504cm) -- (13.231cm,4.906cm) -- (12.885cm,4.371cm) -- (12.641cm,3.865cm) -- (12.469cm,3.373cm) -- (12.347cm,2.887cm) -- (12.260cm,2.404cm) -- (12.197cm,1.922cm) -- (12.151cm,1.441cm) -- (12.116cm,0.961cm) -- (12.091cm,0.480cm) -- (12.071cm,0.000cm) -- (12.060cm,0.000cm) -- (12.045cm,0.000cm) -- (12.030cm,0.000cm) -- (12.015cm,0.000cm) -- (12.000cm,0.000cm) -- (11.985cm,0.000cm) -- (11.970cm,0.000cm) -- (11.955cm,0.000cm) -- (11.940cm,0.000cm) -- cycle;

      \draw[fill=black] (O) ++(\phip:\radp) circle (0.07);
      \draw[postaction={decorate}] (O) -- node[midway,above] {$\ \ _{r_{\ss}=r_0}$} ++(\phip:\radp) node[left] {$x_{\ss}$};
      \draw[dash dot] (O) circle (\rrho);
            
      \draw[fill=gray!40] (11.941cm,0.000cm) -- (11.865cm,2.001cm) -- (11.707cm,4.005cm) -- (11.404cm,6.030cm) -- (10.937cm,8.144cm) -- (10.842cm,8.591cm) -- (10.798cm,8.820cm) -- (10.758cm,9.051cm) -- (10.725cm,9.285cm) -- (10.698cm,9.521cm) -- (10.681cm,9.760cm) -- (10.675cm,9.999cm) -- (10.681cm,10.239cm) -- (10.704cm,10.477cm) -- (10.744cm,10.711cm) -- (10.804cm,10.938cm) -- (10.885cm,11.154cm) -- (10.988cm,11.356cm) -- (11.113cm,11.538cm) -- (11.260cm,11.696cm) -- (11.426cm,11.825cm) -- (11.608cm,11.921cm) -- (11.801cm,11.980cm) -- (12.000cm,12.000cm) --(12.199cm,11.980cm) -- (12.392cm,11.921cm) -- (12.574cm,11.825cm) -- (12.740cm,11.696cm) -- (12.887cm,11.538cm) -- (13.012cm,11.356cm) -- (13.115cm,11.154cm) -- (13.196cm,10.938cm) -- (13.256cm,10.711cm) -- (13.296cm,10.477cm) -- (13.319cm,10.239cm) -- (13.325cm,9.999cm) -- (13.319cm,9.760cm) -- (13.302cm,9.521cm) -- (13.275cm,9.285cm) -- (13.242cm,9.051cm) -- (13.202cm,8.820cm) -- (13.158cm,8.591cm) -- (13.063cm,8.144cm) -- (12.596cm,6.030cm) -- (12.293cm,4.005cm) -- (12.135cm,2.001cm) -- (12.059cm,0.000cm) -- (12.059cm,0.000cm) -- (12.045cm,0.000cm) -- (12.030cm,0.000cm) -- (12.015cm,0.000cm) -- (12.000cm,0.000cm) -- (11.985cm,0.000cm) -- (11.970cm,0.000cm) -- (11.955cm,0.000cm) -- (11.941cm,0.000cm) -- cycle;
      \draw[dotted] (O) -- (Q);
      \draw[dashed] (O) circle (4.03);
      \draw[postaction={decorate}] (O) -- node[midway,above] {$_{\widehat{r}}$} ++(0:4.03);
      \draw[postaction={decorate}] (\c,\c-2) arc (-90:\phip-360:2);
      \node at (\c-2.2,\c-1) {$_{\theta_{\ss}}$};
      \draw[thick] (O) circle (\c);
      \draw[fill=black] (Q) circle (0.07);
      \draw[fill=black] (O) circle (0.07);
      \node[below] at (Q) {$Q$};      
      \node[above] at (O) {$O$}; 
    \end{tikzpicture}
  \end{center}
  \caption{The position $x_{\ss}=(r_{\ss},\theta_{\ss})$ of particle $P$ at time
    $\ss$ is shown. The lightly shaded area corresponds to
    $\dt(\kappa)\setminus B_{Q}(R)$ and the strongly shaded
    region represents $B_Q(R)$. The dashed line corresponds to the arc
    segment where a particle $P$ at initial radial distance $r_0$ is,
    conditional under not having detected the target~$Q$ up to time
    $\ss$. The depicted dash-dot arc segment has length $C_{r_0}$.
    Each of the two dash-dot arc lengths inside the lightly shaded 
    region have (hyperbolic) length $\kappa\sqrt{\ss}$ corresponding to an angle $\theta_0$ such that
    $\phi(r_0)\leq |\theta_0| \le \phi(r_0)+\kappa \phi^{(\ss)}$.
    The dashed circle is the boundary of the largest ball centered at the origin which is contained in $\dt(\kappa)$.}\label{fig:angular}
\end{figure}

Following the proof strategy described in Section~\ref{sec:strategy}, we first define $\dt(\kappa)$. 
Recall that, roughly speaking, $\dt(\kappa)$ is the set of starting points from which a particle has a "good" chance to detect the target by time $\ss$, where "good" depends on a parameter $\kappa > 1$ (independent of $n$).
Let $\phi^{(\ss)}$ be roughly proportional to the angular distance travelled by a particle at radial coordinate $r_0$ up to time $\ss$, more precisely, let $\phi^{(\ss)}:=\sqrt{\ss}e^{-\beta r_0}$ (that is,~$\phi^{(\ss)}$ corresponds to the standard deviation of a standard Brownian motion during an interval of time of length $\ss$ normalized by a term which is proportional to the perimeter of $B_O(r_0)$ for~$r_0$ bounded away from $0$). We can then define the following region (the shaded region in Figure~\ref{fig:angular}): 
\[
\dt = \dt(\kappa) := \big\{ x_0\in B_O(R) : |\theta_0| \le \phi(r_0)+\kappa \phi^{(\ss)}\big\}.
\]

In words, $\dt(\kappa)$ is the collection of points $x_0=(r_0,\theta_0)\in B_O(R)$ which are either contained in $B_Q(R)$ or form an angle at the origin of at most $\kappa \phi^{(\ss)}$ with a point that belongs to the boundary of $B_Q(R)$ and has radial coordinate exactly $r_0$.
Since $x_0\in B_{Q}(R)$ if and only if~$|\theta_0|\leq\phi(r_0)$, it is clear that $B_Q(R)$
is contained in $\dt(\kappa)$. The value $\kappa$ is a parameter that tells us at which scaling up to time $\ss$ we consider our region $\dt(\kappa)$. 

The main goal of this section is to prove the result stated next
which is an instance of Theorem~\ref{thm:intro-Dt} for the case of angular movement only.
Thus, Theorem~\ref{thm:angularMain} will immediately follow  (by the proof strategy discussed in Section~\ref{sec:strategy}) once we show that~$\mu(\dt(\kappa))$ is of the right order:
\begin{theorem}\label{thm:mainangular}
Denote by $\Phi$ the standard normal distribution.
If $\kappa>0$ and 
  $\kappa\sqrt{\ss}\leq\frac{\pi}{2}(1-o(1))e^{\beta R}$, then
\[
\inf_{x_0\in\dt(\kappa)}\P_{x_0}(T_{det}\leq \ss)=\Omega(\Phi(-\kappa))
\qquad\text{and}\qquad
\sup_{x_0\in\ndt(\kappa)}\P_{x_0}(T_{det}\leq \ss)  = O(\Phi(-\kappa)).
\]
Furthermore,
\[
\int_{\ndt\!(\kappa)} \PP_{x_0}(T_{det}\le\ss)d\mu(x_0) = O(\mu(\dt(\kappa))).
\]
\end{theorem}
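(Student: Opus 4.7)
The plan is to reduce the problem to one-dimensional Brownian motion with fixed variance rate and then read the three bounds off the classical reflection-principle formula together with a Gaussian integration.

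Since only angular motion is allowed, any particle starting at $x_0=(r_0,\theta_0)$ satisfies $r_t\equiv r_0$ for all $t\ge0$, so under $\PP_{x_0}$ the process $\theta_t-\theta_0$ is (the projection onto the circle of) a standard Brownian motion on $\RR$ with variance rate $\sigma_{r_0}^{2}:=\sinh^{-2}(\beta r_0)$. Since $(r_0,\theta)\in B_Q(R)$ iff $|\theta|\le\phi(r_0)$, the detection event becomes $\{T_{det}\le\ss\}=\{\inf_{t\le\ss}|\theta_t|\le\phi(r_0)\}$. Set $d:=(|\theta_0|-\phi(r_0))_+$, the angular ``reduced distance'' to the target ball. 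Using $\sinh(\beta r_0)=\tfrac12 e^{\beta r_0}(1+o(1))$, Lemma~\ref{lem:phi}, and the reflection principle, one obtains, up to an exponentially negligible correction arising from the possibility of wrapping around the circle,
\[
\PP_{x_0}(T_{det}\le\ss)\;=\;2\,\Phi\!\Bigl(-\tfrac{d}{2\phis}\Bigr)(1+o(1)).
\]
The wrap-around correction is harmless because the standing hypothesis $\kappa\sqrt{\ss}\le\tfrac{\pi}{2}(1-o(1))e^{\beta R}$ together with $\sigma_{r_0}\sqrt{\ss}=2\phis(1+o(1))$ forces the typical angular displacement in time $\ss$ to be much less than $\pi$ precisely in those radii where $\ndt(\kappa)$ can contain points; for very small $r_0$ where $\phi(r_0)+\kappa\phis\ge\pi$ the whole angular circle lies in $\dt(\kappa)$, so $\ndt(\kappa)$ is empty at that radius and there is nothing to prove.

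The first two bounds follow from the display at once. If $x_0\in\dt(\kappa)$ then $d\le\kappa\phis$, so $\PP_{x_0}(T_{det}\le\ss)\ge c\,\Phi(-\kappa/2)=\Omega(\Phi(-\kappa))$; symmetrically, if $x_0\in\ndt(\kappa)$ then $d>\kappa\phis$, so $\PP_{x_0}(T_{det}\le\ss)\le C\,\Phi(-\kappa/2)=O(\Phi(-\kappa))$, where the implicit multiplicative constant in the argument of $\Phi$ is absorbed into the $\Omega,O$-notation (equivalently, one could restate the theorem with $\kappa$ replaced by $\kappa/2$ throughout). For the integral bound, condition on $r_0$: the $\theta_0$-marginal of $d\mu$ on the slice $\{r=r_0\}$ is Lebesgue measure with a density $g(r_0)$ depending only on $r_0$. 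Substituting $u:=d/\phis$,
\[
\int_{\ndt(\kappa)}\PP_{x_0}(T_{det}\le\ss)\,d\mu(x_0)\;\le\;C\int_0^R g(r_0)\,\phis\!\left(\int_\kappa^\infty 2\Phi(-u/2)\,du\right)dr_0.
\]
The inner $u$-integral is a finite constant depending only on $\kappa$ (in fact, decays like a Gaussian in $\kappa$ by Mills' ratio). On the other hand, $\mu(\dt(\kappa))\ge c\kappa\int_0^R g(r_0)\,\phis\,dr_0$ just by measuring the width-$\kappa\phis$ shell on each side of $\partial B_Q(R)$ at every radius. Dividing the two displays yields exactly the required $O(\mu(\dt(\kappa)))$.

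The main obstacle is uniformity in $r_0$. The approximation $\sinh(\beta r_0)\approx\tfrac12 e^{\beta r_0}$ is only sharp for $r_0\to\infty$, so for $r_0=O(1)$ one must replace it with crude two-sided constants and check that the resulting weaker version of the boxed display still suffices; for those small $r_0$ where $\dt(\kappa)$ covers the whole angular range, one must verify that $\ndt(\kappa)$ is indeed empty there, so that these radii contribute nothing to the integral. Finally, one needs to upgrade the pointwise asymptotic for $\PP_{x_0}(T_{det}\le\ss)$ to uniform two-sided bounds over the entire range $d\in[0,\pi]$, again via Mills' ratio, so that the Gaussian integration over $u$ is uniformly controlled. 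Once these uniformity issues are dealt with, the three statements of Theorem~\ref{thm:mainangular} reduce to the one displayed formula plus a single Gaussian integration.
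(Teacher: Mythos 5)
Your reduction to a one--dimensional exit problem and the two uniform bounds follow essentially the paper's route (the paper treats the wrap-around exactly, as the exit time of the interval $[-a,b]$ with $a\le b$, which costs only a factor $2$; your claim that the wrap-around is ``exponentially negligible'' is not justified for small $\kappa$, where $\ndt(\kappa)$ does contain radii at which the typical angular displacement exceeds $\pi$, but this is cosmetic). The genuine gap is in the integral bound. The inequality $\mu(\dt(\kappa))\ge c\,\kappa\int_0^R g(r_0)\,\phis\,dr_0$ is false in general: the angular width of $\dt(\kappa)$ at radius $r_0$ is $\min\{\phi(r_0)+\kappa\phis,\pi\}$, i.e.\ it is capped at $\pi$, and your right-hand side ignores the cap. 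For instance, with $\alpha<\beta$ and $\kappa\sqrt{\ss}=\Theta(e^{\beta R})$ (allowed by the hypothesis), $\kappa\int_0^R g(r_0)\phis\,dr_0=\Theta(n\kappa\sqrt{\ss}\,e^{-\alpha R})=\omega(n)$, while $\mu(\dt(\kappa))\le\mu(B_O(R))=n$. Correspondingly, your upper bound $C(\kappa)\int_0^R g(r_0)\phis\,dr_0$ overcounts the radii $r_0<\widehat{r}$, where $\ndt(\kappa)$ is empty (Fact~\ref{fct:angular-inclus}), and in the regime just described it exceeds $\mu(\dt(\kappa))$ by a factor that is polynomially large in $\kappa\sqrt{\ss}$; so ``dividing the two displays'' does not give $O(\mu(\dt(\kappa)))$.

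The paper avoids this by using $B_O(\widehat{r})\subseteq\dt(\kappa)$ to restrict the $r_0$-integration to $[\widehat{r},R]$, obtaining $O(ne^{-\alpha R}\sqrt{\ss}\int_{\widehat{r}}^{R}e^{-\beta r_0}\sinh(\alpha r_0)\,dr_0)$, and then matching this expression to $\mu(\dt(\kappa))$ through the case analysis ($\alpha<\beta$, $\alpha=\beta$, $\alpha>\beta$) of Lemma~\ref{lem:angular-muDt}. Your argument can be repaired along the same lines: for $r_0\ge\widehat{r}$ one has $\kappa\phis\le\tfrac12(\pi-\phi(r_0))$, so there the width-$\kappa\phis$ shell is not capped and $\mu(\dt(\kappa))\ge c\,\kappa\int_{\widehat{r}}^R g(r_0)\phis\,dr_0$ is legitimate; combining this with your $u$-integral restricted to $r_0\in[\widehat{r},R]$ then does give the claimed $O(\mu(\dt(\kappa)))$, with constant $\kappa^{-1}\int_\kappa^\infty\Phi(-u/2)\,du=O(1)$. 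Finally, the slack between $\Phi(-\kappa/2)$ and $\Phi(-\kappa)$ (and the fact that $\cosech(\beta r_0)$ is not $\Theta(e^{-\beta r_0})$ uniformly down to $r_0=0$) is at the same level of precision as the paper's own identification of $\II{\ss}$ with $(\phis)^2$ and is harmless for the downstream uses in Theorems~\ref{thm:angularMain} and~\ref{thm:intro-Dt}, but the integral-bound step as written needs the restriction to $r_0\ge\widehat{r}$ to be correct.
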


Before proving the previous result, we make a few observations and introduce some definitions. 
First, note that the perimeter of $B_{O}(r)$ that is outside $B_Q(R)$ has length (see Figure~\ref{fig:angular}) 
\[
C_{r}:=2(\pi-\phi(r))\sinh(\beta r).
\]

Since $\phi(\cdot)$ is decreasing and continuous 
(by Part~\eqref{itm:phi2} of Lemma~\ref{lem:phi}), 
we obtain the following:
\begin{fact}\label{fct:angular-monot}
The mapping $r\mapsto \frac12 C_r=(\pi-\phi(r))\sinh(\beta r)$ is increasing
and continuous for arguments in $[0,R]$ and takes values in $[0,\frac12 C_R]$.
\end{fact}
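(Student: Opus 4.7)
The plan is to verify directly the three claims (continuity, monotonicity, and range of values) for the function $g(r) := (\pi - \phi(r))\sinh(\beta r) = \tfrac{1}{2}C_r$ on the interval $[0,R]$, all of which follow from elementary facts about the building blocks $\phi(\cdot)$ and $\sinh(\beta\cdot)$ together with properties of $\phi(\cdot)$ established in Lemma~\ref{lem:phi}.

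\emph{Continuity.} By Part~\eqref{itm:phi2} of Lemma~\ref{lem:phi}, $\phi(\cdot)$ is differentiable and hence continuous on $[0,R]$, so $\pi - \phi(\cdot)$ is continuous there. Since $\sinh(\beta\cdot)$ is a continuous function of $r$, the product $g(r)$ is continuous on $[0,R]$.

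\emph{Monotonicity.} I would compute the derivative
\[
g'(r) = -\phi'(r)\sinh(\beta r) + \beta(\pi - \phi(r))\cosh(\beta r).
\]
By Part~\eqref{itm:phi2} of Lemma~\ref{lem:phi}, $\phi(\cdot)$ is decreasing, so $\phi'(r)\leq 0$ (in fact strictly negative except possibly at $r=R$, per the discussion after the lemma), giving $-\phi'(r)\sinh(\beta r)\geq 0$ on $[0,R]$. For the second term, Parts~\eqref{itm:phi2} and \eqref{itm:phi3} of Lemma~\ref{lem:phi} yield $\phi(r)\leq \phi(0) = \pi/2 < \pi$, so $\pi - \phi(r) > 0$, and clearly $\beta\cosh(\beta r) > 0$. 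Thus $g'(r) > 0$ on $(0,R]$, so $g$ is strictly increasing on $[0,R]$.

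\emph{Range of values.} At $r=0$ we have $\sinh(0) = 0$, so $g(0)=0$. At $r=R$, by definition $g(R) = (\pi - \phi(R))\sinh(\beta R) = \tfrac{1}{2}C_R$. Continuity and strict monotonicity on $[0,R]$ then yield $g([0,R]) = [0,\tfrac{1}{2}C_R]$, which is exactly the claim. The whole argument is essentially immediate from Lemma~\ref{lem:phi}; there is no real obstacle, since the only mild subtlety is checking that the differentiability of $\phi$ at the endpoint $r=R$ does not spoil monotonicity, and this is handled by the observation that the second term $\beta(\pi-\phi(r))\cosh(\beta r)$ is strictly positive on all of $[0,R]$ regardless.
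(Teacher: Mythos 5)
Your proof is correct and takes essentially the same (elementary) route as the paper, which states the fact without a separate argument because it is immediate from Part~\eqref{itm:phi2} of Lemma~\ref{lem:phi}: $\pi-\phi(r)$ and $\sinh(\beta r)$ are both nonnegative, continuous and increasing on $[0,R]$, so their product is increasing and continuous, and the endpoint values $0$ and $\tfrac12 C_R$ give the stated range. Your derivative computation $g'(r)=-\phi'(r)\sinh(\beta r)+\beta(\pi-\phi(r))\cosh(\beta r)>0$ is just a slightly more explicit version of this observation, and every ingredient you invoke ($\phi'\le 0$, $\phi(0)=\pi/2$, continuity of $\phi$) is indeed supplied by Lemma~\ref{lem:phi}.
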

In particular, for  $\kappa\sqrt{\ss}\in [0,\frac12C_R]$ there is a unique value 
$\widehat{r}$ such that
\begin{equation}\label{eqn:angular-defrhat}
  \kappa\sqrt{\ss} = \tfrac12 C_{\widehat{r}}
    = (\pi-\phi(\widehat{r}))\sinh(\beta\widehat{r}).
\end{equation}
One may think of $\widehat{r}$ as chosen so that up to time $\kappa^2\ss$ a point at distance $\widehat{r}$ from the origin has a reasonable chance to detect the target due to their angular movement. Using Fact~\ref{fct:angular-monot}, we immediately obtain the following:
\begin{fact}\label{fct:angular-inclus}
For $r\in [0,R]$, the following holds:
$r\geq\widehat{r}$ if and only if $\kappa\sqrt{\ss}\leq\frac12 C_r$.
In particular, $B_O(\widehat{r})$ is the largest ball centered at the origin
contained in $\dt(\kappa)$.
\end{fact}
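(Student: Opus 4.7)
My plan is to reduce both assertions to the monotonicity established in Fact~\ref{fct:angular-monot}, so that essentially no new computation is required beyond a careful translation of the geometric containment $B_O(r)\subseteq\dt(\kappa)$ into a scalar condition in $r$.

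For the first assertion, I would simply combine two ingredients: Fact~\ref{fct:angular-monot}, which states that $r\mapsto\tfrac12 C_r$ is strictly increasing and continuous on $[0,R]$, and the defining identity~\eqref{eqn:angular-defrhat} of $\widehat{r}$, namely $\tfrac12 C_{\widehat{r}}=\kappa\sqrt{\ss}$. Together they give $\tfrac12 C_r\geq\kappa\sqrt{\ss}=\tfrac12 C_{\widehat{r}}$ if and only if $r\geq\widehat{r}$, which is exactly the claimed equivalence.

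For the second assertion (the ``in particular'' clause), I would unpack the definition of $\dt(\kappa)$: the ball $B_O(r)$ is contained in $\dt(\kappa)$ precisely when every pair $(r_0,\theta_0)$ with $r_0\in[0,r]$ and $\theta_0\in(-\pi,\pi]$ satisfies $|\theta_0|\leq\phi(r_0)+\kappa\phi^{(\ss)}$. The binding case is $|\theta_0|=\pi$, so the containment is equivalent to $\pi-\phi(r_0)\leq\kappa\phi^{(\ss)}$ for every $r_0\in[0,r]$, which under the definitions of $\phi^{(\ss)}$ and $C_{r_0}$ given in this section rewrites as $\tfrac12 C_{r_0}\leq\kappa\sqrt{\ss}$. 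Invoking the monotonicity of $r_0\mapsto\tfrac12 C_{r_0}$ from Fact~\ref{fct:angular-monot} again, the quantification over $r_0\in[0,r]$ collapses into the single inequality at the upper endpoint, $\tfrac12 C_r\leq\kappa\sqrt{\ss}$, and the first assertion—already proved—identifies this with $r\leq\widehat{r}$. Thus $B_O(\widehat{r})$ is the largest ball centered at the origin contained in $\dt(\kappa)$. I foresee no real obstacle; the only point requiring a little care is verifying that, modulo the section's conventions for $\phi^{(\ss)}$ and $C_{r_0}$, the pointwise angular constraint $\pi-\phi(r_0)\leq\kappa\phi^{(\ss)}$ does coincide exactly with $\tfrac12 C_{r_0}\leq\kappa\sqrt{\ss}$, so that the two monotonicity steps fit together cleanly.
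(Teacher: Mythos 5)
Your route is the same as the paper's: the paper gives no separate argument, deriving the fact ``immediately'' from Fact~\ref{fct:angular-monot} together with the defining identity~\eqref{eqn:angular-defrhat}, which is exactly your first paragraph, and reducing the ball containment to the boundary case $|\theta_0|=\pi$ plus monotonicity is clearly the intended reading of the ``in particular'' clause.

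The one step you flagged as needing care is, however, exactly where the literal definitions do not match, and your claim that the two conditions ``coincide exactly'' is not true as written. With $\phi^{(\ss)}:=\sqrt{\ss}\,e^{-\beta r_0}$, the full circle of radius $r_0$ lies in $\dt(\kappa)$ iff $\pi-\phi(r_0)\le\kappa\sqrt{\ss}\,e^{-\beta r_0}$, i.e.\ $(\pi-\phi(r_0))e^{\beta r_0}\le\kappa\sqrt{\ss}$, whereas $\tfrac12 C_{r_0}=(\pi-\phi(r_0))\sinh(\beta r_0)$. Since $\sinh(\beta r_0)<e^{\beta r_0}$ for every $r_0>0$, the first condition is strictly stronger than $\tfrac12 C_{r_0}\le\kappa\sqrt{\ss}$; read verbatim, a point at angle $\pi$ and radius just below $\widehat{r}$ violates the membership inequality, so the exact statement ``$B_O(\widehat{r})$ is the largest inscribed ball'' does not follow from (and is in tension with) the stated definition of $\dt(\kappa)$. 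The identification becomes exact only if one normalizes $\phi^{(\ss)}$ as $\sqrt{\ss}\cosech(\beta r_0)$ --- which is how the paper itself implicitly uses it later, e.g.\ when writing $\II{\ss}=(\phi^{(\ss)})^2$ in the proof of Theorem~\ref{thm:mainangular} --- and otherwise the two conditions agree only up to the constant hidden in $\sinh(\beta r_0)=\Theta(e^{\beta r_0})$ for $r_0$ bounded away from $0$ (equivalently, up to an additive $O(1)$ shift of the inscribed radius), which is all that the downstream $\Theta$-estimates such as Lemma~\ref{lem:angular-muDt} actually require. So your argument is the intended one, but to make it (and the fact itself) literally correct you must either adopt the $\cosech$ normalization of $\phi^{(\ss)}$ or weaken the ``in particular'' clause to a containment up to constants; as it stands the final identification step would fail under the section's stated definition.
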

(See Figure~\ref{fig:angular} for an illustration of $B_O(\widehat{r})$.)
\begin{fact}\label{fct:angular-aproxhat} 
If $\widehat{r}>1$, then $\widehat{r}=\frac{1}{\beta}\log(\kappa\sqrt{\ss})+\Theta(1)$.
Moreover, if $\widehat{r}\leq 1$, then $\kappa\sqrt{\ss}=O(1)$.
\end{fact}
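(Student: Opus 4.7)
The plan is to directly analyze the defining identity $\kappa\sqrt{\ss} = (\pi-\phi(\widehat{r}))\sinh(\beta\widehat{r})$ from~\eqref{eqn:angular-defrhat}, estimating each of its two factors separately in the two regimes $\widehat{r} > 1$ and $\widehat{r} \leq 1$. Since $\alpha$, $\beta$, and $\nu$ are fixed constants, any quantity depending only on them is $\Theta(1)$.

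For the first claim, assume $\widehat{r} > 1$. By Part~\eqref{itm:phi4} of Lemma~\ref{lem:phi}, $\phi(\widehat{r}) = 2e^{-\widehat{r}/2}(1\pm O(e^{-\widehat{r}}))$, which is bounded above by a constant strictly less than $\pi$ (in fact, $\phi(\widehat{r}) < \phi(1)$ by monotonicity, Part~\eqref{itm:phi2}), hence $\pi - \phi(\widehat{r}) = \Theta(1)$. Separately, since $\beta\widehat{r} \geq \beta > 0$ is bounded away from zero, the identity $\sinh(x) = \tfrac{1}{2}e^{x}(1-e^{-2x})$ yields $\sinh(\beta\widehat{r}) = \Theta(e^{\beta\widehat{r}})$. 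Substituting these two estimates into~\eqref{eqn:angular-defrhat} gives $\kappa\sqrt{\ss} = \Theta(e^{\beta\widehat{r}})$; taking logarithms on both sides and dividing by $\beta$ yields the claimed $\widehat{r} = \tfrac{1}{\beta}\log(\kappa\sqrt{\ss}) + \Theta(1)$.

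For the second claim, assume $\widehat{r} \leq 1$. I will use crude uniform upper bounds on both factors: $\pi - \phi(\widehat{r}) \leq \pi$ trivially, and $\sinh(\beta\widehat{r}) \leq \sinh(\beta)$ by monotonicity of $\sinh$ on $[0,\beta]$. Substituting into~\eqref{eqn:angular-defrhat} produces $\kappa\sqrt{\ss} \leq \pi\sinh(\beta) = O(1)$, as desired.

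I do not anticipate any serious obstacle here: the fact is bookkeeping, and the only content is the observation that each of the two factors entering~\eqref{eqn:angular-defrhat} behaves like a constant once $\widehat{r}$ is bounded (either below by $1$, making $\sinh(\beta\widehat{r})$ exponentially large and $\pi-\phi(\widehat{r})$ a non-degenerate constant, or above by $1$, making both factors bounded by constants depending only on $\beta$).
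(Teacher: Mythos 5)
Your proof is correct, and since the paper states this fact without an accompanying proof, your argument — estimating the two factors $(\pi-\phi(\widehat{r}))=\Theta(1)$ and $\sinh(\beta\widehat{r})=\Theta(e^{\beta\widehat{r}})$ in the defining identity~\eqref{eqn:angular-defrhat}, then taking logarithms, and in the other regime bounding both factors by constants — is exactly the intended elementary bookkeeping. The only cosmetic remark is that invoking Part~\eqref{itm:phi4} of Lemma~\ref{lem:phi} is unnecessary: $0\le\phi\le\tfrac{\pi}{2}$ (Parts~\eqref{itm:phi2} and~\eqref{itm:phi3}) already gives $\pi-\phi(\widehat{r})\in[\tfrac{\pi}{2},\pi]$.
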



%
We will need the following claim whose proof is simple although the calculations involved are a bit tedious, mostly consisting in 
computing integrals and case analysis. 
\begin{lemma}\label{lem:angular-muDt}
If $\kappa>0$ and $\kappa\sqrt{\ss}\leq \frac{\pi}{2}(1-o(1))e^{\beta R}$, then
\[
\mu(\dt(\kappa))=
\begin{cases}
\Theta\Big(\mkor{\nu}{1}+n\Big(\frac{\kappa\sqrt{\ss}}{e^{\beta R}}\Big)^{1\wedge \frac{\alpha}{\beta}}\Big),
& \text{if $\alpha\neq\beta$,} \\[8pt]
\Theta\Big(\mkor{\nu}{1}+n\frac{\kappa\sqrt{\ss}}{e^{\beta R}}\Big(1+\log\big(\frac{e^{\beta R}}{\kappa\sqrt{\ss}}\big)\Big)\Big),
& \text{if $\alpha=\beta$.}
\end{cases}
\]
\end{lemma}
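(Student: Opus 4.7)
The plan is to compute $\mu(\dt(\kappa))$ directly from the definition of $\dt(\kappa)$ by integrating out the angular coordinate first and then performing an asymptotic analysis of the resulting one-dimensional integral over $r$. Write
\[
\mu(\dt(\kappa)) \;=\; \int_0^R \Big(\text{angular length of the $r$-slice of }\dt(\kappa)\Big)\cdot \frac{n\alpha\sinh(\alpha r)}{2\pi(\cosh(\alpha R)-1)}\,dr.
\]
Let $r^\ast$ be the unique radius at which $\phi(r^\ast)+\kappa\sqrt{\ss}\,e^{-\beta r^\ast}=\pi$; note $r^\ast=\widehat r+O(1)$ since $\sinh(\beta r)$ and $\tfrac12 e^{\beta r}$ differ only by a bounded factor (for $\widehat r>1$), and for $\widehat r\le 1$ one has $\kappa\sqrt{\ss}=O(1)$ by Fact~\ref{fct:angular-aproxhat}. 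For $r\le r^\ast$ the full ring is in $\dt(\kappa)$ (contributing $\mu(B_O(r^\ast))$), while for $r>r^\ast$ the angular length is exactly $2(\phi(r)+\kappa\sqrt{\ss}\,e^{-\beta r})$. By Lemma~\ref{lem:muBall}, $\mu(B_O(r^\ast))=n e^{-\alpha(R-r^\ast)}(1+o(1))$, which will be dominated by one of the two integrals computed below.

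Use $\sinh(\alpha r)=\tfrac12 e^{\alpha r}(1+o(1))$ and $\cosh(\alpha R)-1=\tfrac12 e^{\alpha R}(1+o(1))$ to reduce the two surviving integrals to
\[
I_1 \;:=\; \frac{n\alpha}{\pi}\,e^{-\alpha R}\int_{r^\ast}^R \phi(r)\, e^{\alpha r}\,dr,
\qquad
I_2 \;:=\; \frac{n\alpha\,\kappa\sqrt{\ss}}{\pi}\,e^{-\alpha R}\int_{r^\ast}^R e^{(\alpha-\beta)r}\,dr,
\]
where Lemma~\ref{lem:phi}\eqref{itm:phi4} gives $\phi(r)=2e^{-r/2}(1+O(e^{-r}))$. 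Since $\alpha>\tfrac12$, the integral in $I_1$ is $\Theta(e^{(\alpha-1/2)R})$, hence $I_1=\Theta(n e^{-R/2})=\Theta(1)$ using $ne^{-R/2}=\nu=\Theta(1)$. This is the source of the additive $\Theta(1)$ term in the claim.

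The integral in $I_2$ is where the case split appears. If $\alpha>\beta$, the integral equals $\Theta(e^{(\alpha-\beta)R})$, giving $I_2=\Theta(n\kappa\sqrt{\ss}\,e^{-\beta R})$, which matches the $1\wedge(\alpha/\beta)=1$ case. If $\alpha<\beta$, the integral is dominated by its lower endpoint and equals $\Theta(e^{(\alpha-\beta)r^\ast})$; inserting $r^\ast=\tfrac{1}{\beta}\log(\kappa\sqrt{\ss})+\Theta(1)$ from Fact~\ref{fct:angular-aproxhat} yields $I_2=\Theta\bigl(n(\kappa\sqrt{\ss}/e^{\beta R})^{\alpha/\beta}\bigr)$. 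If $\alpha=\beta$, the integral is simply $R-r^\ast=\tfrac{1}{\beta}\log(e^{\beta R}/(\kappa\sqrt{\ss}))+O(1)$, producing the extra logarithmic factor. In all three cases one checks that $\mu(B_O(r^\ast))=n e^{-\alpha(R-r^\ast)}$ is absorbed into $I_2$ (or into $I_1$ when $\widehat r\le 1$), so that $\mu(\dt(\kappa))=\Theta(I_1)+\Theta(I_2)$, which is exactly the stated asymptotic after combining.

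The main obstacle I expect is the careful case analysis at the transition $\widehat r\asymp 1$: when $\kappa\sqrt{\ss}=O(1)$, the cutoff $r^\ast$ may fall into a bounded interval where the approximations $\sinh(\alpha r)\sim \tfrac12 e^{\alpha r}$ and the formula for $\phi(r)$ are no longer sharp, and one has to argue that the ball contribution $\mu(B_O(r^\ast))=O(n e^{-\alpha R})=o(1)$ (because $\alpha>\tfrac12$) is harmless and that $I_1=\Theta(1)$ still provides the dominant term. A secondary bookkeeping issue is checking that the hypothesis $\kappa\sqrt{\ss}\le \tfrac{\pi}{2}(1-o(1))e^{\beta R}$ genuinely keeps $r^\ast$ bounded away from $R$ so that the integrals above have macroscopic length and the $\Theta$-approximations do not degenerate near the boundary.
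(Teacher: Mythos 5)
Your proposal is correct and follows essentially the same route as the paper: you decompose $\dt(\kappa)$ into the inner disc below the full-circle radius, the $\phi(r)$-width part (which the paper evaluates as $\mu(B_Q(R))=\Theta(\nu)=\Theta(1)$ via Lemma~\ref{lem:muBall}) and the wings of width $\kappa\sqrt{\ss}e^{-\beta r}$, and then run the identical case analysis on $\int e^{(\alpha-\beta)r}dr$ that the paper performs for $\mu(A)$. The only remark worth adding is that your closing worry is unfounded: because your threshold $r^\ast$ is defined through $e^{-\beta r}$ (matching the definition of $\dt$), the hypothesis $\kappa\sqrt{\ss}\leq\frac{\pi}{2}(1-o(1))e^{\beta R}$ already forces $R-r^\ast\geq\frac{\log 2}{\beta}-o(1)=\Omega(1)$ (since $\phi(r)\to 0$ near $R$, a full circle at radius $r$ would require $\kappa\sqrt{\ss}e^{-\beta r}\geq\pi-o(1)$), so your integrals never degenerate, whereas the paper's $\widehat{r}$, defined via $\sinh(\beta r)$, can approach $R$ and is therefore handled there as the separate case $\widehat{r}>R-\Theta(1)$ in which everything is $\Theta(n)$.
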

\begin{proof}
If $\widehat{r}> R-\Theta(1)$, by~\eqref{eqn:angular-defrhat} and Part~\eqref{itm:phi3} of Lemma~\ref{lem:phi}, we see that $\kappa\sqrt{\ss}=\Theta(e^{\beta R})$ which always gives an expression of order $\Theta(n)$ on the right-hand side of the equality in the lemma's statement. On the other hand, by Fact~\ref{fct:angular-inclus}, we know that $B_O(\widehat{r})\subseteq\dt(\kappa)\subseteq B_O(R)$, so
from Lemma~\ref{lem:muBall} we get that $\mu(\dt(\kappa))=\Theta(\mu(B_O(\widehat{r})))=\Theta(n)$, and hence 
the claim holds for said large values of $\widehat{r}$.

Assume henceforth that $\widehat{r} \leq R-\Theta(1)$ and define $A:=\dt(\kappa)\setminus (B_Q(R)\cup B_O(\widehat{r}))$.
Clearly, 
\begin{equation}\label{eqn:angular-muSum}
\mu(\dt(\kappa)) = \mu(B_Q(R)\cup B_O(\widehat{r}))+\mu(A).
\end{equation}
Now, observe that $B_Q(R)$ is completely contained in half of the disk $B_O(R)$ (see Figure~\ref{fig:angular}), so
$\mu(B_Q(R)\cup B_O(\widehat{r}))=\mu(B_Q(R))+\Theta(\mu(B_O(\widehat{r})))$, and thus by Lemma~\ref{lem:muBall} and Fact~\ref{fct:angular-aproxhat}, for~$\widehat{r}>1$, 
\begin{equation}\label{eqn:angular-muNotA}
\mu(B_Q(R)\cup B_O(\widehat{r})))=\Theta\Big(n\Big(e^{-\frac{R}{2}}+\Big(\frac{\kappa\sqrt{\ss}}{e^{\beta R}}\Big)^{\frac{\alpha}{\beta}}\Big)\Big)
=\Theta\Big(\mkor{\nu}{1}+n\Big(\frac{\kappa\sqrt{\ss}}{e^{\beta R}}\Big)^{\frac{\alpha}{\beta}}\Big).
\end{equation}
Moreover, the identity also holds when $\widehat{r}\leq 1$, since $\mu(B_Q(R))=\Theta(\mkor{\nu}{1})$ and $\mu(B_O(\widehat{r}))=O(ne^{-\alpha R})=o(1)$ (by Lemma~\ref{lem:muBall}, definition of $R$ and the fact that $\alpha>\frac12$), and $n(\kappa\sqrt{\ss}/e^{\beta R})^{\frac{\alpha}{\beta}}
=O(ne^{-\alpha R})=o(1)$ (by Fact~\ref{fct:angular-aproxhat}, definition of $R$ and since $\alpha>\frac12)$.

On the other hand, by Fact~\ref{fct:angular-inclus} and our choice of $\dt(\kappa)$, we get 
\begin{equation}\label{eqn:angular-muA}
\mu(A)= \Theta(ne^{-\alpha R}\kappa\sqrt{\ss})\int_{\widehat{r}}^R e^{-\beta r_0}\sinh(\alpha r_0)dr_0
\end{equation}
The next claim together with~\eqref{eqn:angular-muSum},  \eqref{eqn:angular-muNotA} and~\eqref{eqn:angular-muA} yield the lemma:
\[
ne^{-\alpha R}\kappa\sqrt{\ss}\int_{\widehat{r}}^R e^{-\beta r_0}\sinh(\alpha r_0)dr_0 =
\begin{cases}
O\Big(n\Big(\frac{\kappa\sqrt{\ss}}{e^{\beta R}}\Big)^{1\wedge \frac{\alpha}{\beta}}\Big),
& \text{if $\alpha\neq\beta$,} \\[8pt]
\Theta\Big(n\frac{\kappa\sqrt{\ss}}{e^{\beta R}}\log\big(\frac{e^{\beta R}}{\kappa\sqrt{\ss}}\big)\Big),
& \text{if $\alpha=\beta$.}
\end{cases}
\]
To prove the claim, not that when $\alpha=\beta$, the last integral equals $\Theta(R-\widehat{r})$.
If $\widehat{r}\leq 1$, by Fact~\ref{fct:angular-aproxhat} we have that $\kappa\sqrt{\ss}=O(1)$,
  so by definition of $R$ we get that $R-\widehat{r}=\Theta(R)=\Theta(\log(e^{\beta R}/(\kappa\sqrt{\ss})))$.
If on the other hand $\widehat{r}>1$, again by Fact~\ref{fct:angular-aproxhat}, we have that $\widehat{r}=\frac{1}{\beta}\log(\kappa\sqrt{\ss})+\Theta(1)$,
  so analogously to the previous calculations we obtain
  $R-\widehat{r}=\Theta(\log(e^{\beta R}/(\kappa\sqrt{\ss})))$.
Plugging back  establishes the claim when $\alpha=\beta$.  

For $\alpha > \beta$, since $\widehat{r}\leq R-\Theta(1)$, the claim follows 
because the integral therein
equals $\Theta(e^{(\alpha-\beta)R})$.

Finally, when $\alpha<\beta$, the integral in the claim is
  $\Theta(e^{-(\beta-\alpha)\widehat{r}})$.
If $\widehat{r}>1$,  by Fact~\ref{fct:angular-aproxhat},
$e^{-(\beta-\alpha)\widehat{r}}
= \Theta((\kappa\sqrt{\ss})^{-(1-\frac{\alpha}{\beta})})$.
If $\widehat{r}\leq 1$, then $e^{-(\beta-\alpha)\widehat{r}}=O(1)$ and, again by Fact~\ref{fct:angular-aproxhat}, also $\kappa\sqrt{\ss}=O(1)$.
So, no matter the value of $\widehat{r}$ 
the claim holds when $\alpha<\beta$ which concludes the proof of the claim for all cases.
\end{proof}

\medskip
We now have all the required ingredients to prove this section's main result.

\begin{proof}{(of Theorem~\ref{thm:mainangular})}
Throughout the ensuing discussion we let $\dt:=\dt(\kappa)$.
We begin by showing the uniform upper and lower bounds on $\P_{x_0}(T_{det}\leq \ss)$. To do so observe first that if $x_0\in B_Q(R)\subseteq \dt$, clearly $\P_{x_0} (T_{det} \le \ss)=1$ for any $\ss\ge0$ and hence the uniform lower bound follows directly for said $x_0$. Assume henceforth that $x_0\not\in B_Q(R)$ and observe that since there is only angular movement, a particle initially located at $(r_0,\theta_0)$ detects $Q$ if and only if it reaches $(r_0,\phi(r_0))$ or $(r_0,-\phi(r_0))$. Now, recall that the angular movement's
law is that of a variance $1$ Brownian motion $B_{\II{\ss}}$ with $\II{\ss}:=\int_{0}^{\ss} \cosech^2(\beta r_\ss)d\ss = (\phi^{(\ss)})^{2}$, so 
\begin{equation}\label{eqn:angular-exitt0}
\P_{x_0}(T_{det}\leq \ss)=\P(H_{[-a,b]}\leq \II{\ss})
\end{equation}
where we have used (with a slight abuse of notation) $\P$ for the law of a standard Brownian motion, and where $H_{[-a,b]}$ is its exit time from the interval $[-a,b]$ where in this case $a:=\phi(r_0)-|\theta_0|$ and $b:=2\pi-\phi(r_0)-|\theta_0|$. This last probability is a well studied function of~$a,b$ and $\II{\ss}$ (see~\cite{Borodin2002}, formula 3.0.2),
which can be bounded using the reflection principle and the fact that~$a\leq b$, giving 
\begin{equation}\label{eqn:angular-exitt}
\P(H_{[-a,b]}\leq \II{\ss})=\Theta\big(\P(B_{\II{\ss}}\leq -a)\big)=\Theta\big(\Phi\big((\phi(r_0)-|\theta_0|)/\phi^{(\ss)}\big)\big).
\end{equation}
From our assumption $x_0\not\in B_Q(R)$ we deduce that $|\theta_0|>\phi(r_0)$, and hence the argument within~$\Phi$ above is always negative. It follows that for $\theta_0>0$ the mapping $\theta_0\mapsto\Phi\big((\phi(r_0)-\theta_0)/\phi^{(\ss)}\big)$ is decreasing, and so both uniform bounds on $\P_{x_0}(T_{det}\leq\ss)$ follow from the definition of $\dt(\kappa)$.

We next establish the integral bound. Let $\dt:=\dt(\kappa)$.
From~\eqref{eqn:angular-exitt0} and~\eqref{eqn:angular-exitt} we observe that
\begin{align*}
    \int_{\ndt}\P_{x_0}(T_{det}\leq\ss)d\mu(x_0)
    = \Theta(ne^{-\alpha R})\int_{\widehat{r}}^R\int_{\phi(r_0)+ \kappa\phi^{(\ss)}}^{\pi}\Phi\big((\phi(r_0)-\theta_0)/\phi^{(\ss)}\big)\sinh(\alpha r_0)d\theta_0dr_0.
\end{align*}
Applying the change of variables $y_0:=(\theta_0-\phi(r_0))/\phi^{(\ss)}$ and bounding $\pi$ by $\infty$ in the upper limit of the integral we obtain 
\begin{align*}
    \int_{\ndt}\P_{x_0}(T_{det}\leq\ss)d\mu(x_0)
    & = O(ne^{-\alpha R})\int_{\widehat{r}}^R\int_{\kappa}^{\infty}\Phi({-}y_0)\sinh(\alpha r_0)\phi^{(\ss)} dy_0dr_0 \\
    & =  O(ne^{-\alpha R}\sqrt{\ss})\int_{\widehat{r}}^R e^{-\beta r_0}\sinh(\alpha r_0)dr_0.
\end{align*}
The last expression is the same as one encountered in the proof of 
Lemma~\ref{lem:angular-muDt}. Substituting by the values obtained therein one gets a term which, by Lemma~\ref{lem:angular-muDt}, is $O(\mu(\dt(\kappa))$, thus completing the proof of the claimed integral upper bound.
\end{proof}

\section{Radial movement}\label{sec:radial}
The basic structure of this section is similar to Section~\ref{sec:angular}.
However, we now consider radial movement only. We define the relevant set $\dt:=\dt(\kappa)$ depending on a parameter $\kappa\geq 1$ independent of $n$, then we compute $\mu(\dt)$ and afterwards 
separately establish the stated upper and lower bounds. Since the radial movement contains a drift towards the boundary that makes the calculations more involved, we first need to prove basic results on such diffusion processes before actually defining $\dt$.
Let us thus start with the definition of the radial movement of a given particle, initially located at $x_0=(r_0, \theta_0)$. Recall that
a particle which at time~$\ss$ is in position
$x_s=(r_s,\theta_s)$ will stay at an angle $\theta_s=\theta_0$ while its
radial distance from the origin $r_s$ evolves according to a diffusion
process with a reflecting barrier at $R$ and generator 
\[
\Delta_{rad} := \frac{1}{2}\frac{\partial^2}{\partial r^2}+\frac{\alpha}{2}\frac{1}{\tanh(\alpha r)}\frac{\partial}{\partial r}.
\]
We are only concerned with the evolution of the process up until the detection time of the target, which occurs when the particle reaches $B_Q(R)$, and since the particles can only move radially, for any $x_0\not\in B_Q(R)$ we can also impose an absorbing barrier for $r_{s}$ at the radius $\rabs_0$ corresponding to the point in $\partial B_Q(R)$ with angle $\theta_0$. Recall that by Part~\eqref{itm:phiInv} of Lemma~\ref{lem:phi}, the function 
$\phi:[0,R]\to [\phi(R),\frac{\pi}{2}]$ has an inverse $\phi^{-1}$ 
which is also decreasing and continuous, so the absorbing barrier is given by $\rabs_0=\phi^{-1}(|\theta_0|)$.
This means that for values of $|\theta_0|>\frac{\pi}{2}$ we choose as absorbing barrier 
the origin $O$, that is,  $\rabs_0=0$.
Also recall that, whatever the value of $\theta_0$, we 
have that $(\theta_0,\rabs_0)$ always belongs to the boundary of $B_Q(R)$.
Since near the origin~$O$ the drift towards the boundary grows to infinity, for a point 
$x_0$ such that $|\theta_0|>\frac{\pi}{2}$ we have $\P_{x_0}(T_{det}\leq t)=0$ (in other words, at these angles the only way to detect $Q$ would be by reaching the origin, but since the drift there is $-\infty$ this is impossible). For the case where the absorbing barrier is distinct from the origin  we use the following result, which we state as a standalone result since it will be of use in later sections as well (by abuse of notation, since $\Delta_{rad}$ defined above depends only on the radial (one-dimensional) movement we use in the following lemma the operator $\Delta_{rad}$ also to denote the specific operator acting over sufficiently smooth one variable functions over the reals):
\begin{lemma}\label{lemmaradial}
	Let $\{\auxy_s\}_{s\geq 0}$ be a diffusion process on $(0,\auxY]$ with generator $\Delta_{rad}$, and with a reflecting barrier at $\auxY$, and let $\P_{\auxy}$ denote the law of $\auxy_s$ with initial position $\auxy$. Define also $T_{\yabs_0}$, $T_\auxY$ the hitting times of~$\yabs_0$ and $\auxY$ respectively. We have:
	\begin{enumerate}[(i)]
	\item\label{radial:itm:phi1} For any $\lambda>0$ and any $\auxy\in[\yabs_0,\auxY]$,
	\begin{equation*}
	\EE_{\auxy}(e^{-\lambda T_{\yabs_0}})\,\leq\,\frac{\lambda_1 e^{-\lambda_2 (\auxY-\auxy)}+\lambda_2 e^{\lambda_1(\auxY-\auxy)}}{{\lambda_1 e^{-\lambda_2 (\auxY-\yabs_0)}+\lambda_2 e^{\lambda_1(\auxY-\yabs_0)}}}\end{equation*}
	where $\lambda_1=\sqrt{\frac{\alpha^2}{4}+2\lambda}+\frac{\alpha}{2}$ and $\lambda_2=\sqrt{\frac{\alpha^2}{4}+2\lambda}-\frac{\alpha}{2}$.
	\item\label{radial:itm:phi2} If $\auxy\in[\yabs_0,\auxY]$, then
	\[\EE_{\auxy}(T_{\yabs_0})\,\leq\,\EE_{\auxY}(T_{\yabs_0})\,\leq\,\frac{e^{\alpha \auxY}}{\alpha^2}\log(\cotanh(\tfrac12\alpha \yabs_0)).\]
	In particular, if $\yabs_0>\tfrac{1}{\alpha}\log 2$, then 
	$\displaystyle
	\EE_{\auxy}(T_{\yabs_0})\,\leq\,\frac{4}{\alpha^2} e^{\alpha(\auxY-\yabs_0)}$.
	\item\label{radial:itm:phi3} If $\auxy\in[\yabs_0,\auxY]$, then
	\[G_{\yabs_0}(\auxy):=\PP_{\auxy}(T_{\yabs_0}<T_\auxY) = \frac{\log\big(\tfrac{\tanh(\alpha \auxY/2)}{\tanh(\alpha \auxy/2)}\big)}{\log\big(\tfrac{\tanh(\alpha \auxY/2)}{\tanh(\alpha \yabs_0/2)}\big)}.
	\]
	\item\label{radial:itm:phi4} If  $\auxy\in[\yabs_0,\auxY)$, then $\displaystyle
	\EE_{\auxy}(T_{\yabs_0}\,\big|\,T_{\yabs_0}<T_\auxY)\leq \frac{2}{\alpha}(\auxy-\yabs_0)+\frac{2}{\alpha^2}(1-G_{\yabs_0}(\auxy))$.
	\end{enumerate}
\end{lemma}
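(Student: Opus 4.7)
My plan is to treat the four parts by a combination of martingale arguments and direct ODE manipulations, exploiting throughout the key observation that the radial drift $\tfrac{\alpha}{2}\coth(\alpha r)$ dominates $\tfrac{\alpha}{2}$ for every $r>0$. This means test functions designed for the simpler constant-drift operator $\tfrac12\partial_r^2+\tfrac{\alpha}{2}\partial_r$ typically produce valid one-sided bounds for $\Delta_{rad}$ once one checks the sign of their derivatives.

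For part~\eqref{radial:itm:phi1}, I would set
\[
\psi(r):=\lambda_1 e^{-\lambda_2(\auxY-r)}+\lambda_2 e^{\lambda_1(\auxY-r)},
\]
and verify, using the algebraic identities $\lambda_1\lambda_2=2\lambda$ and $\lambda_1-\lambda_2=\alpha$, that $\tfrac12\psi''+\tfrac{\alpha}{2}\psi'=\lambda\psi$, $\psi'(\auxY)=0$, and $\psi'(r)\le 0$ on $[0,\auxY]$. Combined with $\coth(\alpha r)\ge 1$ these give $\Delta_{rad}\psi\le\lambda\psi$. By It\^o's formula for the reflected diffusion — where the boundary local-time term at $\auxY$ vanishes precisely because $\psi'(\auxY)=0$ — the process $(e^{-\lambda s}\psi(\auxy_s))_{s\ge 0}$ is then a nonnegative supermartingale. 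Optional stopping at $T_{\yabs_0}\wedge t$, letting $t\to\infty$, and using $\psi(\auxy_{T_{\yabs_0}})=\psi(\yabs_0)$ yields $\psi(\auxy)\ge\psi(\yabs_0)\EE_\auxy(e^{-\lambda T_{\yabs_0}})$, which is exactly the claimed ratio bound.

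Parts~\eqref{radial:itm:phi2} and~\eqref{radial:itm:phi3} are then routine. For~\eqref{radial:itm:phi2}, the function $u(\auxy):=\EE_\auxy(T_{\yabs_0})$ solves the Dynkin problem $\Delta_{rad}u=-1$ with $u(\yabs_0)=0$ and $u'(\auxY)=0$; the integrating factor $\sinh(\alpha r)$ converts this into $(\sinh(\alpha r)u')'=-2\sinh(\alpha r)$, which integrates once to the nonnegative expression $u'(r)=\tfrac{2}{\alpha}(\cosh(\alpha\auxY)-\cosh(\alpha r))/\sinh(\alpha r)$. Monotonicity $u(\auxy)\le u(\auxY)$ follows, and integrating $u'$ a second time, together with $\cosh(\alpha\auxY)\le e^{\alpha\auxY}$ and the Taylor estimate $\log\cotanh(\alpha\yabs_0/2)\le 4e^{-\alpha\yabs_0}$ (valid for $\yabs_0>\tfrac{1}{\alpha}\log 2$), produces both stated estimates. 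For~\eqref{radial:itm:phi3}, a scale function $s$ of the diffusion satisfies $s''+\alpha\coth(\alpha r)s'=0$, so $s(r)=\tfrac{1}{\alpha}\log\tanh(\alpha r/2)$ up to affine normalisation, and the classical exit-probability identity $G_{\yabs_0}(\auxy)=(s(\auxY)-s(\auxy))/(s(\auxY)-s(\yabs_0))$ yields the stated ratio of logarithms.

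Part~\eqref{radial:itm:phi4} is where the main obstacle lies. My approach is to compute $v(\auxy):=\EE_\auxy(T_{\yabs_0}\mathbf{1}_{T_{\yabs_0}<T_\auxY})$ by solving the Poisson problem $\Delta_{rad}v=-G_{\yabs_0}$ on $(\yabs_0,\auxY)$ with both-endpoint absorbing conditions $v(\yabs_0)=v(\auxY)=0$, and then dividing by $G_{\yabs_0}(\auxy)$. Using the same integrating factor $\sinh(\alpha r)$ as in~\eqref{radial:itm:phi2}, together with the key simplification (already appearing in the proof of~\eqref{radial:itm:phi3}) that $\sinh(\alpha r)s'(r)$ is a \emph{constant}, the ODE reduces to two explicit quadratures whose integrands combine $\cosh$, $\log\sinh$, and scale-function terms. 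The challenge is to reassemble these into the compact upper bound $\tfrac{2}{\alpha}(\auxy-\yabs_0)+\tfrac{2}{\alpha^2}(1-G_{\yabs_0}(\auxy))$, separating the ``deterministic'' drift-time piece $\tfrac{2}{\alpha}(\auxy-\yabs_0)$ — which morally reflects that Doob's $h$-transform with $h=G_{\yabs_0}$ adds a negative drift pulling the process down toward $\yabs_0$ at effective speed $\ge\alpha/2$ — from the ``diffusive'' correction $\tfrac{2}{\alpha^2}(1-G_{\yabs_0}(\auxy))$, which vanishes as $\auxy\to\yabs_0$ (where the conditioning becomes tautological). Keeping signs straight through these two quadratures, and performing the clean separation between the two contributions at the end, will be the trickiest bookkeeping in this section.
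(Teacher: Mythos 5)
Your parts~(\ref{radial:itm:phi1})--(\ref{radial:itm:phi3}) are sound. For~(\ref{radial:itm:phi1}) you take a slightly different route than the paper: the paper couples the process with an auxiliary diffusion of constant drift $\tfrac{\alpha}{2}$ (which sits below the original) and applies the exact martingale $e^{-\lambda s}g(\widetilde{\auxy}_s)$ to that auxiliary process, whereas you keep the original process and turn the same test function into a supermartingale via $\coth(\alpha r)\ge 1$ and $\psi'\le 0$; both are correct, and your version avoids constructing the coupling at the cost of checking the sign of $\psi'$ (which indeed holds, since $\psi'(r)=\lambda_1\lambda_2\bigl(e^{-\lambda_2(\auxY-r)}-e^{\lambda_1(\auxY-r)}\bigr)\le 0$). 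Parts~(\ref{radial:itm:phi2}) and~(\ref{radial:itm:phi3}) follow the paper's computations; one small point in~(\ref{radial:itm:phi2}): bounding $\cosh(\alpha\auxY)$ by $e^{\alpha\auxY}$ yields $\tfrac{2e^{\alpha\auxY}}{\alpha^2}\log\cotanh(\tfrac12\alpha\yabs_0)$ and hence $\tfrac{8}{\alpha^2}e^{\alpha(\auxY-\yabs_0)}$, not the stated constants; to get the lemma as written you must use $\cosh(\alpha\auxY)=\tfrac12 e^{\alpha\auxY}(1+e^{-2\alpha\auxY})$ and absorb the tiny excess into the (negative) term $\log\bigl(\sinh(\alpha\yabs_0)/\sinh(\alpha\auxY)\bigr)$, as the paper implicitly does.

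The genuine gap is in part~(\ref{radial:itm:phi4}). You set up exactly the same Poisson problem as the paper ($\Delta_{rad}H=-G_{\yabs_0}$ with $H(\yabs_0)=H(\auxY)=0$, then divide by $G_{\yabs_0}$), but you stop precisely where the real work begins, declaring the reassembly of the two quadratures to be "bookkeeping." It is not: the stated bound rests on two specific estimates. First, the integrand of the first quadrature, $\sinh(\alpha l)\,G_{\yabs_0}(l)\log\bigl(\tanh(\tfrac{\alpha l}{2})/\tanh(\tfrac{\alpha\yabs_0}{2})\bigr)$, must be bounded by $1$, which the paper gets by replacing $\tanh(\tfrac{\alpha\yabs_0}{2})^{-1}$ with $\tanh(\tfrac{\alpha l}{2})^{-1}$ and observing that $z\mapsto\tfrac{2z\log z}{z^{2}-1}\le 1$ on $(1,\infty)$; this is what produces the clean term $\tfrac{2}{\alpha}(\auxy-\yabs_0)$. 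Second, after bounding the second integrand by $G_{\yabs_0}(l)$ one needs $\int_{\auxy}^{\auxY}G_{\auxy}(l)\,dl\le\tfrac{1}{\alpha}$, which in the paper requires a separate argument (locating the critical points of $\auxy\mapsto\int_{\auxy}^{\auxY}G_{\auxy}(l)\,dl$, where the value is $\tfrac{1}{\alpha}\log\bigl(\tanh(\tfrac12\alpha\auxY)/\tanh(\tfrac12\alpha\auxy)\bigr)\sinh(\alpha\auxy)\le\tfrac1\alpha$, plus a monotone-convergence analysis at the endpoints); this produces the factor $\tfrac{2}{\alpha^{2}}(1-G_{\yabs_0}(\auxy))$. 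Your $h$-transform heuristic ("effective drift $\ge\alpha/2$ toward $\yabs_0$") cannot substitute for these: the conditioned drift $\tfrac{\alpha}{2}\coth(\alpha r)-\tfrac{1/\sinh(\alpha r)}{s(\auxY)-s(r)}$ only approaches $-\tfrac{\alpha}{2}$ and is not uniformly below it, which is exactly why the correction term $\tfrac{2}{\alpha^{2}}(1-G_{\yabs_0}(\auxy))$ is there and cannot be argued away by sign-tracking. As written, the proposal does not prove part~(\ref{radial:itm:phi4}).
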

\begin{proof}
We begin with the proof of~\eqref{radial:itm:phi1} by observing that $\tanh(x)\leq 1$ for all positive values of~$x$ and hence we can couple the trajectory of a particle~$P$ with that of an auxiliary particle $\widetilde{P}$ starting with the same initial position as $P$, but whose radius $\widetilde{\auxy}_\ss$ evolves according to the diffusion with generator
\[
\widetilde{\Delta}_{rad}(f) := \frac{1}{2}f''+\frac{\alpha}{2}f',
\]
in such a way that $\widetilde{\auxy}_\ss\leq y_\ss$ for all $\ss$. It follows that the detection time $\widetilde{T}_{\yabs_0}$ of this auxiliary particle is smaller than the one of $P$ so in particular $\EE_{\auxy}(e^{-\lambda T_{\yabs_0}})\leq\EE_{\auxy}(e^{-\lambda \widetilde{T}_{\yabs_0}})$, and it suffices to prove the inequality for the auxiliary process. Let now $g$ be the solution of the following O.D.E.,
\begin{equation}\label{ODE1}
\frac{1}{2}g''(\auxy)+\frac{\alpha}{2}g'(\auxy)-\lambda g(\auxy)=0
\end{equation}
on $[\yabs_0,\auxY]$ with boundary conditions $g(\yabs_0)=1$ and $g'(\auxY)=0$, which is equal to
\[g(\auxy) = \frac{\lambda_1 e^{-\lambda_2 (\auxY-\auxy)}+\lambda_2 e^{\lambda_1(\auxY-\auxy)}}{{\lambda_1 e^{-\lambda_2 (\auxY-\yabs_0)}+\lambda_2 e^{\lambda_1(\auxy-\yabs_0)}}}\]
where $\lambda_1$ and $\lambda_2$ are as in the statement of the lemma. It follows from Itô's lemma that $\{e^{-\lambda s}g(\widetilde{\auxy}_s)\}_{s \ge 0}$ is a bounded martingale, and hence we can apply Doob's optional stopping theorem to deduce $g(\auxy)=\EE_{\auxy}(e^{-\lambda\cdot 0}g(\widetilde{\auxy}_0))=\EE_{\auxy}(e^{-\lambda\widetilde{T}_{\yabs_0}})$, giving the result. To obtain the bound in~\eqref{radial:itm:phi2}, we go back to the original process $\{\auxy_s\}_{s \ge 0}$ which evolves according to $\Delta_{rad}$, and define $F(\auxy)$ as the solution of the O.D.E. on $[\yabs_0,\auxY]$
\begin{equation}\label{eq(ii)}
-1 = \frac{1}{2}F''(\auxy)\,+\,\frac{\alpha}{2}\cotanh(\alpha \auxy)F'(\auxy)
\end{equation}
with boundary conditions $F'(\auxY)=0$ and $F(\yabs_0)=0$. We advance that the solution is smooth and bounded and deduce from Itô's lemma that $\{F(\auxy_s)+s\}_{s\ge 0}$ is a martingale, so applying Doob's optional stopping theorem we deduce $F(\auxy)=\EE_{\auxy}(F(\auxy_0)+0)=\EE_{\auxy}(F(\auxy_{t\wedge T_{\yabs_0}})+t\wedge T_{\yabs_0})$ for every $t>0$. Choosing any $c>0$, a simple argument obtained by restarting the process at $\auxY$ every $c$ units of time gives
\[\PP_\auxy(T_{\yabs_0}>t)\leq \PP_\auxY(T_{\yabs_0}>t)\leq(\PP_{\auxY}(T_{\yabs_0}>c))^{\lfloor\frac{t}{c}\rfloor},\]
and hence $\lim_{t\to\infty}t\PP_\auxy(T_{\yabs_0}>t)=0$. We deduce then that $\lim_{t\to\infty}\EE_\auxy(t\wedge T_{\yabs_0})=\EE_\auxy(T_{\yabs_0})$ and since $F$ is bounded, $\lim_{t\to\infty}\EE_{\auxy}(F(\auxy_{t\wedge T_{\yabs_0}})= \EE_{\auxy}(F(\auxy_{T_{\yabs_0}}))=0$. Thus, $F(\auxy)=\EE_\auxy(T_{\yabs_0})$, and it remains to solve the O.D.E. To do so, we  multiply~\eqref{eq(ii)} by $2\sinh(\alpha \auxy)$ to obtain
\[-2\sinh(\alpha \auxy) = \sinh(\alpha \auxy)F''(\auxy)+\alpha\cosh(\alpha \auxy)F'(\auxy) = (\sinh(\alpha \auxy)F'(\auxy))'.\]
Thus, integrating from $\auxy$ to $\auxY$ and using that $F'(\auxY)=0$ we have
\[\frac{2}{\alpha}(\cosh(\alpha \auxY)-\cosh(\alpha \auxy)) = \sinh(\alpha \auxy)F'(\auxy),\]
which in particular proves directly that $F(\auxy)$ is an increasing function, so that $\EE_{\auxy}(T_{\yabs_0})\leq \EE_\auxY(T_{\yabs_0})$. Integrating from $\yabs_0$ to $\auxY$, together with the condition $F'(\auxY)=0$ gives
\[
\EE_\auxY(T_{\yabs_0}) = F(\auxY) = \frac{2}{\alpha^2}\Big(\log\Big(\frac{\sinh(\alpha \yabs_0)}{\sinh(\alpha \auxY)}\Big)-\cosh(\alpha \auxY)\log\Big(\frac{\tanh(\tfrac12\alpha\yabs_0)}{\tanh(\tfrac12\alpha \auxY)}\Big)\Big)
\]
and hence the general bound appearing in~\eqref{radial:itm:phi2} follows by noticing that the first term is negative, and by bounding $\cosh(\alpha \auxY)$ by $\frac{1}{2}e^{\alpha \auxY}$. To obtain $\EE_{\auxy}(T_{\yabs_0})\leq \frac{4}{\alpha^2} e^{\alpha(\auxY-\yabs_0)}$ observe that if we assume $\yabs_0>\tfrac{1}{\alpha}\log 2$ then $\cotanh(\tfrac12\alpha\yabs_0)\leq 1+4e^{-\alpha\yabs_0}$ so the result follows from bounding $\log(1+4e^{-\alpha\yabs_0})\leq4e^{-\alpha\yabs_0}$. 

To establish~\eqref{radial:itm:phi3} and~\eqref{radial:itm:phi4} it will be enough to work with a diffusion evolving on $(0,\infty)$ according to the original generator $\Delta_{rad}$, but without any barriers. Abusing notation we still call the process $\{\auxy_s\}_{s \ge 0}$. To deduce~\eqref{radial:itm:phi3}, observe that the solution of the O.D.E.
\[
0 = \frac{1}{2}G_{\rabs_0}''(\auxy) + \frac{\alpha}{2}\cotanh(\alpha \auxy)G_{\yabs_0}'(\auxy)
\]
with conditions $G_{\yabs_0}(\yabs_0)=1$ and $G_{\yabs_0}(\auxY)=0$ is given by the closed expression given in~\eqref{radial:itm:phi3}. It follows from Itô's lemma that $\{G_{\yabs_0}(\auxy_s)\}_{s \ge 0}$ is a bounded martingale, so applying Doob's optional stopping theorem we deduce $G_{\yabs_0}(y)=\EE_\auxy(G_{\yabs_0}(\auxy_0))=\EE_\auxy(G_{\yabs_0}(\auxy_{T_{\yabs_0}\wedge T_\auxY}))=\PP_{\auxy}(T_{\yabs_0}<T_\auxY)$.

Finally, to prove~\eqref{radial:itm:phi4} define the function $H(\auxy)$ as the solution of the ordinary differential equation
\[-G_{\yabs_0}(\auxy) = \frac{1}{2}H''(\auxy)\,+\,\frac{\alpha}{2}\cotanh(\alpha \auxy)H'(\auxy)\]
with boundary conditions $H(\yabs_0)=H(\auxY)=0$. It can be checked directly that the last equation is satisfied by
{\footnotesize
\begin{equation}\label{eq(iv)}
H(\auxy)=\frac{2}{\alpha}G_{\yabs_0}(r)\int_{\yabs_0}^{\auxy}\!\sinh(\alpha l)G_{\yabs_0}(l)\log\Big(\frac{\tanh(\frac{\alpha l}{2})}{\tanh(\frac{\alpha \yabs_0}{2})}\Big)dl+\frac{2}{\alpha}(1{-}G_{\yabs_0}(\auxy))\int_{\auxy}^\auxY\!\sinh(\alpha l)G_{\yabs_0}(l)\log\Big(\frac{\tanh(\frac{\alpha \auxY}{2})}{\tanh(\frac{\alpha l}{2})}\Big)dl,
\end{equation}}
which is smooth. It follows once again from Itô's lemma that $\{H(\auxy_s)+\int_0^s G_{\yabs_0}(\auxy_u)du\}_{s \ge 0}$ is a martingale. Since in the proof of~\eqref{radial:itm:phi3} we already showed that $\{G_{\yabs_0}(\auxy_s)\}_{s \ge 0}$ is a martingale, it follows that $\{\int_0^s G_{\yabs_0}(\auxy_u)du-s G_{\yabs_0}(\auxy_s)\}_{s \ge 0}$ is also a martingale. We conclude that $\{H(\auxy_s)+s G_{\yabs_0}(\auxy_s)\}_{s \ge 0}$ is a martingale, so applying Doob's optional stopping theorem we deduce
\[H(\auxy)=\EE_{\auxy}(H(\auxy_0)+0\cdot G_{\yabs_0}(y_0))=\EE_{\auxy}(H(\auxy_{t\wedge T_{\yabs_0}\wedge T_{\auxY}})+(t\wedge T_{\yabs_0}\wedge T_{\auxY})\cdot G_{\yabs_0}(\auxy_{t\wedge T_{\yabs_0}\wedge T_{\auxY}}))\]
for every $t>0$. Reasoning as in the proof of~\eqref{radial:itm:phi2} we can take the limit as $t\to\infty$ to obtain
\[H(\auxy)=\EE_{\auxy}(H(\auxy_{T_{\yabs_0}\wedge T_{\auxY}})+(T_{\yabs_0}\wedge T_{\auxY})\cdot G_{\yabs_0}(\auxy_{T_{\yabs_0}\wedge T_{\auxY}}))=\EE_{\auxy}(T_{\yabs_0}{\bf1}_{\{T_{\yabs_0}<T_{\auxY}\}})\]
where we used that $H(\yabs_0)=H(\auxY)=G_{\yabs_0}(\auxY)=0$ and $G_{\yabs_0}(\yabs_0)=1$. Observing that $\EE_{\auxy}(T_{\yabs_0}\,|\,T_{\yabs_0}<T_\auxY)=\frac{H(\auxy)}{G_{\yabs_0}(\auxy)}$, to obtain the inequality in~\eqref{radial:itm:phi4} we only need to bound $H(\auxy)$. To do so notice that for any $\yabs_0\leq l$ gives $\frac{\tanh(\alpha l/2)}{\tanh(\alpha \yabs_0/2)}\leq \frac{1}{\tanh(\alpha l/2)}$, which together with $0\leq G_{\yabs_0}(l)\leq 1$ allows us to bound from above the first integral of~\eqref{eq(iv)} by $\sinh(\alpha l)\log(\cotanh(\frac12\alpha l))=\frac{2\cotanh(\frac12\alpha l)}{\cotanh^2(\frac12\alpha l)-1}\log (\cotanh(\frac12\alpha l))=f(\cotanh(\frac12\alpha l))$, where $f(z)=\frac{2z}{z^2-1}\log z$ is bounded from above by $1$ on $[1,\infty)$. Using the same argument we can bound the term within the second integral of~\eqref{eq(iv)} by $G_{\yabs_0}(l)$, so that
\[\frac{H(\auxy)}{G_{\yabs_0}(\auxy)}\leq\frac{2}{\alpha}(\auxy-\yabs_0)+\frac{2}{\alpha}(1-G_{\yabs_0}(\auxy))\int_{\auxy}^\auxY \frac{G_{\yabs_0}(l)}{G_{\yabs_0}(\auxy)}dl.\]
Using the fact that $\frac{G_{\yabs_0}(l)}{G_{\yabs_0}(\auxy)}=G_{\auxy}(l)$, the second integral becomes $\int_{\auxy}^\auxY G_{\auxy}(l)dl$ which we control by studying the function $\auxy\mapsto \int_{\auxy}^\auxY G_{\auxy}(l)dl$ on $(0,\auxY)$. Notice first that any critical point $\auxy'$ of said function satisfies
\[\int_{\auxy'}^\auxY G_{\auxy'}(l)dl=\frac{1}{\alpha}\log\Big(\frac{\tanh(\frac12\alpha \auxY)}{\tanh(\frac12\alpha \auxy')}\Big)\sinh(\alpha \auxy')\leq\frac{1}{\alpha},\]
so it will be sufficient to control the integral when either $\auxy=\auxY$ or $\auxy=0$. For the first case we have $\int_{\auxY}^\auxY G_{\auxY}(l)dl=0$, and for the second one, by definition we have $\lim_{\auxy\to 0}G_{\auxy}(l)=0$ for any $l>0$. 
Since $G_\auxy(l)$ is monotone increasing in $\auxy$, by the monotone convergence theorem, $\lim_{\auxy\to 0}\int_\auxy^\auxY G_{\auxy}(l)dl=\int \lim_{\auxy \to 0} G_\auxy(l) dl=0$, so putting all these cases together we conclude $\int_\auxy^\auxY G_{\auxy}(l)dl\leq \frac{1}{\alpha}$.
\end{proof}

\medskip
Before we define $\dt(\kappa)$, let
\[
\phi^{(\ss)}:=\Big(\frac{\ss^{\frac{1}{\alpha}}}{e^{R}}\Big)^{\frac{1}{2}}.
\] 
Intuitively, one may think of $\phi^{(\ss)}$ as those points that are so close in terms of angle to the target, so that - even though possibly initially at the boundary of $B_O(R)$ - they have a reasonable chance to detect the target by time $\ss$ through the radial movement. 
We define~$\dt(\kappa)$ as the collection of points where a particle initially located can detect the target before time $\ss$ with a not too small probability (depending on $\kappa$). From our discussion preceding Lemma~\ref{lemmaradial}, we will always assume that any point $x_0\in\dt$ satisfies $|\theta_0|\leq\frac{\pi}{2}$. Since $\PP_{r}(T_{\rabs_0}<T_R)=G_{\rabs_0}(r)$, with $G_{\rabs_0}(r)$ as defined in Lemma~\ref{lemmaradial} with $y:=r$, $\yabs_0:=\rabs_0$ and $Y:=R$, it follows that $G_{\rabs_0}(r)$ is decreasing as a function of $r$, continuous and takes values in $[0,1]$. In particular, for $\kappa>1$ 
there is a unique $\widetilde{\rabs}_0\in [\rabs_0,R]$ such that
\begin{equation}\label{eqn:radial-defrtilde}
\PP_{\widetilde{\rabs}_0}(T_{\rabs_0}<T_R)=
\delta(\kappa,\ss), 
\quad\text{ where }\quad
\delta=\delta(\kappa,\ss):= \frac{1+\ss}{(1+\kappa\ss^{\frac{1}{2\alpha}})^{2\alpha}}.
\end{equation}
Note that $0\leq\delta(\kappa,\ss)< 1$ (the latter inequality holds because we assume $\kappa\geq 1$). Also observe that $\delta(\kappa,0)=1$ and $\delta(\kappa,\ss)$ tends to $\kappa^{-2\alpha}$ as $\ss$ tends to infinity.
Furthermore, $\delta(\kappa,\ss)=\Theta(\kappa^{-2\alpha})$ if $\ss=\Omega(1)$.
Now, define (see Figure~\ref{fig:radial}) 
\[
\dt = \dt(\kappa) := \big\{ x_0\in B_O(R) : |\theta_0|\leq\tfrac{\pi}{2} \wedge \big[|\theta_0|\leq\phi(R)+\kappa\phi^{(\ss)} \vee  r_0\leq\widetilde{\rabs}_0\big]\big\}.
\]
which contains $B_Q(R)$ since every $x_0\in B_Q(R)$ satisfies $r_0<\rabs_0\leq\widetilde{\rabs}_0$.
To better understand the motivation for defining $\widetilde{\rabs}_0$ and $\delta$ as above, we consider the most interesting regime, i.e.,~$\ss=\Omega(1)$. Under this condition the effect of the drift has enough time to move the particle far away from its initial position, and towards the boundary, so that the event $\{T_{\rabs_0}<\ss,\,T_{\rabs_0}<T_R\}$ is mostly explained by the particles' initial trajectory. In particular, by Part~\eqref{radial:itm:phi3} of Lemma~\ref{lemmaradial}, we have that $\PP_{r_0}(T_{\rabs_0}<\ss,\,T_{\rabs_0}<T_R)\approx \PP_{r_0}(T_{\rabs_0}<T_R)=G_{\rabs_0}(r_0)$,  so the condition $r_0\leq\widetilde{\rabs_0}$ aims to include in $\dt$ all points whose probability of detecting the target before reaching the boundary of $B_O(R)$ is not too small. To exhaust all possibilities, we must also include in $\dt$ all points which have a sufficiently large probability of detecting the target even after reaching the boundary of $B_O(R)$. Said points are considered through the condition $|\theta_0|\leq\phi(R)+\kappa\phi^{(\ss)}$, which gives a lower bound of order $\delta(\kappa,\ss)$ for the detection probability, thus explaining our choice of said function.

\medskip
Before moving to the main theorem of this section, we spend some time building some intuition about the geometric shape of $\dt$. 
Since $\rabs_0$ goes to $0$ when $\theta_0$ tends to $\frac{\pi}{2}$, from~\eqref{eqn:radial-defrtilde} which is used to define $\widetilde{\rabs}_0$, it is not hard to see that 
$\widetilde{\rabs}_0$ also goes to $0$
(and hence $\widetilde{\rabs}_0-\rabs_0$ as well) when $\theta_0$ tends to $\frac{\pi}{2}$.
It requires a fair amount of additional work to show that $\widetilde{\rabs}_0-\rabs_0$, as a function of $\theta_0>\phi(R)+\kappa\phi^{(\ss)}$, first increases very slowly, then reaches a maximum value of roughly $\frac{1}{\alpha}\log\frac{1}{\delta}$
and finally decreases rapidly (we omit the details since we will not rely on this observation). In fact, for all practical purposes, one might think of $\widetilde{\rabs}_0-\rabs_0$ as being essentially constant up to the point when $\rabs_0$ is smaller than a constant (equivalently, $\theta_0$ is at least a constant).

The main goal of this section is to prove the following result from which Theorem~\ref{thm:radialmain} immediately follows (by the proof strategy discussed in Section~\ref{sec:strategy}) once we show that~$\mu(\dt(\kappa))$ is of the right order:
\begin{theorem}\label{thm:rad} 
The following hold:
\begin{enumerate}[(i)]
\item If $\kappa\geq 1$ and $\phi(R)+\kappa\phi^{(\ss)}\leq \frac{\pi}{2}$, then $\sup_{x_0 \in \ndt}\PP_{x_0}(T_{det}\leq\ss)=O(\delta(\kappa,\ss))$ and
\[
\int_{\ndt\!(\kappa)}\P(T_{det}\leq\ss)d\mu(x_0) = O(\mu(\dt(\kappa))). 
\]
\item\label{thm:rad:itm2} For every $c>0$ there is a $\kappa_0 >0$ such that if $\kappa\ge\kappa_0$ and $\ss= \frac{16}{\alpha}\log \kappa+\Omega(1)$ satisfy $\phi(R)+\kappa\phi^{(\ss)}\leq \frac{\pi}{2}-c$, then $\inf_{x_0 \in \dt}\PP_{x_0}(T_{det}\leq\ss)=\Omega(\delta(\kappa,\ss))$.
\end{enumerate}
\end{theorem}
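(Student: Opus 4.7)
The key observation is that, fixing $\theta_0$, the purely radial motion is a one-dimensional diffusion on $[0,R]$ with generator $\Delta_{rad}$, reflection at $R$, and absorption at $\rabs_0=\phi^{-1}(|\theta_0|)$; hence $T_{det}=T_{\rabs_0}$ and the analysis reduces to Lemma~\ref{lemmaradial}. For the uniform upper bound in part~(i), I would first discard the case $|\theta_0|>\pi/2$ (detection probability zero, as explained before Lemma~\ref{lemmaradial}) and otherwise exploit the strong Markov property at the first passage time $T_R$ to write
\[
\PP_{r_0}(T_{\rabs_0}\leq\ss)\;\leq\;\PP_{r_0}(T_{\rabs_0}<T_R)+\PP_R(T_{\rabs_0}\leq\ss).
\]
The first summand equals $G_{\rabs_0}(r_0)$ and is at most $G_{\rabs_0}(\widetilde{\rabs}_0)=\delta(\kappa,\ss)$ by monotonicity and~\eqref{eqn:radial-defrtilde}; for the second I would use exponential Chebyshev, $\PP_R(T_{\rabs_0}\leq\ss)\leq e^{\lambda\ss}\EE_R(e^{-\lambda T_{\rabs_0}})$, insert the Laplace bound of Lemma~\ref{lemmaradial}\eqref{radial:itm:phi1}, optimise at $\lambda=1/\ss$, and use $|\theta_0|>\phi(R)+\kappa\phi^{(\ss)}$ together with Lemma~\ref{lem:phi}\eqref{itm:phi4} to estimate $R-\rabs_0\geq 2\log\bigl(1+\Theta(\kappa\ss^{1/(2\alpha)})\bigr)$. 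This yields $\PP_R(T_{\rabs_0}\leq\ss)=O\bigl(\ss/(1+\kappa\ss^{1/(2\alpha)})^{2\alpha}\bigr)=O(\delta)$, hence the pointwise bound. For the integral version, I integrate the same decomposition over $\ndt(\kappa)$ parametrised by $(r_0,\theta_0)$: the closed form of $G_{\rabs_0}$ in Lemma~\ref{lemmaradial}\eqref{radial:itm:phi3} makes the inner radial integral explicit, and the angular integral of the second summand is handled via $\phi^{-1}$ and Lemma~\ref{lem:muBall}; after splitting on whether $|\theta_0|$ is close to $\phi(R)$ or to $\pi/2$, both contributions come out to $O(\mu(\dt(\kappa)))$.

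For the uniform lower bound in part~(ii), I partition $\dt(\kappa)\setminus B_Q(R)$ into two regimes. In Subcase~A ($r_0\leq\widetilde{\rabs}_0$) we have $G_{\rabs_0}(r_0)\geq\delta$, and Lemma~\ref{lemmaradial}\eqref{radial:itm:phi4} gives $\EE_{r_0}(T_{\rabs_0}\mid T_{\rabs_0}<T_R)\leq \tfrac{2}{\alpha}(\widetilde{\rabs}_0-\rabs_0)+\tfrac{2}{\alpha^2}$. Using the explicit form of $G_{\rabs_0}$ in Lemma~\ref{lemmaradial}\eqref{radial:itm:phi3} with $G_{\rabs_0}(\widetilde{\rabs}_0)=\delta$, a short asymptotic computation (treating $\rabs_0\geq 1$ and $\rabs_0\leq 1$ separately via $1-\tanh(x)\sim 2e^{-2x}$ and $\tanh(x)\sim x$) gives $\widetilde{\rabs}_0-\rabs_0\leq \tfrac{1}{\alpha}\log(1/\delta)+O(1)=\Theta(\log\kappa)$. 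The hypothesis $\ss\geq(16/\alpha)\log\kappa+\Omega(1)$ then ensures, for $\kappa_0$ large, that the conditional expectation is at most $\ss/2$, so conditional Markov yields $\PP_{r_0}(T_{\rabs_0}\leq\ss)\geq G_{\rabs_0}(r_0)/2\geq\delta/2$. In Subcase~B ($|\theta_0|\leq\phi(R)+\kappa\phi^{(\ss)}$, $r_0>\widetilde{\rabs}_0$), the slack $\phi(R)+\kappa\phi^{(\ss)}\leq\pi/2-c$ keeps $\rabs_0\geq\phi^{-1}(\pi/2-c)>0$ bounded below by a positive constant depending only on $c$, so the drift $\tfrac{\alpha}{2}\coth(\alpha r)$ is uniformly bounded on $[\rabs_0,R]$. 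Here I would argue via an excursion/renewal decomposition from the reflecting barrier: starting from $r_0$, the particle reaches $R$ in controlled expected time, and each subsequent excursion from $R$ hits $\rabs_0$ before returning with probability of order $\delta$; a Paley--Zygmund-type second-moment argument on the number of successful excursions within time $\ss$ then produces the required $\Omega(\delta)$ lower bound.

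The hard part is Subcase~B, because Lemma~\ref{lemmaradial} only delivers upper bounds on Laplace transforms, whereas the lower bound needs a matching estimate on $\PP_R(T_{\rabs_0}\leq\ss)$. I would obtain it by coupling $\{r_s\}$ with the tractable diffusion on $[\rabs_0,R]$ having constant drift $\tfrac{\alpha}{2}\coth(\alpha\rabs_0)$ which, since $\coth$ is decreasing on $(0,\infty)$, stochastically \emph{delays} hitting $\rabs_0$ relative to $\{r_s\}$, and hence lower bounds $\PP_R(T_{\rabs_0}\leq\ss)$ by the explicit first-passage probability of a constant-drift Brownian motion; alternatively, an It\^o-calculus argument with a carefully chosen test function works. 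A secondary subtlety is the integral bound in part~(i), where careful tracking is needed in the transition region where $\rabs_0$ passes from $\Theta(1)$ down to $0$.
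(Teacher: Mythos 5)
Your part~(i) and Subcase~A of part~(ii) track the paper's Propositions~\ref{prop:rad-lowerBnd} and~\ref{prop:rad-upperBnd} closely: the decomposition at $T_R$, the use of $G_{\rabs_0}$ and the Laplace-transform bound from Lemma~\ref{lemmaradial}, and the conditional-Markov argument with Fact~\ref{fct:radial-varphi2} giving $\widetilde{\rabs}_0-\rabs_0\leq\tfrac{1}{\alpha}\log(1/\delta)+O(1)$ are all exactly the paper's steps.

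The issue is in Subcase~B. You write that ``each subsequent excursion from $R$ hits $\rabs_0$ before returning with probability of order $\delta$.'' That is off by a factor of $\ss$: a single excursion reaches $\rabs_0$ with probability $\Theta(e^{-\alpha(R-\rabs_0)})=\Theta(\kappa^{-2\alpha}/\ss)=\Theta(\delta/\ss)$, and one recovers $\Omega(\delta)$ only because there are $\Theta(\ss)$ excursions in time $\ss$; the paper takes $m=\lfloor\alpha\ss/24\rfloor$ excursions and uses $\PP(\mathcal{E})\geq 1-(1-\PP(E_1))^m=\Omega(m\PP(E_1))=\Omega(\delta)$. With the per-excursion probability you state, the excursion count and the Markov-type time budget would not balance and you would not land on the correct exponent. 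A Paley--Zygmund step is also unnecessary: the $E_i$ are i.i.d.\ by the strong Markov property, so a direct first-moment computation suffices.

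You also flag that Lemma~\ref{lemmaradial} only delivers one-sided bounds and propose a constant-drift coupling (or an It\^o argument) to lower-bound $\PP_R(T_{\rabs_0}\leq\ss)$. The paper sidesteps this entirely: Part~\eqref{radial:itm:phi3} of Lemma~\ref{lemmaradial} gives the \emph{exact} probability $G_{\rabs_0}(r)$ of hitting $\rabs_0$ before the boundary, and Part~\eqref{radial:itm:phi4} gives the needed bound on the conditional hitting time, so no new lower bound on Laplace transforms is required. The one genuine technical difficulty — that the reflecting boundary makes a naive excursion decomposition from $R$ itself ill-defined — is resolved in the paper by coupling $r_t$ with an auxiliary process on $(0,R+1]$ that is instantaneously restarted at $R$ whenever it hits $R+1$; this dominates $r_t$ from above, yields well-defined i.i.d.\ excursions of expected length $O(1)$, and makes Parts~\eqref{radial:itm:phi3} and~\eqref{radial:itm:phi4} directly applicable. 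Your constant-drift alternative would still have to resolve the boundary behaviour, and the sketch leaves this undeveloped.
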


\begin{remark}
The extra hypotheses on $\kappa$ and $\ss$ needed for the lower bounds of Part~\eqref{thm:rad:itm2} of Theorem~\ref{thm:rad} 
are key for our methods to work, 
since for small times $\ss= o(1)$ the detection probability for a point always tends to $0$ unless it is already in $B_Q(R)$ or very close to it (but the latter set is of smaller measure than $B_Q(R)$). Similarly, if one is very close to the origin (that is, for angles very close to $\frac{\pi}{2}$) the explosion of the drift towards the boundary at the origin also implies an extra penalization for the probability of detection. Observe nevertheless that the extra hypothesis in Part~\eqref{thm:rad:itm2} are automatically satisfied if $\ss=\omega(1)\cap o(e^{\alpha R})$.
Furthermore, the hypothesis $\ss =\Omega(1)$ is natural since in the case of radial movement only we will show that $\EE(T_{det})=\Theta(1)$, and we are interested in tail behaviors of the detection time.
\end{remark}



Among all facts regarding the intuition of $\dt$, we will only need to prove rigorously (see the next result) that if $\phi(R)+\kappa\phi^{(\ss)}\leq |\theta_0|\leq\frac{\pi}{2}$, then $\widetilde{\rabs}_0-\rabs_0\leq\frac{1}{\alpha}\log\frac{1}{\delta}+O(1)$.
\begin{fact}\label{fct:radial-varphi2}
For $\ss> 0$ and $\kappa\geq 1$, 
if $\theta_0\in [\phi(R)+\kappa\phi^{(\ss)},\frac{\pi}{2}]$, then
  $e^{\widetilde{\rabs}_0-\rabs_0}=O(1/\delta^{\frac{1}{\alpha}})$
  where $\delta:=\delta(\kappa,\ss)$.
\end{fact}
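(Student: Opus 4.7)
The plan is to unfold the equation defining $\widetilde{\rabs}_0$ via the explicit formula from Lemma~\ref{lemmaradial}\eqref{radial:itm:phi3} and reduce it to a transparent relation between exponentials. Concretely, set
\[
v(r) := \log\!\bigl(1/\tanh(\alpha r/2)\bigr) = \log\!\left(\tfrac{1+e^{-\alpha r}}{1-e^{-\alpha r}}\right),
\]
which is smooth, strictly positive and strictly decreasing on $(0,\infty)$. Lemma~\ref{lemmaradial}\eqref{radial:itm:phi3} then rewrites the defining identity $G_{\rabs_0}(\widetilde{\rabs}_0) = \delta$ as $v(\widetilde{\rabs}_0) = \delta\, v(\rabs_0) + (1-\delta)\, v(R)$. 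Since both $v(R) \geq 0$ and $1-\delta \geq 0$, dropping the second term immediately yields the key one-sided bound $v(\widetilde{\rabs}_0) \geq \delta\, v(\rabs_0)$. Note this is exactly the right direction: $v$ being decreasing, a lower bound on $v(\widetilde{\rabs}_0)$ translates into an upper bound on $\widetilde{\rabs}_0$.

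Next I would establish the two elementary estimates for $v$ that actually carry the proof. First, a universal lower bound $v(r) \geq e^{-\alpha r}$ valid for \emph{all} $r > 0$, which follows by writing $v(r) = 2\tanh^{-1}(e^{-\alpha r})$ and using $2\tanh^{-1}(x) \geq 2x$ (from the Taylor series). Second, an upper bound $v(r) \leq 4 e^{-\alpha r}$ valid whenever $r \geq r_* := (\log 2)/\alpha$, obtained from $\log(1+y) \leq y$ applied to $y = 2e^{-\alpha r}/(1-e^{-\alpha r}) \leq 4e^{-\alpha r}$. Plugging the universal lower bound into the inequality from the previous paragraph gives
\[
v(\widetilde{\rabs}_0) \;\geq\; \delta\, e^{-\alpha \rabs_0},
\]
uniformly in $\rabs_0 \in [0,R]$.

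To finish, I would split into two cases according to the size of $\widetilde{\rabs}_0$. If $\widetilde{\rabs}_0 \geq r_*$, the upper bound on $v$ gives $4 e^{-\alpha\widetilde{\rabs}_0} \geq v(\widetilde{\rabs}_0) \geq \delta e^{-\alpha \rabs_0}$, which rearranges directly to $e^{\widetilde{\rabs}_0 - \rabs_0} \leq (4/\delta)^{1/\alpha} = O(\delta^{-1/\alpha})$. Otherwise, $\widetilde{\rabs}_0 < r_*$ forces $\widetilde{\rabs}_0 - \rabs_0 \leq \widetilde{\rabs}_0 < r_* = O(1)$, which is trivially $O(\delta^{-1/\alpha})$ because $\delta \leq 1$. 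The potentially delicate regime is $\theta_0 \uparrow \pi/2$, where $\rabs_0 \downarrow 0$ and $v(\rabs_0) \to \infty$; I expect the main obstacle to be choosing an estimate on $v$ strong enough to handle this singular case uniformly, and the lower bound $v(r) \geq e^{-\alpha r}$ holding all the way down to $r = 0^+$ is precisely what dissolves that obstruction without any separate subcase analysis.
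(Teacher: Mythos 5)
Your proof is correct and follows essentially the same route as the paper's: both unfold the definition of $\widetilde{\rabs}_0$ via the explicit formula in Lemma~\ref{lemmaradial}\eqref{radial:itm:phi3} and rely on the estimates $\log\coth(\tfrac12\alpha r)=\Theta(e^{-\alpha r})$ away from the origin, together with a separate treatment of the regime where $\widetilde{\rabs}_0$ is of constant order. Your rewriting of the defining identity as the convex combination $v(\widetilde{\rabs}_0)=\delta\, v(\rabs_0)+(1-\delta)\,v(R)$ and discarding the nonnegative term $(1-\delta)\,v(R)$ is a mild streamlining that eliminates the paper's second preliminary case ($\rabs_0\ge R-\tfrac{1}{\alpha}\log\tfrac{3}{\delta}$), since you never need to lower-bound $g(\rabs_0)=v(\rabs_0)-v(R)$ against the subtracted boundary term.
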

\begin{proof}
We first handle some simple cases.
If $\widetilde{\rabs}_0\leq\frac{1}{\alpha}\log\frac{2}{\delta}$, then $e^{\widetilde{r}_0-\rabs_0}\leq e^{\widetilde{r}_0}\leq (2/\delta)^{\frac{1}{\alpha}}$ and we are done.
Similarly, if $\rabs_0\geq R-\frac{1}{\alpha}\log\frac{3}{\delta}$, since $\widetilde{\rabs}_0\leq R$, we have that $e^{\widetilde{r}_0-\rabs_0}\leq e^{R-\rabs_0}\leq (3/\delta)^{\frac{1}{\alpha}}$ and we obtain again the claimed bound. Henceforth, assume that $\widetilde{\rabs}_0>\frac{1}{\alpha}\log\frac{2}{\delta}$ and that $\rabs_0< R-\frac{1}{\alpha}\log\frac{3}{\delta}$.

Let $g(r):=\log(\tanh(\frac12\alpha R)/\tanh(\frac12\alpha r))$ and note that since $\tanh(x)\leq 1$, $\cotanh(x)=1+\frac{2e^{-2x}}{1-e^{-2x}}$ and $1+y\leq e^{y}$, 
\[g(\widetilde{\rabs}_0) \leq \log(\cotanh(\tfrac12\alpha\widetilde{\rabs}_0))
\leq\frac{2e^{-\alpha\widetilde{\rabs}_0}}{1-e^{-\alpha\widetilde{\rabs}_0}}\leq4e^{-\alpha\widetilde{\rabs}_0},\]
where in the last inequality we have used that $\delta\leq 1$. Moreover, since $\tanh(x)=1-\frac{2e^{-2x}}{1+e^{-2x}}$, 
$\cotanh(x)=1+\frac{2e^{-2x}}{1-e^{-2x}}$ and
using twice that $\log(1+y)\geq \frac{y}{1+y}$ for $y>-1$, we have
\[
g(\rabs_0) = \log(\coth(\tfrac12\alpha\rabs_0))+\log(\tanh(\tfrac12\alpha R)) \geq \frac{2e^{-\alpha\rabs_0}}{1+e^{-\alpha\rabs_0}} - 
\frac{2e^{-\alpha R}}{1-e^{-\alpha R}}
\geq e^{-\alpha\rabs_0}-(2+o(1))e^{-\alpha R}.
\]
Since $\delta\leq 1$, by our assumption on $\rabs_0$, we conclude that
  $g(\rabs_0)=\Omega(e^{-\alpha\rabs_0})$. Observing that $g(\widetilde{\rabs}_0)/g(\rabs_0)=G_{\rabs_0}(\widetilde{\rabs}_0)=\delta$, it follows from the 
previously derived bounds on $g(\widetilde{\rabs}_0)$ and $g(\rabs_0)$ 
that $e^{\alpha(\widetilde{\rabs}_0-\rabs_0)}=O(\delta^{-1})$ 
and thus $e^{\widetilde{\rabs}_0-\rabs_0}=O(1/\delta^{\frac{1}{\alpha}})$ as desired.
\end{proof}
For practical purposes one may view $\dt$ as the collection of points of $B_O(R)$
which belong either to a sector of angle $2(\phi(R)+\kappa\phi^{(\ss)})$ 
whose bisector contains $Q$, 
or to the ball $B_Q(R)$, or to those points with angular coordinate $|\theta_0|\geq \phi(R)+\kappa\phi^{(\ss)}$ which are 
within radial distance roughly $\frac{1}{\alpha}\log\frac{1}{\delta}$ of $B_Q(R)$. 
This picture would be accurate except for the fact that it places into $\dt$ points with angular coordinate close to $\frac{\pi}{2}$ and fairly close to the origin $O$, say at distance 
$\frac{1}{\alpha}\log\frac{1}{\delta}-\Omega(1)$. 
However, particles initially located at such points are extremely unlikely to reach $B_Q(R)$ and detect $Q$, since the drift towards the boundary tends to infinity close to the origin.
This partly justifies why $\dt$ is defined so that in the case where $\theta_0$ goes to $\frac{\pi}{2}$ the expression $\widetilde{\rabs}_0-\rabs_0$ tends to $0$, thus 
leaving out of $\dt$ the previously mentioned problematic points.

\begin{figure}
\begin{center}
\begin{tikzpicture}[x=1cm,y=1cm,scale=0.65,
     decoration={markings,
       mark=at position 1 with {\arrow[scale=1.5,black]{latex}};
      }]
      
      \def\c{12}
      \def\barr{4}
      \def\radp{9.5}
      \def\phip{240}
      \def\angabs{-87.94}
      \node[inner sep=0] (O) at (\c,\c) {};
      \node[inner sep=0] (P) at (2,4) {};
      \node[inner sep=0] (Q) at (\c,0) {};
  \draw[fill=gray!20] (11.561cm,0.008cm) --(11.686cm,3.423cm) -- (11.656cm,3.656cm) -- (11.624cm,3.891cm) -- (11.589cm,4.126cm) -- (11.552cm,4.362cm) -- (11.511cm,4.599cm) -- (11.468cm,4.837cm) -- (11.421cm,5.076cm) -- (11.371cm,5.316cm) -- (11.318cm,5.558cm) -- (11.261cm,5.800cm) -- (11.201cm,6.045cm) -- (11.137cm,6.291cm) -- (11.070cm,6.540cm) -- (11.000cm,6.792cm) -- (10.928cm,7.047cm) -- (10.854cm,7.305cm) -- (10.778cm,7.568cm) -- (10.702cm,7.835cm) -- (10.627cm,8.108cm) -- (10.555cm,8.387cm) -- (10.487cm,8.672cm) -- (10.426cm,8.963cm) -- (10.376cm,9.261cm) -- (10.344cm,9.513cm) -- (10.339cm,9.563cm) -- (10.335cm,9.614cm) -- (10.331cm,9.665cm) -- (10.328cm,9.717cm) -- (10.325cm,9.768cm) -- (10.322cm,9.819cm) -- (10.321cm,9.870cm) -- (10.320cm,9.922cm) -- (10.319cm,9.973cm) -- (10.319cm,10.025cm) -- (10.320cm,10.076cm) -- (10.322cm,10.128cm) -- (10.324cm,10.179cm) -- (10.327cm,10.231cm) -- (10.331cm,10.282cm) -- (10.335cm,10.333cm) -- (10.341cm,10.384cm) -- (10.347cm,10.435cm) -- (10.354cm,10.486cm) -- (10.362cm,10.537cm) -- (10.371cm,10.587cm) -- (10.381cm,10.638cm) -- (10.392cm,10.688cm) -- (10.403cm,10.737cm) -- (10.416cm,10.787cm) -- (10.430cm,10.836cm) -- (10.444cm,10.884cm) -- (10.460cm,10.932cm) -- (10.477cm,10.980cm) -- (10.494cm,11.027cm) -- (10.513cm,11.074cm) -- (10.533cm,11.120cm) -- (10.554cm,11.166cm) -- (10.577cm,11.211cm) -- (10.600cm,11.255cm) -- (10.625cm,11.299cm) -- (10.651cm,11.341cm) -- (10.678cm,11.383cm) -- (10.706cm,11.424cm) -- (10.736cm,11.465cm) -- (10.767cm,11.504cm) -- (10.799cm,11.542cm) -- (10.832cm,11.579cm) -- (10.867cm,11.615cm) -- (10.904cm,11.650cm) -- (10.941cm,11.684cm) -- (10.981cm,11.717cm) -- (11.022cm,11.748cm) -- (11.065cm,11.778cm) -- (11.110cm,11.806cm) -- (11.156cm,11.834cm) -- (11.205cm,11.859cm) -- (11.257cm,11.883cm) -- (11.311cm,11.905cm) -- (11.369cm,11.926cm) -- (11.431cm,11.944cm) -- (11.499cm,11.961cm) -- (11.574cm,11.975cm) -- (11.660cm,11.987cm) -- (11.769cm,11.995cm) -- (12.000cm,12.000cm) --(12.231cm,11.995cm) -- (12.340cm,11.987cm) -- (12.426cm,11.975cm) -- (12.501cm,11.961cm) -- (12.569cm,11.944cm) -- (12.631cm,11.926cm) -- (12.689cm,11.905cm) -- (12.743cm,11.883cm) -- (12.795cm,11.859cm) -- (12.844cm,11.834cm) -- (12.890cm,11.806cm) -- (12.935cm,11.778cm) -- (12.978cm,11.748cm) -- (13.019cm,11.717cm) -- (13.059cm,11.684cm) -- (13.096cm,11.650cm) -- (13.133cm,11.615cm) -- (13.168cm,11.579cm) -- (13.201cm,11.542cm) -- (13.233cm,11.504cm) -- (13.264cm,11.465cm) -- (13.294cm,11.424cm) -- (13.322cm,11.383cm) -- (13.349cm,11.341cm) -- (13.375cm,11.299cm) -- (13.400cm,11.255cm) -- (13.423cm,11.211cm) -- (13.446cm,11.166cm) -- (13.467cm,11.120cm) -- (13.487cm,11.074cm) -- (13.506cm,11.027cm) -- (13.523cm,10.980cm) -- (13.540cm,10.932cm) -- (13.556cm,10.884cm) -- (13.570cm,10.836cm) -- (13.584cm,10.787cm) -- (13.597cm,10.737cm) -- (13.608cm,10.688cm) -- (13.619cm,10.638cm) -- (13.629cm,10.587cm) -- (13.638cm,10.537cm) -- (13.646cm,10.486cm) -- (13.653cm,10.435cm) -- (13.659cm,10.384cm) -- (13.665cm,10.333cm) -- (13.669cm,10.282cm) -- (13.673cm,10.231cm) -- (13.676cm,10.179cm) -- (13.678cm,10.128cm) -- (13.680cm,10.076cm) -- (13.681cm,10.025cm) -- (13.681cm,9.973cm) -- (13.680cm,9.922cm) -- (13.679cm,9.870cm) -- (13.678cm,9.819cm) -- (13.675cm,9.768cm) -- (13.672cm,9.717cm) -- (13.669cm,9.665cm) -- (13.665cm,9.614cm) -- (13.661cm,9.563cm) -- (13.656cm,9.513cm) -- (13.650cm,9.462cm) -- (13.644cm,9.411cm) -- (13.638cm,9.361cm) -- (13.631cm,9.311cm) -- (13.624cm,9.261cm) -- (13.617cm,9.211cm) -- (13.609cm,9.161cm) -- (13.600cm,9.111cm) -- (13.592cm,9.062cm) -- (13.583cm,9.012cm) -- (13.574cm,8.963cm) -- (13.564cm,8.914cm) -- (13.555cm,8.865cm) -- (13.545cm,8.817cm) -- (13.534cm,8.768cm) -- (13.524cm,8.720cm) -- (13.513cm,8.672cm) -- (13.502cm,8.624cm) -- (13.491cm,8.576cm) -- (13.480cm,8.529cm) -- (13.469cm,8.481cm) -- (13.457cm,8.434cm) -- (13.445cm,8.387cm) -- (13.434cm,8.340cm) -- (13.422cm,8.293cm) -- (13.410cm,8.247cm) -- (13.398cm,8.200cm) -- (13.385cm,8.154cm) -- (13.373cm,8.108cm) -- (13.361cm,8.062cm) -- (13.348cm,8.017cm) -- (13.336cm,7.971cm) -- (13.323cm,7.926cm) -- (13.311cm,7.880cm) -- (13.298cm,7.835cm) -- (13.285cm,7.790cm) -- (13.273cm,7.746cm) -- (13.260cm,7.701cm) -- (13.247cm,7.657cm) -- (13.235cm,7.612cm) -- (13.222cm,7.568cm) -- (13.209cm,7.524cm) -- (13.197cm,7.480cm) -- (13.184cm,7.436cm) -- (13.171cm,7.392cm) -- (13.159cm,7.349cm) -- (13.146cm,7.305cm) -- (13.134cm,7.262cm) -- (13.121cm,7.219cm) -- (13.109cm,7.175cm) -- (13.096cm,7.132cm) -- (13.084cm,7.090cm) -- (13.072cm,7.047cm) -- (13.060cm,7.004cm) -- (13.047cm,6.961cm) -- (13.035cm,6.919cm) -- (13.023cm,6.876cm) -- (13.011cm,6.834cm) -- (13.000cm,6.792cm) -- (12.988cm,6.750cm) -- (12.976cm,6.708cm) -- (12.964cm,6.666cm) -- (12.953cm,6.624cm) -- (12.941cm,6.582cm) -- (12.930cm,6.540cm) -- (12.918cm,6.499cm) -- (12.907cm,6.457cm) -- (12.896cm,6.416cm) -- (12.885cm,6.374cm) -- (12.874cm,6.333cm) -- (12.863cm,6.291cm) -- (12.852cm,6.250cm) -- (12.841cm,6.209cm) -- (12.831cm,6.168cm) -- (12.820cm,6.127cm) -- (12.810cm,6.086cm) -- (12.799cm,6.045cm) -- (12.789cm,6.004cm) -- (12.779cm,5.963cm) -- (12.769cm,5.922cm) -- (12.759cm,5.882cm) -- (12.739cm,5.800cm) -- (12.682cm,5.558cm) -- (12.629cm,5.316cm) -- (12.579cm,5.076cm) -- (12.532cm,4.837cm) -- (12.489cm,4.599cm) -- (12.448cm,4.362cm) -- (12.411cm,4.126cm) -- (12.376cm,3.891cm) -- (12.344cm,3.656cm) -- (12.314cm,3.423cm) -- (12.439cm,0.008cm) --(12.352cm,0.005cm) --(12.264cm,0.003cm) --(12.176cm,0.001cm) --(12.088cm,0.000cm) --(12.000cm,0.000cm) --(11.912cm,0.000cm) --(11.824cm,0.001cm) --(11.736cm,0.003cm) --(11.648cm,0.005cm) --(11.561cm,0.008cm) --(12.439cm,0.008cm) --cycle;

\draw[fill=gray!40] (11.941cm,0.000cm) -- (11.902cm,1.200cm) -- (11.842cm,2.401cm) -- (11.748cm,3.604cm) -- (11.607cm,4.811cm) -- (11.404cm,6.030cm) -- (11.136cm,7.278cm) -- (10.842cm,8.591cm) -- (10.798cm,8.820cm) -- (10.758cm,9.051cm) -- (10.725cm,9.285cm) -- (10.698cm,9.521cm) -- (10.681cm,9.760cm) -- (10.675cm,9.999cm) -- (10.681cm,10.239cm) -- (10.704cm,10.477cm) -- (10.744cm,10.711cm) -- (10.804cm,10.938cm) -- (10.885cm,11.154cm) -- (10.988cm,11.356cm) -- (11.113cm,11.538cm) -- (11.260cm,11.696cm) -- (11.426cm,11.825cm) -- (11.608cm,11.921cm) -- (11.801cm,11.980cm) -- (12.000cm,12.000cm) --(12.199cm,11.980cm) -- (12.392cm,11.921cm) -- (12.574cm,11.825cm) -- (12.740cm,11.696cm) -- (12.887cm,11.538cm) -- (13.012cm,11.356cm) -- (13.115cm,11.154cm) -- (13.196cm,10.938cm) -- (13.256cm,10.711cm) -- (13.296cm,10.477cm) -- (13.319cm,10.239cm) -- (13.325cm,9.999cm) -- (13.319cm,9.760cm) -- (13.302cm,9.521cm) -- (13.275cm,9.285cm) -- (13.242cm,9.051cm) -- (13.202cm,8.820cm) -- (13.158cm,8.591cm) -- (13.112cm,8.366cm) -- (13.063cm,8.144cm) -- (13.013cm,7.924cm) -- (12.963cm,7.707cm) -- (12.913cm,7.492cm) -- (12.864cm,7.278cm) -- (12.815cm,7.067cm) -- (12.768cm,6.857cm) -- (12.723cm,6.649cm) -- (12.679cm,6.441cm) -- (12.636cm,6.235cm) -- (12.596cm,6.030cm) -- (12.557cm,5.825cm) -- (12.521cm,5.621cm) -- (12.486cm,5.418cm) -- (12.453cm,5.215cm) -- (12.422cm,5.013cm) -- (12.393cm,4.811cm) -- (12.366cm,4.609cm) -- (12.340cm,4.408cm) -- (12.316cm,4.206cm) -- (12.293cm,4.005cm) -- (12.272cm,3.805cm) -- (12.252cm,3.604cm) -- (12.158cm,2.401cm) -- (12.098cm,1.200cm) -- (12.059cm,0.000cm) -- (12.059cm,0.000cm) -- (12.045cm,0.000cm) -- (12.030cm,0.000cm) -- (12.015cm,0.000cm) -- (12.000cm,0.000cm) -- (11.985cm,0.000cm) -- (11.970cm,0.000cm) -- (11.955cm,0.000cm) -- (11.941cm,0.000cm) -- cycle;
\draw[fill=black] (O) ++(\phip:\radp) circle (0.07) node[left] {$x_{\ss}$};
\draw[fill=black] (O) ++(\phip:8.0) circle (0.07) node[left] {$x_0$};
\draw[fill=black] (O) ++(\phip:2.62) circle (0.07) node[right] {$_{A_0}$};
\draw[dotted] (O) -- ++(\phip:2.62);
\draw[fill=black] (O) ++(\phip:3.22) circle (0.07) node[left] {$_{B_0}$};
\draw[dashed] (O)  ++(\phip:2.62) -- ++(\phip:9.38);

      \draw[dotted] (0,\c) -- (2*\c,\c);
      \draw[dotted] (O) -- (Q);
      \draw[dotted] (O) -- ++(\angabs:\c+2);
      \draw[dotted] (\c,-0.75) -- (\c,-2);
      \draw (\c,-1.5) arc (-90:\angabs:\c+1.5);
      \draw[postaction={decorate}] ({(\c+1.5)*cos(-90-5)+\c},{(\c+1.5)*sin(-90-5)+\c}) arc (-90-5:-90:\c+1.5);
      \draw[postaction={decorate}] ({(\c+1.5)*cos(-83.1)+\c},{(\c+1.5)*sin(-83.1)+\c}) arc (\angabs+5:\angabs:\c+1.5);
      \draw[postaction={decorate}] (\c,\c-\radp) arc (-90:\phip-360:\radp);
      \node at (\c+0.9,-1.1) {$_{\phi(R)+\kappa\phi^{(\ss)}}$};
      \node at (\c-2.5,\c-8.7) {$_{\theta_{\ss}=\theta_0}$};
      \draw[thick,black] (2*\c,\c) arc (0:-180:\c);
      \draw[fill=black] (Q) circle (0.07);
      \draw[fill=black] (O) circle (0.07);
      \node[below] at (Q) {$_Q$};      
      \node[above] at (O) {$_O$}; 
	\end{tikzpicture}
\end{center}
  \caption{Half of disk $B_O(R)$.  The position $x_{\ss}:=(r_{\ss},\theta_{\ss})$ of particle $P$ at time
    $\ss$ is shown. The lightly shaded area corresponds to
    $\dt(\kappa)\setminus B_{Q}(R)$ and the strongly shaded
    region represents~$B_Q(R)$. The dashed line corresponds to the radial
    segment where a particle $P$ at initial angle~$\theta_0$ is,
    conditional under not having detected the target~$Q$ at time
    $\ss$. The radial coordinates of $A_0$ and $B_0$ are $\rabs_0:=\rabs_0(\theta_0)$
    and $\widetilde{\rabs}_0:=\widetilde{\rabs}_0(\theta_0)$,
    respectively.}\label{fig:radial}
\end{figure}
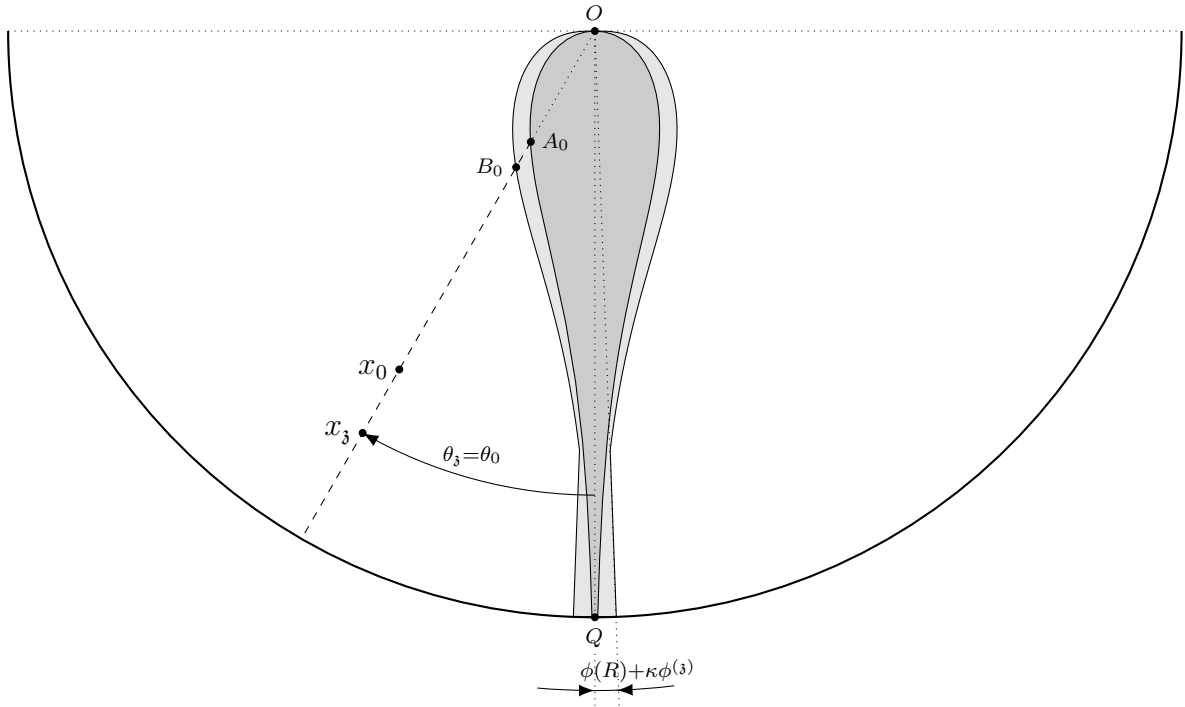

\medskip
We next determine $\mu(\dt)$, which simply amounts to performing an integral. 
\begin{lemma}\label{lem:radialmeasure}
If  $\kappa\ge 1$ are such that $\phi(R)+\kappa\phis\le\frac{\pi}{2}$, then  
\[
\mu(\dt(\kappa)) = \Theta\big(n(\phi(R)+\kappa\phi^{(\ss)})\big) = \Theta(\mkor{\nu(1+\kappa\ss^{\frac{1}{2\alpha}})}{1+\kappa\ss^{\frac{1}{2\alpha}}}).
\]
\end{lemma}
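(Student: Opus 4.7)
The plan is to decompose $\dt(\kappa)$ into the angular sector $S_1:=\{x_0\in B_O(R):|\theta_0|\leq\phi(R)+\kappa\phi^{(\ss)}\}$ and the remainder $S_2:=\dt(\kappa)\setminus S_1$, which by definition is contained in $\{x_0\in B_O(R): \phi(R)+\kappa\phi^{(\ss)}<|\theta_0|\leq\tfrac{\pi}{2},\ r_0\leq\widetilde{\rabs}_0(\theta_0)\}$, and to bound the measure of each piece separately. The equivalence of the two expressions on the right-hand side of the statement is a quick consequence of Part~\eqref{itm:phi4} of Lemma~\ref{lem:phi} together with $e^{R/2}=n/\nu$: these yield $\phi(R)=\Theta(1/n)$, while directly from its definition $\phi^{(\ss)}=\ss^{\frac{1}{2\alpha}}e^{-R/2}=\Theta(\ss^{\frac{1}{2\alpha}}/n)$, and hence $n(\phi(R)+\kappa\phi^{(\ss)})=\Theta(1+\kappa\ss^{\frac{1}{2\alpha}})$.

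For the sector $S_1$, the angular marginal of $f(r,\theta)$ is uniform on $(-\pi,\pi]$ while its radial marginal integrates to $1$ on $[0,R]$, so Fubini gives at once
\[
\mu(S_1)\,=\,n\cdot\frac{2(\phi(R)+\kappa\phi^{(\ss)})}{2\pi}\,=\,\Theta\big(n(\phi(R)+\kappa\phi^{(\ss)})\big),
\]
which already furnishes the lower bound $\mu(\dt(\kappa))\geq\mu(S_1)$ of the claimed order.

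For $S_2$, using $\cosh(\alpha R)-1=\Theta(e^{\alpha R})$ together with $\int_0^{\widetilde{\rabs}_0}\alpha\sinh(\alpha r_0)\,dr_0=\Theta(e^{\alpha\widetilde{\rabs}_0})$, Fubini yields
\[
\mu(S_2)\,=\,\Theta(ne^{-\alpha R})\int_{\phi(R)+\kappa\phi^{(\ss)}}^{\pi/2}e^{\alpha\widetilde{\rabs}_0(\theta_0)}\,d\theta_0.
\]
The crux of the argument is now to combine Fact~\ref{fct:radial-varphi2}, which supplies $e^{\alpha\widetilde{\rabs}_0(\theta_0)}=O\big(e^{\alpha\rabs_0(\theta_0)}/\delta(\kappa,\ss)\big)$, with Part~\eqref{itm:phi4} of Lemma~\ref{lem:phi} applied to $\rabs_0(\theta_0)=\phi^{-1}(\theta_0)$, which gives $e^{\alpha\rabs_0(\theta_0)}=\Theta(\theta_0^{-2\alpha})$. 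Since $\alpha>\tfrac12$, the resulting integrand $\theta_0^{-2\alpha}$ is integrable with mass concentrated at the lower limit, producing $\int_{\phi(R)+\kappa\phi^{(\ss)}}^{\pi/2}\theta_0^{-2\alpha}d\theta_0=O\big((\phi(R)+\kappa\phi^{(\ss)})^{1-2\alpha}\big)$. Substituting $e^{\alpha R}=\Theta(n^{2\alpha})$, $\phi(R)+\kappa\phi^{(\ss)}=\Theta((1+\kappa\ss^{\frac{1}{2\alpha}})/n)$, and $\delta(\kappa,\ss)=(1+\ss)/(1+\kappa\ss^{\frac{1}{2\alpha}})^{2\alpha}$, the powers of $n$ cancel exactly and one arrives at $\mu(S_2)=O\big((1+\kappa\ss^{\frac{1}{2\alpha}})/(1+\ss)\big)=O(\mu(S_1))$.

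Adding both estimates gives $\mu(\dt(\kappa))=\Theta(\mu(S_1))=\Theta(n(\phi(R)+\kappa\phi^{(\ss)}))=\Theta(1+\kappa\ss^{\frac{1}{2\alpha}})$, as claimed. The only non-routine ingredient is the uniform control of $e^{\alpha\widetilde{\rabs}_0(\theta_0)}$ through Fact~\ref{fct:radial-varphi2} combined with the asymptotic for $\phi^{-1}$ from Part~\eqref{itm:phi4} of Lemma~\ref{lem:phi}; the remaining steps are elementary changes of variable and the bookkeeping of powers of $n$.
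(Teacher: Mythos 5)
Your proof is correct and follows essentially the same route as the paper's: a lower bound from the sector $\{|\theta_0|\le\phi(R)+\kappa\phis\}$, and an upper bound on the remainder via $\int e^{\alpha\widetilde{\rabs}_0}d\theta_0$, controlled by Fact~\ref{fct:radial-varphi2} together with $\theta_0=\phi(\rabs_0)=\Theta(e^{-\frac12\rabs_0})$ from Lemma~\ref{lem:phi}. The only cosmetic caveat is that your displayed "$\mu(S_2)=\Theta(\cdots)$" should really be an $O(\cdots)$ bound (as in the paper), since $S_2$ is merely contained in the described region and $\int_0^{\widetilde{\rabs}_0}\sinh(\alpha r)\,dr$ is only $O(e^{\alpha\widetilde{\rabs}_0})$ when $\widetilde{\rabs}_0$ is small, but this does not affect the argument because only the upper bound is used.
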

\begin{proof}
For the lower bound, simply observe that $\dt:=\dt(\kappa)$ contains a sector $\Upsilon_Q$ 
  of angle $2(\phi(R)+\kappa\phi^{(\ss)})$ on whose bisector lies the target $Q$,
  so  $\mu(\dt)\geq\mu(\Upsilon_Q)=\Theta(n(\phi(R)+\kappa\phi^{(\ss)}))$.
Now, we address the upper bound. 
It suffices to show that $\mu(\dt\setminus\Upsilon_Q)=O(\mu(\Upsilon_Q))$.
Indeed, observe that
\[
\mu(\dt\setminus\Upsilon_Q) 
= O(ne^{-\alpha R})\int_{\phi(R)+\kappa\phi^{(\ss)}}^{\frac{\pi}{2}}\int_0^{\widetilde{\rabs}_0}\frac{1}{2\pi}\sinh(\alpha r_0)dr_0d\theta_0
= O(ne^{-\alpha R})\int_{\phi(R)+\kappa\phi^{(\ss)}}^{\frac{\pi}{2}}e^{\alpha\widetilde{\rabs}_0}d\theta_0.
\]
Thus, since $e^{\alpha\widetilde{\rabs}_0}=e^{\alpha\rabs_0}e^{\alpha(\widetilde{\rabs}_0-\rabs_0)}$, by Fact~\ref{fct:radial-varphi2},
\[
\mu(\dt\setminus\Upsilon_Q) = 
  O(ne^{-\alpha R}\delta^{-1})
  \int_{\phi(R)+\kappa\phi^{(\ss)}}^{\frac{\pi}{2}}e^{\alpha\rabs_0}d\theta_0.
\]
Recall that by definition $|\theta_0|=\phi(\rabs_0)$, so that by Part~\eqref{itm:phi4} of Lemma~\ref{lem:phi}, we have  $|\theta_0|=\phi(\rabs_0)=\Theta(e^{-\frac12\rabs_0})$, 
and
\[
 \mu(\dt\setminus\Upsilon_Q) = O(ne^{-\alpha R}\delta^{-1})\int_{\phi(R)+\kappa\phi^{(\ss)}}^{\frac{\pi}{2}}\theta_0^{-2\alpha}d\theta_0
= O(ne^{-\alpha R}\delta^{-1}(\phi(R)+\kappa\phi^{(\ss)})^{-(2\alpha-1)}).
\]
By our choices of $\phi^{(\ss)}$ and $\delta$ 
  we have $e^{-\alpha R}\delta^{-1}=\frac{1}{1+\ss}(\phi(R)+\kappa\phi^{(\ss)})^{2\alpha}
  \leq (\phi(R)+\kappa\phi^{(\ss)})^{2\alpha}$.
Thus, $\mu(\dt\setminus\Upsilon_Q)=O(n(\phi(R)+\kappa\phi^{(\ss)}))=O(\mu(\Upsilon_Q))$ as claimed. 
\end{proof}

\subsection{Uniform upper bound}\label{sec:upper_rad}
The goal of this subsection is to prove the following result from which 
the upper bounds of Theorem~\ref{thm:rad} immediately follow.
\begin{proposition}\label{prop:rad-lowerBnd}
If  $\kappa\geq 1$, then
\[
\sup_{x_0 \in \ndt\!(\kappa), |\theta_0| < \frac{\pi}{2}}  \P_{x_0}(T_{det}\leq\ss)=O(\delta(\kappa,\ss)) 
\quad\text{ and } \quad 
\sup_{x_0 \in \ndt\!(\kappa), |\theta_0| \geq \frac{\pi}{2}}  \P_{x_0}(T_{det}\leq\ss)=0. 
\]
Furthermore, if $\phi(R)+\kappa\phis\leq\frac{\pi}{2}$, then
\[
\int_{\ndt\!(\kappa)}\P_{x_0}(T_{det}\leq\ss)d\mu(x_0) = O(\mu(\dt(\kappa))).
\]
\end{proposition}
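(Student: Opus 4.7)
The plan is to treat the supremum and integral claims separately, with two complementary pointwise estimates (one via the hitting-probability $G_{\rabs_0}$, and one via the Laplace transform $\EE_R(e^{-\lambda T_{\rabs_0}})$) doing most of the work.

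First, I will reduce to the case $|\theta_0|<\tfrac{\pi}{2}$ by observing that when $|\theta_0|\ge\tfrac{\pi}{2}$ the absorbing barrier is at $\rabs_0=0$, and the drift $\tfrac{\alpha}{2}\cotanh(\alpha r)\sim 1/(2r)$ near the origin makes the radial process behave like a $2$-dimensional Bessel process, so the origin is unattainable from any positive starting point. This forces $\PP_{x_0}(T_{det}\le\ss)=0$, handling the second supremum trivially. For $x_0\in\ndt(\kappa)$ with $|\theta_0|<\tfrac{\pi}{2}$, the definition of $\ndt(\kappa)$ forces both $|\theta_0|>\phi(R)+\kappa\phis$ and $r_0>\widetilde{\rabs}_0$.

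Next, for the uniform bound I will decompose using the strong Markov property at the first hitting time $T_R$ of the reflecting barrier:
$\PP_{r_0}(T_{\rabs_0}\le\ss)\le\PP_{r_0}(T_{\rabs_0}<T_R)+\PP_R(T_{\rabs_0}\le\ss).$
The first summand equals $G_{\rabs_0}(r_0)$ by Part~\eqref{radial:itm:phi3} of Lemma~\ref{lemmaradial}; since $G_{\rabs_0}$ is decreasing in $r$ and $r_0>\widetilde{\rabs}_0$, this is at most $G_{\rabs_0}(\widetilde{\rabs}_0)=\delta(\kappa,\ss)$ by the very definition of $\widetilde{\rabs}_0$. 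For the second, I will use the exponential Markov inequality together with Part~\eqref{radial:itm:phi1} of Lemma~\ref{lemmaradial} (with starting position and reflecting barrier both at $R$) to obtain
$\PP_R(T_{\rabs_0}\le\ss)\le\tfrac{\lambda_1+\lambda_2}{\lambda_2}\,e^{\lambda\ss-\lambda_1(R-\rabs_0)},$
and optimize $\lambda$ case-by-case — roughly $\lambda\asymp(R-\rabs_0)^2/\ss^2$ when $\ss=O(R-\rabs_0)$, and the trivial bound $1$ otherwise — combining with Fact~\ref{fct:radial-varphi2} and Part~\eqref{itm:phi4} of Lemma~\ref{lem:phi} to convert $R-\rabs_0$ into $\log(1/\delta)$ up to constants, yielding the desired $O(\delta)$ bound.

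For the integral bound I will split $\int_{\ndt(\kappa)}\PP_{x_0}(T_{det}\le\ss)\,d\mu\le I_1+I_2$ according to the two summands. In $I_1$, I will apply the closed form from Part~\eqref{radial:itm:phi3} of Lemma~\ref{lemmaradial} and integrate by parts in $r_0$ using the identity $g'(r)=-\alpha/\sinh(\alpha r)$ for $g(r):=\log(\tanh(\alpha R/2)/\tanh(\alpha r/2))$ to evaluate the inner integral explicitly; then change variables $\theta_0=\phi(\rabs_0)$, so $d\theta_0\asymp e^{-\rabs_0/2}d\rabs_0$ by Part~\eqref{itm:phi4} of Lemma~\ref{lem:phi}, and conclude via the identity $ne^{-\alpha R}(\phi(R)+\kappa\phis)^{-(2\alpha-1)}=O(\mu(\dt(\kappa)))$ already used in the proof of Lemma~\ref{lem:radialmeasure}. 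In $I_2$, since $\PP_R(T_{\rabs_0}\le\ss)$ is independent of $r_0$, I will integrate $r_0$ out first (getting a mass of order $O(n)$ per unit of $\theta_0$) and then control the remaining one-dimensional $\theta_0$-integral via the same change of variables together with the pointwise bound above.

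The hard part will be the integral bound. Merely plugging the uniform estimate $\PP_{x_0}(T_{det}\le\ss)=O(\delta)$ into $\mu(\ndt(\kappa))=O(n)$ gives $O(n\delta)$, which for fixed $\kappa$ can exceed $\mu(\dt(\kappa))=O(1+\kappa\ss^{1/(2\alpha)})$ by a large factor. The argument must therefore take advantage of the faster-than-$\delta$ decay of $\PP_{x_0}(T_{det}\le\ss)$ as $|\theta_0|$ moves away from $\phi(R)+\kappa\phis$ (equivalently, as $\rabs_0$ decreases toward $0$), which is precisely the extra information encoded in the explicit form of $G_{\rabs_0}$ and in the exponential decay in $R-\rabs_0$ of the Laplace-transform bound. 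A secondary technical point is the careful $\lambda$-optimization, which must interpolate between the small-$\ss$ and large-$\ss$ regimes uniformly in the angular variable.
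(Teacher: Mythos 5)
Your skeleton is close to the paper's: the reduction for $|\theta_0|\ge\tfrac{\pi}{2}$, the decomposition $\P_{r_0}(T_{det}\le\ss)\le\P_{r_0}(T_{\rabs_0}<T_R)+\P_R(T_{\rabs_0}\le\ss)$, and the treatment of the first summand via monotonicity of $G_{\rabs_0}$ and the defining property $G_{\rabs_0}(\widetilde{\rabs}_0)=\delta$ are all exactly right. The gap is in the second summand. Your $\lambda$-optimization only covers $\ss=O(R-\rabs_0)$, but for $x_0\in\ndt(\kappa)$ one only knows $R-\rabs_0\ge 2\log(1+\kappa\ss^{\frac{1}{2\alpha}})-O(1)$, so the generic situation is $\ss\gg R-\rabs_0$, and there your ``trivial bound $1$'' is not $O(\delta(\kappa,\ss))$ with a constant independent of $\kappa$: for $\ss=\Omega(1)$ one has $\delta=\Theta(\kappa^{-2\alpha})\to 0$ as $\kappa\to\infty$, and a $\kappa$-dependent constant would make the bound useless for Theorem~\ref{thm:intro-Dt}\eqref{upperDt}. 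The paper instead takes the single choice $\lambda=\frac{1}{1+\ss}$ in Lemma~\ref{lemmaradial}\eqref{radial:itm:phi1}, yielding $\P_R(T_{\rabs_0}\le\ss)=O\big((1+\ss)e^{-\alpha(R-\rabs_0)}\big)$, and the factor $1+\ss$ is then exactly the numerator in the definition of $\delta$: from $|\theta_0|=\phi(\rabs_0)=\Theta(e^{-\rabs_0/2})$ and $|\theta_0|\ge\phi(R)+\kappa\phis$ one gets $e^{-\alpha(R-\rabs_0)}=O(\delta/(1+\ss))$. Your proposed conversion of $R-\rabs_0$ into $\log(1/\delta)$ via Fact~\ref{fct:radial-varphi2} is also off target: that fact controls $\widetilde{\rabs}_0-\rabs_0$, and the quantity you need is $\tfrac{1}{2\alpha}\log\big(\tfrac{1+\ss}{\delta}\big)$, where the $(1+\ss)$ cannot be discarded.

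The integral bound has a second, independent problem. You rightly note that plugging in the uniform bound is insufficient, but the split $I_1+I_2$ along your pointwise decomposition does not work either. For $I_1$: since $g(r_0)e^{\alpha r_0}=\Theta(1)$ on the bulk of $[\widetilde{\rabs}_0,R]$, each radial shell contributes equally, so $\int_{\widetilde{\rabs}_0}^{R}G_{\rabs_0}(r_0)\,d\mu_{\theta_0}(r_0)=\Theta\big(n e^{-\alpha(R-\rabs_0)}(R-\widetilde{\rabs}_0)\big)$, and after integrating in $\theta_0$ one picks up an extra $\log\big(1/(\phi(R)+\kappa\phis)\big)$; for $\ss,\kappa=\Theta(1)$ this makes $I_1$ genuinely of order $\mu(\dt(\kappa))\cdot\Theta(R)=\mu(\dt(\kappa))\cdot\Theta(\log n)$ — no integration by parts removes it, because $G_{\rabs_0}$ discards the time constraint, and the time constraint is precisely what prevents the logarithmic pile-up. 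Likewise $I_2$ with the ``trivial bound $1$'' gives $O(n)$, far larger than $\mu(\dt(\kappa))=\Theta(n(\phi(R)+\kappa\phis))$ unless $\phi(R)+\kappa\phis=\Omega(1)$. The paper avoids both issues by not splitting at $T_R$ inside the integral: it integrates the Laplace-transform bound of Lemma~\ref{lemmaradial}\eqref{radial:itm:phi1} (again with $\lambda=\frac{1}{1+\ss}$) directly over $r_0\in[\widetilde{\rabs}_0,R]$; since its decay rate $\lambda_1>\alpha$ strictly beats the density growth $e^{\alpha r_0}$, the per-angle mass is $O\big(n(1+\ss)e^{-\alpha(R-\rabs_0)}\big)$ — the would-be factor $R-\widetilde{\rabs}_0$ is replaced by $1+\ss$ — and the definition of $\delta$ absorbs it, giving $O\big(n\delta(\phi(R)+\kappa\phis)\big)=O(\mu(\dt(\kappa)))$ via Lemma~\ref{lem:radialmeasure}.
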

\begin{proof}
By the discussion previous to Lemma~\ref{lemmaradial}, we have $\P_{x_0}(T_{det}\leq\ss)=0$
  when $x_0=(r_0,\theta_0)\in\ndt$ is such that $|\theta_0|\geq\frac{\pi}{2}$, as claimed.
Henceforth, let $x_0=(r_0,\theta_0)\in\ndt$ be such that
$|\theta_0|<\frac{\pi}{2}$.  
As in Lemma~\ref{lemmaradial}, 
let $\P_{r_0}$ be the law of $r_\ss$ with initial position $r_0$
and $T_{\rabs}$ be the hitting time of $\rabs$.

Observe that a particle initially at $x_0$ reaches the boundary of $B_{Q}(R)$ either without ever visiting the boundary of $B_O(R)$ (which happens with probability at most $\P_{r_0}(T_{\rabs_0}<T_R))$ or after visiting the 
said boundary for the first time (in which case the time to reach the boundary of $B_Q(R)$ is dominated by the time of a particle starting at the boundary of $B_O(R)$ and having 
the same angular coordinate as the point $x_0$).
Thus,
\[
\P_{r_0}(T_{\det}\leq\ss) \leq \P_{r_0}(T_{\rabs_0}<T_R)+\P_{R}(T_{\rabs_0}\leq\ss).
\]
We will show that each of the right hand side terms above is $O(\delta)$
where $\delta:=\delta(\kappa,\ss)$. This immediately implies
the first part of the proposition's statement.

By our choice of $\widetilde{\rabs}_0$
and since $r_0\geq\widetilde{\rabs}_0$, we have 
\[
\P_{r_0}(T_{\rabs_0}<T_R) \leq \P_{\widetilde{\rabs}_0}(T_{\rabs_0}<T_R)
= \delta.
\]
By Part~\eqref{radial:itm:phi1} of Lemma~\ref{lemmaradial} with $\auxy:=r$, $\yabs_0:=\rabs_0$ and $\auxY:=R$, and Markov's inequality,
for $\lambda>0$, $\lambda_1:=\sqrt{\frac{\alpha^2}{4}+2\lambda}+\frac{\alpha}{2}>0$ and $\lambda_2:=\sqrt{\frac{\alpha^2}{4}+2\lambda}-\frac{\alpha}{2}$, we get that
\begin{equation}
\P_{r_0}(T_{\rabs_0}\leq\ss) 
= \P_{r_0}(e^{-\lambda T_{\rabs_0}}\geq e^{-\lambda\ss})\leq \frac{\lambda_1 e^{-\lambda_2(R-r_0)}+\lambda_2 e^{\lambda_1(R-r_0)}}{\lambda_2e^{\lambda_1(R-\rabs_0)}}e^{\lambda \ss}.
\label{eqn:radial-upper-aux}
\end{equation}
Observing that $\lambda_1\lambda_2=2\lambda$, that $\lambda_2>0$ and $\lambda_1>\alpha$,
and taking $\lambda=\frac{1}{1+\ss}<1$ (since by hypothesis $\ss> 0$) it follows that
\begin{equation}\label{eqn:radial-upper-aux2}
\P_{R}(T_{\rabs_0}\leq\ss) 
\leq \frac{\lambda_1+\lambda_2}{\lambda_2 e^{\lambda_1(R-\rabs_0)}}\cdot e^{\lambda\ss}
= O\Big(\frac{e^{\lambda\ss}}{\lambda}e^{-\lambda_1(R-\rabs_0)}\Big)
= O((1+\ss) e^{-\alpha(R-\rabs_0)}).
\end{equation}
By our choice of $\delta$, we have $1+\ss\leq\delta(1+\kappa\ss^{\frac{1}{2\alpha}})^{2\alpha}
=\Theta(\delta e^{\alpha R}(\phi(R)+\kappa\phi^{(\ss)})^{2\alpha})$.
Moreover, by Part~\eqref{itm:phi4} of Lemma~\ref{lem:phi}, we have 
$|\theta_0|=\phi(\rabs_0)=\Theta(e^{-\frac12\rabs_0})$.
Since by hypothesis $x_0\in\ndt$, we know that $|\theta_0|\geq\phi(R)+\kappa\phi^{(\ss)}$,
it follows that $e^{-\frac12\rabs_0}=\Omega(\phi(R)+\kappa\phi^{(\ss)})$.
Thus, 
\[
\P_{R}(T_{\rabs_0}\leq\ss) 
= O\big(\delta (\phi(R)-\kappa\phis)^{2\alpha}e^{\alpha \rabs_0}\big)=O(\delta)
\]
which establishes the first part of our stated result.

Next, we consider the second stated claim (the integral bound).
By~\eqref{eqn:radial-upper-aux}, denoting by~$\mu_{\theta_0}$ the measure induced by $\mu$ for the angle $\theta_0$, recalling that $\lambda_1-\lambda_2=\alpha$, it follows that
\begin{align*}
\int_{\widetilde{\rabs}_0}^R\P_{x_0}(T_{det}\leq\ss)d\mu_{\theta_0}(r_0)
&= O(n)\int_{\widetilde{\rabs}_0}^R\frac{\lambda_1 e^{-\lambda_2 (R-r_0)}+\lambda_2 e^{\lambda_1(R-r_0)}}{{\lambda_2 e^{\lambda_1(R-\rabs_0)}}}\cdot e^{\lambda\ss}\cdot e^{-\alpha (R-r_0)}dr_0\\
&= O(n)\frac{e^{\lambda_2 (R-\widetilde{\rabs}_0)}-e^{-\lambda_1(R-\widetilde{\rabs}_0)}}{{\lambda_2 e^{\lambda_1(R-\rabs_0)}}}\cdot e^{\lambda\ss} \\
&= O(n) \frac{e^{\lambda_2 (R-\widetilde{\rabs}_0)}}{\lambda_2 e^{\lambda_1 (R-\rabs_0)}}\cdot e^{\lambda\ss}. 
\end{align*}
Using once more that $\lambda_1-\lambda_2=\alpha$ and $\lambda_1\lambda_2=2\lambda$, taking again $\lambda=\frac{1}{1+\ss}< 1$,
we obtain
\[
\int_{\widetilde{\rabs}_0}^{R}\P_{x_0}(T_{det}\leq\ss)d\mu_{\theta_0}(r_0)
= O(n (1+\ss) e^{-\alpha (R-\rabs_0)}e^{-\lambda_2(\widetilde{\rabs}_0-\rabs_0)}).
\]
Therefore, since $\widetilde{\rabs}_0\geq\rabs_0$ and $\lambda_2>0$ imply that 
$\lambda_2(\widetilde{\rabs}_0-\rabs_0)$ is positive, it follows that
\[
  \int_{\ndt}\P_{x_0}(T_{det}\leq \ss)d\mu(x_0)
  = O(n(1+\ss)e^{-\alpha R})\int_{\phi(R)+\kappa\phi^{(\ss)}}^{\frac{\pi}{2}}e^{\alpha\rabs_0}d\theta_0.
\]
Using that $|\theta_0|=\phi(\rabs_0)=\Theta(e^{-\frac12\rabs_0})$, since
  $\alpha>\frac12$, by definition of $\delta$ we get
\[
\int_{\ndt}\P_{x_0}(T_{det}\leq \ss)d\mu(x_0)
  = O(n(1+\ss)e^{-\alpha R})\int_{\phi(R)+\kappa\phi^{(\ss)}}^{\frac{\pi}{2}}\theta_0^{-2\alpha}d\theta_0 
  = O(n\delta(\phi(R)+\kappa\phi^{(\ss)})).
\]
Since $\delta\le 1$, the claimed integral bound follows from Lemma~\ref{lem:radialmeasure}.
\end{proof}

\subsection{Uniform lower bound}\label{sec:lower_rad}
The goal of this subsection is to prove the following lower bound matching the upper bound of Proposition~\ref{prop:rad-lowerBnd}  under the assumption that at least a constant amount of time has passed since the process started (as stated in Theorem~\ref{thm:rad}).
\begin{proposition}\label{prop:rad-upperBnd}
For every fixed arbitrarily small constant $c>0$
there are large enough  constants $\kappa_0, C\geq 1$ such that 
if $\kappa\geq\kappa_0$ and $\ss\geq\frac{16}{\alpha}\log\kappa+C$ satisfy $\phi(R)+\kappa\phi^{(\ss)}\leq\frac{\pi}{2}-c$, then 
\[
\inf_{x_0\in\dt(\kappa)}\P_{x_0}(T_{det}\leq \ss) = \Omega(\kappa^{-2\alpha}).
\]
\end{proposition}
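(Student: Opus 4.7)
The strategy is to partition $\dt(\kappa)\setminus B_Q(R)$ into two subregions and handle each separately (for $x_0\in B_Q(R)$ we trivially have $T_{det}=0$). Since particles move only radially, $T_{det}=T_{\rabs_0}$ where $\rabs_0=\phi^{-1}(|\theta_0|)$ plays the role of an absorbing barrier. Write $\delta:=\delta(\kappa,\ss)=\Theta(\kappa^{-2\alpha})$ (valid since $\ss=\Omega(1)$ by the hypothesis on $\ss$). The two subregions are Subcase B, where $r_0\in(\rabs_0,\tilde{\rabs}_0]$, and Subcase A, where $r_0\in(\tilde{\rabs}_0,R]$ and $|\theta_0|\le\phi(R)+\kappa\phis$.

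In Subcase B the argument is a direct application of Parts~\eqref{radial:itm:phi3} and~\eqref{radial:itm:phi4} of Lemma~\ref{lemmaradial}. By monotonicity of $G_{\rabs_0}(\cdot)$ and the defining equation $G_{\rabs_0}(\tilde{\rabs}_0)=\delta$, one gets $\P_{r_0}(T_{\rabs_0}<T_R)\ge\delta$. Fact~\ref{fct:radial-varphi2} bounds $\tilde{\rabs}_0-\rabs_0\le(1/\alpha)\log(1/\delta)+O(1)$, so Part~\eqref{radial:itm:phi4} yields $\EE_{r_0}(T_{\rabs_0}\mid T_{\rabs_0}<T_R)\le(4/\alpha)\log\kappa+O(1)$. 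Markov's inequality combined with $\ss\ge(16/\alpha)\log\kappa+C$ for $C$ large enough then gives $\P_{r_0}(T_{\rabs_0}>\ss\mid T_{\rabs_0}<T_R)\le 1/2$, and therefore $\P_{r_0}(T_{\rabs_0}\le\ss)\ge\delta/2=\Omega(\kappa^{-2\alpha})$.

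For Subcase A the plan is to reduce to Subcase B by first driving the particle down to $\tilde{\rabs}_0$. Letting $\tau:=\inf\{u:r_u=\tilde{\rabs}_0\}$, the strong Markov property at $\tau$ gives
\[
\P_{r_0}(T_{\rabs_0}\le\ss)\ge\P_{r_0}(\tau\le\ss/2)\cdot\P_{\tilde{\rabs}_0}(T_{\rabs_0}\le\ss/2),
\]
and the second factor is $\Omega(\delta)$ by Subcase B (enlarging $C$ to absorb the halving of $\ss$). It thus suffices to lower bound $\P_{r_0}(\tau\le\ss/2)$ by a positive constant uniformly in $r_0\in[\tilde{\rabs}_0,R]$, for which the natural tool is a Paley--Zygmund second-moment estimate applied to the occupation time $V:=\int_0^{\ss/2}\mathbf{1}_{r_u\le\tilde{\rabs}_0}\,du$, whose positivity is equivalent to $\tau\le\ss/2$. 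The radial diffusion has stationary density $\pi\propto\sinh(\alpha r)$ on $[0,R]$; a direct calculation using Part~\eqref{itm:phi4} of Lemma~\ref{lem:phi}, the defining relation of $\tilde{\rabs}_0$, and Fact~\ref{fct:radial-varphi2} shows that at the worst Subcase A point $R-\tilde{\rabs}_0=(1/\alpha)\log\ss+O(1)$, hence $\pi([0,\tilde{\rabs}_0])=\Theta(1/\ss)$. Assuming constant-time mixing of $r_u$ to $\pi$ uniformly in $n$, one obtains $\EE_{r_0} V=\Theta(1)$ together with $\EE_{r_0} V^2=O(\EE V+(\EE V)^2)=O(1)$ via a short-range/long-range decomposition of the double integral defining $\EE V^2$; Paley--Zygmund then delivers $\P_{r_0}(\tau\le\ss/2)=\P_{r_0}(V>0)\ge(\EE V)^2/\EE V^2=\Omega(1)$, completing the proof.

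The main obstacle is establishing the constant-time mixing and the associated two-time decorrelation required in Subcase A uniformly in the asymptotic parameters. One route is a Poincar\'e inequality yielding a spectral gap of $\Delta_{rad}$ bounded away from zero as $n\to\infty$, which should follow from Bakry--\'Emery-type arguments given that $\sinh(\alpha r)$ is log-concave on $[0,R]$ and the generator is uniformly elliptic with a reflecting barrier. A more concrete alternative, aligned with the discretized auxiliary process approach the paper advertises for the harder mixed-movement cases, is to partition $[0,\ss/2]$ into unit-length blocks, to show that each block independently contains an excursion visiting $[0,\tilde{\rabs}_0]$ with probability $\Omega(1/\ss)$ (using the reflecting behaviour at $R$ and Part~\eqref{radial:itm:phi1} of Lemma~\ref{lemmaradial} to control the relevant Laplace transforms), and then to conclude $\Omega(1)$ by a Bonferroni/union-bound argument over the $\Theta(\ss)$ blocks.
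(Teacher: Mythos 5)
Your Subcase B is exactly the paper's first case (Parts~\eqref{radial:itm:phi3}--\eqref{radial:itm:phi4} of Lemma~\ref{lemmaradial}, Fact~\ref{fct:radial-varphi2}, then Markov), and apart from a harmless factor-of-two slip in the conditional expectation bound (it is $\frac{4}{\alpha^2}\log\frac{1}{\delta}+O(1)=\frac{8}{\alpha}\log\kappa+O(1)$, not $\frac{4}{\alpha}\log\kappa+O(1)$, which still sits below $\ss/2$ under the hypothesis on $\ss$) it is fine. The genuine gap is in Subcase A: the entire reduction hinges on $\P_{r_0}(\tau_{\widetilde{\rabs}_0}\le\ss/2)=\Omega(1)$ uniformly in $n$, and this is never proved — it is deferred to an ``assumed constant-time mixing'' which, as formulated, cannot do the job. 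Total-variation mixing in $O(1)$ time is useless here because the target set has stationary mass $\pi([0,\widetilde{\rabs}_0])=\Theta(1/\ss)$, so you need the deviation from stationarity to be $o(1/\ss)$, i.e.\ a quantitative (e.g.\ exponential) decay statement, and moreover from the deterministic worst-case start $r_0=R$, not from $\pi$. Your first repair route fails as stated: the Bakry--\'Emery criterion applied to $\pi\propto\sinh(\alpha r)=e^{-U}$ gives a gap of order $\inf_{[0,R]}U''=\alpha^2/\sinh^2(\alpha R)\to 0$, so convexity of the potential degenerates with $R$ (one would instead need a one-dimensional log-concave Poincar\'e/Cheeger bound plus a regularization step to handle the start $\delta_R$). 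Your second route is not a proof either: a union bound over $\Theta(\ss)$ unit blocks each of probability $\Omega(1/\ss)$ bounds the probability of the union from \emph{above}; a Bonferroni lower bound needs exactly the pairwise decorrelation $\P(A_i\cap A_j)\lesssim\P(A_i)\P(A_j)+\text{(decay)}$ that you are assuming, and the same control is what your claimed bound $\EE V^2=O(\EE V+(\EE V)^2)$ requires. The paper does develop occupation-time machinery of this type (the discretized chain $\mathfrak{D}$ and Proposition~\ref{prop:coupling}), but only from a stationary start, only after verifying hypotheses, and it is used for the mixed $\alpha=2\beta$ case, not here — with an extra coupling needed to pass from $\pi$ to a boundary start.

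The paper's proof of this proposition avoids mixing altogether and is the natural fix for your Subcase A: reduce to the worst point $r_0=R$, $|\theta_0|=\phi(R)+\kappa\phis$, couple $r_t$ from above with an auxiliary process on $(0,R+1]$ that is restarted at $R$ each time it hits $R+1$, so that its excursions are genuinely i.i.d.\ by the strong Markov property; by Part~\eqref{radial:itm:phi3} of Lemma~\ref{lemmaradial} each excursion reaches $\rabs_0$ with probability $\Omega(e^{-\alpha(R-\rabs_0)})=\Omega(\kappa^{-2\alpha}/\ss)$, so $m=\Theta(\ss)$ excursions succeed with probability $\Omega(\kappa^{-2\alpha})$, and the elapsed time is controlled by Markov's inequality using the excursion-duration bounds (Part~\eqref{radial:itm:phi4} and the constant-drift comparison). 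If you want to keep your two-stage structure (first hit $\widetilde{\rabs}_0$, then invoke Subcase B), the same restart coupling gives the missing estimate directly: per excursion the probability of reaching $\widetilde{\rabs}_0$ is $\Omega(e^{-\alpha(R-\widetilde{\rabs}_0)})=\Omega(1/\ss)$, and independence over $\Theta(\ss)$ excursions yields $\P_{R}(\tau_{\widetilde{\rabs}_0}\le\ss/2)=\Omega(1)$ without any spectral-gap or second-moment input. As written, however, the proposal does not contain a proof of the key Subcase A estimate.
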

\begin{proof}
Let $\dt:=\dt(\kappa)$.
To obtain the lower bound on the detection probability recall that any $x_0\in\dt$ either satisfies $|\theta_0|\leq\phi(R)+\kappa\phi^{(\ss)}$ or $r_0\leq \widetilde{\rabs}_0$ where $\widetilde{\rabs}_0$ by definition is such that $G_{\rabs_0}(\widetilde{\rabs}_0)=\delta(\kappa,\ss)$. We deal first with the case $\rabs_0<r_0\leq \widetilde{\rabs}_0$ (the case $r_0\leq \rabs_0$ is trivial since it implies that $x_0\in B_Q(R)$ and thus $T_{det}=0$) by noticing that 
\begin{align*}\P_{x_0}(T_{det}\leq \ss)&\geq\,\P_{x_0}(T_{\rabs_0}\leq \ss\,\big|\,T_{\rabs_0}<T_{R})\P_{x_0}(T_{\rabs_0}<T_{R})\\&\geq\,\big(1-\tfrac{1}{\ss}\EE_{x_0}(T_{\rabs_0}\,\big|\,T_{\rabs_0}<T_{R})\big)\P_{x_0}(T_{\rabs_0}<T_{R}).
\end{align*}
Using Part~\eqref{radial:itm:phi4} of Lemma~\ref{lemmaradial} with $\auxy:=r$, $\yabs_0:=\rabs_0$ and $\auxY:=R$, recalling that $G_{\rabs_0}(r)=\P_{r}(T_{\rabs_0}<T_{R})$, since by case assumption $r_0\leq\widetilde{\rabs}_0$ and recalling again that by definition of $\widetilde{\rabs}_0$ it holds that $G_{\rabs_0}(\widetilde{\rabs}_0)=\delta:=\delta(\kappa,\ss)$, we get
\[
\P_{x_0}\big(T_{det}\leq \ss\big)\geq 
  \Big(1-\frac{1}{\ss}\Big[\frac{2}{\alpha}(\widetilde{\rabs}_0-\rabs_0)+\frac{2}{\alpha^2}(1-\delta)\Big]\Big)\delta.
\]
From Fact~\ref{fct:radial-varphi2} it follows that $\widetilde{\rabs}_0-\rabs_0=\frac{1}{\alpha}\log \tfrac{1}{\delta}+O(1)$, and since we can bound from above $1-\delta$ by $\log\tfrac{1}{\delta}$, we get that the term inside the brackets above is at most $\frac{4}{\alpha^2}\log\frac{1}{\delta}+O(1)$. By hypothesis, $\ss\ge C\ge 1$, so $\delta\geq (1+\kappa)^{-2\alpha}=\Omega(\kappa^{-2\alpha})$ 
and hence, using again the hypothesis regarding $\ss$, we see that $\ss>\frac{16}{\alpha}\log\kappa+C\geq \frac{8}{\alpha^2}\log\frac{1}{\delta}+\frac12 C$ for $C$ large enough, so taking $C$ even larger if needed we conclude that 
\[
\P_{x_0}\big(T_{det}\leq \ss\big)=\Omega(\delta)=\Omega(\kappa^{-2\alpha}).
\]
We now handle the points $x_0:=(r_0,\theta_0)\in\dt$ for which $|\theta_0|\leq\phi(R)+\kappa\phi^{(\ss)}$. Observe that for any fixed~$\theta_0$ the detection probability is minimized when $r_0=R$, and for points such that $r_0=R$ the probability decreases with $|\theta_0|$, so it will be enough to consider the case where $r_0=R$ and $|\theta_0|=\phi(R)+\kappa\phi^{(\ss)}$. To obtain lower bounds on the detection probability we will couple the radial movement $r_t$ of the particle starting at $x_0$ with a similar one, denoted by $\widetilde{r}_t$, evolving as follows:
\begin{itemize}
    \item $\widetilde{r}_0=R$ and we let $\widetilde{r}_t$ evolve through the generator $\Delta_{rad}$ on $(0,R+1]$.
    \item Every time $\widetilde{r}_t$ hits $R+1$ it is immediately restarted at $R$.
\end{itemize}
It follows that $r_t$ and $\widetilde{r}_t$ can be coupled so that $r_t\leq\widetilde{r}_t$ for every $t\geq0$, and hence the detection time $\widetilde{T}_{\rabs_0}$ of the new process bounds the original $T_{det}=T_{\rabs_0}$ from above. Thus, it will be enough to bound $\PP_{x_0}(\widetilde{T}_{\rabs_0}\leq\ss)$ from below. Observe that the trajectories of $\widetilde{r}_t$ are naturally divided into excursions from $R$ to $R+1$, which we use to define a sequence of Bernoulli random variables $\{E_i\}_{i\geq1}$ where $E_i=1$ if $\widetilde{r_t}$ reaches $\rabs_0$ on the $i$-th excursion. We also use this division to define a sequence $\{\tau_i\}_{i\geq1}$ of random variables, where $\tau_i$ is the time it takes $\widetilde{r}_t$ to reach either $\rabs_0$ or $R+1$ in the $i$-th excursion. It follows from the strong Markov property that all excursions are independent of one another, and so are the $E_i$'s and $\tau_i$'s.

Fix $m:=\lfloor \tfrac{\alpha}{24}\ss\rfloor$ which from our hypothesis on $\ss$ satisfies $m\geq 1$, and observe that, since the $E_i$'s are i.i.d., the event $\mathcal{E}:=\{\exists 1\leq i\leq m,\,E_i=1\}$ has probability
\[
\PP(\mathcal{E})=1-(1-\PP(E_1))^m\geq 1-e^{-m\PP(E_1)}.
\]
From Part~\eqref{radial:itm:phi3} of Lemma~\ref{lemmaradial} with $\auxy:=R$, $\yabs_0:=\rabs_0$ and $\auxY:=R+1$, using  $\log(1+x)\geq\frac{x}{2}$ for $|x|<1$
and $\log(\cotanh(\frac{x}{2}))\leq \frac{2e^{-x}}{1-e^{-x}}$, we have
\[
\PP(E_1)=\frac{\log\big(\tfrac{\tanh(\alpha (R+1)/2)}{\tanh(\alpha R/2)}\big)}{\log\big(\tfrac{\tanh(\alpha R/2)}{\tanh(\alpha \rabs_0/2)}\big)}
\geq\frac{\log\big(1+\frac{2e^{-\alpha R}(1-e^{-\alpha})}{(1+e^{-\alpha(R+1)})(1-e^{-\alpha R})}\big)}{\log\big(\cotanh(\alpha\frac{\rabs_0}{2})\big)}
=\Omega(e^{-\alpha(R-\rabs_0)}(1-e^{-\alpha\rabs_0})).
\]
Since $\phi(R)+\kappa\phi^{(\ss)}\leq\frac{\pi}{2}-c$, we get $\rabs_0=\Omega(1)$ and deduce that $\PP(E_1)=\Omega(e^{-\alpha(R-\rabs_0)})$.
Recalling that $\phi(R)+\kappa\phi^{(\ss)}=|\theta_0|=\phi(\rabs_0)$, by Part~\eqref{itm:phi4} of Lemma~\ref{lem:phi}, we have that
$e^{-\frac12\rabs_0}=\Theta(\phi(R)+\kappa\phi^{(\ss)})$ and $\phi(R)=\Theta(e^{-\frac12R})$.
Since by hypothesis $\ss,\kappa\geq 1$, it follows that
$e^{\frac12(R-\rabs_0)}=\Theta(e^{\frac12 R}(\phi(R)+\kappa\phi^{(s)}))=\Theta(\kappa\ss^{\frac{1}{2\alpha}})$.
Thus, $\PP(E_1)=\Omega(\kappa^{-2\alpha}/\ss)$ so setting $\kappa_0$ sufficiently large we get
\begin{equation*}
\PP(\mathcal{E}) \geq 
1-e^{-\Omega(\kappa^{-2\alpha})}
=\Omega(\kappa^{-2\alpha}).
\end{equation*}
Clearly, if $i_D$ is the first index such that $E_{i_D}=1$, then $\mathcal{E}=\{1\leq i_D\leq m\}$. Hence, 
\[\P_{x_0}(T_{det}\leq \ss)\geq \P_{x_0}(T_{det}\leq \ss\mid\mathcal{E})\P_{x_0}(\mathcal{E})=\P_{x_0}\Big(\sum_{i=1}^{i_D}\tau_i\leq \ss \mid 1\leq i_D\leq m\Big)\PP(\mathcal{E}),\]
where the $\tau_i$ were defined above. Using Markov's inequality we obtain
\[\P_{x_0}\Big(\sum_{i=1}^{i_D}\tau_i> \ss\,\big|\,1\leq i_D\leq m\Big)\leq \frac{1}{\ss}\big(m\EE_{x_0}(\tau_1 \mid E_1=0)+\EE_{x_0}(\tau_1 \mid E_1=1)\big),\]
since there are at most $m$ excursions where the particle fails to reach $\rabs_0$, followed by a single excursion where it succeeds (here we are conditioning on these events through the value of $E_1$). For the first term, we have
\[\EE_{x_0}(\tau_1 \mid E_1=0)=\EE_{R}(\widetilde{T}_{R+1} \mid \widetilde{T}_{R+1}\leq \widetilde{T}_{\rabs_0})\,\leq\, \frac{6}{\alpha},\]
as can be seen in~\cite{Borodin2002} (formulas 3.0.4 and 3.0.6).  by comparing the process to one with constant drift equal to $\tfrac{\alpha}{2}$. For the second term we use Part~\eqref{radial:itm:phi4} of Lemma~\ref{lemmaradial} with $\auxy:=R$, $\yabs_0:=\rabs_0$ and $\auxY:=R+1$ to obtain
\[
\EE_{x_0}(\tau_1 \mid E_1=1)=\EE_{R}(\widetilde{T}_{\rabs_0} \mid \widetilde{T}_{\rabs_0}<\widetilde{T}_{R+1})\leq \frac{2}{\alpha}(R+1-\rabs_0)+\frac{2}{\alpha^2}.
\]
Recalling that
$e^{\frac12(R-\rabs_0)}=\Theta(\kappa\ss^\frac{1}{2\alpha})$, 
we have $R-\rabs_0=\log(\kappa\ss^{\frac{1}{2\alpha}})+\Theta(1)$.
Summarizing,
\[\P_{x_0}\Big(\sum_{i=1}^{i_D}\tau_i> \ss\mid 1\leq i_D\leq m\Big)= \frac{6m}{\alpha\ss}+
\frac{1}{\ss}\Big(\frac{2}{\alpha}\log(\kappa\ss^{\frac{1}{2\alpha}})+O(1)\Big).\] 
By our choice of $m$ it is immediate that $\frac{6m}{\alpha\ss}\leq\frac14$. 
To bound the last term of the displayed equation recall that by hypothesis $\ss\geq\frac{16}{\alpha}\log\kappa+C$, 
so that we have $\frac{2}{\alpha}\log\kappa\le\frac18\ss$ and moreover we can choose 
$C$ large enough so that $\frac{1}{\alpha^2}\log\ss\leq\frac{1}{8}\ss$ and finally conclude that
\[
\P_{x_0}(T_{det}\leq \ss)=\Big(1-\P_{x_0}\Big(\sum_{i=1}^{i_D}\tau_i> \ss\,\big|\,1\leq i_D\leq m\Big)\Big)\PP(\mathcal{E})=\Omega(\kappa^{-2\alpha}).
\]
\end{proof}
Observe that the bounds established in the preceding proposition are exactly those claimed in the first part of  Theorem~\ref{thm:rad}.

\subsection{Time spent within an interval containing the origin}
In this section we consider a class of one-dimensional diffusion processes taking place in $[0,\auxY]$ with $\auxY$ a positive integer  
and with given stationary  distribution.
	Our study is motivated by the process $\{\auxy_s\}_{s\geq 0}$ in $(0,\auxY]$ with generator~$\Delta_h$
	(see~\eqref{radialgenerator}) and reflecting barrier at $\auxY$. This section's main result
	will later in the paper be applied precisely to this latter process. However, the arguments of this section are applicable to less constrained  diffusion processes and might even be further
	generalized to a larger class of processes, and so we have opted for an exposition which, instead
	of tailoring the proof arguments to the specific 
	process with generator $\Delta_h$, gives the conditions that need to be satisfied so that
	our results as a corollary also apply to our specific process.

    Our goal is to formalize a rather intuitive fact. Specifically, assume
    $\ss$ is not a too small amount of time and that 
    $\{\auxy_s\}_{s\geq 0}$ is a diffusion process with stationary distribution~$\pi(\cdot)$ on $[0,\auxY]$. Roughly speaking, we show that under some general hypotheses, during a time interval of length $\ss$, when starting from the stationary distribution,  with constant probability the process spends within $(0,k]$ ($k$ larger than a constant) a period of time proportional to the expected time to spend there, that is, proportional to $\ss\pi((0,k])$. We remark that $k$ only needs to be larger than a sufficiently large constant $C$, which does not depend on $Y$ (otherwise the result would follow directly from Markov's inequality), and $k$ does not depend on $\pi$ either.


	We start by establishing two lemmas. The first one states that a diffusion process $\{\auxy_s\}_{s\geq 0}$ in $[0,\auxY]$ with stationary distribution $\pi(\cdot)$,
    henceforth referred to as the \emph{original process}, can be coupled with a process $\mathfrak{D}$ that is continuous in time but discretized in space with values in $\mathbb{Z}$ and defined next.
    First, let $\pi_m:=\pi((m-1,m])$ where $1\le m\le \auxY$. 
    Let $p'_1=0$ (this restriction could be relaxed, but we impose it for the sake of simplicity of presentation), and let $p'_2,\ldots, p'_{\auxY}$ be the unique solution to the following system of equations:
    \begin{equation}\label{radial:def:pim}
    \pi_{m} = 
    \begin{cases} 
    p'_{m}\pi_{m}+p'_{m+1}\pi_{m+1}, & \text{if $m=1$,} \\    
    (1-p'_{m-1})\pi_{m-1}+p'_{m+1}\pi_{m+1}, & \text{if $1<m<\auxY$,} \\
    (1-p'_{m-1})\pi_{m-1}+(1-p'_{m})\pi_{m}, & \text{if $m=\auxY$.} 
    \end{cases}
    \end{equation}
    Intuitively, one can think of the $p'_m$ as the probability that the process
    hits $m-1$ before it hits $m$ when starting from the stationary distribution conditioned on starting in the interval~$(m-1,m]$. 
	Denoting next, for $0 \le m < \auxY$, by
	$\widehat{p}_m$ the probability that the original process starting at $m$ hits  $m-1$ before hitting $m+1$ (observe that $\widehat{p}_0=0$). 
	We then define
	$$
	p:=\sup\{\sup_{m\in \NN: 1 \le m\le\auxY} p'_m, \sup_{m \in \NN: 0 \le m < \auxY} \widehat{p}_m\},
	$$
	and let $q:=1-p$.
	 We are exclusively interested in processes for which the following holds:
	\begin{quote}
	    \mycom{hyp:H1}{(H1)} 
	    $p \in [\underline{c},\overline{c}]$, $0<\underline{c}\leq\overline{c}<1/2$, 
	    for constants $\underline{c},\overline{c}>0$ 
	    that do not depend on $\auxY$. 
	\end{quote}
	In other words, we are interested in the case
	where the original process is subject to a drift towards the boundary $\auxY$.
	
	Note however that condition~\linktomycom{hyp:H1}  does not preclude that the process $\{\auxy_s\}_{s\geq 0}$, being at position $\auxy$ (for some real $0 \le \auxy \le \auxY$), in expectation stays within an interval $(y-1,y+1] \cap [0,\auxY]$ an amount of time that increases with $\auxY$. This explains why we focus on processes that satisfy the next stated additional hypothesis concerning the maximal time $T_{\auxy\pm 1}$   the process stays within said interval:
	\begin{quote}
	   \mycom{hyp:H2}{(H2)} There is a constant $L>0$  (independent of $\auxY$) such that 
	   for all reals $0\leq \auxy \le \auxY$ it holds that 
	   $\EE_{y}(T_{\auxy\pm 1})\le L$.
	\end{quote}
	The preceding hypothesis says that the process stays, when starting at position $y$, in expectation, at most constant time within any giving interval $(y-1,y+1]$.
	A last assumption that we make in order to derive this section's main result is the following: 
	\begin{quote}
	   \mycom{hyp:H3}{(H3)} There is a constant
	   $\widetilde{C}$, independent of $\auxY$, such that if $\widetilde{C}\le k\le \auxY$, $\ss\geq 1/\pi((0,k])$ and 
	   $\auxy_s \le k$ at some time moment $s\ge 0$, then with constant probability $\auxy_t \le k$ for all $s \le t \le s+1$.
	\end{quote}
	The preceding hypothesis says that only in the vicinity of $0$ there might exist an infinite drift (towards $Y$). Once we reach a point that is a constant away from $0$, we might stay at roughly this value for at least one unit of time with constant probability (this is a technical assumption that allows us to focus only on the probability to reach such a value when considering the expected time spent roughly there.
	
	Now, we are ready to define the process $\mathfrak{D}$: set the initial position of $\mathfrak{D}$ as $\auxy_0^{\mathfrak{D}} := \lfloor \auxy_0 \rfloor$. Suppose the original process starts in an interval $(m-1,m]$ for some integer $1 \le m \le \auxY$.  If the original process hits $m$ before hitting $m-1$, the original process stays put at $m-1$ and the \emph{coupling phase} defined below starts.  If the original process hits $m-1$ before hitting~$m$, then $\mathfrak{D}$ jumps to $m-2$ and the coupling phase starts as well. In the coupling phase now iteratively the following happens: suppose the original process is initially at integer $m$, for some $1 \le m < \auxY$, and~$\mathfrak{D}$ is at some integer $m'$ (if $m=\auxY$, directly go to the \emph{move-independent phase} defined below). Mark~$m$ as the center of the current phase, and do the following: As long as the original process neither hits $m-1$ nor $m+1$, $\mathfrak{D}$ stays put. If the original process hits $m-1$, then~$\mathfrak{D}$ jumps to $m'-1$ (note that this happens with probability $\widehat{p}_m \le p$. If the original process hits $m+1$, then~$\mathfrak{D}$ jumps to $m'-1$ with probability $\frac{p-\widehat{p}_m}{\widehat{q}_m},$ and with probability $1-\frac{p-\widehat{p}_m}{\widehat{q}_m},$ $\mathfrak{D}$ jumps to $m'+1$. Note that the total probability that $\mathfrak{D}$ jumps to $m'-1$ is exactly $p$. Moreover, if $m+1=\auxY$, the move-independent phase starts. In either case, $m$ is unmarked as the center of the current phase, and the new location of the original process (either $m-1$ or $m+1$, respectively; note that $m-1=0$ could also happen, this is treated in the same way), is marked as the new center, and the next iteration of the coupling phase (move-independent phase, respectively) with this new location playing the role of $m$. 
	
	In the move-independent phase \emph{only the instants} of movements of the original process and $\mathfrak{D}$ are coupled, but not the movements itself: suppose that at the beginning of this phase the original process is at position $m$ for some integer $0 \le m \le \auxY$, whereas $\mathfrak{D}$ is at some integer $m'$: at the instant when the original process hits $m-1$ or $m+1$, $\mathfrak{D}$ jumps to $m'-1$ with probability $p$ and to $m'+1$ with probability $q$. Then, the next iteration of the independent phase starts with the new location of the original process (that is, either $m-1$ or $m+1$) playing the role of $m$.
	
	\noindent\par
	By construction, it is clear that from the beginning of the coupling phase on, $\mathfrak{D}$ jumps from $m'$ to $m'-1$ with probability~$p$ and from $m'$ to $m'+1$ with probability $1-p$. We also have the following observation that also follows immediately by construction:
	\begin{observation}\label{obs:domination}
	    As long as the original process $\{\auxy_s\}_{s \ge 0}$ does not hit $\auxY$, at every time instant $s$, $\auxy_s^{\mathfrak{D}} \le \auxy_s$.
	\end{observation}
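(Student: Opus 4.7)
My plan is to prove the observation by strong induction on the successive integer-valued phases of the coupling, tracking the invariant that whenever a new coupling phase starts with the original process at an integer center $m$ and $\mathfrak{D}$ at integer position $m'$, we have $m' \le m-1$. Since the original process is assumed never to hit $\auxY$, we remain in the coupling phase (the move-independent phase only activates when the center reaches $\auxY$), so the argument only concerns transitions within the coupling phase.

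For the base case, note that $\auxy_0 \in (m-1,m]$ for some integer $m$, and $\auxy_0^{\mathfrak{D}} = \lfloor \auxy_0 \rfloor$. With probability one $\auxy_0 \in (m-1,m)$ (the boundary event has measure zero under the continuous distribution), so $\auxy_0^{\mathfrak{D}} = m-1$. If the original process hits $m$ before $m-1$, the coupling phase starts with center $m$ and $\mathfrak{D}$ at $m-1$, so the invariant $m' \le m-1$ holds. If it hits $m-1$ first, then $\mathfrak{D}$ jumps to $m-2$ and the coupling phase starts with center $m-1$ and $\mathfrak{D}$ at $m-2$, again satisfying the invariant.

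For the inductive step, suppose the coupling phase is entered with center $m$ and $\mathfrak{D}$ at $m' \le m-1$. The next transition occurs when the original process reaches either $m-1$ or $m+1$. In the former case $\mathfrak{D}$ jumps to $m'-1$, the new center is $m-1$, and the new relation is $m'-1 \le m-2$, which follows from $m' \le m-1$. In the latter case $\mathfrak{D}$ jumps either to $m'-1$ or to $m'+1$; the new center is $m+1$, and in either case the new position of $\mathfrak{D}$ is at most $m'+1 \le m$, preserving the invariant. This completes the inductive argument.

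Finally, for the pointwise inequality $\auxy_s^{\mathfrak{D}} \le \auxy_s$ at all times $s$ (not just at center transitions), observe that during any coupling sub-phase with center $m$, the original process takes values strictly in $(m-1,m+1)$ while $\mathfrak{D}$ sits at $m' \le m-1$, hence $\auxy_s^{\mathfrak{D}} = m' \le m-1 < \auxy_s$. At the instants when the center changes, the new relation is already guaranteed by the inductive step. The only routine obstacle is the bookkeeping at the boundary $\auxy = 0$ (where the original process may be reflected), but this does not affect the invariant since hitting a smaller integer can only decrease both sides consistently, and in no case involves $\auxY$.
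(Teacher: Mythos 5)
Your proof is correct and is essentially the argument the paper has in mind: the paper simply asserts the observation ``follows immediately by construction,'' and your induction over coupling sub-phases with the lag-one invariant ($\mathfrak{D}$ at most the current center minus one, hence below the original process throughout each sub-phase and at each transition, with the boundary at $0$ and the exclusion of the move-independent phase handled exactly as you note) is precisely the formalization of that claim. The measure-zero caveat about an integer starting point is harmless and does not affect the conclusion.
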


	We next show that for $T$ being a sufficiently large constant $C_L$ that depends on $L$ only, conditional under not yet having hit the boundary, $\mathfrak{D}$ is likely to have performed already quite a few steps up to time $T$: 
	\begin{lemma}\label{geometricdominationlemma}
Assume~\linktomycom{hyp:H2} holds. Let $C_L$ be a large enough constant depending on $L$ only with $L$ as in~\linktomycom{hyp:H2}. 
For every constant $c_1$, there exists a constant $c_2 > 0$, depending only on~$c_1$ and $L$, such that for any integer $T \ge C_L$, with probability at least $1-e^{c_1 T}$, at least $c_2 T$ steps are performed by $\mathfrak{D}$ up to time $T$.
	\end{lemma}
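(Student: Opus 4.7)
The plan is to establish an exponential concentration bound on the inter-step times of $\mathfrak{D}$ and then conclude by a Chernoff-type argument via the tower property. By construction, every step of $\mathfrak{D}$ after the initial one occurs at the moment when the original diffusion $\{\auxy_s\}_{s\geq 0}$, sitting at an integer $m\in\{1,\ldots,\auxY\}$, first leaves the interval $(m-1,m+1]\cap[0,\auxY]$. Writing $\tau_i$ for the duration of the $i$-th such step and $\mathcal{F}_{i-1}$ for the filtration immediately before it begins, the strong Markov property and hypothesis~\linktomycom{hyp:H2} applied at the integer $m_i$ give $\EE[\tau_i\mid\mathcal{F}_{i-1}]\leq L$ uniformly; the very first step (during the initial phase) is handled analogously since its duration is the exit time from a length-$1$ subinterval of $(\auxy_0-1,\auxy_0+1]$, which is in turn dominated by the exit from that length-$2$ interval and hence has expectation at most $L$ by~\linktomycom{hyp:H2}.

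My next task is to upgrade this conditional expectation bound into a uniform sub-exponential tail bound: find constants $\theta,K>0$ depending only on $L$ such that $\EE[e^{\theta\tau_i}\mid\mathcal{F}_{i-1}]\leq K$. Markov's inequality already gives $\PP(\tau_i\geq 2L\mid\mathcal{F}_{i-1})\leq 1/2$; to iterate this I must control the \emph{residual} exit time from the fixed interval $(m-1,m+1]$ when restarting from an arbitrary $y'\in(m-1,m+1]$, not just from the center $m$. I would do this by applying~\linktomycom{hyp:H2} at the new position $y'$ (so that a displacement of size at least $1$ from $y'$ happens in expected time at most $L$) and combining with the drift encoded in~\linktomycom{hyp:H1} (so that each such displacement has probability bounded away from zero of actually carrying the process across $m-1$ or $m+1$). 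Composing these two facts via the strong Markov property yields geometrically decreasing tails $\PP(\tau_i\geq s\mid\mathcal{F}_{i-1})\leq Ke^{-cs}$, and hence the desired MGF bound.

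Once the uniform MGF bound is in hand, the concentration step is routine. Setting $S_n:=\tau_1+\cdots+\tau_n$, the tower property yields $\EE[e^{\theta S_n}]\leq K^n$, so Markov gives
\[
\PP(S_n>T)\;\leq\; e^{-\theta T}K^n.
\]
Writing $N(T)$ for the number of steps performed by $\mathfrak{D}$ up to time $T$, the event $\{N(T)<n\}$ coincides with $\{S_n>T\}$, so choosing $n=\lceil c_2T\rceil$ with $c_2=c_2(c_1,L)>0$ small enough that $c_2\log K\leq \theta-c_1$ produces $\PP(N(T)<c_2T)\leq e^{-c_1T}$ for every integer $T\geq C_L$, with $C_L=C_L(L)$ taken large enough to absorb the rounding in the choice of $n$ and the contribution of the initial-phase step.

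The main obstacle is the tail bound of the second paragraph. Pure~\linktomycom{hyp:H2} controls only the exit time from the length-$2$ interval recentered at the \emph{current} position, whereas $\tau_i$ is the exit time from the fixed interval $(m-1,m+1]$, and after a failed attempt the process restarts at a generic $y'\in(m-1,m+1]$ rather than at $m$. A priori, under~\linktomycom{hyp:H2} alone one could imagine the diffusion oscillating inside $(m-1,m+1]$ for a long time without ever crossing either endpoint; it is precisely the drift guaranteed by~\linktomycom{hyp:H1}, together with the reflection at $\auxY$, that closes the iteration by ensuring each successive length-$1$ displacement has a constant positive probability of actually crossing $m\pm 1$. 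Pinning down this geometric composition cleanly is the one place where a careful argument is needed; everything else reduces to standard tower-property Chernoff bookkeeping.
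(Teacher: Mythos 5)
Your overall skeleton is the same as the paper's: each coupling-phase step of $\mathfrak{D}$ starts with the original process at an integer $m$ and ends at the first hitting of $m\pm 1$, one proves a geometric/exponential tail for these inter-step durations from~\linktomycom{hyp:H2}, and one finishes with a concentration bound for their sum (your MGF/Chernoff bookkeeping versus the paper's domination of $\lfloor T^{(i)}/(2L)\rfloor$ by i.i.d.\ Geometric$(1/2)$ variables and a negative-binomial tail is a cosmetic difference). The problem lies exactly in the step you flag as delicate, and your proposed way of closing it does not work. First, the lemma assumes only~\linktomycom{hyp:H2} and requires $c_2$ to depend on $c_1$ and $L$ alone, whereas you invoke~\linktomycom{hyp:H1}; at best you would prove a weaker statement whose constants also involve $\underline{c},\overline{c}$. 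Second, and more substantively, \linktomycom{hyp:H1} does not supply the crossing estimate you need: it controls hitting probabilities ($p'_m$, $\widehat{p}_m$) only from \emph{integer} starting points, while after a failed attempt the process sits at a generic real $y'\in(m-1,m+1]$. Moreover, when $y'$ lies in the lower half $(m-1,m]$, the only unit displacement that is guaranteed to produce a crossing of $\{m-1,m+1\}$ is the \emph{downward} one (reaching $y'-1\le m-1$), and this is precisely the direction whose probability the drift towards $\auxY$ can make arbitrarily small (in the motivating radial process it degenerates near the origin). So the claim that ``each such displacement has probability bounded away from zero of carrying the process across $m\pm1$'' is false in general, and repairing it via two consecutive upward displacements would again need crossing-probability lower bounds at non-integer points that neither hypothesis provides.

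The gap can be closed with~\linktomycom{hyp:H2} alone, which is what legitimizes the paper's ``restart every $2L$'' step: for any $y'\in(m-1,m+1)$, hitting $\{y'-1,y'+1\}$ forces the process, by continuity, to have first hit one of $m-1$, $m$, $m+1$ (the exit point on the far side of $m$ cannot be reached without passing through $m$, and the exit point on the near side lies beyond the corresponding endpoint of the fixed interval). Applying~\linktomycom{hyp:H2} once at $y'$ and, if the process reached $m$, once more at $m$, the strong Markov property gives $\EE_{y'}[\text{hitting time of }\{m-1,m+1\}]\le 2L$ uniformly in $y'\in(m-1,m+1]$; Markov's inequality and genuine restarting now yield $\PP_m(\tau_i\ge 4kL)\le 2^{-k}$, after which your MGF bound holds with $\theta,K$ depending on $L$ only and the rest of your argument goes through. (Two minor points: your final choice ``$c_2\log K\le\theta-c_1$'' silently requires $c_1<\theta$, so $c_1$ cannot be arbitrarily large relative to $L$; the paper's own bookkeeping shares this restriction, so it is not specific to you. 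Also, the ``$1-e^{c_1T}$'' in the statement is a typo for $1-e^{-c_1T}$, which is what both you and the paper actually aim at.)
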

	\begin{proof} 
	Fix any real $0 \le y \le \auxY$, and recall that  $T_{y\pm 1}$ denotes the time the original process remains in the interval $(y-1,y+1] \cap [0,\auxY]$. By~\linktomycom{hyp:H2} and Markov's inequality, with probability at most $1/2$, the original process exits $(y-1,y+1]$ before time $2 L$. Formally, for all $0 \le y \le \auxY$, we have $\PP_{y}(T_{y \pm 1}\ge 2 L) \leq 1/2$. 
Since this bound is uniform over the starting position $y$ we can restart the process every $2 L$ units of time, thus obtaining
\begin{equation}\label{eq.geometric}
    \P(T_{y\pm 1}\geq 2kL)\leq 2^{-k},
\end{equation} for every integer $k\ge 1$, regardless of $y$. Hence, if at the beginning of an iteration of the coupling phase the original process is at some integer $0 \le m \le \auxY$, the time it takes $\mathfrak{D}$ to wait until its next step from that time moment on, is bounded in the same way as $T_{m\pm 1}$, and the time for the first step to enter the coupling phase can be bounded in the same way as in~\eqref{eq.geometric}. Now, for $i\ge 1$, let $T^{(i)}$ denote the time spent between the instants $T_{i-1}$ and $T_i$, that mark the beginning and end of the $i$-th iteration of the coupling phase (the time before the start of the first iteration of the coupling phase, in case $i=1$, respectively). By~\eqref{eq.geometric}, the random variable~$\lfloor\frac{T^{(i)}}{2 L}\rfloor$ is stochastically bounded from above by a geometric random variable with success probability $\frac{1}{2}$ and support $\{1,2,3,\ldots\}$. The random variables $T^{(1)}, T^{(2)}, \ldots$ are independent but not identically distributed, as the time to remain in some intervals might be longer than in others, but since~\eqref{eq.geometric} holds for all $y$, we may bound $\sum_i \lfloor \frac{T^{(i)}}{2L} \rfloor$ by the sum of  i.i.d.\ geometric random variables with success probability $\frac12$. Hence, for any constant~$c_1$, there is a sufficiently small~$c_2>0$ such that
\[\P\Big(\sum_{i=0}^{c_2 T-1}T^{(i)}> T\Big)\leq \P\Big(\sum_{i=0}^{c_2 T-1}\left\lfloor\frac{T^{(i)}}{2L}\right\rfloor> \frac{T}{3L} \Big)\le \sum_{j=\lfloor \frac{T}{3L} \rfloor+1}^\infty \binom{j-1}{c_2 T -1}2^{-j}\leq e^{-c_1 T},\]
where we used that the sum of $c_2 T$ i.i.d.~geometric random variables has a negative binomial distribution, and where $c_2$ is chosen (depending on $c_1$ and $L$ only) so that the last inequality holds.
Hence, with probability at least $1-e^{c_1 T}$, at least $c_2 T$ steps are performed by $\mathfrak{D}$  during time $T$, and the lemma follows.
	\end{proof}
	From the previous lemma we have the following immediate corollary:
	\begin{corollary}\label{cor:geomdomination}
	Assume~\linktomycom{hyp:H2} holds. Let $C_L$ be a large enough constant depending on $L$ only with $L$ as in \linktomycom{hyp:H2}. Let $C'_L$ be a sufficiently large constant depending on~$L$ only and let $s \ge C'_L$ denote the number of steps performed by $\mathfrak{D}$. For every constant $c'_1 > 0$, there exists $C'_1$ depending only on $L$ and on $c'_1$, such that with probability at least $1-e^{c'_1 s}$, the time elapsed for the $s$ steps is at most~$C'_1 s$.
	\end{corollary}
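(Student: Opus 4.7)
The plan is to derive the corollary directly from Lemma~\ref{geometricdominationlemma} by a simple contrapositive-style argument: the event that the first $s$ steps take longer than $C'_1 s$ units of time is precisely the event that strictly fewer than $s$ steps have been performed by time $C'_1 s$, and the latter is exactly the type of event Lemma~\ref{geometricdominationlemma} controls.

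More concretely, let $\tau_s$ denote the time elapsed for the first $s$ steps of $\mathfrak{D}$ and let $N_T$ denote the number of steps performed by $\mathfrak{D}$ in the time interval $[0,T]$, so that
\[
\{\tau_s > T\} = \{N_T < s\}.
\]
First, I would apply Lemma~\ref{geometricdominationlemma} with parameter $c_1 := c'_1$ to extract a constant $c_2 = c_2(c'_1,L) > 0$ such that for every integer $T \ge C_L$,
\[
\P(N_T \ge c_2 T) \ge 1 - e^{-c'_1 T}.
\]
Then I would set $C'_1 := \max\{1, 1/c_2\}$, which depends only on $c'_1$ and $L$, and choose $C'_L$ large enough (depending on $L$ only) so that $C'_1 s \ge C_L$ whenever $s \ge C'_L$.

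For such $s$, apply the above bound with $T := C'_1 s$. Since $c_2 C'_1 \ge 1$, we have $c_2 T \ge s$, and so
\[
\{N_{C'_1 s} \ge c_2 C'_1 s\} \subseteq \{N_{C'_1 s} \ge s\} = \{\tau_s \le C'_1 s\}.
\]
Combining this inclusion with $C'_1 \ge 1$, which yields $e^{-c'_1 C'_1 s} \le e^{-c'_1 s}$, we conclude
\[
\P(\tau_s \le C'_1 s) \ge \P(N_{C'_1 s} \ge c_2 C'_1 s) \ge 1 - e^{-c'_1 C'_1 s} \ge 1 - e^{-c'_1 s},
\]
which is exactly the desired bound (assuming the intended statement has $e^{-c'_1 s}$ rather than $e^{c'_1 s}$).

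There is no substantial obstacle to expect: the argument is the standard duality between a lower tail on a counting process and an upper tail on the associated waiting time, and the only bookkeeping is to make sure the constants $c_2$ and $C'_1$ depend only on $c'_1$ and $L$, which is automatic from the choices above.
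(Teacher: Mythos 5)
Your proposal is correct and is essentially the argument the paper intends: the paper states this corollary as an immediate consequence of Lemma~\ref{geometricdominationlemma}, and your counting-process/waiting-time duality $\{\tau_s>T\}=\{N_T<s\}$ together with the choice $C'_1:=\max\{1,1/c_2\}$ is exactly that implicit step made explicit (including the observation that the exponent should read $e^{-c'_1 s}$). The only residual bookkeeping is that Lemma~\ref{geometricdominationlemma} is stated for integer $T$, so one should take $T:=\lceil C'_1 s\rceil$ and absorb the rounding into the constant, which your inequalities accommodate without change.
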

	
When $p(\{\auxy_s\}_{s\geq 0})<1/2$, the process $\{\auxy_s\}_{s\geq 0}$ is subject to a drift towards the boundary $\auxY$, 
so intuitively the process $\mathfrak{D}$ will also hit $\auxY$ in a number of steps 
	that is proportional to $\auxY-\auxy_0^{\mathfrak{D}}$ (the initial distance of the process $\mathfrak{D}$ to the boundary
	$\auxY$) and dependent on the intensity of the drift. Moreover, one should expect that the probability of the said number of steps exceeding its expected value by much decreases rapidly. The next result is a quantitative version of this intuition.
	
	\begin{lemma}\label{lem:auxdfrac}
	Let $\{\auxy_s\}_{s\geq 0}$ be a diffusion process on $[0,\auxY]$ and
	suppose that $\mathfrak{D}$ is such that $\auxy_0^{\mathfrak{D}} \ge m$ for some $m \in \mathbb{N}$. Let $p:=p(\{\auxy_s\}_{s\ge 0})$ and assume \linktomycom{hyp:H1} holds. Then, for any $0< \delta <\frac{1}{2p_{}}-1$ and all $C\geq (1-2(1+\delta)p_{})^{-1}$, with probability at least $1-\exp(-\frac13\delta^2p_{}\tau)$, the process
	$\mathfrak{D}$ hits $\auxY$ in at most $\tau=\tau(m):=\lfloor C(\auxY-m)\rfloor$ steps.
	\end{lemma}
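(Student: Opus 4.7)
The plan is to reduce the statement to a standard multiplicative Chernoff bound by exploiting that, once the coupling phase has begun, $\mathfrak{D}$ evolves as a discrete-time random walk on $\mathbb{Z}$ whose increments are i.i.d.\ and equal to $-1$ with probability $p$ and to $+1$ with probability $q:=1-p$. Let $U$ denote the number of downward steps of $\mathfrak{D}$ among its first $\tau$ steps. Since each step changes the position by exactly $\pm 1$, the position of $\mathfrak{D}$ after $\tau$ steps is $\auxy_0^{\mathfrak{D}} + \tau - 2U$, and the hitting time of $\auxY$ exceeds $\tau$ if and only if this position is strictly less than $\auxY$. Combining this with $\auxy_0^{\mathfrak{D}}\ge m$ yields the inclusion
\[
\{\mathfrak{D}\text{ does not hit }\auxY\text{ within }\tau\text{ steps}\}\ \subseteq\ \Bigl\{U > \tfrac{1}{2}\bigl(\tau-(\auxY-m)\bigr)\Bigr\}.
\]

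Next I would verify that the choice $\tau = \lfloor C(\auxY-m)\rfloor$ together with $C\ge (1-2(1+\delta)p)^{-1}$ forces $\tfrac{1}{2}(\tau-(\auxY-m))\ge (1+\delta)p\tau$. Indeed, this inequality is equivalent to $\tau(1-2(1+\delta)p)\ge \auxY-m$, and the assumption $0<\delta<\tfrac{1}{2p}-1$ together with~\linktomycom{hyp:H1} ensures that $1-2(1+\delta)p>0$; the defining inequality on $C$ then yields the needed bound (any minor loss from the floor can be absorbed using the strictness of the inequality on $U$ and the integrality of $U$, or by enlarging $C$ infinitesimally when $\auxY-m$ is small). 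Consequently, the event that $\mathfrak{D}$ fails to hit $\auxY$ within $\tau$ steps is contained in $\{U\ge (1+\delta)\EE(U)\}$, since $\EE(U)=p\tau$ and $U\sim\mathrm{Bin}(\tau,p)$.

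Finally, applying the standard multiplicative Chernoff bound to the binomial random variable $U$ gives
\[
\PP\bigl(U\ge (1+\delta)\EE(U)\bigr)\ \le\ \exp\!\bigl(-\tfrac{1}{3}\delta^2 p\tau\bigr),
\]
which is exactly the probability bound stated in the lemma. I do not anticipate any serious obstacle: the argument is essentially a one-line Chernoff estimate once the hitting-time event has been translated into an event on the Bernoulli increments of $\mathfrak{D}$. The only mild subtlety is the handling of the floor in the definition of $\tau$, which contributes at most a lower-order correction and does not affect the exponential rate; the hypothesis~\linktomycom{hyp:H1} plays no further role beyond guaranteeing $p<1/2$, so the drift is oriented towards $\auxY$ and the deviation direction used in the Chernoff bound is the correct one.
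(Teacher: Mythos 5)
Your proof is correct and follows essentially the same route as the paper: both translate the event of not hitting $\auxY$ within $\tau$ steps into the event that the number $U$ of downward steps exceeds $\tfrac12(\tau-(\auxY-m))$, check via the hypothesis $C\geq(1-2(1+\delta)p)^{-1}$ that this threshold is at least $(1+\delta)\EE(U)=(1+\delta)p\tau$, and conclude with the multiplicative Chernoff bound. Your explicit remark on absorbing the floor in $\tau=\lfloor C(\auxY-m)\rfloor$ is, if anything, slightly more careful than the paper's treatment.
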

	\begin{proof}
	Denote by $U$ the random variable counting the number of time steps up to step $\tau$ where $\mathfrak{D}$ decreases its value (that is, jumps from some integer $m'$ to $m'-1$). 
	If the process~$\mathfrak{D}$ does not hit $\auxY$ during $\tau$ steps, then it 
	must have decreased for $U$ steps and increased during $\tau-U$ steps 
	(so $\auxy^{\mathfrak{D}}_\tau=\tau-2U+\auxy^{\mathfrak{D}}_0$), and moreover $\auxy^{\mathfrak{D}}_\tau$
	would have to be smaller than~$\auxY$.
	Thus, it will suffice to bound from above the probability that $\tau-2U=\auxy^{\mathfrak{D}}_\tau-\auxy^{\mathfrak{D}}_0< \auxY-m$.
	Since $\EE(U)=p_{}\tau$ and by Chernoff bounds (see~\cite[Theorem 4.4(2)]{mu17}), for any $0 <\delta < 1$,
    \[
    \PP(U \ge (1+\delta)\EE(U)) \le e^{-\frac13\delta^2\EE(U)},
    \]
    the claim follows observing that $\frac12(C-1)(\auxY-m)\geq (1+\delta)\EE(U)$ (by hypothesis on $\delta$ and~$C$) and $\tau-2U<\auxY-m$ if and only if $U > \tfrac12(C-1)(\auxY-m)$.
	\end{proof}

	Next, we show that, with high probability over the choice of the starting position, the boundary $\auxY$ is hit quickly:

\begin{lemma}\label{lem:coupling}
	Let $\{\auxy_s\}_{s\geq 0}$ be a diffusion process on 
	$[0,\auxY]$ with $\pi(\cdot)$ its stationary distribution.
	Assume \linktomycom{hyp:H1} and \linktomycom{hyp:H2} hold.
	Then, there is a 
	$\widehat{C} >0$ such that if $0<k\le \auxY$, then
	the original process has not hit $\auxY$
	by time $\widehat{C}\log(1/\pi((0,k]))$ with probability at most $3\pi([0,k])$.
\end{lemma}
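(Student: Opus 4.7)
My plan is to bound the failure event ``the original process has not hit $\auxY$ by time $T:=\widehat{C}\log(1/\pi((0,k]))$'' by a union of three events, each of probability at most $\pi([0,k])$; assume throughout that $\pi([0,k])<1/3$, since otherwise $3\pi([0,k])\ge 1$ makes the conclusion trivial. Define $\mathcal{E}_1:=\{\auxy_0\le k\}$ and, for a small constant $c_0>0$ chosen below, set $\tau:=\lceil \log(1/\pi((0,k]))/c_0\rceil$; let $\mathcal{E}_2$ be the event that $\mathfrak{D}$ does not hit $\auxY$ within its first $\tau$ steps, and $\mathcal{E}_3$ the event that the elapsed time for $\mathfrak{D}$ to perform these $\tau$ steps exceeds $T$. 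By Observation~\ref{obs:domination}, on $\overline{\mathcal{E}_1}\cap\overline{\mathcal{E}_2}\cap\overline{\mathcal{E}_3}$ the process $\mathfrak{D}$ hits $\auxY$ by time $T$ while still dominated by $\{\auxy_s\}_{s\ge 0}$, forcing the original process to have hit $\auxY$ by time $T$ too.

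By stationarity, $\PP_\pi(\mathcal{E}_1)=\pi([0,k])$. On $\overline{\mathcal{E}_1}$ we have $\auxy_0^\mathfrak{D}=\lfloor \auxy_0\rfloor\ge\lceil k\rceil$ (non-integer $k$ is handled by replacing it with $\lceil k\rceil$ throughout, which only weakens the statement), so Lemma~\ref{lem:auxdfrac} applied with $m:=\lceil k\rceil$ and with the free parameter therein chosen as $C:=\tau/(\auxY-\lceil k\rceil)$ yields $\PP(\mathcal{E}_2\mid\overline{\mathcal{E}_1})\le e^{-c_1\tau}$ for a constant $c_1>0$ depending only on $\underline{c},\overline{c}$, provided its hypothesis $C\ge(1-2(1+\delta)p)^{-1}$ is met. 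Corollary~\ref{cor:geomdomination} applied with $s:=\tau$ gives $\PP(\mathcal{E}_3\mid\overline{\mathcal{E}_1}\cap\overline{\mathcal{E}_2})\le e^{-c_2\tau}$, provided $T\ge C'_1\tau$. Taking $c_0$ sufficiently small makes both $e^{-c_1\tau}$ and $e^{-c_2\tau}$ at most $\pi([0,k])$, and then taking $\widehat{C}\ge C'_1/c_0$ makes $T\ge C'_1\tau$, closing the union bound.

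The crucial check is that the hypothesis $C\ge(1-2(1+\delta)p)^{-1}$ in Lemma~\ref{lem:auxdfrac} is satisfiable with constants depending only on $\underline{c},\overline{c}$, i.e., that $\tau=\Omega(\auxY-k)$; since $\tau=\Theta(\log(1/\pi((0,k])))$ by construction, this reduces to showing $\log(1/\pi((0,k]))=\Omega(\auxY-k)$. I would derive this by first establishing, through a short induction on $m$ using the three cases of~\eqref{radial:def:pim}, the detailed-balance identity $(1-p'_m)\pi_m=p'_{m+1}\pi_{m+1}$ for $1\le m<\auxY$; combined with $p'_{m+1}\le p<\tfrac{1}{2}\le q\le 1-p'_m$ from~\linktomycom{hyp:H1}, this yields $\pi_{m+1}/\pi_m\ge q/p>1$. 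Iterating, $\pi_k\le (p/q)^{\auxY-k}$ and $\pi([0,k])\le \pi_k\cdot q/(q-p)$, hence $\log(1/\pi((0,k]))\ge(\auxY-k)\log(q/p)-O(1)=\Omega(\auxY-k)$. The main obstacle is precisely this comparability: it is the two-sided nature of~\linktomycom{hyp:H1} together with the detailed-balance identity hidden in~\eqref{radial:def:pim} that make the resulting constants $\auxY$-independent and hence make the lemma usable; once this is in hand, the rest is a standard drift-plus-concentration argument relying on the already-established Lemma~\ref{lem:auxdfrac} and Corollary~\ref{cor:geomdomination}.
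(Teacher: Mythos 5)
Your proof is correct and follows the same three-event decomposition as the paper --- the starting position lying in $(0,k]$, the discrete process $\mathfrak{D}$ failing to hit $\auxY$ within $\tau$ steps, and the elapsed time for those $\tau$ steps being too long --- controlled, respectively, by stationarity, Lemma~\ref{lem:auxdfrac}, and Corollary~\ref{cor:geomdomination}, with Observation~\ref{obs:domination} transferring a hit by $\mathfrak{D}$ into a hit by the original process. The one place where you go beyond the paper's write-up is the explicit verification that the hypothesis $C\ge(1-2(1+\delta)p)^{-1}$ of Lemma~\ref{lem:auxdfrac} is satisfiable, i.e.\ that $\log(1/\pi((0,k]))=\Omega(\auxY-k)$ with constants depending only on $\underline{c},\overline{c}$; the paper compresses this into ``by our choice of $C'$'', whereas your derivation --- using the detailed-balance identity behind Fact~\ref{fact:H1} to obtain $\pi_{m+1}/\pi_m\ge q/p>1$ and hence geometric decay of $\pi((0,k])$ in $\auxY-k$ --- makes the step rigorous.
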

\begin{proof}
Define the event $\mathcal{E}_1:=\{y_0\in (0,k]\}$ and observe that
\begin{equation*}\label{mixed:lower:E1Bnd}
\PP_{\pi}(\mathcal{E}_1) = \pi((0, k]).
\end{equation*}
Next, fix $0 <\delta \leq 1$, $p$ as before, and let $C':=C'(\delta)\geq 3/(\delta^2p)$ 
be a constant which is at least as large as the 
constant $C$ of Lemma~\ref{lem:auxdfrac} (this last result is applicable because~\linktomycom{hyp:H1} holds).
Define $\tau:=\lceil C'\log(1/\pi((0,k]))\rceil$ and
let~$\mathcal{E}_2$ be the event that the process $\mathfrak{D}$ does not hit $\auxY$ during $\tau$ steps.
Conditioned on $\overline{\mathcal{E}}_1$ (so $y_0^{\mathfrak{D}}\geq k$ since $y_0^{\mathfrak{D}}>y_0-1>k-1$), by our choice of~$C'$ and Lemma~\ref{lem:auxdfrac} with $m:=k$,
\begin{equation*}\label{mixed:lower:E2Bnd}
\PP_{\pi}(\mathcal{E}_2 \mid \overline{\mathcal{E}}_1) 
\leq \exp\big({-}\tfrac13\delta^2p_{}\tau\big) 
\leq \pi((0,k]).
\end{equation*}
Choosing $C'$ sufficiently large, by Corollary~\ref{cor:geomdomination}, applied with $c'_1:=1$, there exists $C'_1$ large enough, so that with probability at least $1-e^{-\tau}$, the time elapsed for $\tau$ steps is at most $C'_1 \tau$.

Let $\mathcal{E}_3$ be the event that the original process has not hit $\auxY$ by time 
$C'_1\tau$. Since $\delta,p_{}\leq 1$, by definition of $C'$, we have $C'>1$, so it follows by definition of $\tau$ that $\tau\geq \log(1/\pi((0,k]))$, and thus our preceding discussion establishes that 
\[
\P_{\pi}(\mathcal{E}_3 \mid \overline{\mathcal{E}}_1, \overline{\mathcal{E}}_2 ) \le \pi((0,k]).
\]
The lemma follows by a union bound over events $\mathcal{E}_1$, $\mathcal{E}_2$, $\mathcal{E}_3$, and by setting $\widehat{C}:=C'C'_1$.
\end{proof}

	In order to establish our main result we next state the following fact: 
	\begin{fact}\label{fact:H1}
	For all integers $1 < m < \auxY$, $q \pi_{m-1} \le p \pi_{m}$.
	\end{fact}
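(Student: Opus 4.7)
The plan is to derive the stated inequality from the defining system~\eqref{radial:def:pim} of the $p'_m$'s by extracting a clean identity relating consecutive $\pi_m$'s, and then applying the bounds on $p'_m$ encoded in~\linktomycom{hyp:H1}.

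First, I would prove by induction on $m$ the identity
\[
(1-p'_{m-1})\pi_{m-1} \;=\; p'_m\pi_m \qquad \text{for every integer } 2 \le m \le \auxY.
\]
The base case $m=2$ is immediate: since $p'_1=0$, the first line of~\eqref{radial:def:pim} reduces to $\pi_1 = p'_2\pi_2$, i.e., $(1-p'_1)\pi_1 = p'_2\pi_2$. For the inductive step, assume the identity holds at some $m$ with $1<m<\auxY$. Then the middle line of~\eqref{radial:def:pim} can be rewritten using the inductive hypothesis as
\[
\pi_m \;=\; (1-p'_{m-1})\pi_{m-1} + p'_{m+1}\pi_{m+1} \;=\; p'_m \pi_m + p'_{m+1}\pi_{m+1},
\]
and rearranging yields $(1-p'_m)\pi_m = p'_{m+1}\pi_{m+1}$, which is the desired identity at $m+1$.

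Once this identity is in hand, the fact follows directly. Indeed, for any $1 < m < \auxY$, since $p'_{m-1} \le p$ by definition of $p$, we have $1-p'_{m-1} \ge 1-p = q$; and since $p'_m \le p$, also $p'_m \pi_m \le p\pi_m$. Combining these with the identity gives
\[
q\pi_{m-1} \;\le\; (1-p'_{m-1})\pi_{m-1} \;=\; p'_m\pi_m \;\le\; p\pi_m,
\]
as claimed.

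There is no real obstacle here: the only mildly nontrivial step is recognizing that the recursive system~\eqref{radial:def:pim} encodes a detailed-balance-like relation $(1-p'_{m-1})\pi_{m-1}=p'_m\pi_m$ between adjacent intervals, which is what makes the bound $p'_m,p'_{m-1}\le p$ from~\linktomycom{hyp:H1} sufficient to conclude.
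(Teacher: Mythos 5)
Your proof is correct and follows essentially the same route as the paper: an induction on the system~\eqref{radial:def:pim} yielding the detailed-balance identity $(1-p'_{m-1})\pi_{m-1}=p'_m\pi_m$, followed by bounding $p'_{m-1}$ and $p'_m$ by $p$. One cosmetic point: the bounds $p'_m\le p$ come from the definition of $p$ itself rather than from~\linktomycom{hyp:H1} (which only asserts $p\in[\underline{c},\overline{c}]$), but this does not affect the argument.
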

	\begin{proof}
Let $p'_1,...,p'_{\auxY}$ be defined as in this subsection's introduction and let $q'_i:=1-p'_i$, and recall that $p'_1=0$ by assumption.
We claim that $q'_{m-1}\pi_{m-1}=p'_m\pi_m$ for $1< m \le \auxY$.
Indeed, by~\eqref{radial:def:pim}, we have that $\pi_1=p'_1\pi_1+p'_2\pi_2$, so $q'_1\pi_1=p'_2\pi_2$. Assume the claim holds for $1\le m<\auxY$. By~\eqref{radial:def:pim} and the inductive hypothesis, we have $p'_{m+1}\pi_{m+1}=\pi_m-q'_{m-1}\pi_{m-1}=\pi_m-p'_{m}\pi_{m}=q'_{m}\pi_{m}$, which concludes the inductive proof of our claim.

By definition of  $p$, we have $p'_m\le p$ and $q'_m\ge q$ for all $1\le m < \auxY$. Hence, by the preceding paragraph's claim, for $1<m<\auxY$ we get $q \pi_{m-1} \le q'_{m-1}\pi_{m-1}=p'_{m}\pi_{m} \le p \pi_m$, and the fact follows.
	\end{proof}
We now use the previous lemmata to show this section's main result:
\begin{proposition}\label{prop:coupling}
Let $\{\auxy_s\}_{s\geq 0}$ be a diffusion process on $[0,\auxY]$ with stationary distribution~$\pi(\cdot)$. Assume~\linktomycom{hyp:H1}, \linktomycom{hyp:H2} and~\linktomycom{hyp:H3} hold.
Then, there are constants $\widetilde{C}>0$, $\widetilde{c}\in (0,1)$, $\widetilde{\eta}> 0$ all independent of $\auxY$ such that for all 
 $\widetilde{C} \le k\leq \auxY$ and $\ss\geq 1/\pi((0,k])$ we have  
\[
\PP_\pi\big(I_k\geq \widetilde{c} \ss\pi((0,k])\big) \geq \widetilde{\eta},
	\qquad\text{where $I_k := \int_0^\ss {\bf 1}_{(0,k]}(\auxy_s)ds$.}
\]
\end{proposition}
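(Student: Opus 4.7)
The plan is to follow a second moment / Chebyshev approach on a suitably discretized version of the integral $I_k$. For an integer $1 \le i \le \lfloor \ss\rfloor$, let $X_i := \mathbf{1}_{(0,k]}(\auxy_{i-1})$ and $X := \sum_{i=1}^{\lfloor \ss\rfloor} X_i$. Stationarity of $\pi$ gives $\EE(X_i) = \pi((0,k])$ and so $\EE(X) = \Theta(\ss\pi((0,k]))$, which is at least a large constant by the hypothesis $\ss \ge 1/\pi((0,k])$. The idea is to show $\Var(X) \le \frac13(\EE(X))^2$, so that Chebyshev gives $X \ge \frac14 \EE(X)$ with constant probability; then hypothesis \linktomycom{hyp:H3} (which requires $k \ge \widetilde{C}$ and applies whenever $\auxy_s \le k$) guarantees that, for each time $i$ at which $X_i = 1$, with constant probability the process remains in $(0,k]$ throughout the whole $i$-th unit interval, contributing $\Omega(1)$ to $I_k$.

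Before attacking the second moment, I will establish a ``geometric decay'' input from \linktomycom{hyp:H1}: iterating Fact~\ref{fact:H1} gives $\pi_m \ge (q/p)\pi_{m-2}$ for $1 < m < \auxY$, and since $p/q$ is bounded away from $1$ independently of $\auxY$, there is an integer $\Delta$ depending only on $p$ such that $\pi((0,k-\Delta]) \le \tfrac{1}{12}\pi((0,k])$ for all $1 \le k \le \auxY$. This $\Delta$ will appear in the cutoff between ``short'' and ``long'' temporal separations.

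The heart of the proof is bounding $\PP(X_1 = 1, X_{1+d} = 1)$ for each separation $d \ge 1$, and here the argument splits in two. For large separations $d \ge \tau := \lceil \widehat{C} \log(1/\pi((0,k-\Delta]))\rceil$ (with $\widehat{C}$ from Lemma~\ref{lem:coupling}), the process has, by Lemma~\ref{lem:coupling} applied at level $k-\Delta$, hit the reflecting boundary $\auxY$ by time $d$ except with probability $3\pi((0,k-\Delta]) \le \tfrac14 \pi((0,k])$; conditional on hitting $\auxY$, the memory of the starting position is erased and $\PP(X_{1+d}=1 \mid X_1 = 1) \le \PP(X_{1+d} = 1) = \pi((0,k])$, yielding $\PP(X_1 = 1, X_{1+d}=1) \le (\pi((0,k]))^2 + \tfrac14(\pi((0,k]))^2$. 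For short separations $d < \tau$, I use the coupled discrete-space process $\mathfrak{D}$: Observation~\ref{obs:domination} gives $\auxy_d^{\mathfrak{D}} \le \auxy_d$ (prior to hitting $\auxY$), and Lemma~\ref{geometricdominationlemma} / Corollary~\ref{cor:geomdomination} say that $\mathfrak{D}$ performs $\Omega(d)$ steps in time $d$ with probability $1 - e^{-\Omega(d)}$. Since each step of $\mathfrak{D}$ has decrease probability $p < 1/2$, a Chernoff/anticoncentration estimate on the simple biased random walk gives $\PP(\auxy_d^{\mathfrak{D}} \le k \mid \auxy_0^{\mathfrak{D}} \le k) = O((4pq)^{c_2 d/2})$, which sums geometrically in $d$. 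Summing using that the $\binom{2\ell}{\ell}$ term contributes $(4pq)^{\ell}$ and invoking Fact~\ref{fact:H1} to shift the stationary mass so that $\pi((k-2i-1,k-2i])(q/p)^i$ remains controlled by $\pi((0,k])$, one obtains the bound $\PP(X_1=1,X_{1+d}=1) \le C^\star(4pq)^{c_2 d /2}\,\pi((0,k])$ for $d < \tau$.

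Combining both regimes and summing over $d$,
\[
\EE(X^2) \le \sum_{d < \tau} \ss\bigl(C^\star(4pq)^{c_2 d /2} + e^{-\Omega(d)}\bigr)\pi((0,k]) + \sum_{d \ge \tau} \ss\cdot\tfrac{5}{4}(\pi((0,k]))^2 \le C^{\star\star}\EE(X) + \tfrac{5}{4}(\EE(X))^2,
\]
and since $\EE(X)$ is a large constant by choice of $\widetilde{C}$ and the assumption $\ss\pi((0,k]) \ge 1$, this is $\le \tfrac43(\EE(X))^2$. Chebyshev then gives $\PP(X \ge \tfrac14\EE(X)) \ge 1 - \tfrac{16}{27}$, and combining with \linktomycom{hyp:H3} (which turns a constant fraction of the $X_i = 1$ events into contributions of $\Omega(1)$ to $I_k$) yields $I_k = \Omega(\ss\pi((0,k]))$ with constant probability, so suitable $\widetilde{c}$ and $\widetilde{\eta}$ exist. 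The main obstacle I anticipate is executing cleanly the ``short separation'' coupling estimate: in particular verifying that $\mathfrak{D}$'s increments on $(-\infty,\auxY]$ really dominate those of the original continuous process uniformly in the starting position (this is where the careful construction of $\mathfrak{D}$ with coupling and move-independent phases pays off), and handling the shift in the stationary mass using Fact~\ref{fact:H1} without losing a factor depending on $\auxY$.
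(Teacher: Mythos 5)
Your skeleton is the same as the paper's (unit-interval indicators $X_i$, stationarity for the first moment, hypothesis \linktomycom{hyp:H3} to convert indicators into time spent in $(0,k]$, a second-moment bound plus Chebyshev, and the discrete process $\mathfrak{D}$ together with Fact~\ref{fact:H1}), but your correlation estimate for large separations has a genuine gap. For $d\ge\tau$ you invoke Lemma~\ref{lem:coupling} at level $k-\Delta$ and conclude $\PP_\pi(X_1=1,X_{1+d}=1)\le\tfrac54(\pi((0,k]))^2$. Two problems. First, Lemma~\ref{lem:coupling} is an \emph{unconditional} statement under $\PP_\pi$, and its failure probability is dominated by exactly the event (starting low) that you intersect with: conditional on $X_1=1$ the start may be near $0$, and when $\auxY-k\ll\auxY$ the time $\tau=\Theta(\log(1/\pi((0,k-\Delta])))=\Theta(\auxY-k)$ is far too short for the walk to reach $\auxY$ from there, so $\PP(\text{no hit by }d\mid X_1=1)$ can be of order one for $d$ just above $\tau$; hence the term $\tfrac14(\pi((0,k]))^2$ does not follow. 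Second, the bound that \emph{does} follow from the lemma, namely $\PP(X_1=1,X_{1+d}=1,\text{no hit})\le 3\pi((0,k-\Delta])\le\tfrac14\pi((0,k])$, is insufficient: summed over the up to $\ss$ values of $d$ with $P_d\le\ss$ it contributes order $\ss\,\EE_\pi(X)=(\EE_\pi X)^2/\pi((0,k])$, which swamps $(\EE_\pi X)^2$ and destroys the Chebyshev step. In addition, your ``memory is erased after hitting $\auxY$'' inequality $\PP(X_{1+d}=1\mid X_1=1,\text{hit})\le\pi((0,k])$ is not automatic and needs a monotone coupling argument, and for short $d$ the domination $\auxy_d^{\mathfrak D}\le\auxy_d$ of Observation~\ref{obs:domination} only holds before the original process hits $\auxY$ — a case you flag but do not dispose of.

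The paper avoids all of this by not splitting at $\tau$ and not using Lemma~\ref{lem:coupling} here at all. For every $d\ge C_L$ it constructs two coupled copies: $\widetilde\auxy$ started from $\pi$ conditioned on $(0,k]$ and $\widehat\auxy$ started from $\pi$, built so that $\widetilde\auxy_0\le\widehat\auxy_0$, run independently until they first meet and coupled together afterwards. On $\{\widetilde T_{\auxY}\le d\}$ the lower path cannot hit $\auxY$ without crossing the upper one, so the copies have merged and one gets the clean product bound $(\PP_\pi(X_1=1))^2$ — this is the rigorous version of your ``memory erased'' step. On $\{T_{\auxY}>d\}$ the domination by $\mathfrak D$ is legitimately available, and together with Lemma~\ref{geometricdominationlemma} and Fact~\ref{fact:H1} it gives $\PP(X_1=1,X_{1+d}=1,T_{\auxY}>d)=O\big((4pq)^{c_2d/2}\big)\pi((0,k])$ for \emph{all} $d\ge C_L$; since this decays geometrically in $d$, its total contribution is only $O(\EE_\pi X)$, and $\Var_\pi(X)\le\tfrac13(\EE_\pi X)^2$ follows. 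Your large-$d$ regime must be repaired along these lines (or by stratifying the conditional starting height and using the geometric decay of $\pi$ together with a quantitative hitting-time bound), not by a direct citation of Lemma~\ref{lem:coupling}.
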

\begin{proof}
First, we establish that there is an integer $\Delta>0$ depending on $p$ alone such that
\begin{equation}\label{lower:eqn:existDelta}
\pi((0,k-\Delta])\leq\frac{1}{12}\pi((0,k]) \qquad \text{for all $1\leq k\leq\auxY$.}
\end{equation}
The claim is a consequence of~\linktomycom{hyp:H1} and Fact~\ref{fact:H1}: 
Indeed, let $p, q$ be as therein. For any~$k$, summing 
over $m$ with $1<m\le k$ we get $q\pi((0,k-1])\leq p\pi((0,k])$ and thus
$\pi((0,k-m])\leq (p/q)^m\pi((0,k])$ for any $m\in\NN$.
By~\linktomycom{hyp:H1} we know that $p/q$ is bounded away from $1$ by a constant independent of $\auxY$. 
Taking $m$ large enough so that $(p/q)^m\leq 1/12$ the claim follows setting $\Delta:=m$.

Next, we will split the time period $[0,\ss]$ into time intervals of one unit (throwing away the eventual remaining non-integer part): for the $i$-th time interval let $X_i$ be the indicator random variable being~$1$ if at time instant $i-1$ (that is, at the beginning of the $i$-th time interval) the process $\{y_s\}_{s \ge 0}$ is within $(0,k]$, and $0$ otherwise. Since $\pi$ is stationary, $\P_{\pi}(X_i=1)=\pi((0,k])$ for any $i$. Thus, setting $X:=\sum_{i=1}^{\lfloor \ss \rfloor} X_i$, we have $(\ss-1)\P_{\pi}(X_1=1)<\EE_{\pi}(X)\leq \ss\P_{\pi}(X_1=1) \le 2\EE_{\pi}(X)$.
Since~\linktomycom{hyp:H3} holds, for $\widetilde{C}$ the constant therein, if $X_i=1$ and $k\ge \widetilde{C}$, then with constant probability the process $\{y_s\}_{s \ge 0}$ stays throughout the entire $i$-th time interval within $(0,k]$.
So by our previous discussion, to establish the desired
result, it suffices to show that for some $\xi>0$  the probability that $X$ is smaller than $\xi\EE_{\pi}(X)$ is at most a constant strictly less than $1$. We rely on Chebyshev's inequality to do so and thus need to bound from above the variance of $X$. This explains why we now undertake the determination of an upper bound for $\EE_{\pi}(X^2)$.
Since there are at most $P_d \le \ss$ pairs of random variables $X_i$ and~$X_j$ such that $|i-j|=d$,
$$
\EE_{\pi}(X^2)=\sum_{i,j} \P_{\pi}(X_i=1,X_j=1) \leq \sum_{d=0}^{\lfloor \ss \rfloor} P_d\P_{\pi}(X_1=1, X_{1+d}=1). 
$$
 

In order to bound $\P_{\pi}(X_1=1, X_{1+d}=1)$ we construct two processes $\{\widetilde{\auxy}_s\}_{s\geq0}$ and $\{\widehat{\auxy}_s\}_{s\geq0}$ as follows: Initially, $\widetilde{\auxy}_0$ is sampled with 
    distribution $\frac{\pi(\cdot\cap(0,k])}{\pi((0,k])}$.
    Also, with probability $\pi((0,k])$ we let $\widehat{\auxy}_0=\widetilde{\auxy}_0$, and otherwise we sample $\widehat{\auxy}_0$ with distribution  $\frac{\pi(\cdot\cap(k,\auxY])}{\pi((k,\auxY])}$.
    Then, both processes evolve independently according to the same law as the original process until they first meet. Afterward, they still move as the original process but coupled together.
It follows directly that $\{\widehat{\auxy}_s\}_{s \ge 0}$ is a copy of the original process, while $\{\widetilde{\auxy}_s\}_{s \ge 0}$ is a version of the original process conditioned to start within $(0,k]$. Let $\widetilde{T}_{\auxY}$ be the first time that $\{\widetilde{\auxy}_s\}_{s \ge 0}$ hits $\auxY$, and define $\widetilde{X}_i$ and~$\widehat{X}_i$ analogously to the variables $X_i$ with $\widetilde{y}_s$ and $\widehat{y}_s$ playing the role of $y_s$, respectively. Since $\P_{\pi}(X_1=1, X_{1+d}=1)=\P(\widetilde{X}_{1+d}=1)\pi((0,k])$,
it follows that 
\[\P_{\pi}(X_1=1, X_{1+d}=1)\,=\,\P(\widetilde{X}_{1+d}=1, \widetilde{T}_Y
\leq d)\pi((0,k])+\P(X_1=1, X_{1+d}=1,\widetilde{T}_Y>d).
\]
For the first term on the right-hand side above observe that $\widetilde{\auxy}_0\leq \widehat{\auxy}_0$, hence on the event~$\widetilde{T}_{\auxY}\leq d$ both processes have met before time $d$, and so by definition $\widetilde{X}_{1+d}=\widehat{X}_{1+d}$, yielding
\[
\P(\widetilde{X}_{1+d}=1, \widetilde{T}_Y\leq d)\pi((0,k])\leq \P(\widehat{X}_{1+d}=1)\pi((0,k])=\big(\P_{\pi}(X_1=1)\big)^2.
\]
It remains then to bound the term $\P(X_1=1, X_{1+d}=1,\widetilde{T}_Y>d)$. To do so recall that~$\{\widetilde{\auxy}_s\}_{s \ge 0}$ is a version conditioned to start at $(0,k]$ so calling $T_Y$ the first time $\{\auxy_s\}_{s\ge0}$ hits $Y$, we have
\[\P(X_1=1, X_{1+d}=1,\widetilde{T}_Y>d)=\P_{\pi}(X_{1}=1, X_{1+d}=1, T_Y
>d).\]
First, recall that by Lemma~\ref{geometricdominationlemma}, if $d\ge C_L$ for some large enough constant depending on~$L$ only, for every $c_1 > 0$ there exists $c_2 > 0$, depending only on $c_1$ and on $L$, so that the process~$\mathfrak{D}$ performs at least $c_2 d$ steps during the time interval $[0, d]$  with probability at least~$1-e^{-c_1d}$. Note that~$c_1$ and $c_2$ do not depend on $\widetilde{C}$. Let $\mathcal{E}$ be the event that $\mathfrak{D}$ performed at least $c_2 d$ steps up to time $d$. Now, 
\begin{align*}
    & \P_{\pi}(X_{1}=1, X_{1+d}=1, T_Y
>d) \\ & \qquad \le  \P_{\pi}(X_{1}=1, X_{1+d}=1, T_Y
>d, \mathcal{E}) + \P_{\pi}(X_1=1, \overline{\mathcal{E}}) \\ \qquad
     & \qquad \le \P_{\pi}(X_{1}=1, X_{1+d}=1, T_Y
>d, \mathcal{E}) + e^{-c_1 d}\P_{\pi}(X_1=1).
\end{align*}
 Next, recall that as long as the original process does not hit the boundary, by Observation~\ref{obs:domination}, the auxiliary process satisfies $y_s^{\mathfrak{D}} \le y_s$ for every instant $s$. Formally,
$$ \P_{\pi}(X_{1}=1, X_{1+d}=1, T_Y
>d) \le \P(\auxy_0^{\mathfrak{D}} \le k, \auxy_{d}^{\mathfrak{D}} \le k)=\P_{\pi}(X_1=1)\P(\auxy_d^{\mathfrak{D}} \le k \mid \auxy_0^{\mathfrak{D}} \le k),   
$$
and also 
\begin{align*}
\P_{\pi}(X_{1}=1, X_{1+d}=1, T_Y
>d, \mathcal{E}) &\le
\P(\auxy_0^{\mathfrak{D}} \le k, \auxy_{d}^{\mathfrak{D}} \le k, \mathcal{E}) \\
& \le 
\P_{\pi}(X_1=1)\P(\auxy_d^{\mathfrak{D}} \le k \mid \mathcal{E}, \auxy_0^{\mathfrak{D}} \le k),
\end{align*}
and our goal is thus to bound $\P(\auxy_d^{\mathfrak{D}} \le k \mid \mathcal{E}, \auxy_0^{\mathfrak{D}} \le k)$.
Recalling that $p_{} < 1/2$ (by hypothesis) is the probability that~$\mathfrak{D}$ makes a decreasing step, and $q:=1-p$, we have for some large enough constant $C_1 > 0$ 
$$
 \P(\auxy_{d}^{\mathfrak{D}} = k \mid \auxy_0^{\mathfrak{D}}=k, \mathcal{E}) \le \sum_{\ell: 2\ell \ge c_2 d} \binom{2\ell}{\ell}(p_{}q)^{\ell}\le \sum_{\ell:2\ell \ge c_2 d} (4p_{}q)^{\ell} \le C_1(4p_{}q)^{c_2 d/2}.
$$
The same upper bound holds for  $\P(\auxy_{d}^{\mathfrak{D}} = k{-}1 \mid \auxy_0^{\mathfrak{D}}=k, \mathcal{E})$. For  $\P(\auxy_{d}^{\mathfrak{D}} = k{-}2 \mid \auxy_0^{\mathfrak{D}}=k, \mathcal{E})$ and  $\P(\auxy_{d}^{\mathfrak{D}} = k{-}3 \mid \auxy_0^{\mathfrak{D}}=k, \mathcal{E})$ note that $\mathfrak{D}$ has to make one more decreasing step and one less increasing step, yielding an upper bound of 
$C_1(4p_{}q)^{c_2 d/2} \frac{p_{}}{q}$. In the same way, 
$\P(\auxy_{d}^{\mathfrak{D}} = k{-}2i \mid \auxy_0^{\mathfrak{D}}=k, \mathcal{E}) \le  C_1(4p_{}q)^{c_2 d/2} (\frac{p_{}}{q})^i$, so that for $C_2:=C_2(C_1) > 0$ large enough,
$\P(\auxy_{d}^{\mathfrak{D}} \le k \mid \auxy_0^{\mathfrak{D}}=k, \mathcal{E}) \le C_2 (4p_{}q)^{c_2 d/2}$.
We can get the same upper bound also for  $\P(\auxy_{d}^{\mathfrak{D}} \le k \mid \auxy_0^{\mathfrak{D}}=k{-}1, \mathcal{E})$. 
Moreover, 
we have
\begin{align*}
\P(\auxy_{d}^{\mathfrak{D}} = k \mid \auxy_0^{\mathfrak{D}}=k{-}2, \mathcal{E}) &\le \sum_{\ell: 2\ell \ge c_2 d} 
\binom{2\ell}{\ell-1}p_{}^{\ell{-}1}q^{\ell+1} 
\le \frac{q}{p} \sum_{\ell: 2\ell \ge c_2 d} \binom{2\ell}{\ell}(p_{}q)^{\ell} 
\le \frac{q}{p}C_1(4p_{}q)^{\frac{c_2 d}{2}}.
\end{align*}
By the argument from before,  we get
 $\P(\auxy_{d}^{\mathfrak{D}} \le k \mid \auxy_0^{\mathfrak{D}}=k{-}2, \mathcal{E}) \le (q/p)C_2 (4p_{}q)^{\frac{c_2 d}{2}}$
 and iterating the argument, also obtain
 $
 \P(\auxy_{d}^{\mathfrak{D}} \le k \mid \auxy_0^{\mathfrak{D}} = k{-}2i, \mathcal{E}) \le (q/p)^i C_2 (4p_{}q)^{\frac{c_2 d}{2}}.
 $
 Hence, if~$k$ is even, 
 \begin{align*}
 & \P(\auxy_{d}^{\mathfrak{D}} \le k, \auxy_0^{\mathfrak{D}} \le k \mid \mathcal{E}) \\
 & \quad = \sum_{j=0}^{k}\PP(\auxy_d^{\mathfrak{D}} \le k \mid \auxy_0^{\mathfrak{D}}=k{-}j, \mathcal{E})\PP(\auxy_0^{\mathfrak{D}}=k{-}j)
 \\
  & \quad \le \PP(\auxy_d^{\mathfrak{D}} \le k \mid \auxy_0^{\mathfrak{D}}=0, \mathcal{E})\PP(\auxy_0^{\mathfrak{D}}=0) 
  +\!\!\sum_{i=0}^{k/2-1}\PP(\auxy_d^{\mathfrak{D}} \le k \mid \auxy_0^{\mathfrak{D}}=k{-}2i, \mathcal{E})\PP(\auxy_0^{\mathfrak{D}} \in \{k{-}2i, k{-}2i{-}1\})
 \\
& \quad \leq (q/p)^{k/2}C_2(4pq)^{\frac{c_2d}{2}}\pi((0,1])+2\sum_{i=0}^{k/2-1}\Big(\frac{q}{p}\Big)^i C_2 (4p_{}q)^{\frac{c_2 d}{2}}\pi((k{-}2i{-}2, k{-}2i])  \\
& \quad \le 4C_3(4p_{}q)^{\frac{c_2 d}{2}}\pi((0,k]),
 \end{align*}
 where for the secondlast inequality we used Fact~\ref{fact:H1}.
If $k$ is odd, then take $\{0,...,\frac12(k-1)\}$ as the range over which the second index of the summation (the one over $i$ above); the remaining bounds are still valid, and 
we obtain the same bound as for $k$ even.
Thus, recalling that $P_d\leq\ss$ denotes the number of pairs of random variables $X_i$ and $X_j$ for which $|i-j|=d$, and recalling that $\overline{\mathcal{E}}$ holds with probability at most $e^{-c_1 d}$ (in which case the joint probability is only bounded by the probability of $X_1=1$), and adding also the joint probability $(\P(X_1=1))^2$ in case $\widetilde{T}_Y \le d$, we get 
\begin{align*}
\EE_{\pi}(X^2) &\le \ss C_L \P_{\pi}(X_1=1)+ \sum_{d \ge C_L}^{\lfloor \ss \rfloor} P_d \Big( \big(4C_3(4p_{}q)^{\frac{c_2 d}{2}}+e^{-c_1d}\big)\P_{\pi}(X_1=1)+\big(\P_{\pi}(X_1=1)\big)^2\Big)
\\
& \le C_L\EE_{\pi}(X)+2C^{*}\EE_{\pi}(X)+(\EE_{\pi}(X))^2.
\end{align*}
where for the second inequality we used that $P_d \le \ss$ and then that $\ss \P_{\pi}(X_1=1)\le 2\EE_{\pi}(X)$, and that $\sum_{d\ge 0} (4C_3(4p_{}(1-p_{}))^{c_2 d/2} +e^{-c_1d})\le C^{*}$
for some $C^{*}$ depending on $c_1$ and $c_2$ only. 
Thus, since $\EE_{\pi}(X) \ge C_4$ for some constant $C_4$ large by our hypothesis $\ss\ge 1/\pi((0,k])$, we conclude that  $\EE_{\pi}(X^2)\le \frac43 (\EE_{\pi}(X))^2$, so
$\Var_{\pi}(X)\le \frac13 (\EE_{\pi}(X))^2$.
Thus, by Chebyshev's inequality,
$$
\P(|X-\EE_{\pi}(X)| \ge \tfrac34 \EE_{\pi}(X)) \le \Var_{\pi}(X)/(\tfrac34\EE_{\pi}(X))^2 \le \tfrac{16}{27},
$$
and the statement follows taking $\widetilde{c}:=\frac14$ and $\widetilde{\eta}:=\frac{11}{27}$.
\end{proof}


To conclude this section, we show that the process we studied
in Sections~\ref{sec:upper_rad} and~\ref{sec:lower_rad}
satisfies~\linktomycom{hyp:H1} through~\linktomycom{hyp:H3} and hence Proposition~\ref{prop:coupling}  holds for the said process. Reaching such conclusion was the motivation for this section, and it provides a result on which the analysis of the combined radial and angular processes, which we will treat in the next section, relies.
\begin{corollary}\label{radial:cor:coupling}
Let $\{\auxy_s\}_{s\geq 0}$ be the diffusion process on $(0,\auxY]$ with
generator $\Delta_h$ and a reflecting barrier at $\auxY$. 
Then, there are constants $\widetilde{C}>0$, $\widetilde{c},\widetilde{\eta}\in (0,1)$, such that for all 
$\widetilde{C}\le k\le \auxY$ and $\ss\geq 1/\pi((0,k])$ we have  
\[
\PP_\pi\big(I_k\geq \widetilde{c} \ss\pi((0,k])\big) \geq \widetilde{\eta},
	\qquad\text{where $I_k := \int_0^\ss {\bf 1}_{(0,k]}(\auxy_s)ds$.}
\]
\end{corollary}
\begin{proof}
It will be enough to verify that~\linktomycom{hyp:H1}-\linktomycom{hyp:H3} hold and apply
Proposition~\ref{prop:coupling} with $\widetilde{C}$ as therein and satisfying our just stated condition.

Let $C_{\alpha,\auxY}:=(\cosh(\alpha\auxY)-1)^{-1}$. By definition $\pi_m:=\pi((m{-}1,m])=C_{\alpha,\auxY}(\cosh(\alpha m)-\cosh(\alpha(m{-}1)))$.
Using $\cosh x-\cosh y=2\sinh(\frac12(x+y))\sinh(\frac12(x-y))$, we get
\begin{equation}\label{radial:eqn:piSinh}
\pi_{m}=2C_{\alpha,\auxY}\sinh(\alpha(m-\tfrac12))\sinh(\tfrac12\alpha) \qquad\text{for all $1\le m\le \auxY$}.
\end{equation}
To show that $p$ is bounded away from $0$ as required by~\linktomycom{hyp:H1}, it suffices
to observe that by Fact~\ref{fact:H1}, 
$p/q\ge p_2/q_1=\pi_1/\pi_2=\sinh(\frac12\alpha)/\sinh(\frac32\alpha)$. To establish that $p$ is also bounded from above by a constant strictly smaller than $1/2$, recall the definition of $p'_m$ from this subsection's introduction and let $q'_m:=1-p'_m$. Note that by definition of $q'_i$, we have $q'_{i}\pi_{i}=\pi_{i}-p'_{i}\pi_{i}$, so from the proof of Fact~\ref{fact:H1}, $p'_{m}\pi_{m}=q'_{m-1}\pi_{m-1}=\pi_{m-1}-p'_{m-2}\pi_{m-2}
=\pi_{m-1}-\pi_{m-2}+p'_{m-3}\pi_{m-3}$. Hence, $p'_m\pi_m\le\pi_{m-1}-\pi_{m-2}+\pi_{m-3}$ for $3<m\le\auxY$. It is easy to see that the right-hand side is, as a function in $\alpha$, maximized for $\alpha=1/2$. Moreover, for every $\alpha$, it is increasing in $m$, we see it is bounded from above by $0.4618$. By direct calculation, $p'_2=\pi_1/\pi_2=\sinh(\alpha/2) / \sinh(3\alpha/2) \le 0.3072$ and $p'_3=(1-p'_2)\pi_2/\pi_3 \le 0.3557$, where we used that both expressions for $p'_2$ and $p'_3$ are decreasing as functions in $\alpha$. 
 We conclude that $\sup_{1\le m \le \auxY} p'_m\leq 0.4618$ for all $1<m\le\auxY$. To conclude that $p < 1/2$, we also need to bound $\widehat{p}_m$, for $1 \le m < \auxY$: in order to do so, observe that the diffusion process defined by~\eqref{truegenerator} can be coupled with a process of constant drift towards $m+1$ so that the real process is always to the right of the process of constant drift; for a process with constant drift towards $m+1$ starting at $m$ it is well known that the probability to reach $m-1$ before $m+1$ is at most $1/2-\varepsilon$ for some $\varepsilon > 0$ (see~\cite{Borodin2002}, formula 3.0.4).
 Hence $\widehat{p}_m \le \frac12 - \varepsilon$ for any $1 \le m < \auxY$, and hence $p < 1/2$, as desired.

Next, we establish~\linktomycom{hyp:H2}. Recalling that $T_{(\auxY-1)\pm 1}$ is the random variable counting the maximal time the process spends in the interval $(\auxY-2,\auxY]$ starting at $\auxY-1$; by Part~\eqref{radial:itm:phi2} of Lemma~\ref{lemmaradial} applied with $\yabs_0:=\auxY-2$, for any $\auxy \in (\auxY-2,\auxY]$, we get $\EE_{\auxy}(T_{\yabs_0}) \le \frac{4}{\alpha^2}e^{2\alpha}$, and since $\sup_{\auxY-2 \le y \le \auxY} \EE_y (T_{y \pm 1}) \le \sup_{\auxY-2 < \auxy \le \auxY} \EE_{\auxy}(T_{\yabs_0})$, conditon~\linktomycom{hyp:H2} is satisfied for $y > \auxY-2$. In order to bound $\EE_y(T_{y\pm 1})$ for $y<\auxY-1$, note that the time to exit from such an interval can only increase when imposing a reflecting barrier at $y+1$, and we can then apply  Part~\eqref{radial:itm:phi2} of Lemma~\ref{lemmaradial} to the process with this reflecting barrier at $y$, applied with~$\yabs_0:=y-1$. Hence, for any starting position $y$, the expected time remaining in the interval~$(y-1,y+1]$ is at most a constant $L$, and~\linktomycom{hyp:H2} is satisfied.

Finally, to check that~\linktomycom{hyp:H3} holds, simply observe that 
for all $\widetilde{C}\le k\le \auxY$ (where $\widetilde{C}$ is at least a constant strictly greater than $1$), the process $\{\auxy_s\}_{s\ge 0}$ dominates a process of constant drift towards $\auxY$, which wherever conditioned on $\auxy_s\le k$ has a constant probability of staying within $(0,k]$ during the unit length time interval $[s,s+1]$, thus establishing the claim and concluding the proof of the corollary.
\end{proof}

\section{General case: strategies for detection}\label{sec:mix}
%

In this section we define the set $\dt$ similarly as in the radial section, without the restriction that $|\theta_0| \le \tfrac{\pi}{2}$, since also points that belong to the half disk of $B_O(R)$ opposite to $Q$ are starting positions from which particles can detect (mainly thanks to the angular movement component). As in previous sections, on a high level, this set $\dt$ is chosen in such a way that at least a constant part of the measure of the overall detection probability comes from~$\dt$. The precise definition of $\dt$ (given in the two following theorems) depends on the fact whether $\ss$ is small or large. For $\ss$ large, we can moreover provide uniform lower bounds of detection for every element in $\dt$ and matching uniform upper bounds of detection for every element in $\ndt$. To make this intuition precise, we have the following two theorems, depending on whether $\ss$ is small or large: 

\begin{theorem}\label{thm:mixedSmall} 
Assume $\beta \le 1/2$,  $\ss=\Omega((e^{\beta R}/n)^2)\cap O(1)$. Let
\begin{equation}\label{mix:lower:defD}
\dt := \{ x_0=(r_0,\theta_0)\in B_O(R) \mid |\theta_0|\leq \phi(r_0) +\sqrt{\ss}e^{-\beta r_0} \}. 
\end{equation}
Then 
\[
\int_{\dt} \PP_{x_0}(T_{det}\leq\ss)d\mu(x_0)=\Theta(\mu(\dt))=\Theta(n e^{-\beta R}\sqrt{\ss})
\]
and 
\[
\int_{\ndt} \PP_{x_0}(T_{det}\leq\ss)d\mu(x_0)=O(\mu(\dt)).
\]
\end{theorem}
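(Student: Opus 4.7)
The plan is to follow the strategy of Section~\ref{sec:strategy} and to exploit the fact that for $\ss=O(1)$ the radial component is only a bounded perturbation, so the analysis essentially reduces to the one in Section~\ref{sec:angular}. The key technical observation is that on any time interval of length $\ss=O(1)$, starting from $r_0$ bounded away from $0$, the radial coordinate stays within a constant band around $r_0$ with arbitrarily large probability: since the drift $\tfrac{\alpha}{2}\coth(\alpha r)$ is uniformly bounded for $r\ge 1$, a direct comparison with a Brownian motion with constant drift (using arguments similar to those in Part~\eqref{radial:itm:phi2} of Lemma~\ref{lemmaradial}) gives, for every $\eta>0$, a constant $M=M(\eta)$ such that $\PP_{x_0}(\sup_{0\le s\le\ss}|r_s-r_0|\le M)\ge 1-\eta$. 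On this typical radial event the angular time change $\II{\ss}=\int_0^\ss\sinh^{-2}(\beta r_s)\,ds$ satisfies $\II{\ss}=\Theta(\ss e^{-2\beta r_0})$, so the angular displacement $\theta_\ss-\theta_0$ behaves, up to constants, like a Brownian motion at the same scale $\phis=\sqrt{\ss}e^{-\beta r_0}$ as in Section~\ref{sec:angular}.

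Computing $\mu(\dt)$ is then immediate from Lemma~\ref{lem:angular-muDt} applied with $\kappa=1$, since the set $\dt$ in~\eqref{mix:lower:defD} coincides with the set $\dt(1)$ considered there: the assumption $\alpha>\tfrac12\ge\beta$ forces $\alpha\ne\beta$ and $1\wedge(\alpha/\beta)=1$, yielding $\mu(\dt)=\Theta(1+n\sqrt{\ss}/e^{\beta R})$, and the lower bound $\ss=\Omega((e^{\beta R}/n)^2)$ reduces this to $\Theta(ne^{-\beta R}\sqrt{\ss})$. For the lower bound on $\int_{\dt}\PP_{x_0}(T_{det}\le\ss)\,d\mu$ I would isolate the sub-region $\dt'\subseteq\dt$ of points with $|\theta_0|\le\phi(r_0)+\tfrac12\sqrt{\ss}e^{-\beta r_0}$ and $r_0$ larger than a sufficiently large constant $r_*$; this sub-region still has measure $\Theta(\mu(\dt))$ since points of $B_O(R)$ with $r_0<r_*$ contribute at most $O(1)$ to the measure by Lemma~\ref{lem:muBall} and the assumption $\alpha>\tfrac12$. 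For any $x_0\in\dt'$, conditional on the typical radial event, the detection event is equivalent to a Brownian motion of variance $\Theta(\ss e^{-2\beta r_0})$ exiting an interval of length at least $\tfrac12\sqrt{\ss}e^{-\beta r_0}$ around the origin, which has probability bounded below by a positive constant via the reflection argument used to derive~\eqref{eqn:angular-exitt}. The matching upper bound $\int_{\dt}\PP_{x_0}(T_{det}\le\ss)\,d\mu=O(\mu(\dt))$ is trivial since the detection probability never exceeds $1$.

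For the integral over $\ndt$, the same conditioning converts the problem into an angular Brownian exit bound $\PP_{x_0}(T_{det}\le\ss)=O(\Phi((\phi(r_0)-|\theta_0|)/\phis))+\eta$, and after integration one obtains, up to an $\eta$-fraction of $\mu(\ndt)$, the same integral handled at the end of the proof of Theorem~\ref{thm:mainangular}, which is $O(\mu(\dt))$ by Lemma~\ref{lem:angular-muDt}. The main obstacle in making this argument rigorous is controlling the contribution of atypical radial trajectories: when $r_s$ dips significantly below $r_0$ the clock $\II{\ss}$ can grow large and spuriously increase the detection probability, so one must verify that the combined probability of such a dip and a subsequent angular crossing to $\pm\phi(r_0)$ is subdominant. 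Choosing $\eta$ as a sufficiently small constant and using the exponential decay, on time windows of length $O(1)$, of hitting probabilities for balls of small radius around the origin (an excursion control similar in spirit to the arguments of Section~\ref{sec:lower_rad}) handles this contribution, so that the final $O(\mu(\dt))$ bound absorbs the $\eta$-error into the implicit constants.
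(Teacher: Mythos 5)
Your computation of $\mu(\dt)$ via Lemma~\ref{lem:angular-muDt} with $\kappa=1$ (using $\alpha>\tfrac12\ge\beta$ and $\ss=\Omega((e^{\beta R}/n)^2)$) is fine, and the bound $\int_{\dt}\PP_{x_0}(T_{det}\le\ss)\,d\mu\le\mu(\dt)$ is trivial; but both remaining steps have gaps, and the one for $\ndt$ is fatal as sketched. Writing $\PP_{x_0}(T_{det}\le\ss)=O(\Phi((\phi(r_0)-|\theta_0|)/\phis))+\eta$ with a \emph{constant} error $\eta$ and then "absorbing an $\eta$-fraction of $\mu(\ndt)$" cannot work: $\mu(\ndt)=\Theta(n)$ while $\mu(\dt)=\Theta(ne^{-\beta R}\sqrt{\ss})=o(n)$, so $\eta\,\mu(\ndt)$ is never $O(\mu(\dt))$ for any fixed $\eta>0$. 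What is needed is a \emph{pointwise} bound that decays in the angular overshoot $\Delta:=|\theta_0|-\phi(r_0)$. The dangerous radial excursions are not visits to a small ball around the origin: they are shallow dips of depth roughly $\log(1+\Delta e^{\beta r_0})$ below $r_0$, far from the origin, which either reach $B_Q(R)$ directly or boost the angular clock $\II{\ss}$; over a window of length $O(1)$ their probability is only Gaussian in the dip depth, not uniformly negligible, so your "excursion control near the origin" gives no decay in $\Delta$ at all. The paper's proof (Proposition~\ref{generalschico}) supplies exactly this decay by enclosing $x_0$ in the box $[\hatr,R]\times[\theta_0\pm\hatt]$ with $\hatt=\tfrac13\Delta$ and $\hatr=r_0-\log(1+\Delta e^{\beta r_0})$, and bounding separately, via Proposition~\ref{prop:mainuppermixed}, the radial exit by $4\Phi\bigl(-\log(1+\Delta e^{\beta r_0})/\sqrt{\ss}\bigr)$ and the angular exit by $4\Phi\bigl(-\hatt/\sqrt{\ss \JJ(\hatr)}\bigr)$; the logarithmic choice of $\hatr$ together with $\beta<\tfrac12<\alpha$ is precisely what makes the $\theta_0$- and $r_0$-integrals come out to $O(ne^{-\beta R}\sqrt{\ss})$. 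Nothing in your sketch produces a bound with this structure.

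The lower bound also has a gap, though a fixable one. Conditioning on $\sup_{0\le s\le\ss}|r_s-r_0|\le M$ does \emph{not} reduce detection to an angular Brownian crossing of a gap of size $O(\sqrt{\ss}e^{-\beta r_0})$: on that event the detection threshold at the current radius can be as small as $\phi(r_0+M)=\phi(r_0)-\Theta_M(e^{-r_0/2})$, and for $\beta<\tfrac12$, $\ss$ near its minimum $\Theta((e^{\beta R}/n)^2)$ and intermediate radii (say $r_0=R/2$) one has $e^{-r_0/2}\gg\sqrt{\ss}e^{-\beta r_0}$, so the conditional crossing probability is not bounded below by a constant and your claimed "equivalence" fails. (The uniform constant lower bound on the band is in fact true, but proving it requires tracking the drift of $\theta_s-\phi(r_s)$, i.e.\ the co-movement of the boundary with the radius, or restricting to $r_0\ge R-O(1)$.) The paper avoids the issue altogether in Proposition~\ref{generalschico_lower}: it integrates only over the sub-sector $|\theta_0|\le\sqrt{\ss}e^{-\beta r_0}$, whose measure is already $\Theta(ne^{-\beta R}\sqrt{\ss})$ because $\alpha>\beta$, and there the relevant angular distance is to the ray $\theta=0$, which does not move with the radius; conditioning only on $r_s\le r_0+1$ then gives the constant $\Phi(-e^{\beta})$ lower bound. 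You should either adopt that device or supply the drift analysis; as written your step is not justified.
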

\begin{theorem}\label{thm:mixedLarge}
Let $\KA > 1$. Define then $\KR:=\KA$ if $\alpha < 2\beta$ and $\KR:=e^{\KA^2}$ if $\alpha \ge 2\beta$. Let 
\[
\dt(\KA):=\{x_0=(r_0,\theta_0)\in B_O(R) \mid |\theta_0|\leq\phi(R)+\KA\phis \vee |\theta_0|\leq \phi(r_0)+\KR e^{-(\beta\wedge\frac12) r_0}\}, 
\]
where 
\[\phis:=\begin{cases}
(\ss^{\frac{1}{\alpha}}/e^R)^{\beta\wedge\frac{1}{2}}, 
 & \text{if $\alpha<2\beta$},\\[2pt]
e^{-\beta R}\sqrt{\ss \log\ss}, & \text{if $\alpha=2\beta$},\\[2pt]
e^{-\beta R}\sqrt{\ss},&\text{if $\alpha>2\beta$}.
\end{cases}
\]
Furthermore, define $$\mathfrak{Z} := \begin{cases}
e^{\alpha R}, &
\text{if $\alpha < 2\beta$,} \\
e^{\alpha R}/(\alpha R), &
\text{if $\alpha=2\beta$,} \\
e^{2\beta R}, & \text{if $\alpha > 2\beta$.} 
\end{cases} $$
Then, for $\ss=\Omega(1)\cap O(\mathfrak{Z})$ we have 
\[
\int_{\ndt(\KA)} \PP_{x_0}(T_{det}\leq\ss)d\mu(x_0)=O(\mu(\dt(\KA))).
\]
Furthermore, under the additional assumption $\ss=\omega(1)$ we have
$$
\inf_{x_0\in\dt(\KA)}\P_{x_0}(T_{det}\leq \ss) =
\begin{cases}
e^{-O(\KA^2)},
& \text{ if $\alpha\geq2\beta$,} \\[2pt]
\Omega(\KA^{-\alpha/(\beta\wedge\frac12)}),
& \text{ if $\alpha<2\beta$,} 
\end{cases}
$$
and
\[
\sup_{x_0\in\ndt(\KA)}\P_{x_0}(T_{det}\leq \ss) =
\begin{cases}
e^{-\Omega(\KA^2)},
& \text{ if $\alpha\geq2\beta$,} \\[2pt]
O(\KA^{-\alpha/(\beta\wedge\frac12)}),
& \text{ if $\alpha<2\beta$.} 
\end{cases}
\]
\end{theorem}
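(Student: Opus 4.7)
The plan is to combine the approaches of Sections~\ref{sec:angular} and~\ref{sec:radial}. Write $\dt:=\dt(\KA)$ and decompose it as $\dt^{rad}\cup\dt^{ang}$, where $\dt^{rad}:=\{x_0:|\theta_0|\le\phi(R)+\KA\phis\}$ is a thin sector aimed at $Q$ and $\dt^{ang}:=\{x_0:|\theta_0|\le\phi(r_0)+\KR e^{-(\beta\wedge\frac12)r_0}\}$ widens $B_Q(R)$ into a tube whose width depends on $r_0$. The key structural fact that I would exploit throughout is that conditional on the radial trajectory $\{r_s\}_{s\ge 0}$, the angular coordinate evolves as a time-changed standard Brownian motion with time change $\II{\ss}:=\int_0^\ss\sinh^{-2}(\beta r_s)\,ds$, so detection is controlled by the comparison between $\II{\ss}$ and $(|\theta_0|-\phi(r_\ss))^2$.

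For the uniform lower bound on $\dt$ I would treat the two parts separately. Points in $\dt^{rad}$ are handled as in Proposition~\ref{prop:rad-upperBnd}: the radial motion alone drives the particle to $\rabs_0$, yielding the right detection probability after coupling away the angular drift as a benign perturbation (on the event $\{r_\ss=\rabs_0\}$ the angular gap is only $O(\KA\phis)$, well within reach of $B_{\II{\ss}}$). Points in $\dt^{ang}\setminus\dt^{rad}$ require genuine angular accumulation, for which the essential tool is Corollary~\ref{radial:cor:coupling}: with constant probability $\II{\ss}\ge\widetilde{c}\,\ss\,\pi((0,k])\sinh^{-2}(\beta k)$. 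Choosing $k$ to optimize this product against the admissible gap $\KR e^{-(\beta\wedge\frac12)r_0}$ produces the three regimes of the statement: $k=\Theta(1)$ in the case $\alpha<2\beta$ (angular-dominated, giving the polynomial bound $\KA^{-\alpha/(\beta\wedge\frac12)}$), $k=\Theta(\log\ss)$ in the case $\alpha=2\beta$, and $k=R-O(1)$ in the case $\alpha>2\beta$ (both radial-assisted, giving the Gaussian bound $e^{-\Omega(\KA^2)}$).

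For the uniform upper bound on $\ndt$ I would condition on the radial trajectory and combine two failure mechanisms in tandem: (i) the radial process hits $\rabs_0$, bounded via Lemma~\ref{lemmaradial}\eqref{radial:itm:phi1} and Markov's inequality as in Proposition~\ref{prop:rad-lowerBnd}; and (ii) a time-changed Brownian motion exits $[-\phi(r_\ss),\phi(r_\ss)]$, controlled by the Gaussian tail bound $\P(|B_{\II{\ss}}|\ge|\theta_0|-\phi(R))\le e^{-(|\theta_0|-\phi(R))^2/(2\II{\ss})}$. Taking the expectation over the radial trajectory requires controlling the upper tail of $\II{\ss}$, which follows from Corollary~\ref{radial:cor:coupling} used in the reverse direction together with the Pareto sum estimates mentioned in the abstract. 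The measure $\mu(\dt)$ is a direct integration of $\sinh(\alpha r_0)\,dr_0\,d\theta_0$ over each of the two parts, exactly as in Lemmas~\ref{lem:angular-muDt} and~\ref{lem:radialmeasure}; the three asymptotic regimes for $\phis$ in the statement emerge from the balance between the exponential growth $e^{\alpha r_0}$ of the intensity and the exponential decay $e^{-(\beta\wedge\frac12)r_0}$ of the admissible angular width. The integral bound $\int_{\ndt}\P_{x_0}(T_{det}\le\ss)\,d\mu(x_0)=O(\mu(\dt))$ is then obtained by carrying through this same integration after plugging in the uniform upper bound.

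The main obstacle will be the critical case $\alpha=2\beta$. There the optimal radius $k$ is not a single value: the integrand $\ss\,\pi((0,k])\sinh^{-2}(\beta k)\,e^{\alpha k}$ is constant in $k$, so contributions from a logarithmically wide band of radii are all comparable, and the extra $\sqrt{\log\ss}$ factor in $\phis$ is precisely the accumulated effect across this band. Tracking this will require a dyadic decomposition in both directions: on the lower-bound side, applying Corollary~\ref{radial:cor:coupling} layer-by-layer to build up a matching displacement without overcounting the contribution of any individual layer; and on the upper-bound side, summing the angular tail bounds over all layers so as to produce exactly one extra $\log$ factor. Coordinating these two dyadic sums so that the constants match is the most delicate technical piece of the argument.
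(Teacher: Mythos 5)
Your high-level architecture (split $\dt(\KA)$ into the sector $\{|\theta_0|\le\phi(R)+\KA\phis\}$ and the tube $\{|\theta_0|\le\phi(r_0)+\KR e^{-(\beta\wedge\frac12)r_0}\}$, condition on the radial trajectory, treat the angle as a time-changed Brownian motion) matches the paper, but you have attached the two detection mechanisms to the wrong halves of $\dt$, and as written the lower bound for the sector fails whenever $\beta<\frac12$. For a sector point with $|\theta_0|\approx\phi(R)+\KA\phis$, ``radial motion alone drives the particle to $\rabs_0$'' would require reaching the absorption radius with $e^{-\rabs_0/2}=\Theta(\KA\phis)$; even over the whole horizon this has probability $O(\ss e^{-\alpha(R-\rabs_0)})=O\big(\KA^{-2\alpha}\ss e^{-\alpha R}(\KA\phis)^{-2\alpha}\KA^{2\alpha}\big)$, which for $\alpha>2\beta$ (so $\phis=e^{-\beta R}\sqrt\ss$ and $\ss=O(e^{2\beta R})$) is $O(\KA^{-2\alpha}\ss^{1-\alpha}e^{-\alpha(1-2\beta)R})$, polynomially small in $n$ --- nowhere near the required $e^{-O(\KA^2)}$. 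The three regimes of $\phis$, the choice of how deep the radial excursion goes, and the layer-by-layer use of Corollary~\ref{radial:cor:coupling} in the critical case $\alpha=2\beta$ are needed precisely for these sector points (pure angular Gaussian bound when $\alpha>2\beta$; partial descent to radius $\approx R-\frac1\alpha\log\ss$ followed by angular motion when $\alpha<2\beta$, $\beta<\frac12$; accumulation of the $I_k$ over $\Theta(\log\ss)$ layers when $\alpha=2\beta$, plus a coupling to the stationary start that the corollary presupposes). Conversely, the tube points need no optimization over $k$ and no stationary-start result: a local argument suffices --- descend from $r_0$ by about $\frac1{\beta\wedge\frac12}\log\KR$, which by Lemma~\ref{lemmaradial} costs $\Omega(\KR^{-\alpha/(\beta\wedge\frac12)})$, and one unit of angular time at that depth covers the gap $\KR e^{-(\beta\wedge\frac12)r_0}$ with constant probability. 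Your claim that the three regimes ``emerge'' from the tube via Corollary~\ref{radial:cor:coupling} is therefore a misattribution as well as a gap.

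On the upper bound, the decisive ingredient is an upper tail estimate for $\II{\ss}$, and your proposed source for it does not exist: Corollary~\ref{radial:cor:coupling} is a constant-probability \emph{lower} bound on the time spent in $(0,k]$ and cannot be ``used in the reverse direction''; moreover the tail of $\II{\ss}$ is genuinely heavy (polynomial), so no Gaussian or second-moment argument will produce it. The missing bridge to the Pareto-sum estimates you invoke is the excursion decomposition: dominate the reflected radial process by an auxiliary process restarted at $R-1$ at each visit to $R$, so that $\II{\ss}$ is bounded by a sum of at most $M(\ss)$ i.i.d.\ contributions $\int_0^\infty\cosech^2(\beta\widetilde r^{(i)}_u)\,du$; identify each contribution, via the exponential functional of drifted Brownian motion (Dufresne's identity), as inverse-gamma and hence dominated by a Pareto variable of shape $\frac{\alpha}{2\beta}$; control $M(\ss)$ by a Chernoff bound; and only then apply Lemma~\ref{lem:cotapower}. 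This is the content of Proposition~\ref{prop:merged-varianza}, and the logarithmic correction in the critical case comes from the $\gamma=1$ case of that lemma, not from a dyadic summation of angular tail bounds over radial layers. Without this chain (and without the position-dependent truncation parameter $v_0(x_0)$ used when integrating over $\ndt$ to reach $O(\mu(\dt))=O(n\phis)$), the second half of your argument does not get off the ground.
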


\begin{figure}
    \centering
    
\begin{tikzpicture}[x=1cm,y=1cm,scale=0.65,
     decoration={markings,
       mark=at position 1 with {\arrow[scale=1.5,black]{latex}};
      }]
      
      \def\c{12}
      \def\barr{4}
      \def\radp{6.8}
      \def\radb{2.95}
      \def\delta{1.5}
      \def\angb{45}
      \def\phip{200}
      \def\phiro{266.5}
      \def\phiss{185}
      \def\angabs{-71.4};
      \node[inner sep=0] (O) at (\c,\c) {};
      \node[inner sep=0] (P) at (2,4) {};
      \node[inner sep=0] (Q) at (\c,0) {};
  \draw[fill=gray!20] (8.204cm,0.616cm) --(10.262cm,6.803cm) -- (10.178cm,7.003cm) -- (10.092cm,7.208cm) -- (10.005cm,7.419cm) -- (9.918cm,7.636cm) -- (9.831cm,7.859cm) -- (9.747cm,8.090cm) -- (9.664cm,8.328cm) -- (9.586cm,8.575cm) -- (9.513cm,8.830cm) -- (9.447cm,9.094cm) -- (9.391cm,9.367cm) -- (9.346cm,9.649cm) -- (9.315cm,9.940cm) -- (9.301cm,10.238cm) -- (9.307cm,10.543cm) -- (9.336cm,10.852cm) -- (9.391cm,11.162cm) -- (9.476cm,11.470cm) -- (9.593cm,11.771cm) -- (9.744cm,12.059cm) -- (9.930cm,12.326cm) -- (10.150cm,12.565cm) -- (10.402cm,12.767cm) -- (10.631cm,12.900cm) -- (10.678cm,12.923cm) -- (10.727cm,12.944cm) -- (10.775cm,12.963cm) -- (10.824cm,12.981cm) -- (10.873cm,12.997cm) -- (10.923cm,13.011cm) -- (10.972cm,13.024cm) -- (11.022cm,13.035cm) -- (11.072cm,13.044cm) -- (11.122cm,13.052cm) -- (11.172cm,13.058cm) -- (11.222cm,13.062cm) -- (11.271cm,13.064cm) -- (11.321cm,13.064cm) -- (11.370cm,13.063cm) -- (11.418cm,13.060cm) -- (11.466cm,13.054cm) -- (11.513cm,13.048cm) -- (11.560cm,13.039cm) -- (11.606cm,13.028cm) -- (11.651cm,13.016cm) -- (11.695cm,13.002cm) -- (11.738cm,12.987cm) -- (11.780cm,12.969cm) -- (11.820cm,12.950cm) -- (11.860cm,12.930cm) -- (11.898cm,12.908cm) -- (11.934cm,12.884cm) -- (11.969cm,12.859cm) -- (12.002cm,12.833cm) -- (11.998cm,12.833cm) -- (12.031cm,12.859cm) -- (12.066cm,12.884cm) -- (12.102cm,12.908cm) -- (12.140cm,12.930cm) -- (12.180cm,12.950cm) -- (12.220cm,12.969cm) -- (12.262cm,12.987cm) -- (12.305cm,13.002cm) -- (12.349cm,13.016cm) -- (12.394cm,13.028cm) -- (12.440cm,13.039cm) -- (12.487cm,13.048cm) -- (12.534cm,13.054cm) -- (12.582cm,13.060cm) -- (12.630cm,13.063cm) -- (12.679cm,13.064cm) -- (12.729cm,13.064cm) -- (12.778cm,13.062cm) -- (12.828cm,13.058cm) -- (12.878cm,13.052cm) -- (12.928cm,13.044cm) -- (12.978cm,13.035cm) -- (13.028cm,13.024cm) -- (13.077cm,13.011cm) -- (13.127cm,12.997cm) -- (13.176cm,12.981cm) -- (13.225cm,12.963cm) -- (13.273cm,12.944cm) -- (13.322cm,12.923cm) -- (13.369cm,12.900cm) -- (13.416cm,12.876cm) -- (13.463cm,12.851cm) -- (13.509cm,12.824cm) -- (13.554cm,12.796cm) -- (13.598cm,12.767cm) -- (13.642cm,12.736cm) -- (13.685cm,12.704cm) -- (13.728cm,12.671cm) -- (13.769cm,12.637cm) -- (13.810cm,12.601cm) -- (13.850cm,12.565cm) -- (13.889cm,12.527cm) -- (13.927cm,12.489cm) -- (13.964cm,12.450cm) -- (14.000cm,12.409cm) -- (14.035cm,12.368cm) -- (14.070cm,12.326cm) -- (14.103cm,12.283cm) -- (14.136cm,12.240cm) -- (14.167cm,12.195cm) -- (14.198cm,12.150cm) -- (14.227cm,12.105cm) -- (14.256cm,12.059cm) -- (14.283cm,12.012cm) -- (14.310cm,11.965cm) -- (14.336cm,11.917cm) -- (14.360cm,11.869cm) -- (14.384cm,11.820cm) -- (14.407cm,11.771cm) -- (14.429cm,11.722cm) -- (14.449cm,11.672cm) -- (14.469cm,11.622cm) -- (14.488cm,11.571cm) -- (14.507cm,11.521cm) -- (14.524cm,11.470cm) -- (14.540cm,11.419cm) -- (14.556cm,11.368cm) -- (14.570cm,11.317cm) -- (14.584cm,11.265cm) -- (14.597cm,11.214cm) -- (14.609cm,11.162cm) -- (14.620cm,11.110cm) -- (14.630cm,11.059cm) -- (14.640cm,11.007cm) -- (14.649cm,10.955cm) -- (14.657cm,10.903cm) -- (14.664cm,10.852cm) -- (14.671cm,10.800cm) -- (14.677cm,10.748cm) -- (14.682cm,10.697cm) -- (14.686cm,10.645cm) -- (14.690cm,10.594cm) -- (14.693cm,10.543cm) -- (14.696cm,10.492cm) -- (14.698cm,10.441cm) -- (14.699cm,10.390cm) -- (14.700cm,10.339cm) -- (14.700cm,10.288cm) -- (14.699cm,10.238cm) -- (14.698cm,10.188cm) -- (14.697cm,10.138cm) -- (14.695cm,10.088cm) -- (14.692cm,10.038cm) -- (14.689cm,9.989cm) -- (14.685cm,9.940cm) -- (14.681cm,9.891cm) -- (14.677cm,9.842cm) -- (14.672cm,9.793cm) -- (14.666cm,9.745cm) -- (14.661cm,9.697cm) -- (14.654cm,9.649cm) -- (14.648cm,9.601cm) -- (14.641cm,9.554cm) -- (14.634cm,9.507cm) -- (14.626cm,9.460cm) -- (14.618cm,9.413cm) -- (14.609cm,9.367cm) -- (14.601cm,9.321cm) -- (14.592cm,9.275cm) -- (14.582cm,9.229cm) -- (14.573cm,9.184cm) -- (14.553cm,9.094cm) -- (14.487cm,8.830cm) -- (14.414cm,8.575cm) -- (14.336cm,8.328cm) -- (14.253cm,8.090cm) -- (14.169cm,7.859cm) -- (14.082cm,7.636cm) -- (13.995cm,7.419cm) -- (13.908cm,7.208cm) -- (13.822cm,7.003cm) -- (13.738cm,6.803cm) -- (15.845cm,0.633cm) --(15.475cm,0.514cm) --(15.101cm,0.408cm) --(14.724cm,0.313cm) --(14.344cm,0.231cm) --(13.961cm,0.161cm) --(13.577cm,0.104cm) --(13.190cm,0.059cm) --(12.803cm,0.027cm) --(12.415cm,0.007cm) --(12.026cm,0.000cm) --(11.637cm,0.005cm) --(11.249cm,0.024cm) --(10.861cm,0.054cm) --(10.475cm,0.097cm) --(10.090cm,0.153cm) --(9.707cm,0.221cm) --(9.327cm,0.302cm) --(8.949cm,0.394cm) --(8.575cm,0.499cm) --(8.204cm,0.616cm) --cycle;

\draw[fill=gray!40] (11.941cm,0.000cm) -- (11.902cm,1.200cm) -- (11.842cm,2.401cm) -- (11.748cm,3.604cm) -- (11.607cm,4.811cm) -- (11.404cm,6.030cm) -- (11.136cm,7.278cm) -- (10.842cm,8.591cm) -- (10.798cm,8.820cm) -- (10.758cm,9.051cm) -- (10.725cm,9.285cm) -- (10.698cm,9.521cm) -- (10.681cm,9.760cm) -- (10.675cm,9.999cm) -- (10.681cm,10.239cm) -- (10.704cm,10.477cm) -- (10.744cm,10.711cm) -- (10.804cm,10.938cm) -- (10.885cm,11.154cm) -- (10.988cm,11.356cm) -- (11.113cm,11.538cm) -- (11.260cm,11.696cm) -- (11.426cm,11.825cm) -- (11.608cm,11.921cm) -- (11.801cm,11.980cm) -- (12.000cm,12.000cm) --(12.199cm,11.980cm) -- (12.392cm,11.921cm) -- (12.574cm,11.825cm) -- (12.740cm,11.696cm) -- (12.887cm,11.538cm) -- (13.012cm,11.356cm) -- (13.115cm,11.154cm) -- (13.196cm,10.938cm) -- (13.256cm,10.711cm) -- (13.296cm,10.477cm) -- (13.319cm,10.239cm) -- (13.325cm,9.999cm) -- (13.319cm,9.760cm) -- (13.302cm,9.521cm) -- (13.275cm,9.285cm) -- (13.242cm,9.051cm) -- (13.202cm,8.820cm) -- (13.158cm,8.591cm) -- (13.112cm,8.366cm) -- (13.063cm,8.144cm) -- (13.013cm,7.924cm) -- (12.963cm,7.707cm) -- (12.913cm,7.492cm) -- (12.864cm,7.278cm) -- (12.815cm,7.067cm) -- (12.768cm,6.857cm) -- (12.723cm,6.649cm) -- (12.679cm,6.441cm) -- (12.636cm,6.235cm) -- (12.596cm,6.030cm) -- (12.557cm,5.825cm) -- (12.521cm,5.621cm) -- (12.486cm,5.418cm) -- (12.453cm,5.215cm) -- (12.422cm,5.013cm) -- (12.393cm,4.811cm) -- (12.366cm,4.609cm) -- (12.340cm,4.408cm) -- (12.316cm,4.206cm) -- (12.293cm,4.005cm) -- (12.272cm,3.805cm) -- (12.252cm,3.604cm) -- (12.158cm,2.401cm) -- (12.098cm,1.200cm) -- (12.059cm,0.000cm) -- (12.059cm,0.000cm) -- (12.045cm,0.000cm) -- (12.030cm,0.000cm) -- (12.015cm,0.000cm) -- (12.000cm,0.000cm) -- (11.985cm,0.000cm) -- (11.970cm,0.000cm) -- (11.955cm,0.000cm) -- (11.941cm,0.000cm) -- cycle;
\draw[fill=black] (O) ++(\phiss:\radp-1) circle (0.07) node[right, below] {$x_{\ss}$};
\draw[fill=black] (O) ++(\phip:\radp) circle (0.07) node[above left] {$x_0$};
\draw[dashed] (O)  ++(\phip:0) -- ++(\phip:\c);

      \draw[dotted] (O) -- (Q);
      \draw[dotted] (O) -- ++(\angabs:\c+2);
      \draw[dotted] (\c,-0.75) -- (\c,-2);
      \begin{scope}[xshift=340,yshift=340]
        \draw[dash dot] (\phip-\angb:\c) -- (\phip-\angb:\radb) arc (\phip-\angb:\phip+\angb:\radb) -- (\phip+\angb:\c);
        \draw[postaction={decorate}] (\phip:\radp) arc (\phip:\phip+\angb:\radp);
        \draw[postaction={decorate}] (\phip+\angb:\radp) arc (\phip+\angb:\phip:\radp) node[midway,xshift=7.5pt,yshift=5pt] {$_{\widehat{\theta}_0}$};
        \draw[dashed] (0,0) -- (\phip+\angb:\radb);
        \draw[postaction={decorate}] (\phiro:\radp) arc (\phiro:\phip+\angb:\radp) node[midway, above, xshift=5pt] {$_{\frac12\widehat{\theta}_0}$};
        \draw[postaction={decorate}] (\phip+\angb:\radp) arc (\phip+\angb:\phiro:\radp);
        \draw[postaction={decorate}] (270:\radp+\delta) arc (270:\phip:\radp+\delta) node[midway, above] {$_{\theta_0}$};
        \draw[postaction={decorate}] (270:\c+1.5) arc (270:360+\angabs:\c+1.5) node[midway, below] {$_{\phi(R)+\kappa_A\phi^{(\ss)}}$};
        \draw[postaction={decorate}] (360+\angabs:\c+1.5) arc (360+\angabs:270:\c+1.5);
        \draw[postaction={decorate}] (\phiro:\c+1.5) arc (\phiro:270:\c+1.5) node[midway, below] {$_{\phi(r_0)}$};
        \draw[postaction={decorate}] (270:\c+1.5) arc (270:\phiro:\c+1.5);
        \draw[postaction={decorate}] (0,0) -- (\phiro:\radp) node[midway, left, xshift=2pt] {$_{r_0}$};
      \draw[dotted] (0,0) -- (\phiro:\c+2);
      \draw[dashed] (0,0) circle (5.4);
      \draw[postaction={decorate}] (0,0) -- (-50:5.4) node[midway, right] {$_{r''}$};
      \draw[dashed] (0,0) circle (0.87);
      \draw (0,0) -- (30:1.37);
      \draw[postaction={decorate}] (30:1.57) -- (30:0.87) node [right, xshift=2pt,yshift=-1pt] {$_{r'}$};
      \draw[postaction={decorate}] (0,0) -- (180:\radb) node [midway, above, yshift=-2pt] {$_{\widehat{r}_0}$};
      \end{scope}
      \draw[thick,black] (\c,\c) circle (\c);
      \draw[fill=black] (Q) circle (0.07);
      \draw[fill=black] (O) circle (0.07);
      \node[below] at (Q) {$_Q$};      
      \node[above] at (O) {$_O$}; 
	\end{tikzpicture}
    \caption{(a) The lightly shaded area corresponds to $\dt(\KA)\setminus B_{Q}(R)$ as defined in Theorem~\ref{thm:mixedLarge} and the strongly shaded region represents $B_Q(R)$. (b) The smallest (respectively largest) circumference whose
    boundary is a dashed line is of radius $r'$
    ($r''$, respectively). 
    (c) The region whose boundary is the dashed-dot line corresponds to the box $\mathcal{B}(x_0):=[\widehat{r}_0,R]\times [\theta_0-\widehat{\theta}_0,\theta_0+\widehat{\theta}_0]$ when $\ss$ is large.}
    \label{fig:mixto}
\end{figure}
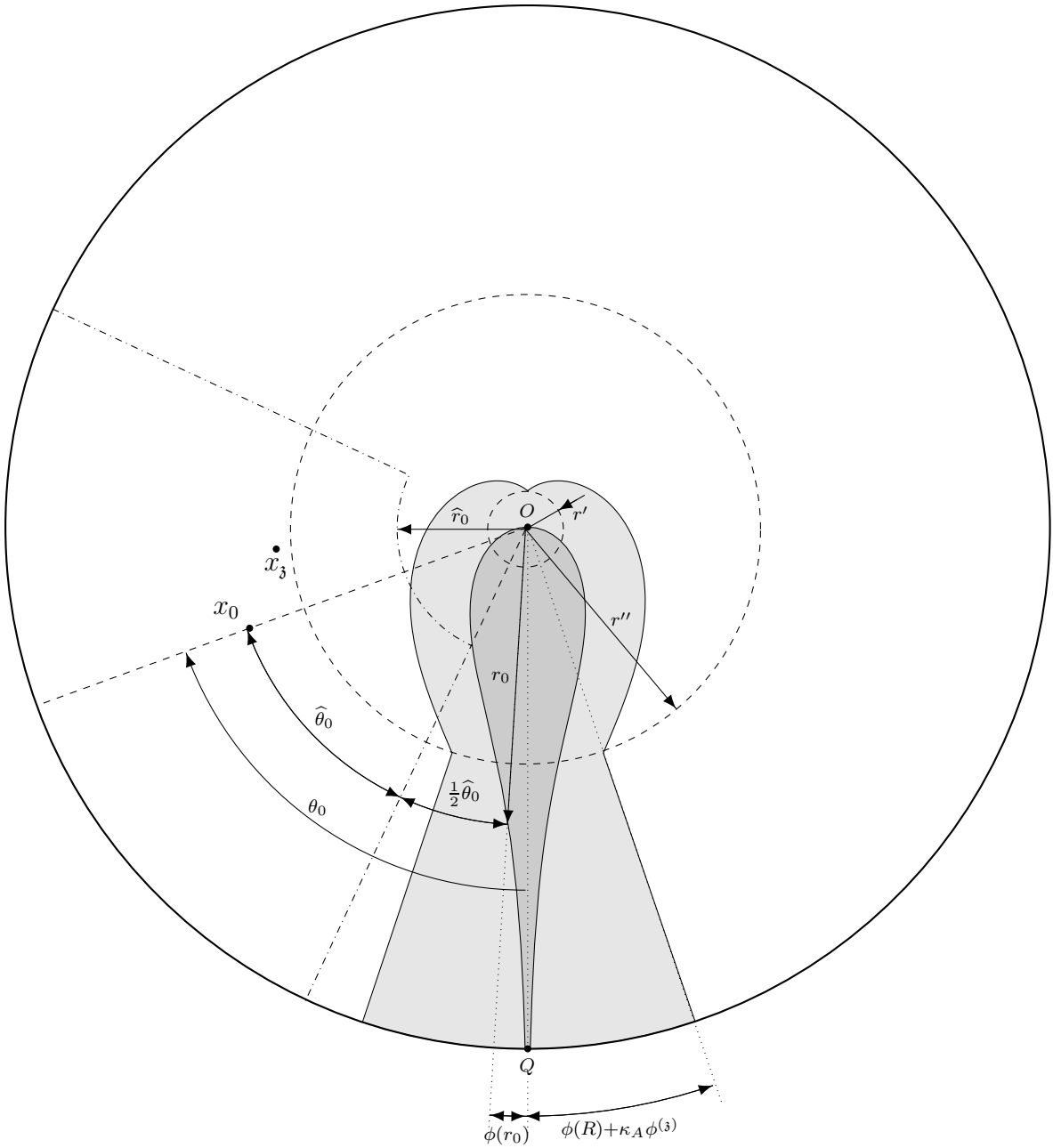

We refer to Figure~\ref{fig:mixto}(a) for an illustration of $\dt(\KA)$ as defined in the last theorem.

\medskip
The following subsection is dedicated to proving the lower bounds of the two theorems just stated (in fact, as can be seen from the proof, the assumptions about $\ss$ are slightly milder in the proofs). The subsequent subsection then deals with the corresponding upper bounds.

\subsection{Lower bounds}\label{sec:Lower}
In this subsection we prove the lower bounds of Theorem~\ref{thm:mixedSmall} and~\ref{thm:mixedLarge}.

\subsubsection{The case $\ss$ small}
We start with the proof of the lower bound of Theorem~\ref{thm:mixedSmall}. 
We establish the following proposition, which proves the  lower bound of Theorem~\ref{thm:mixedSmall}. 
\begin{proposition}\label{generalschico_lower}
 Fix $\beta\le\frac{1}{2}$ and take $\dt$ as in~\eqref{mix:lower:defD}. If $\ss=O(1)$, then
\[\int_{\dt}\P_{x_0}(T_{det}\leq \ss)d\mu(x_0) 
\;=\; \Omega(ne^{-\beta R}\sqrt{\ss}).\]
\end{proposition}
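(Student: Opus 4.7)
The plan is to exhibit a subregion $\dt'\subseteq\dt$ with $\mu(\dt')=\Omega(ne^{-\beta R}\sqrt{\ss})$ such that $\PP_{x_0}(T_{det}\leq\ss)=\Omega(1)$ uniformly over $x_0\in\dt'$; the stated bound then follows immediately by restricting the integral to $\dt'$. A natural choice is
\[
\dt' := \big\{(r_0,\theta_0) : r_0\in[R-1,R],\ \phi(r_0) < |\theta_0|\leq \phi(r_0)+c\sqrt{\ss}\,e^{-\beta r_0}\big\},
\]
for a sufficiently small absolute constant $c>0$. A direct computation, using that $f(r_0)=\Theta(\alpha e^{-\alpha(R-r_0)})$ on $[R-1,R]$ and integrating $\theta_0$ over a window of total length $2c\sqrt{\ss}\,e^{-\beta r_0}$, gives
\[
\mu(\dt')=\Theta(n)\int_{R-1}^R \sqrt{\ss}\,e^{-\beta r_0}\,e^{-\alpha(R-r_0)}dr_0=\Theta(ne^{-\beta R}\sqrt{\ss}),
\]
which is the desired measure estimate.

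For the uniform lower bound on detection probabilities on $\dt'$, I would fix $x_0\in\dt'$ and consider the event $E_1:=\{r_s\in[r_0-1,R]\text{ for all }s\in[0,\ss]\}$. Since the radial process has a positive drift toward the reflecting barrier at $R$ and $\ss=O(1)$, a comparison with a driftless reflected Brownian motion yields $\PP_{x_0}(E_1)=\Omega(1)$. On $E_1$ we have $\sinh(\beta r_s)=\Theta(e^{\beta r_0})$ uniformly in $s$, so by the Dambis--Dubins--Schwarz time change $\II{s}:=\int_0^s\sinh^{-2}(\beta r_u)du$ we may write $\theta_s=\theta_0+W(\II{s})$ for a standard Brownian motion $W$ independent of the radial trajectory, with $\II{\ss}=\Theta(\ss\,e^{-2\beta r_0})$ on $E_1$.

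Next, Part~\eqref{itm:phi4} of Lemma~\ref{lem:phi} gives $\phi(r_0)-\phi(R)=O(e^{-R/2})$; together with $\beta\leq 1/2$ and the hypothesis $\sqrt{\ss}=\Omega(e^{\beta R}/n)$, this implies $e^{-R/2}=O(\sqrt{\ss}\,e^{-\beta r_0})$, so the angular distance from $|\theta_0|$ to $\phi(R)$ remains of order $O(\sqrt{\ss}\,e^{-\beta r_0})=O(\sqrt{\II{\ss}})$. Applying the reflection principle for one-dimensional Brownian motion over $[0,\II{\ss}]$, the probability that $|\theta_s|$ reaches $\phi(R)$ for some $s\leq\ss$ is $\Omega(1)$ (provided $c$ is chosen small enough). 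At such a crossing time the radial coordinate still satisfies $r_s\leq R$, so $\phi(R)\leq\phi(r_s)$ and the particle lies inside $B_Q(R)$; detection has therefore occurred. Combined with $\PP(E_1)=\Omega(1)$, this yields $\PP_{x_0}(T_{det}\leq\ss)=\Omega(1)$ uniformly over $\dt'$.

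The main obstacle will be the rigorous verification of $\PP(E_1)=\Omega(1)$ for $\ss=O(1)$: one has to rule out large downward excursions of $r_s$ within a short time window despite the positive drift throughout $[R-2,R]$, which can be handled by coupling with a reflected Brownian motion of constant drift and invoking standard hitting-time bounds, in the spirit of Part~\eqref{radial:itm:phi2} of Lemma~\ref{lemmaradial}. A secondary technical point is to make the Dambis--Dubins--Schwarz step clean in the coupled setting, which is standard but requires writing the angular SDE as $d\theta_s=\sinh^{-1}(\beta r_s)dB^{\theta}_s$ driven by a Brownian motion $B^{\theta}$ independent of the one driving $r_s$.
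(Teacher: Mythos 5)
Your toolkit is essentially the paper's — condition on a radial confinement event of constant probability over the constant-length time window, view the angle as a time-changed Brownian motion independent of the radial trajectory, apply a reflection-principle bound, and integrate the measure — but your choice of subregion introduces a genuine gap relative to the statement as given. The proposition assumes only $\ss=O(1)$ (and $\beta\le\tfrac12$); it does not assume $\ss=\Omega((e^{\beta R}/n)^2)$. Your uniform bound on $\dt'$ requires the angle to reach $\phi(R)$, because on $E_1$ the radius may climb all the way to $R$, and to absorb the gap $\phi(r_0)-\phi(R)=\Theta(e^{-R/2})$ (for, say, $r_0\in[R-1,R-\tfrac12]$) you explicitly invoke $\sqrt{\ss}=\Omega(e^{\beta R}/n)$. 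That hypothesis belongs to Theorem~\ref{thm:mixedSmall}, not to the proposition: if $\sqrt{\ss}=o(e^{(\beta-\frac12)R})$, the angular standard deviation $\Theta(\sqrt{\ss}e^{-\beta R})$ is much smaller than $e^{-R/2}$, the reflection step no longer yields a constant, and your argument does not establish the stated bound (the paper's proof uses no lower bound on $\ss$).

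The gap is fixable with a small modification. Either (i) replace $E_1$ by the event that $r_s\le (r_0+K\sqrt{\ss})\wedge R$ for all $s\le\ss$ (still of constant probability for $\ss=O(1)$) and aim at $\phi((r_0+K\sqrt{\ss})\wedge R)$, noting $\phi(r_0)-\phi((r_0+K\sqrt{\ss})\wedge R)=O(\sqrt{\ss}e^{-R/2})=O(\sqrt{\ss}e^{-\beta r_0})$ since $\beta\le\tfrac12$; or (ii) follow the paper's packaging: integrate over the strip $\{|\theta_0|\le\sqrt{\ss}e^{-\beta r_0},\ r_0\in[1,R]\}\subseteq\dt$, where the angular distance to be covered is at most $|\theta_0|$ itself (reaching angle $0$ detects at any radius because $\phi>0$), so conditioned on the radius changing by at most $1$ the detection probability is $\Omega\big(\Phi(-|\theta_0|e^{\beta(r_0+1)}/\sqrt{\ss})\big)$, uniformly a constant on that strip, and the measure computation (using $\alpha>\tfrac12\ge\beta$) gives $\Theta(ne^{-\beta R}\sqrt{\ss})$ with no lower bound on $\ss$. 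Your measure estimate for $\dt'$, the constant-probability radial confinement, and the time-change/independence step are fine as written.
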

\begin{proof}
Consider $x_0$ with $|\theta_0| \le \phi(r_0)+\sqrt{\ss} e^{-\beta r_0}$ and $r_0 \ge c$ for some constant $c > 0$. Note that for such choice of $r_0$ we have $\coth(r_0)\leq\coth(c)=O(1)$ and thus within time $O(1)$, with probability bounded away from zero, the radial coordinate changes by at most $1$. Conditioning on this, we consider the angular movement variance $1$ Brownian motion $B_{\II{\ss}}$ where now $\II{\ss}:=\int_0^\ss\cosech^2(\beta r_{\ss})d\ss\geq (\sqrt{\ss}/\sinh(\beta(r_0+1))^2$. As in the proof of Theorem~\ref{thm:mainangular}, define the exit time $H_{[-a,b]}$ from the interval $[-a,b]$ where
 $a:=\phi(r_0+1)-|\theta_0|$ and $b:=2\pi-\phi(r_0+1)-|\theta_0|$, and as in~\eqref{eqn:angular-exitt} in the proof of Theorem~\ref{thm:mainangular},
$$
\mathbb{P}_{x_0}(T_{det} \le \ss) \ge \mathbb{P}(H_{[-a,b]} \le \II{\ss})=\Omega\big(\Phi\big((\phi(r_0+1)-|\theta_0|)\sinh(\beta (r_0+1))/\sqrt{\ss}\big)\big).
$$
Since $\phi(\cdot)>0$ and $\sinh(x)\leq e^{x}$ we conclude that $\PP_{x_0}(T_{det}\le \ss)
  =\Omega(\Phi(-|\theta_0|e^{\beta (r_0+1)}/\sqrt{\ss}))$.
Integrating over the elements of $\dt$ satisfying $|\theta_0| \le \sqrt{\ss} e^{-\beta r_0}$, 
we have
$$
\int_{\dt} \mathbb{P}_{x_0}(T_{det} \le \ss) d\mu(x_0) =
\Omega(ne^{-\alpha R})\int_{1}^R \int_0^{\sqrt{\ss} e^{-\beta \widehat{r}_0}} 
\Phi(-\theta_0 e^{\beta \widehat{r}_0}/\sqrt{\ss})\sinh(\alpha \widehat{r}_0) d\theta_0 d\widehat{r}_0. 
$$
Performing the change of variables $y_0:=\theta_0 e^{\beta \widehat{r}_0}/\sqrt{\ss}$, since $\alpha>\beta$, and using $\sinh(x)=\Theta(e^x)$ for $x=\Omega(1)$, we get for the latter
$$
\Omega(ne^{-\alpha R})\int_0^{1} \Phi(-y_0)\sqrt{\ss}dy_0\int_{1}^R e^{(\alpha-\beta)\widehat{r}_0} d\widehat{r}_0=\Omega(ne^{-\beta R}\sqrt{\ss})\int_0^{1} \Phi(-y_0) dy_0.
$$
Note that the last integral is $\Omega(1)$, and thus the stated result follows.
\end{proof}

\subsubsection{The case $\ss$ large}
We now prove the lower bound of Theorem~\ref{thm:mixedLarge}. In fact, we will only need to assume the milder condition $\ss \ge Ce^{2\KA^2}$ with $C$ being a large enough constant. 
We start by showing that particles that start close to the origin detect quickly with at least constant probability:
\begin{lemma}\label{lem:closetoorigin}
Let $\tau > 0$, and let $c^*:=C^*/\beta$ for some arbitrarily large constant $C^* > 0$. Then,
$$
 \inf_{x_0\in B_O(c^*)}\PP_{x_0}(T_{det}\leq \tau)=\Omega(1).
$$
\end{lemma}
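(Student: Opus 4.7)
The plan is to establish that, starting from any $x_0 \in B_O(c^*)$, with probability bounded below by a positive constant independent of $n$, the particle's angular coordinate enters a nontrivial sector around $Q$ by time $\tau$ while the radial coordinate remains bounded by a constant, forcing $x_\tau \in B_Q(R)$ and hence $T_{det}\le\tau$.

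First, I would choose a constant $C_\tau>0$, depending only on $\tau$, $\alpha$, $\beta$, and $C^*$, so that the event
\[
E_1:=\big\{\sup\nolimits_{0\le s\le\tau} r_s\le c^*+C_\tau\big\}
\]
satisfies $\PP_{x_0}(E_1)\ge \tfrac12$ uniformly in $x_0\in B_O(c^*)$. Since the radial drift $(\alpha/2)\coth(\alpha r)$ is bounded above by $M:=(\alpha/2)\coth(\alpha c^*)$ on $\{r\ge c^*\}$, applying the strong Markov property at successive downcrossings of the level $c^*$ and comparing each excursion above $c^*$ with a Brownian motion with constant drift $M$, standard estimates bound $\sup_{s\le\tau} r_s$ in distribution from above by $c^*+M\tau+\sup_{s\le\tau}B_s$, whose tail bound via Markov's inequality allows one to select the required $C_\tau$.

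Next, conditional on the radial path $(r_u)_{u\le\tau}$, the angular process $\theta_s$ viewed on the circle $\RR/(2\pi\ZZ)$ is a time-changed Brownian motion starting at $\theta_0$ with clock $V(s):=\int_0^s \sinh^{-2}(\beta r_u)\,du$. On $E_1$ the deterministic bound
\[
V(\tau)\ge v_\star:=\tau\sinh^{-2}(\beta(c^*+C_\tau))>0
\]
holds. Writing the density of circular Brownian motion at time $v$ as $p_v(\theta)=\sum_{k\in\ZZ}(2\pi v)^{-1/2}\exp(-(\theta-\theta_0+2\pi k)^2/(2v))$, the $k=0$ term alone yields $p_v(\theta)\ge (2\pi v)^{-1/2}e^{-\pi^2/(2v)}$, which for $v\ge v_\star$ is uniformly bounded below by a positive constant $c_\star:=c_\star(v_\star)$. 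Thus, setting $\phi_\star:=\phi(c^*+C_\tau)>0$,
\[
\PP_{x_0}\big(|\theta_\tau|\le\phi_\star \,\big|\, (r_u)_{u\le\tau}\big)\ge 2\phi_\star c_\star =: p_\star>0
\]
uniformly over radial paths in $E_1$ and over the initial angle $\theta_0$.

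Finally, on $E_1\cap\{|\theta_\tau|\le\phi_\star\}$, monotonicity of $\phi$ (Lemma~\ref{lem:phi}) gives $\phi(r_\tau)\ge\phi(c^*+C_\tau)=\phi_\star\ge|\theta_\tau|$, so $x_\tau\in B_Q(R)$ and $T_{det}\le\tau$. Combining the two bounds yields $\PP_{x_0}(T_{det}\le\tau)\ge p_\star\,\PP(E_1)\ge p_\star/2=\Omega(1)$, uniformly over $x_0\in B_O(c^*)$, as required. The main technical obstacle is the rigorous justification of Step~1: the radial drift $\coth(\alpha r)$ diverges near the origin, which precludes a naive stochastic domination by a constant-drift Brownian motion; this is circumvented by working only with excursions above the fixed level $c^*$, where the uniform drift bound $M$ applies.
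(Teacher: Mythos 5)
Your overall architecture (confine the radius to a constant band with constant probability, then use the conditional time-changed Brownian angular motion to reach the target) is the same as the paper's, but your angular step contains a claim that fails as written. You assert that $(2\pi v)^{-1/2}e^{-\pi^2/(2v)}$ is ``uniformly bounded below by a positive constant for $v\ge v_\star$''; it is not, since this expression tends to $0$ as $v\to\infty$. This matters because on $E_1$ the clock $V(\tau)=\int_0^\tau\sinh^{-2}(\beta r_u)\,du$ is bounded \emph{below} by $v_\star$ but not above: the radial path may spend time arbitrarily close to the origin, where $\sinh^{-2}(\beta r_u)$ blows up, so the conditional variance ranges over all of $[v_\star,\infty)$ and your single-Gaussian-term bound gives no uniform constant. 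The conclusion you want (the wrapped-normal density at any point is bounded below uniformly over $v\ge v_\star$) is nevertheless true — for large $v$ the wrapped normal is close to the uniform density $1/(2\pi)$, which one can see by summing the $\Theta(\sqrt{v})$ lattice terms with $|\theta-\theta_0+2\pi k|\le\sqrt{v}$ — so the gap is repairable, but it needs this extra argument. (A minor further imprecision: the domination $\sup_{s\le\tau}r_s\le c^*+M\tau+\sup_{s\le\tau}B_s$ should carry the reflected/local-time contribution, e.g. $c^*+M\tau+2\sup_{s\le\tau}|B_s|$; this does not affect the choice of $C_\tau$.)

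The paper avoids the issue by choosing a different angular event. It too first keeps the radius below a constant ($2c^*$), bounding that probability by monotonicity in the starting radius; but instead of asking the angle to \emph{land} within $\phi(c^*+C_\tau)$ of $0$ at time exactly $\tau$ (a density statement that degrades as the clock grows), it asks that $|\theta_s-\theta_0|>2\pi$ for some $s\le\tau$, which by continuity forces the trajectory to cross angle $0$ and hence detect. Conditionally on the radial path this is a tail event for a Brownian motion, and its probability is nondecreasing in the clock, so the lower bound $V(\tau)\ge\tau\sinh^{-2}(2C^*)=\Omega(1)$ alone suffices and the unboundedness of $V(\tau)$ is harmless. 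If you either switch to that sweeping event or patch the wrapped-density lower bound as indicated, your proof goes through and yields the same constant-order bound.
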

\begin{proof}
Let $T_{h}$ be the smallest time $t$ such that $r_t=2c^*$ and notice that, calling $\PP^r$ the law of the radial component of the process, for any point $x_0=(r_0, \theta_0)$ with~$r_0 \le c^*$, it holds that
$\PP^r_{r_0}(T_{h}\ge \tau)\,\geq\,\PP^r_{c^*}(T_{h} \ge \tau).$
Now, fix any realization $\{r_s\}_{s\geq 0}$ of the radial component of the process such that~$T_h\ge\tau $ and observe that for any such realization, detection is guaranteed if $|\theta_s-\theta_0|>2\pi$ for some $0 \le s \le \tau$. Under the event $T_{h}\ge\tau$ the radial coordinate at any time $s \le \tau$ is at most $2c^*$. Thus, the angular movement $\theta_\tau-\theta_0$ has a normal distribution centered at zero with variance at least
\[\int_0^{\tau}\sinh^{-2}(\beta r_s)ds\,\geq\, \tau\sinh^{-2}(2C^*)  =\Omega(1).\]
Thus, with constant probability, within time~$\tau$ the angular movement covers an angle of $2\pi$. The lemma follows. 
\end{proof}

We now deal with the remaining starting points $x_0\in\dt$.
Before doing so we establish a simple fact concerning the random variables defined for nonnegative integer values of $k$ as follows:
\begin{equation}\label{mixedLower:def:Ik}
I_k := \int_0^\ss {\bf 1}_{(0,k]}(r_s)ds.
\end{equation}
Recall that $\pi(r):=\frac{\alpha\sinh(\alpha r)}{\cosh(\alpha R)-1}$, $0\leq r\leq R$, is the stationary distribution of the process~$\{r_s\}_{s\geq 0}$.
\begin{fact}\label{fact:stationary}
If $k\geq\frac{1}{\alpha}\log 2$, 
then $\EE_{\pi}(I_k) \ge \tfrac14\ss e^{-\alpha (R-k)}$.
\end{fact}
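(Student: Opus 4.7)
The plan is to exploit that the process starts from its stationary distribution $\pi$, so the expected occupation time of the set $(0,k]$ is simply $\ss$ times its stationary mass. Formally, by Tonelli's theorem,
\[
\EE_\pi(I_k) \,=\, \int_0^\ss \PP_\pi(r_u \in (0,k])\,du \,=\, \int_0^\ss \pi((0,k])\,du \,=\, \ss\,\pi((0,k]),
\]
so everything reduces to a one-line computation of $\pi((0,k])$ followed by elementary bounds on hyperbolic functions.

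Next, I would integrate the density $\pi(r)=\alpha\sinh(\alpha r)/(\cosh(\alpha R)-1)$ directly to obtain
\[
\pi((0,k]) \,=\, \frac{\cosh(\alpha k)-1}{\cosh(\alpha R)-1}.
\]
For the denominator, since $e^{-\alpha R}\leq 1$, one has $\cosh(\alpha R)-1\leq \tfrac12 e^{\alpha R}$. For the numerator, I would observe that the function $f(x):=\cosh(x)-1-\tfrac{1}{8}e^{x}$ satisfies $f(\log 2)=0$ and $f'(x)=\tfrac{3}{8}e^x-\tfrac{1}{2}e^{-x}$ is positive for $x\geq \log 2$, so $\cosh(\alpha k)-1\geq \tfrac{1}{8}e^{\alpha k}$ under our hypothesis $\alpha k\geq\log 2$. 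Combining these two estimates,
\[
\pi((0,k]) \,\geq\, \frac{e^{\alpha k}/8}{e^{\alpha R}/2} \,=\, \tfrac14 e^{-\alpha(R-k)},
\]
and multiplying by $\ss$ yields the claimed bound.

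There is essentially no obstacle here: the only non-tautological input is the elementary inequality $\cosh(\alpha k)-1\geq \tfrac18 e^{\alpha k}$, which is tight at $\alpha k=\log 2$ and self-improves beyond it; everything else follows from stationarity and the explicit form of $\pi$.
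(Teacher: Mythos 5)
Your proof is correct and follows essentially the same route as the paper: stationarity gives $\EE_\pi(I_k)=\ss\,\pi((0,k])=\ss\,\frac{\cosh(\alpha k)-1}{\cosh(\alpha R)-1}$, the denominator is bounded by $\tfrac12 e^{\alpha R}$, and the numerator by a constant multiple of $e^{\alpha k}$ using $k\geq\tfrac{1}{\alpha}\log 2$. Your derivative check giving $\cosh(\alpha k)-1\geq\tfrac18 e^{\alpha k}$ (tight at $\alpha k=\log 2$) is exactly the constant needed for the stated $\tfrac14$, matching the paper's intended elementary estimate.
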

\begin{proof}
Suppose $r_0$  is distributed according to the stationary distribution $\pi$. 
Since $\cosh(x)-1\leq \frac12 e^x$, by definition of $I_k$, we have
\[
\EE_{\pi}(I_k) = \int_0^{\ss}\PP_{\pi}(r_s\leq k)ds =\ss\pi((0,k]) = \ss\cdot\frac{\cosh(\alpha k)-1}{\cosh(\alpha R)-1}
\geq 2\ss e^{-\alpha R}(\cosh(\alpha k)-1).
\]
The desired conclusion follows observing that the map $x\mapsto 1+e^{-2x}-2e^{-x}$ is non-decreasing, and thus for $x\geq\log 2$, we have 
$\cosh(x)-1=\frac12 e^x(1+e^{-2x}-2e^{-x})\geq\frac14 e^{x}$.
\end{proof}

We are now ready to state and prove the proposition dealing with particles whose initial position are points satisfying the first condition in the definition of $\dt$:
\begin{proposition}\label{mixedLowerBoundschico}
Let $\KA > 1$. There is a sufficiently large constant $C>0$ such that if $\ss\ge Ce^{2\KA^2}$ and $\ss=O(\mathfrak{Z})$, then for any $x_0=(r_0, \theta_0)$ with $|\theta_0| \le \phi(R)+\KA \phis$, we have the following: 
\begin{enumerate}[(i)]

\item\label{mixedLowerBoundschico:imt1}  If $\alpha \ge 2\beta$, then $\displaystyle 
\PP_{x_0}(T_{det} \leq \ss)=\Omega(\tfrac{1}{\KA}e^{-\frac12\KA^2}) =\Omega(e^{-0.7\KA^2})$. 

\item\label{mixedLowerBoundschico:imt2} 
If $\alpha < 2\beta$, then 
$\displaystyle
\PP_{x_0}(T_{det} \leq \ss) = \Omega(\KA^{-\alpha/(\beta\wedge\frac12)})$.
\end{enumerate}
\end{proposition}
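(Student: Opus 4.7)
Conditionally on the radial trajectory $\{r_s\}_{0\le s\le\ss}$, the angular coordinate evolves as a Brownian motion with respect to the angular clock $\II{\ss}:=\int_0^\ss\sinh^{-2}(\beta r_s)\,ds$. For a starting point with $|\theta_0|\le\phi(R)+\KA\phis$, the event $\{T_{det}\le\ss\}$ contains the event that this Brownian motion moves an angular distance of order $\KA\phis$ by time $\II{\ss}$, whose probability is, by the reflection principle, of order $\Phi({-}\KA\phis/\sqrt{\II{\ss}})$. The proof therefore reduces to lower-bounding $\II{\ss}$, with constant probability, by a quantity that matches $\phis^2$ in Case~\eqref{mixedLowerBoundschico:imt1} and $(\KA\phis)^2$ in Case~\eqref{mixedLowerBoundschico:imt2}.

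To apply Corollary~\ref{radial:cor:coupling}, which is stated for the stationary initial distribution $\pi$ of the radial coordinate, I first let the radial coordinate mix during a short initial phase of length $O(1)$: using Part~\eqref{radial:itm:phi2} of Lemma~\ref{lemmaradial} together with the reflecting barrier at $R$, within time $O(1)$ the radial coordinate couples, with constant probability, to a draw from $\pi$. During that interval the angular displacement is at most $O(\sinh^{-1}(\beta R))=o(\phis)$ and is thus negligible. On the remaining time interval of length $\Theta(\ss)$, Corollary~\ref{radial:cor:coupling} guarantees, with constant probability, that the radial coordinate spends time at least $\widetilde{c}\,\ss\,\pi((0,k])$ inside $(0,k]$ for every admissible $k$, i.e.\ every $\widetilde{C}\le k\le R$ with $\ss\ge 1/\pi((0,k])$.

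For Case~\eqref{mixedLowerBoundschico:imt1}, $\alpha\ge 2\beta$, I will apply the Corollary simultaneously (via a union bound) at the dyadic scales $k=R-L$ for $L=0,1,\ldots,\lfloor\alpha^{-1}\log\ss\rfloor$. Taking differences of the times spent in nested windows, the annulus $(R-L-1,R-L]$ contributes an amount of order $\ss e^{-\alpha L}\sinh^{-2}(\beta(R-L))=\ss e^{-2\beta R}e^{(2\beta-\alpha)L}$ to $\II{\ss}$. Summing yields $\II{\ss}=\Omega(\ss e^{-2\beta R})$ if $\alpha>2\beta$ (dominated by $L=0$) and $\II{\ss}=\Omega(\ss e^{-2\beta R}\log\ss)$ if $\alpha=2\beta$ (all $\Theta(\log\ss)$ terms are of the same order), both equal to $\Omega(\phis^2)$. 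The Gaussian tail estimate then yields $\PP_{x_0}(T_{det}\le\ss)=\Omega(e^{-0.7\KA^2})$, matching the stated bound.

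For Case~\eqref{mixedLowerBoundschico:imt2}, $\alpha<2\beta$, the scale contributions grow with $L$, so I will apply the Corollary at the single deepest admissible scale $k^\star\asymp R-\alpha^{-1}\log\ss$, producing $\II{\ss}=\Omega(\phis^2)$ when $\beta\le\tfrac12$. For $\beta>\tfrac12$ one additionally forces a descent to an even smaller radius using Parts~\eqref{radial:itm:phi1} and~\eqref{radial:itm:phi3} of Lemma~\ref{lemmaradial}; the polynomial factor $\KA^{-\alpha/(\beta\wedge 1/2)}$ emerges as the probabilistic cost of reaching the depth at which the angular variance per unit time is high enough that the angular Brownian motion covers $\KA\phis$ with constant probability. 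The main technical obstacle is this last subcase, where a delicate balance between the cost of the radial descent and the angular variance available at the target depth is required to extract the exact exponent $\alpha/(\beta\wedge 1/2)$.
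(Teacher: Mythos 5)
Your high-level reduction (condition on the radial path, treat the angle as a Brownian motion run at the clock $\II{\ss}$, then lower-bound $\II{\ss}$) is the paper's strategy for the angular-dominated cases, but two of your key steps do not deliver the stated bounds. First, in Case~(ii) with $\beta\le\frac12$: applying Corollary~\ref{radial:cor:coupling} at the single deepest admissible scale $k^\star\approx R-\frac1\alpha\log\ss$ gives $\II{\ss}=\Omega(\phis^2)$, and the Gaussian estimate then yields only $\PP_{x_0}(T_{det}\le\ss)=\Omega(e^{-c\KA^2})$, which is \emph{much smaller} than the claimed $\Omega(\KA^{-\alpha/\beta})$; the polynomial bound cannot come from a Gaussian tail at standard deviation $\Theta(\phis)$. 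The paper gets the polynomial rate precisely by the ``descent'' mechanism you reserve for $\beta>\frac12$: starting near $R-\frac1\alpha\log\ss$, it forces the radius about $\frac1\beta\log\KA$ units deeper, an event of probability $\Omega(\KA^{-\alpha/\beta})$ by Lemma~\ref{lemmaradial}\eqref{radial:itm:phi3}, after which one unit of time at that depth already provides angular standard deviation $\Theta(\KA\phis)$ and the angle is covered with \emph{constant} probability (Lemma~\ref{lemmaradial}\eqref{radial:itm:phi4} controls the conditional descent time, and Lemma~\ref{lem:closetoorigin} handles starts near the origin). Conversely, for $\beta\ge\frac12$ the correct mechanism is not a variance/descent balance at all but pure radial detection as in Proposition~\ref{prop:rad-upperBnd}, giving cost $\KA^{-2\alpha}=\KA^{-\alpha/(\beta\wedge\frac12)}$; an angular-descent argument fails there because the required depth is macroscopically far from the boundary.

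Second, in Case~(i) with $\alpha=2\beta$ your union bound over the $\Theta(\log\ss)$ scales is vacuous: Corollary~\ref{radial:cor:coupling} guarantees each occupation event only with a fixed constant probability $\widetilde\eta<1$, so the union bound over $\Theta(\log\ss)$ failures gives nothing. The paper instead observes that, since each scale contributes the same order $\ss e^{-2\beta R}$, it suffices that a constant \emph{fraction} of the events $\{I_k\ge\widetilde c\,\ss e^{-\alpha(R-k)}\}$ hold, which follows from an averaging argument on the sum of their indicators. Relatedly, your per-annulus accounting via ``differences of times in nested windows'' is not justified: lower bounds on $I_{R-L}$ and $I_{R-L-1}$ do not lower-bound $I_{R-L}-I_{R-L-1}$; the paper avoids this by Abel summation, yielding $\II{\ss}\ge\sum_k e^{-2\beta k}I_k$ with positive weights (using $\beta\ge\frac14$). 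Your initial ``mix in time $O(1)$ to a draw from $\pi$'' step is also stronger than what is true or needed; the paper's version couples a stationary start (which with constant probability lies in $[R-1,R]$ and hits $R$ quickly) with the fixed start and uses crossing-or-domination plus angular symmetry, and it needs this only in the $\alpha=2\beta$ case. For $\alpha>2\beta$ your machinery is superfluous: the deterministic bound $r_s\le R$ already gives $\II{\ss}\ge 4\ss e^{-2\beta R}$, which is how the paper handles that subcase.
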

To prove the just stated proposition we use the following standard fact several times, so we state it explicitly.
\begin{fact}[\cite{gordon41}]\label{mixedLower:fact:brownian}
Given the radial component $\{r_{s}\}_{s\geq 0}$ of a particle's trajectory,  the angular component $\{\theta_s\}_{s\geq 0}$ law is that of a Brownian motion indexed by $\II{\ss}:=\int_0^{\ss}\cosech^2(\beta r_s)ds$. 
If $\II{\ss} \ge \sigma^2 > 0$ and $\kappa>0$, then 
\[
\PP(\sup_{0\leq s\leq \ss} B_{\II{s}}\geq \kappa \sigma \mid \{r_s\}_{s\geq 0}) \geq  \frac{\kappa}{\sqrt{2\pi}(\kappa^2+1)}e^{-\frac12\kappa^2}=\Omega\Big(\frac{1}{\kappa}e^{-\frac12\kappa^2}\Big).
\]
\end{fact}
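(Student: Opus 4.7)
The plan is to reduce the statement to the classical Gaussian tail inequality via the reflection principle. Conditionally on the radial trajectory $\{r_s\}_{s\ge 0}$, the time change $s\mapsto \II{s}$ becomes a deterministic, continuous, non-decreasing function with $\II{0}=0$, so $s\mapsto B_{\II{s}}$ is a standard Brownian motion observed along a fixed reparametrization of time. In particular,
\[
\sup_{0\le s\le \ss}B_{\II{s}} \;=\; \sup_{0\le t\le \II{\ss}} B_t,
\]
and the hypothesis $\II{\ss}\ge\sigma^2$ together with monotonicity of the running maximum in the terminal time yields
\[
\PP\Bigl(\sup_{0\le s\le\ss} B_{\II{s}}\ge\kappa\sigma \,\Big|\, \{r_s\}_{s\ge 0}\Bigr) \;\ge\; \PP\Bigl(\sup_{0\le t\le\sigma^2} B_t\ge\kappa\sigma\Bigr).
\]

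Next, I would apply the reflection principle for Brownian motion to obtain
\[
\PP\Bigl(\sup_{0\le t\le\sigma^2}B_t\ge\kappa\sigma\Bigr) \;=\; 2\,\PP(B_{\sigma^2}\ge\kappa\sigma) \;=\; 2\,\PP(Z\ge\kappa),
\]
where $Z$ denotes a standard normal random variable. The remaining ingredient is Gordon's classical lower bound for Mill's ratio,
\[
\PP(Z\ge\kappa) \;\ge\; \frac{1}{\sqrt{2\pi}}\cdot\frac{\kappa}{\kappa^2+1}\,e^{-\kappa^2/2},
\]
which delivers the stated inequality (with a factor $2$ that is harmlessly discarded). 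The asymptotic form $\Omega(\kappa^{-1}e^{-\kappa^2/2})$ follows since $\kappa/(\kappa^2+1)=\Theta(1/\kappa)$ as $\kappa\to\infty$, while for bounded $\kappa$ the probability in question is bounded below by a constant, and the stated lower bound is also $\Theta(1)$ in that range.

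There is no real obstacle here: the statement is essentially a quenched reformulation of a textbook computation. The only subtlety worth flagging is that conditioning on the radial trajectory does not alter the law of the underlying Brownian motion $B_{\cdot}$ driving the angular component, since the radial and angular noise terms in the generator~\eqref{truegenerator} are independent. Consequently, the reflection principle applies pathwise in the deterministic time change $\II{\cdot}$, and the resulting bound is uniform over every realization of $\{r_s\}_{s\ge 0}$ satisfying $\II{\ss}\ge\sigma^2$, as required for the quenched statement to then be plugged into Proposition~\ref{mixedLowerBoundschico}.
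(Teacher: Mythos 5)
Your argument is correct and matches what the paper does: the fact is just the Gaussian tail lower bound of Gordon applied after the deterministic time change, using $\II{\ss}\ge\sigma^2$ to reduce to a fixed horizon (the paper simply bounds the supremum below by the endpoint value $B_{\II{\ss}}$ and cites Gordon's Mills-ratio inequality, which is exactly the displayed constant). Your reflection-principle step is a harmless detour whose factor $2$ you discard, and your quenched justification of the time change is the same as the paper's; just note that the final $\Omega(\kappa^{-1}e^{-\kappa^2/2})$ form is only meant for $\kappa$ bounded away from $0$ (in the paper $\kappa=\KA>1$), where $\kappa/(\kappa^2+1)=\Theta(1/\kappa)$.
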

Now we proceed with the pending proof.
\begin{proof}[Proof of Proposition~\ref{mixedLowerBoundschico}.]
Assume first $\beta \ge 1/2$
(and thus necessarily $\alpha < 2\beta$). In this case the radial movement dominates and the proof of~\eqref{mixedLowerBoundschico:imt2} is very similar to the one given for $x_0 \in \dt$ where $|\theta_0|\le \phi(R)+\kappa\phis$: assume first that $\ss$ (and $\KA$) is such that $|\theta_0|\le \phi(R)+\KA \phis \le \pi/2 - c$ for some $c > 0$. Define $\overline{r}_0$ be as $\rabs_0$ in the proof of Proposition~\ref{prop:rad-upperBnd} (that is $\overline{r}_0$ corresponds to the absorption radius in case there was radial movement only; since $\phi(R)+\KA \phis \le \pi/2 - c$, we have $\overline{r}_0=\Omega(1)$). By the same argument as given in the proof of Proposition~\ref{prop:rad-upperBnd}, with $\KA$ playing the role of $\kappa$ therein, with probability $\Omega(\KA^{-2\alpha})$, there exists a time moment $T \le \ss$, at which a radial value of $\overline{r}_0$ is reached. In this case, by symmetry of the angular movement, with probability at least $1/2$, either the angle at time $T$ satisfies $|\theta_T| \le |\theta_0|$ or there exists $t \le T$ where the angle $\theta_t=0$ and we detected already by time $t$, and hence in this case $Q$ is detected by time $T$ with probability $\Omega(\KA^{-2\alpha})$. Assume then that~$\ss$ (and $\KA$) is such that $\pi/2 - c \le \phi(R)+\KA \phis$. In this case, let $\overline{r}_0$ be equal  (assuming there were radial movement only) to the absorption radius $\rabs_0$ that would correspond to an angle exactly $|\theta_0|=\pi/2 -c$. Note that in this case $\overline{r}_0=\Theta(1)$. By the proof of Proposition~\ref{prop:rad-upperBnd}, with probability $\Omega(\KA^{-2\alpha})$ a radial value of $\overline{r}_0$ is reached at a time moment  $T \le \frac12\ss$. In this case, with probability at least $1/2$, as before, either there existed a moment $t$ with $\theta_t=0$, and we would have detected $Q$, or $|\theta_T|\le|\theta_0|$. In this case, by Lemma~\ref{lem:closetoorigin}, since $\overline{r}_0=\Theta(1)$, with constant probability we detect $Q$ in time $\frac12\ss=\Omega(1)$, and hence also in this case $Q$ is detected by time $\ss$ with probability $\Omega(\KA^{-2\alpha})$.

Consider thus $x_0 \in \dt$ with $|\theta_0| \le \phi(R)+\KA\phi^{(\ss)}$ under the assumption $\beta < 1/2$. Recall that we also may assume that $r_0 \ge c^*$ for $c^*$ as in the statement of Lemma~\ref{lem:closetoorigin} since other values of $r_0$ were already dealt with in said lemma.
First, note that for $x_0 \in B_Q(R)$ the lower bound trivially holds. 
So, henceforth let $x_0 \in \dt \setminus B_Q(R)$. Since by hypothesis $\ss\in O(\mathfrak{Z})=O(e^{\alpha R})$, we may assume that $\ss < \exp(\alpha(R-\frac{1}{\alpha}\log 2 - 1+c))$ for $c>0$ sufficiently large. 
Let $\rho:=R-\frac{1}{\alpha}\log\ss+c$ and note that $\frac{1}{\alpha}\log 2+1 < \rho$ by the previous assumption on $\ss$. 
Denote by $T_{\rho-1}$ the random variable corresponding to the first time the process reaches $B_O(\rho-1)$. Recall from Part~\eqref{radial:itm:phi2} of Lemma~\ref{lemmaradial}, with $\yabs_0=\rho-1$ and $\auxY=R$, that for the radial component $\{r_s\}_{s\geq 0}$ of the movement, and for any $r_0 \in [\rho-1, R]$,
	\[\EE_{r_0}(T_{\rho-1})\leq\frac{4}{\alpha^2}e^{\alpha(R-\rho+1)}
	=\frac{4}{\alpha^2} \ss e^{-\alpha(c-1)}.
	\]
	By Markov's inequality, for $c$ sufficiently large so that  $\frac{4}{\alpha^2}e^{-\alpha(c-1)} \le \frac14$, it follows that the event $\mathcal{A}$ corresponding to the process reaching $B_O(\rho-1)$ 
	before time $\frac12\ss$ happens with
	probability
	\[\PP(\mathcal{A})=1-\PP_{r_0}(T_{\rho-1}\geq \tfrac12 \ss) \ge \tfrac{1}{2}.\]
	Now, let $\overline{\rho}:=\max\{\rho-\beta \log \KA, c^*\}.$ Moreover, define $\mathcal{B}$ as the event that starting from radius $\rho-1$ we hit the radial value $\overline{\rho}$ before hitting the radial value $R$.
	Define $g(r):=\log(\tanh(\frac12\alpha R)/\tanh(\frac12\alpha r))$ as in Fact~\ref{fct:radial-varphi2} and observe that as argued therein,
	since $\rho=\Omega(1)$ we have $g(\rho-1)=O(e^{-\alpha \rho})$ and because
	$\overline{\rho}=R-\Omega(1)$ we have $g(\overline{\rho})=\Omega(e^{-\alpha \overline{\rho}})$. Recall from Part~\eqref{radial:itm:phi3} of Lemma~\ref{lemmaradial} that $g(\rho-1)/g(\overline{\rho})$ is the probability that starting from radius $\rho-1$ we hit the radial value $\overline{\rho}$ before hitting the radial value $R$, and we have
$$
\PP(\mathcal{B})=\frac{g(\rho-1)}{g(\overline{\rho})}=\Omega(e^{\alpha(\overline{\rho}-\rho)})=
\Omega(\KA^{-\alpha/\beta}).
$$
 From Part~\eqref{radial:itm:phi4} of Lemma~\ref{lemmaradial} we obtain 
$$
\EE_{\rho-1}(T_{\overline{\rho}} \mid T_{\overline{\rho}} < T_R) \le \frac{2}{\alpha}(\beta \log \KA+1)+\frac{2}{\alpha^2} \le \frac{\ss}{8},
$$
where the last inequality follows (with room to spare) by our assumption of $\ss$.
Thus, by Markov's inequality, conditionally under $\mathcal A \cap \mathcal B$,
with probability at least $\frac12$, we reach $\overline{\rho}$ before time $\frac34\ss$. Let $\mathcal C$ be the corresponding event. Conditional under $\mathcal C$, with constant probability the particle stays one unit of time inside $B_O(\overline{\rho}+1)$ before time $\ss$, and let this event be called $\mathcal D$. Conditional under $\mathcal D$, the angular component's law during one unit of time inside $B_O(\overline{\rho}+1)$ is $B_{\II{\ss}}$ with $$\II{\ss}\ge \cosech^2(\beta(\overline{\rho}+1)) \ge e^{-2\beta(\overline{\rho}+1)} \geq e^{-2\beta (R-\frac{1}{\alpha}\log\ss+c-\frac{1}{\beta}\log \KA +1)}=:\sigma^2.$$
Note that $\sigma = e^{-\beta R}\ss^{\frac{\beta}{\alpha}}\KA e^{-\beta(c+1)}$ for the absolute constant $c > 0$ independent of $\KA$ from above, and recall also that
 $|\theta_0| \le \phi(R)+\KA \ss^{\frac{\beta}{\alpha}}e^{-\beta R} \le 2\KA \ss^{\frac{\beta}{\alpha}}e^{-\beta R}$ by assumptions on $\beta < 1/2$ and $\ss$, with room to spare. 
Let $\mathcal{E}$ be the event that when reaching $B_O(\overline{\rho}+1)$ the angle at the origin spanned by the particle and $Q$ is (in absolute value) is at most $|\theta_0|$ or there was a moment $t$ before reaching $B_O(\overline{\rho}+1)$ with $\theta_t=0$ (and thus we detected at time $t$). Note that by symmetry of the angular movement, $\PP(\mathcal{E})\ge \frac12$.
 Conditional under $\mathcal{D} \cap \mathcal{E}$, by Fact~\ref{mixedLower:fact:brownian},  since $\KA > 1$, for some $c_1=c_1(c)$ we have
\[
\P_{x_0}(T_{det}\leq\ss)\geq \PP(\sup_{0\leq s\leq \ss} B_{\II{s}}\geq c_1 \sigma \mid \{r_s\}_{s\geq 0}) =\Omega(1),
\]
 and since $\mathcal{D} \cap \mathcal{E}$ holds with probability $\Omega(\KA^{-\alpha/\beta})$, we have
 $
\P_{x_0}(T_{det}\leq\ss)= \Omega(\KA^{-\alpha/\beta})$,
which finishes the proof of the case $\alpha < 2\beta$ and $\beta < 1/2$ (and thus the proof of~\eqref{mixedLowerBoundschico:imt2}).

 We now deal with the remaining $\alpha \ge 2\beta$ case (and therefore necessarily $\beta < 1/2$). Assume first $\alpha > 2\beta$. This argument is analogous to the one for angular movement: we repeat it for convenience. 
Given the trajectory of $\{r_{s}\}_{s\geq 0}$, recall that the angular component's law is that of a Brownian motion $B_{\II{\ss}}$, where $\II{\ss}:=\int_0^{\ss} \cosech^2(\beta r_s)ds \ge 4\ss e^{-2\beta R} =: \sigma^2$.
Hence, using Fact~\ref{mixedLower:fact:brownian}, since $\KA > 1$ and using that $0.2x^2\ge \log x$ for all $x\ge 1$,
$$
\P_{x_0}(T_{det}\leq\ss)\geq \PP(\sup_{0\leq s\leq \ss} B_{\II{s}}\geq \KA \sigma \mid \{r_s\}_{s\geq 0}) =\Omega\big(-\tfrac{1}{\KA}e^{-\frac12\KA^2}\big)={\Omega(e^{-0.7\KA^2})}, 
$$
showing the result in the case $\alpha > 2\beta$.

Finally, suppose $\alpha=2\beta$ (therefore $\beta > \frac14$).
First, suppose that the starting point $r_0$ of the radial component is distributed according to the stationary distribution of the process, that is, $\pi(r):=\frac{\alpha\sinh(\alpha r)}{\cosh(\alpha R)-1}$ with $0\leq r\leq R$. 
We will see that the contribution to  $\II{\ss}:=\int_0^{\ss} \cosech^2(\beta r_s) ds$ of the time spent around different radial values is roughly the same, forcing a logarithmic correction. To bound $\II{\ss}$ from below, let $\overline{k}=R-\lfloor\tfrac{1}{\alpha}\log (\ss/4) \rfloor$, which by our hypothesis $\ss= O(\mathfrak{Z})=O(e^{\alpha R}/(\alpha R))$ implies $\overline{k}=\omega(1)$ (in particular it implies also $\overline{k} \ge \frac{1}{\alpha}\log 2$). 
Note that 
\[
\II{\ss} = \int_0^{\overline{k}}\cosech^2(\beta r_s)ds
  + \!\sum_{k=\overline{k}+1}^R \int_{k-1}^{k}\cosech^2(\beta r_s)ds
      \geq 4\Big(I_{\overline{k}}e^{-2\beta \overline{k}}+\!\sum_{k=\overline{k}+1}^R (I_k - I_{k-1})e^{-2\beta k}\Big).
\]
Hence, using that $\beta\geq\frac14$ implies that $4(1-e^{-2\beta})\geq 1$,
\begin{equation}
    \II{\ss} 
    \geq 4\Big(\sum_{k=\overline{k}}^{R-1}I_ke^{-2\beta k}(1-e^{-2\beta}) +I_R e^{-2\beta R} \Big)
    \ge \sum_{k=\overline{k}}^R e^{-2\beta k}I_{k}.
\end{equation}
 So, recalling that $\overline{k}\ge \frac{1}{\alpha}\log 2$, by Fact~\ref{fact:stationary}, for all $k \in \{\overline{k},\ldots,R\}$, we have $\EE_{\pi}(I_k)\geq \frac14 \ss e^{-\alpha(R-k)}$,
which gives an estimate for the value of the $I_k$ variables. Moreover, by Corollary~\ref{radial:cor:coupling}, 
since $k \ge \overline{k}$ and by definition of $\overline{k}$ we have $\ss \ge 4e^{\alpha (R-\overline{k})} \ge 4e^{\alpha (R-k)}$. Since $\cosh(x)-1=\frac12 e^x(1-e^{-x})^2\le \frac12 e^x$, using once more that $k\ge\overline{k}\ge\frac{1}{\alpha}\log 2$, we obtain 
$\pi((0,k])\ge e^{-\alpha(R-k)}(1-e^{-\frac12\alpha k})^2\ge \frac{1}{4}e^{-\alpha(R-k)}$, so $\ss\ge 1/\pi((0,k])$ and the assumptions of Corollary~\ref{radial:cor:coupling} are satisfied. Hence, there exist $\widetilde{c}, \widetilde{\eta} \in (0,1)$, such that for each $\overline{k} \le k \le R$, we have that the expectation of the indicator $Z_k$ of the event $\{I_k\geq \widetilde{c}\ss e^{-\alpha (R-k)}\}$ is at least~$\widetilde{\eta}$. Let $Z:=\sum_{k=\overline{k}}^R Z_k$ and that $\EE(Z) \ge (R-\overline{k}+1)\widetilde{\eta}$. Define then the event 
$\mathcal{E}:=\{Z>\frac{\widetilde{\eta}}{2}(R-\overline{k}+1)\}$. Since $\EE(Z)\le (\PP(\mathcal{E})+\frac{\widetilde{\eta}}{2}\PP(\overline{\mathcal{E}}))(R-\overline{k}+1)$,
it must be the case that $\PP(\mathcal{E})\ge\frac{\widetilde{\eta}}{2}/(1-\frac{\widetilde{\eta}}{2})\ge\frac{\widetilde{\eta}}{2}$.
Thus, for a fixed realization of $\{r_s\}_{s\geq 0}$ satisfying $\mathcal{E}$, since $\alpha=2\beta$, we have 
\[
\sum_{k=\overline{k}}^{R}e^{-2\beta k}I_k 
\geq\sum_{k=\overline{k}}^{R}e^{-2\beta k}\widetilde{c}\ss e^{-\alpha (R-k)}Z_k
= \widetilde{c}\ss e^{-2\beta R}\sum_{k=\overline{k}}^R Z_k 
> \widetilde{c}\ss e^{-2\beta R}\frac{\widetilde{\eta}}{2}(R-\overline{k}+1).
\]
Note that  $R-\overline{k}+1\ge \frac{1}{\alpha}\log (\ss/4)$ by definition of $\overline{k}$, so 
$
\sum_{k=\overline{k}}^R e^{-2\beta k}I_k\geq \frac{\widetilde{c}}{2\alpha}\widetilde{\eta}\ss e^{-2\beta R}\log(\ss/4)) \ge \frac{\widetilde{c}}{3\alpha}\widetilde{\eta}\ss e^{-2\beta R}\log\ss,
$
where we used the assumption that $\ss$ is at least a sufficiently large constant.
Thus, under $\mathcal{E}$ the angular movement dominates stochastically
a Brownian motion $B_{\frac{\widetilde{c}}{3\alpha}\widetilde{\eta}\ss e^{-2\beta R}\log\ss}$ with standard deviation 
$e^{-\beta R}\sqrt{(\widetilde{c}\ss \widetilde{\eta}/(3\alpha))\log\ss}=:\sigma$. By Fact~\ref{mixedLower:fact:brownian}, 
\[
\PP(\sup_{0\leq s\leq \ss} B_{\II{s}}\geq \KA \sigma \mid \{r_s\}_{s\geq 0})=\Omega(\tfrac{1}{\KA}e^{-\frac12\KA^2}).
\]
Thus, for $x_0 \in \dt$ with $|\theta_0| \le \phi(R)+\KA \phis\le 2\KA e^{-\beta R}\sqrt{\ss \log \ss}$, since $\P_{x_0}(\mathcal{E})\geq \frac{\widetilde{\eta}}{2}$ and using that $0.2x^2\ge \log x$ for all $x\ge 1$,
\begin{equation}\label{startstationary}
\P_{x_0}(T_{det}\le \ss)\geq \tfrac{\widetilde{\eta}}{2}\P_{x_0}(T_{det} \le \ss \mid \mathcal{E})=\Omega(\tfrac{1}{\KA}e^{-\frac12\KA^2})=\Omega(e^{-0.7\KA^2}).
\end{equation}

It remains to show thus that with positive probability the trajectory starting with a fixed initial radius for $x_0 \in \dt$ can be coupled in such a way that the probability of detection of the target by time $\ss$ can be bounded from below by the probability of detection when the radius is chosen according to the stationary distribution $\pi(\cdot)$. Denote by $\widehat{r}_t$ the radial component at time $t$ when starting according to the stationary distribution.
Consider the event $\mathcal{A}$ that the initial radial value $\widehat{r}_0$ is at most one unit away from the 
boundary of $B_O(R)$, that is, $\mathcal{A}:=\{\widehat{r}_0\in [R-1,R]\}$.
Clearly, $\PP(\mathcal{A})=\pi([R-1,R])=\Omega(1)$. 
Let $\mathcal{B}$ be the event that $\{\widehat{r}_s\}_{s\geq 0}$ starting from $R-1$ hits $R$ by time $\frac12\ss$. 
Conditional under $\mathcal{A}$, since the time to hit $R$ is clearly dominated by the time a standard  Brownian motion (corresponding to a one-dimensional radial movement) hits $R$ starting from $R-1$, by our lower bound on $\ss$, we have that $\PP(\mathcal{B}\mid \mathcal{A})=\Omega(1)$.  Note that conditional under $\mathcal{A}\cap\mathcal{B}$ either the trajectories starting with a fixed value of $r_0$ on the one hand and with $\widehat{r}_0$ according to the stationary distribution $\pi(\cdot)$ on the other hand must cross by time $\frac12\ss$ (and they can be coupled naturally from then on for a time interval of $\frac12\ss$), or $r_t \le \widehat{r}_t$ for any $0 \le t \le \frac12\ss$, and during an initial time period of length $\frac12\ss$ the detection probability starting from $r_0$ stochastically dominates the one when starting from $\widehat{r}_0$. Thus, with probability $\PP(\mathcal{A}\cap\mathcal{B})=\Omega(1)$, the process $\{r_s\}_{s\geq 0}$ can be successfully coupled with $\{\widehat{r}_s\}_{s\geq 0}$, and since we aim for a lower bound, we assume that $\mathcal{A}\cap\mathcal{B}$ holds. Conditional under $\mathcal{A}\cap\mathcal{B}$, we may thus apply the reasoning yielding~\eqref{startstationary} with $\ss$ replaced by $\frac12\ss$, and the result follows.
\end{proof}

We still have to deal with particles whose initial location are points satisfying the second condition in the definition of $\dt$. The next proposition does this.
\begin{proposition}\label{prop:uniformlowerboundN2}
Let $\KR >1$ and assume that $\ss\ge Ce^{2\KA^2}$ for $C$ large enough.
Then, for any $x_0=(r_0,\theta_0)$ with 
$(|\theta_0|-\phi(r_0))e^{(\beta\wedge\frac12)r_0} \le \KR$, we have the following: 
\begin{enumerate}[(i)]

\item\label{prop:uniformLowerboundN2:itm1} If $\alpha \ge 2\beta$, then 
$\displaystyle 
\PP_{x_0}(T_{det} \leq \ss)=\Omega(\KR^{-\frac{\alpha}{\beta}})
=\Omega(e^{-{\frac{\alpha}{\beta}\KA^2}}).
$
\item\label{prop:uniformLowerboundN2:itm2} If $\alpha < 2\beta$, 
then  
$\displaystyle
\PP_{x_0}(T_{det} \leq \ss) = \Omega(\KR^{-\alpha/(\beta\wedge\frac12)})=\Omega(\KA^{-\alpha/(\beta\wedge\frac12)}).
$
\end{enumerate}
\end{proposition}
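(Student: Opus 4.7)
The plan mirrors and extends the strategy of Proposition~\ref{mixedLowerBoundschico}, adapting it from starting points in the sector $|\theta_0|\le\phi(R)+\KA\phis$ to the band defined by $(|\theta_0|-\phi(r_0))e^{(\beta\wedge\frac12)r_0}\le\KR$. I begin with the trivial reductions: WLOG $\theta_0\ge 0$; if $\theta_0\le\phi(r_0)$ then $x_0\in B_Q(R)$ and the bound holds trivially; and if $r_0$ is below a constant $c^*$, Lemma~\ref{lem:closetoorigin} already provides $\PP_{x_0}(T_{det}\le\ss)=\Omega(1)$, which dominates both claimed lower bounds. Henceforth I assume $\theta_0>\phi(r_0)\ge 0$ and $r_0>c^*$.

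The strategy is to use the radial drift to drive the particle down to an intermediate radius
\[
\widetilde{r}\,:=\,\max\Bigl\{\,r_0-\tfrac{1}{\beta\wedge\frac12}\log\KR-C_1,\ c^*\,\Bigr\},
\]
for a sufficiently large constant $C_1=C_1(\alpha,\beta)$, where either the particle is already inside $B_Q(R)$ or the angular Brownian motion is fast enough to close the residual angular gap. Part~\eqref{radial:itm:phi3} of Lemma~\ref{lemmaradial} (with $\auxY=R$ and $\yabs_0=\widetilde{r}$) gives $\PP(T_{\widetilde{r}}<T_R)=G_{\widetilde{r}}(r_0)=\Omega\bigl(e^{-\alpha(r_0-\widetilde{r})}\bigr)=\Omega\bigl(\KR^{-\alpha/(\beta\wedge\frac12)}\bigr)$, and Part~\eqref{radial:itm:phi4} together with Markov's inequality yield that, conditional on hitting $\widetilde{r}$ before $R$, the hitting time satisfies $T_{\widetilde{r}}\le\ss/2$ with probability at least $\tfrac12$, using $\ss\ge Ce^{2\KA^2}\gg\log\KR$.

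The argument then splits. For $\beta\ge\tfrac12$ (which forces $\alpha<2\beta$ and $\KR=\KA$), I would follow the excursion argument of Proposition~\ref{prop:rad-upperBnd} with $\KA$ in place of $\kappa$ to obtain that the radial reaches $\widetilde{r}$ at some time $T\le\ss$ with probability $\Omega(\KA^{-2\alpha})$; the estimate $\phi(\widetilde{r})\sim 2\KA e^{C_1/2}e^{-r_0/2}\ge(2+\KA)e^{-r_0/2}\ge|\theta_0|$ (valid for $C_1$ appropriately chosen and using Part~\eqref{itm:phi4} of Lemma~\ref{lem:phi}) combined with the reflection-principle symmetry of the angular Brownian motion guarantees that either $\theta_t=0$ for some $t\le T$ (so detection occurs at $t$, since $\theta_t=0$ lies on the ray through $Q$) or $|\theta_T|\le|\theta_0|\le\phi(\widetilde{r})$, in either case yielding detection by time $\ss$. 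For $\beta<\tfrac12$, the residual gap $\theta_0-\phi(\widetilde{r})$ is bounded by $O(\KR e^{-\beta r_0})$ while at radius $\widetilde{r}$ the angular variance rate is $\cosech^2(\beta\widetilde{r})=\Theta(\KR^2 e^{-2\beta r_0})$; following the template used in Proposition~\ref{mixedLowerBoundschico}, I would argue that with constant probability the particle stays within one unit of $\widetilde{r}$ for an additional unit of time, during which the angular standard deviation is a factor $\Omega(\KR)$ larger than the residual gap, so Fact~\ref{mixedLower:fact:brownian} yields constant probability of detection. Multiplying the various contributions gives $\PP_{x_0}(T_{det}\le\ss)=\Omega(\KR^{-\alpha/(\beta\wedge\frac12)})$, which specializes to the two claimed bounds (using $\beta\wedge\frac12=\beta$ when $\alpha\ge2\beta$). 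The chief obstacle is calibrating the constant $C_1$ in the definition of $\widetilde{r}$ so that for $\beta<\tfrac12$, the angular displacement accumulated during the $O(\log\KR)$-duration radial descent does not push $|\theta_{T_{\widetilde{r}}}|$ so far above $\theta_0$ that the subsequent unit of angular movement can no longer close the remaining gap.
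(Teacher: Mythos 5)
Your overall mechanism (descend by roughly $\frac{1}{\beta\wedge\frac12}\log\KR$ in radius at cost $\Omega(\KR^{-\alpha/(\beta\wedge\frac12)})$, then detect either because $\phi$ at the new radius exceeds $|\theta_0|$ when $\beta\ge\frac12$, or via one unit of boosted angular diffusion when $\beta<\frac12$) is exactly the paper's, and the pieces you quote (Parts~\eqref{radial:itm:phi3} and~\eqref{radial:itm:phi4} of Lemma~\ref{lemmaradial}, the angular symmetry/``$\theta_t=0$'' dichotomy, Fact~\ref{mixedLower:fact:brownian}, Lemma~\ref{lem:closetoorigin}) are the right ones. However, there is a genuine gap: your blanket claim $\PP_{r_0}(T_{\widetilde r}<T_R)=G_{\widetilde r}(r_0)=\Omega\bigl(e^{-\alpha(r_0-\widetilde r)}\bigr)$ is false for starting radii at or within $O(1)$ of the boundary. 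The lower bound $g(r_0)=\Omega(e^{-\alpha r_0})$ used in Fact~\ref{fct:radial-varphi2} requires $R-r_0=\Omega(1)$; at $r_0=R$ one has $G_{\widetilde r}(R)=0$, and more generally $G_{\widetilde r}(R-\epsilon)=\Theta(\epsilon\,e^{-\alpha(R-\widetilde r)})$ for small $\epsilon$. Points with $r_0$ that close to $R$ are not excluded by the hypothesis $(|\theta_0|-\phi(r_0))e^{(\beta\wedge\frac12)r_0}\le\KR$ — indeed they carry most of the measure — so your $\beta<\tfrac12$ branch (which contains all of case~(i) and part of case~(ii)) does not cover the main bulk of starting points. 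The paper resolves this precisely where it invokes $\ss\ge Ce^{2\KA^2}$: it restricts to $r_0<R-\log\KA$ (resp.\ $r_0<R-\KR$ when $\beta\ge\frac12$) and shows that the excluded near-boundary points satisfy $|\theta_0|\le\phi(R)+\KA\phis$, hence are already handled by Proposition~\ref{mixedLowerBoundschico}, whose detection mechanisms (expected hitting time to a band $B_O(\rho-1)$, or the excursion construction of Proposition~\ref{prop:rad-upperBnd}) are valid from the boundary. You would need either this deferral or an explicit reflection/excursion argument for $R-r_0=O(1)$ in the $\beta<\tfrac12$ branch; your excursion remark appears only in the $\beta\ge\tfrac12$ branch, and even there it is conflated with the $G$-argument (excursions are what you need near $R$, the $G$-bound away from $R$; an excursion process restarted at $R$ cannot by itself give $\Omega(\KA^{-2\alpha})$ when $r_0\ll R$ and $e^{\alpha(R-r_0)}\gg\ss$).

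Two smaller points. First, when the maximum in your definition of $\widetilde r$ is attained at $c^*$, the estimate $\phi(\widetilde r)\sim 2\KR e^{C_1/2}e^{-r_0/2}\ge|\theta_0|$ no longer applies; as in the paper, you should instead conclude via Lemma~\ref{lem:closetoorigin} that once radius $c^*$ is reached, detection occurs in $O(1)$ time with constant probability (your set-up mentions the lemma for $r_0\le c^*$ but not for this descent-to-$c^*$ case). Second, the calibration worry you flag at the end is handled exactly by the symmetry event you already introduced: conditioning on ``$|\theta_{T_{\widetilde r}}|\le|\theta_0|$ or some $\theta_t=0$'' (probability $\ge\frac12$) removes any dependence on the angular drift accumulated during the descent, so no tuning of $C_1$ is needed for that purpose.
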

\begin{proof}
Thanks to Lemma~\ref{lem:closetoorigin} we may, and will, assume throughout the proof that
$r_0>c^*$ for an arbitrarily large $c^*$. Under said condition, the proof argument formalizes the following intuitive fact: Either there is a chance for the particle to move radially towards the origin so that the boundary of $B_Q(R)$ is reached and detection happens, or there is a chance to move radially towards the origin, enough so that the particle stays long enough in such a region close enough to the origin so that the relatively large angle the particle traverses makes detection probable.

We first assume $\beta<1/2$.
Observe that we may assume $|\theta_0|\ge 2\phi(r_0)$, since otherwise during one time unit there is constant probability that the radial value is at most $\min\{R,r_0+1\}$, and conditional under this event, during this time unit with constant probability an angular movement of standard deviation at least $e^{-\beta (r_0+1)}$ is performed, thus covering with constant probability an angular distance of $e^{-\beta(r_0+1)} \ge 2\phi(r_0)$ in the case of $\beta < 1/2$ for $r_0 > c^*$. We may and will thus below replace the condition $(|\theta_0|-\phi(r_0))e^{(\beta\wedge\frac12)r_0} \le \KR$ by $|\theta_0|e^{(\beta\wedge\frac12)r_0} \le 2\KR$. We consider the worst-case scenario, i.e., $|\theta_0|=2\KR e^{-\beta r_0}$, or equivalently $r_0=\frac{1}{\beta}\log(2\KR/|\theta_0|)$. 
We restrict our discussion to vertices with $r_0 < R-\log \KA$, as other vertices were already considered before: indeed, for values of $r_0 \ge R-\log \KA$, in case~\eqref{prop:uniformLowerboundN2:itm1}
we have $2\KR e^{-\beta r_0} = 2e^{\KA^2}e^{-\beta r_0} \le e^{\KA^2}e^{-\beta R}\KA \le \KA \sqrt{\ss}e^{-\beta R}=\KA \phis$, where the second inequality follows from our assumption of $\ss\ge Ce^{2\KA^2}$ with $C$ large. Hence such values of $x_0=(\theta_0, r_0)$ satisfy the first condition of $\dt$ and were treated in Proposition~\ref{mixedLowerBoundschico}. In case~\eqref{prop:uniformLowerboundN2:itm2}, for values of $r_0 \ge R-\log \KA$, we have $2\KR e^{-\beta r_0}=2\KA e^{-\beta r_0}\le 2\KA^2 e^{-\beta R}\le \KA \phis$, where again the second inequality follows from the same assumption on $\ss \ge Ce^{2\KA^2}$ (again with room to spare), and again this case was already dealt with in Proposition~\ref{mixedLowerBoundschico}.
Define $g(r):=\log(\tanh(\frac12 \alpha R)/\tanh(\frac12 \alpha r))$ as in Fact~\ref{fct:radial-varphi2} and let $\overline{r}_0:=\max\{r_0-\frac{1}{\beta}\log \KR, c^*\}$. By arguments given in said fact, since we have restricted our discussion to the case where $r_0=R-\Omega(1)$, we have $g(r_0)=\Omega(e^{-\alpha r_0})$ and, since $\overline{r}_0\geq c^*$ with $c^*$ large, we also have $g(\overline{r}_0)=O(e^{-\alpha \overline{r}_0})$. 
Recall from Part~\eqref{radial:itm:phi3} of Lemma~\ref{lemmaradial} that $g(r_0)/g(\overline{r}_0)$ is the probability that starting from radius $r_0$ we hit the radial value $\overline{r}_0$ before hitting the radial value $R$, and we have
$$
\frac{g(r_0)}{g(\overline{r}_0)}=\Omega(e^{\alpha(\overline{r}_0-r_0)})=
\Omega(e^{\alpha(\max\{r_0-\frac{1}{\beta}\log \KR, c^*\}-r_0)})=\Omega(\KR^{-\alpha/\beta}).
$$
Thus, by Part~\eqref{radial:itm:phi4} of Lemma~\ref{lemmaradial} with $\auxy_0:=\overline{r}_0$ and $\auxY:=R$, we have that $\EE_{r_0} (T_{\overline{r}_0} \mid T_{\overline{r}_0} < T_R) \le \frac{2}{\alpha\beta}\log \KR + \frac{2}{\alpha^2} \le \frac14\ss$ by our lower bound hypothesis of $\ss$ with $C:=C(\beta)$ sufficiently large. 
By Markov's inequality,  conditional under $T_{\overline{r}_0} < T_R$, starting at radius $r_0$, with probability at least $1/2$, the value of $\overline{r}_0$ is hit by time $\frac12\ss$. In this case, if $\overline{r}_0=c^*$, then by Lemma~\ref{lem:closetoorigin} the target is detected with constant probability in constant time, and we are done. If $\overline{r}_0 =r_0-\frac{1}{\beta}\log \KR > c^*$, then with constant probability, during the ensuing unit time interval, the radial coordinate $r$ is always at most $\overline{r}_0+1$. Call this event $\mathcal{A}$. By symmetry of the angular movement, with probability at least $1/2$, either the angle $\theta$ at the hitting time of~$\overline{r}_0$ is in absolute value at most $|\theta_0|$ or there was a time moment $t$ with $\theta_t=0$ before, and we had detected $Q$ by time $t$ already. Call this event~$\mathcal{B}$. Conditional under $\mathcal{A} \cap \mathcal{B}$, the variance of the angular movement after reaching $\overline{r}_0$ during the following unit time interval is at least 
$
e^{-2\beta (r_0-\frac{1}{\beta}\log\KR+1)} =e^{-2\beta (r_0+1)}\KR^2$,
so recalling that by our worst-case assumption $|\theta_0|=2\KR e^{-\beta r_0}$,  the standard deviation is thus at least $e^{-\beta (r_0+1)}\KR=e^{-\beta}|\theta_0|/2$. Hence, with constant probability, independently of $\KR$ (and also independently of $\beta$), in this case an angle of $|\theta_0|$ is covered during a unit time interval, and by our lower bound assumption of $\ss$, the particle is detected by time $\ss$, finishing also this case and 
establishing the proposition when $\beta<1/2$ 
by plugging in the respective relations between $\KA$ and $\KR$ in order to obtain the last equality of the the two parts of the statement.

Next, we consider Part~\eqref{prop:uniformLowerboundN2:itm2} which encompasses a case similar to the one analyzed in Section~\ref{sec:radial}, so we give only a short sketch of how it is handled. By hypothesis and case assumption $|\theta_0| \le \phi(r_0)+\KR e^{-\frac12r_0}$. Since $\KR >1$, we thus have $|\theta_0|\le 3\KR e^{-\frac12r_0}$. Again, it suffices to show the statement for the worst-case scenario, that is, we may assume $|\theta_0|= 3\KR e^{-\frac12r_0}$. We may assume that $r_0 < R- \KR$, since for vertices with $r_0 \ge R-\KR$, using our assumption of $\ss\ge Ce^{2\KA^2}$ (with room to spare) we have
$3\KR e^{-\frac12r_0} \le 3\KR e^{\KR} e^{-\frac12 R} \le \KA \ss^{1/(2\alpha) e^{-\frac12 R}}=\KA \phis$, and such cases were already dealt with in the proof of Part~\eqref{mixedLowerBoundschico:imt2}  of Proposition~\ref{mixedLowerBoundschico}. By Lemma~\ref{lem:closetoorigin} we may also assume $r_0 > c^*$ for arbitrarily large $c^*$.
Define $\overline{r}_0:=\max\{r_0-2 \log \KR-2, c^{*}\}$. 
As before, recall from Part~\eqref{radial:itm:phi3} of Lemma~\ref{lemmaradial} that $g(r_0)/g(\overline{r}_0)$ is the probability that starting from radius $r_0$ we hit the radial value $\overline{r}_0$ before hitting the radial value $R$. Since $r_0=R-\Omega(1)$ and since $\overline{r}_0 \ge c^*$, we have
$$
\frac{g(r_0)}{g(\overline{r}_0)}=\Omega(e^{\alpha(\overline{r}_0-r_0)})=
\Omega(e^{\alpha(\max\{r_0-2\log \KR-2, c^*\}-r_0)})=\Omega(\KR^{-2\alpha}),
$$
and let $\mathcal{A}$ be the event that this indeed happens. 
By Part~\eqref{radial:itm:phi4} of Lemma~\ref{lemmaradial} with $\auxy_0:=\overline{r}_0$ and $\auxY:=R$, we have that $\EE_{x_0}(T_{\overline{r}_0}\mid \mathcal{A}) \le \frac{4}{\alpha}\log \KR +\frac{4}{\alpha}+ \frac{2}{\alpha^2} \le \frac14\ss$ by our lower bound hypothesis on $\ss$. By Markov's inequality, with probability at least $1/2$, $T_{\overline{r}_0} < \frac12\ss$. Let $\mathcal{B}$ be the event to reach the radius $\overline{r}_0$ by time $\frac12\ss$.
We have
$$
\PP_{x_0}(\mathcal{B}) \ge \PP_{x_0}(\mathcal{B} \mid \mathcal{A})\PP_{x_0}(\mathcal{A}) =\Omega(\KR^{-2\alpha}).
$$

 Let $\mathcal{C}$ be the event that one of the following events occurs: Either at the moment $T$ when reaching $\overline{r}_0$ the angle made at the origin between the particle and $Q$ is in absolute value at most $|\theta_0|$, or there was a moment $t \le T$ where $\theta_t=0$ (and we already detected $Q$ by time $t$). Note that by symmetry of the angular movement, $\PP_{x_0}(\mathcal{C} \mid \mathcal{B})=\PP_{x_0}(\mathcal{C}) \ge 1/2$. Hence, $\PP_{x_0}(\mathcal{C} \cap \mathcal{B})=\Omega(\KR^{-2\alpha})$. To conclude observe first the following:
 If it is the case that $\overline{r}_0=c^*$, then conditional under $\mathcal{C} \cap \mathcal{B}$, by Lemma~\ref{lem:closetoorigin}, with constant probability, in time $O(1)$ the particle $Q$ is detected, and since $\frac12\ss+O(1) \le \ss$, we are done. In particular, if 
we had $|\theta_0| > \pi/2 - c$ for some sufficiently small $c > 0$, then, since $|\theta_0| \le 3\KR e^{-\beta r_0}$, we must have $r_0 < c^*$, and therefore clearly also $\overline{r}_0=c^*$. If on the other hand we had  $|\theta_0| \le \pi/2 - c$ for some arbitrarily small $c > 0$ and also $\overline{r}_0=r_0-2\log \KR$, then observe that since $\overline{r}_0 > c^*$ with $c^*$ large enough, we have
$\phi(\overline{r}_0) \ge e 1.99\KR e^{-r_0/2} \ge 3\KR e^{-r_0/2} \ge |\theta_0|$, and $Q$ is detected by time $T$, since $\phi(\overline{r}_0)$ corresponds to the maximum angle at the origin between a particle at radius $\overline{r}_0$ and $Q$ that guarantees detection. 
\end{proof}

The uniform lower bound of Theorem~\ref{thm:mixedLarge} now follows directly by combining Proposition~\ref{mixedLowerBoundschico} and Proposition~\ref{prop:uniformlowerboundN2}. The integral lower bound of Theorem~\ref{thm:mixedLarge} follows directly from Proposition~\ref{mixedLowerBoundschico} applied with $\KA$ being any fixed constant and the fact that $\mu(\dt)=\Omega(n\phis)$ (by considering the condition $|\theta_0| \le \phi(R)+\KA \phis$ only), and the proofs of the lower bounds of Theorem~\ref{thm:mixedSmall} and Theorem~\ref{thm:mixedLarge} are finished. 

\subsection{Upper bounds}\label{sec:Upper}

In this section we will show the corresponding upper bounds of Theorems~\ref{thm:mixedSmall} and~\ref{thm:mixedLarge}: that is, we show that particles initially placed outside $\dt$, with $\dt$ as in the corresponding theorems, have only a small chance of detecting the target by time $\ss$. For all values of $\ss$, we will show that
\[\int_{\ndt}\PP_{x_0}(T_{det}\leq \ss)d\mu(x_0)=O(\mu(\dt)),\]
thus establishing that the significant contribution 
to the tail probabilities comes from particles inside $\dt$, and for large values of $\ss$ we will show uniform upper bounds for the detection probability of every point outside $\dt$. 
In order to analyze the trajectory of a particle we will make use of the fact that the generator driving its motion can be separated into its radial and angular component, given respectively by
\[\Delta_{rad} = \frac{1}{2}\frac{\partial^2}{\partial r^2}+\frac{\alpha}{2}\frac{1}{\tanh(\alpha r)}\frac{\partial}{\partial r}\quad\text{and}\quad \Delta_{ang}=\frac{1}{2\sinh^2(\beta r)}\frac{\partial^2}{\partial\theta^2}.\]
Since the generator driving the radial part of the motion does not depend on $\theta$, our approach consists in sampling the radial component $\{r_s\}_{s\geq0}$ of the trajectories first and then, conditional under a given radial trajectory, sampling the angular component $\{\theta_s\}_{s\geq0}$. With this approach we can make use of the results obtained in Section~\ref{sec:radial} in order to study $\{r_s\}_{s\geq 0}$, while the angular component is distributed according to a time-changed Brownian motion $\theta_s=B_{\II{s}}$, where
\[\II{s}:=\int_0^s\cosech^{2}(\beta r_u)du\]
relates the radial trajectory to the angular variance of $\{\theta_s\}_{s\geq 0}$, as already seen in Section~\ref{sec:angular}. To be able to apply the insight obtained by studying each component separately, we will need to replace the hitting time of the target (which translates into hitting $B_Q(R)$ whose boundary defines a curve relating $r$ and $\theta$) by the exit time of a simpler set: let $x_0\in\ndt$ be any fixed initial location with $\theta_0\in(0,\pi)$  and define a box $\mathcal{B}(x_0)$ containing~$x_0$, of the form 
\begin{equation}\label{eqn:up:box}
\mathcal{B}(x_0):=[\hatr,R]\times[\theta_0-\hatt,\theta_0+\hatt]\subseteq \overline{B}_Q(R)
\end{equation}
where $\hatt$ and $\hatr$ will be defined later on. 
Since $\mathcal{B}(x_0)\subseteq\overline{B}_Q(R)$, it follows that in order for a particle starting at $x_0$ to detect the target, it must first exit the box through either its upper boundary or its side boundaries. Denoting by $\trad$ and $\tang$ the respective hitting times of said boundaries, this implies that $\trad\wedge \tang\leq T_{det}$, and we can obtain the bound
\begin{equation}\label{boxboundsNew}\PP_{x_0}(T_{det}\leq \ss)\,\leq\,\PP_{x_0}(\trad\leq \ss)+\PP_{x_0}(\tang\leq \ss<\trad).\end{equation}
The advantage of addressing the exit time of $\mathcal{B}(x_0)$ instead of $T_{det}$ is straightforward; the first event $\{T_0^{rad}\le \ss\}$ is independent of the angular component of the trajectory, allowing us to bound from above its probability with the tools developed in Section~\ref{sec:radial}, while the treatment of the second event $\{T_0^{ang}\le \ss<T_0^{rad}\}$ will follow from standard results on Brownian motion and the control of~$\II{\ss}$. The following result allows us to bound the second term on the right-hand side of~\eqref{boxboundsNew}:
\begin{proposition}\label{prop:mainuppermixed}
Define $J(r):=\cosech^{-2}(\beta r)$. Then,
\[\PP_{x_0}(\tang\leq \ss\leq\trad)\,\leq\,4\Phi\Big({-}\frac{\hatt}{\sqrt{\ss\JJ(\hatr)}}\Big).\]
where $\Phi$ stands for the error function. Furthermore, 
\[\PP_{x_0}(\tang\leq \ss)\leq \sqrt{\frac{2}{\pi}}\int_{0}^{\infty}\frac{\hatt}{\sigma^{3/2}}e^{-\frac{\hatt^2}{2\sigma}}\PP_{x_0}\big(\II{\ss}\geq\sigma\big)d\sigma.
\]
\end{proposition}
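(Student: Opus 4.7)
The plan is to leverage the decomposition of the particle's motion into its independent radial and angular components, together with the time-change representation recorded in Fact~\ref{mixedLower:fact:brownian}: conditional on the radial trajectory $\{r_u\}_{u\ge 0}$, the angular displacement $\theta_s-\theta_0$ equals $B_{\II{s}}$, where $B$ is a standard Brownian motion independent of $\{r_u\}$ and $\II{s}=\int_0^s \cosech^2(\beta r_u)\,du$. Under this representation, $\tang$ becomes the first time $B$ (run along the clock $\II{\cdot}$) exits $(-\hatt,\hatt)$, while $\trad$ depends only on $\{r_u\}$; this separation is what makes the two bounds accessible.

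For the first inequality, the key observation is that on $\{\trad\ge \ss\}$ one has $r_u\ge \hatr$ throughout $[0,\ss]$, and monotonicity of $\cosech^2$ on the positive half-line forces $\II{\ss}\le \ss\,J(\hatr)$. Hence $\{\tang\le \ss\}\cap\{\trad\ge \ss\}$ is contained in the event that $B$ exits $(-\hatt,\hatt)$ by time $\ss J(\hatr)$. Conditioning on $\{r_u\}_{u\le \ss}$ and applying the two-sided reflection bound
\[
\PP\bigl(\sup_{u\le t}|B_u|\ge \hatt\bigr) \,\le\, 2\,\PP\bigl(\sup_{u\le t}B_u\ge \hatt\bigr) \,=\, 4\,\Phi\!\bigl({-}\hatt/\sqrt{t}\bigr)
\]
at $t=\ss J(\hatr)$ (using monotonicity of $\Phi(-\hatt/\sqrt{\cdot})$ in $t$), followed by taking expectation over the radial trajectory, yields the claimed estimate.

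For the second inequality, write $\{\tang\le \ss\}=\{H_{\hatt}\le \II{\ss}\}$ where $H_{\hatt}:=\inf\{u\ge 0:|B_u|=\hatt\}$, and exploit the independence of $H_{\hatt}$ and $\II{\ss}$ inherited from the time change. By symmetry and a union bound, $\PP(H_{\hatt}\le \sigma)\le 2\,\PP(H_{\hatt}^+\le \sigma)$ where $H_{\hatt}^+:=\inf\{u\ge 0:B_u=\hatt\}$ has the classical inverse-Gaussian density $f_1(\sigma)=\hatt\,(2\pi\sigma^3)^{-1/2}\exp(-\hatt^2/(2\sigma))$. Fubini (in the form $\EE[F_1(X)]=\int_0^\infty f_1(\sigma)\PP(X\ge \sigma)\,d\sigma$ for a positive random variable $X$ with $F_1(0)=0$) applied to $X=\II{\ss}$ then gives
\[
\PP_{x_0}(\tang\le \ss) \,\le\, 2\int_0^\infty f_1(\sigma)\,\PP_{x_0}(\II{\ss}\ge \sigma)\,d\sigma,
\]
which upon substituting $f_1$ produces exactly the stated expression.

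The only technical subtlety is to invoke cleanly the time-change representation so that $B$ is genuinely a standard Brownian motion independent of the radial process (a Dambis--Dubins--Schwarz type statement), which is already packaged for us in Fact~\ref{mixedLower:fact:brownian}; once this is in place, both inequalities reduce to classical Gaussian tail estimates and first-passage density formulas, so there is no serious obstacle beyond bookkeeping with the two-sided versus one-sided hitting times.
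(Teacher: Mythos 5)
Your proposal is correct and follows essentially the same route as the paper: condition on the radial trajectory, use the time-change representation $\theta_s=B_{\II{s}}+\theta_0$, bound the two-sided exit by reflection (factor $4$), and use $\II{\ss}\le \ss J(\hatr)$ on $\{\trad\ge\ss\}$ for the first inequality. For the second inequality you integrate the first-passage (L\'evy) density of $B$ against $\PP_{x_0}(\II{\ss}\ge\sigma)$ via Fubini, which is just an equivalent repackaging of the paper's step of taking $\EE_{x_0}\big(\Phi({-}\hatt/\sqrt{\II{\ss}})\big)$ and integrating by parts, and it yields the same constant $\sqrt{2/\pi}$.
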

\begin{proof}
Denote by $\PP_{x_0}(\cdot\mid\{r_u\}_{u\geq0})$ the law of the angular process given a realization of its radial component. Given such a realization we already know that the trajectory $\{\theta_u\}_{u\geq0}$ is equal in law to that of the time-changed Brownian motion $\{B_{\II{u}}+\theta_0\}_{u\geq0}$, for which $\tang$ is equal to the exit time of $B_{\II{u}}$ from $[-\hatt,\hatt]$. Since the exit time of a Brownian motion is well known, we proceed as in Section~\ref{sec:angular} using the reflection principle to obtain 
\[\PP_{x_0}(\tang\leq \ss\,|\{r_u\}_{u\geq0})\leq 4\PP_{x_0}(B_{\II{\ss}}\leq -\hatt\mid\{r_u\}_{u\geq 0})=4\Phi\Big({-}\frac{\hatt}{\sqrt{\II{\ss}}}\Big)\]
where the factor 4 is obtained since the probability of exiting by hitting one of the angles is twice the probability of hitting any of the two angles, which is twice the probability of hitting one fixed angle. Taking expectations with respect to the law of $\{r_u\}_{u\geq 0}$ we obtain
\[\PP_{x_0}(\tang\leq \ss\leq\trad)\leq 4\EE_{x_0}\Big(\Phi\Big({-}\frac{\hatt}{\sqrt{\II{\ss}}}\Big){\bf 1}_{\{\ss\leq\trad\}}\Big),\]
where the expression within the expectation depends on the radial movement alone. To apply this last bound observe that for any realization of $\{r_u\}_{u\geq0}$ such that $\trad\geq\ss$ we have that $\inf_{0\leq u\leq\ss}r_u\geq \hatr$, so $\II{\ss}\leq \ss\JJ(\hatr)$, which gives the first part of the proposition's statement. For the second part of the proposition we can make the same computation as before without taking into account the event $\{\trad\geq\ss\}$, giving 
\[\PP_{x_0}(\tang\leq \ss)\,\leq\,4\EE_{x_0}\Big(\Phi\Big({-}\frac{\hatt}{\sqrt{\II{\ss}}}\Big)\Big)\,=\,-4\int_{0}^\infty\Phi\Big({-}\frac{\hatt}{\sqrt{\sigma}}\Big)d\PP_{x_0}\big(\II{\ss}\geq\sigma\big),\]
and the result follows after using integration by parts, since $\frac{d}{d\sigma}\Phi(-\frac{\hatt}{\sqrt{\sigma}})=\frac{1}{2\sqrt{2\pi}}\frac{\hatt}{\sigma^{3/2}}e^{-\frac{\hatt^2}{2\sigma}}$.
 \end{proof}
 
 \smallskip
 
On a high level, in the proposition above, the bound for $\PP_{x_0}(\tang\leq \ss<\trad)$ will be useful as long as $\ss=O(1)$ since in this case the inequality $\ss \JJ(\hatr)\leq \II{\ss}$ is not too loose. However, when $\ss=\omega(1)$ we need to control this term using the second part of Proposition~\ref{prop:mainuppermixed}, which relies heavily on bounding probabilities of the form $\PP(\II{\ss}\geq\sigma)$. To provide these bounds observe that when $\inf_{0\leq u\leq\ss}r_u$ is not too close to zero the drift function is almost constant so the trajectories of $\{r_u\}_{u\geq0}$ resemble those of a Brownian motion with drift $\tfrac{\alpha}{2}$ (with a reflecting boundary at $R$). Even further, on the event where $\inf_{0\leq u\leq\ss}r_u$ is not too close to zero we also have
\begin{equation}\label{eq:approximation}\II{\ss}\approx\int_0^\ss e^{-2\beta r_u}du,\end{equation}
where  
the integral appearing above on the right-hand side has been widely studied in the case when $\{r_u\}_{u\geq 0}$ is an (unbounded) Brownian motion with drift (see~\cite{matsumoto2005,Yor1992,Feng2020,dufresne}), revealing that their distributions are heavy-tailed with the tail exponent given analytically in terms of $\beta$ and the drift of $\{r_u\}_{u\geq 0}$. In order to make use of known results  we must first address the change in behavior arising from the reflecting boundary of our process. To do so, we consider an auxiliary process $\{\widetilde{r}_u\}_{u\geq 0}$ akin to the one introduced in the proof of Proposition~\ref{prop:rad-upperBnd}:
    \begin{itemize}
	\item $\{\widetilde{r}_u\}_{u\geq 0}$ begins at $r_0$ and evolves according to $\Delta_{rad}$ up until hitting $R$.
	\item Every time $\{\widetilde{r}_u\}_{u\geq 0}$ hits $R$, it is immediately restarted at $R-1$ and continues to evolve as before.
\end{itemize}
It is not hard to see that $\{\widetilde{r}_u\}_{u\geq 0}$
 is stochastically dominated by $\{r_u\}_{u\geq 0}$, in the sense that the radius of the auxiliary process is always smaller. Thus, in particular, $\II{\ss}\leq \tII{\ss} := \int_0^s\cosech^{2}(\beta \widetilde{r}_u)du$, and hence it will be enough to bound probabilities of the form $\PP(\tII{\ss}\geq\sigma)$ from above. Now, from the definition of $\widetilde{r}_u$, it is natural to use the subsequent hitting times of $R$, say $0<T_R^{(1)}<T_R^{(2)}<\ldots$, to divide the trajectory of $\{\widetilde{r}_u\}_{u\geq 0}$ into excursions from $R-1$ to $R$ (or from $r_0$ to $R$, in the case of the first one), giving
\[\tII{\ss}\,\leq\,\int_0^{T^{(1)}_R}\cosech^{2}(\beta\widetilde{r}_u)du\,+\,\sum_{i=1}^{M(\ss)}\int_{T^{(i)}_R}^{T^{(i+1)}_R}\cosech^{2}(\beta\widetilde{r}_u)du,\]
where $M(\ss)$ is a random variable equal to the amount of times $\{\widetilde{r}_u\}_{u\geq 0}$ has hit $R$ by time~$\ss$. Let $\{\widetilde{r}^{(0)}_u\}_{u\geq0},\{\widetilde{r}^{(1)}_u\}_{u\geq0},\{\widetilde{r}^{(2)}_u\}_{u\geq0},\ldots$ be independent random diffusion processes (that will correspond to the different excursions), each evolving on $(0,\infty)$ according to the generator $\Delta_{rad}$ without the reflecting boundary at $R$, and such that $\{\widetilde{r}^{(0)}_u\}_{u\geq0}$ starts at $r_0$ while the rest of the $\{\widetilde{r}^{(i)}_u\}_{u\geq0}$ start at $R-1$. From the definition of the auxiliary process it follows that within each time interval of the form $[T^{(i)}_R,T^{(i+1)}_R]$ (or $[0,T^{(1)}_R]$ in the case of the first excursion) the trajectory of $\{\widetilde{r}_u\}_{u\geq0}$ is equal to the one of $\{\widetilde{r}^{(i)}_{u'}\}_{u'\geq0}$ for $u'\leq T^{(i+1)}_R-T^{(i)}_R$. Hence,
\[\tII{\ss}\,\leq\,\int_0^{T^{(1)}_R}\cosech^{2}(\beta\widetilde{r}^{(0)}_u)du\,+\,\sum_{i=1}^{M(\ss)}\int_{0}^{T^{(i+1)}_R-T^{(i)}_R}\cosech^{2}(\beta\widetilde{r}^{(i)}_u)du\,\leq\,\sum_{i=0}^{M(\ss)}\tI^{(i)},\]
where for $i=0, \ldots, M(\ss)$
\[\tI^{(i)}:=\int_{0}^{\infty}\cosech^{2}(\beta\widetilde{r}^{(i)}_u)du.\]
Observe that the bound on $\tII{\ss}$ involves a sum of $M(\ss)+1$ random variables. Intuitively speaking the number of times the process hits $R$ should be proportional to $\ss$, so we introduce an auxiliary parameter $v_0>0$ depending on $x_0$, to be fixed later, so that the event $M(\ss)>\lceil v_0\ss\rceil$ occurs with a very small probability. Using this new parameter we obtain
\begin{equation}\label{eq:division}\PP_{x_0}(\II{\ss}\geq\sigma)\,\leq\,\PP_{x_0}\big(M(\ss)>\lceil v_0\ss\rceil \big)+\PP_{x_0}\big(\tI^{(0)}\geq\tfrac12\sigma\big)+ \PP_{x_0}\Big(\sum_{i=1}^{\lceil v_0\ss\rceil}\tI^{(i)}\geq\tfrac12\sigma\Big).\end{equation}
The advantage of the bound above is that it involves the sum of independent random variables~$\tI^{(i)}$ which, aside from $\tI^{(0)}$, are also identically distributed. The bound also gives valuable intuition regarding the detection of the target: roughly speaking, in order for detection to take place either $x_0$ must be sufficiently close to the origin so that the angular variance coming from $\tI^{(0)}$ becomes significant, or $\ss$ must be large enough so that many excursions starting near the boundary occur, allowing for the contribution $\sum_{i=1}^{M(\ss)}\tI^{(i)}$ to be large enough. As mentioned above (the discussion following~\eqref{eq:approximation}), the distributions of the $\tI^{(i)}$ variables are heavy-tailed. In order to control their sum we make use of the following lemma, whose proof mimics closely what was done in \cite{Omelchenko2019}: since our result is slightly more precise than the one in~\cite{Omelchenko2019}, we provide it in the appendix for the sake of completeness. 

\begin{lemma}\label{lem:cotapower}
Let $S_m:=\sum_{i=1}^m Z_i$ where $\{Z_i\}_{i\in\NN}$  is a sequence of i.i.d.~absolutely continuous random variables taking values in $[1,\infty)$ such that there are $V,\gamma>0$ for which for all $x\geq 0$
\[1-F_{Z_i}(x)=\PP(Z_i\geq x)\leq Vx^{-\gamma}.\]
Then, there are $\auxc,\auxl>0$ depending on $V$, $\gamma$ and $\EE(Z_1)$  (if it exists) alone such that:
\begin{itemize}
    \item If $\gamma<1$ and $L>\auxl$, then
    $\displaystyle\PP(S_m\geq Lm^{\frac{1}{\gamma}})\leq \auxc L^{-\gamma}$.
    \item If $\gamma=1$ and $L>\auxl$, then $\displaystyle\PP(S_m\geq Lm\log(m))\leq \Big(\frac{\auxc}{L\log(m)}\Big)^{1-\frac{\auxl}{L}}$.
    \item If $\gamma>1$ and  $L>\auxl$, then
    $\displaystyle\PP(S_m\geq Lm)\,\leq\,\auxc L^{-\gamma}m^{-((\gamma-1)\wedge\frac{\gamma}{2})}$.
\end{itemize}
\end{lemma}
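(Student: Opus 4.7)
The plan is to use a classical truncation argument, tuned separately to each regime of $\gamma$. For a threshold $K=K(L,m,\gamma)$ to be chosen, let $Y_i:=\min(Z_i,K)$. Since the event $\{S_m\ge t\}$ is contained in $\{\exists\, i:Z_i>K\}\cup\{\sum_{i=1}^m Y_i\ge t\}$, a union bound yields
\[
\PP(S_m\ge t)\le m\PP(Z_1>K)+\PP\Big(\sum_{i=1}^m Y_i\ge t\Big)\le mVK^{-\gamma}+\PP\Big(\sum_{i=1}^m Y_i\ge t\Big).
\]
The first ``one big jump'' term is controlled by choosing $K$ large enough, while the second term, whose summands are now bounded, will be handled by Markov- or Chernoff-type estimates, the choice depending on $\gamma$.

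When $\gamma<1$, I would take $K$ proportional to $Lm^{1/\gamma}$, so that the union-bound term is $O(L^{-\gamma})$. A direct calculation gives $\EE Y_1\le 1+V\int_1^K x^{-\gamma}dx=O(K^{1-\gamma})$, hence $\EE\sum_i Y_i=O(L^{1-\gamma}m^{1/\gamma})$, and Markov's inequality at level $\tfrac12 Lm^{1/\gamma}$ produces a second $O(L^{-\gamma})$ contribution (the requirement ``$L$ large'' becomes the constant $\auxl$).

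When $\gamma>1$, the mean $\mu:=\EE Z_1$ is finite and $K$ is chosen of order $Lm$, so the tail term is $O(L^{-\gamma}m^{1-\gamma})$. For $1<\gamma\le 2$ this already matches $L^{-\gamma}m^{-(\gamma-1)}$, and the truncated sum concentrates around $m\EE Y_1=m\mu(1+o(1))$ by a first-moment estimate. For $\gamma>2$ the tail term $L^{-\gamma}m^{1-\gamma}$ is smaller than the target $L^{-\gamma}m^{-\gamma/2}$, so the main work is to bound the deviation of $\sum_i(Y_i-\EE Y_i)$; Rosenthal's inequality applied with a suitable exponent $p>2$ to the centered truncated variables, together with the uniform bound on their $p$-th moments which follows from the Pareto tail, yields the claimed $L^{-\gamma}m^{-\gamma/2}$ estimate.

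The case $\gamma=1$ will be the main technical obstacle. Here $\EE Y_1\sim V\log K$ diverges slowly, so first-moment Markov is too weak and concentration around an unbounded mean is not the right tool; instead I would truncate at $K\asymp Lm\log m$ and run a Chernoff bound
\[
\PP\Big(\sum_i Y_i\ge\tfrac12 Lm\log m\Big)\le e^{-\frac12\lambda Lm\log m}\big(\EE e^{\lambda Y_1}\big)^m.
\]
Integration by parts combined with the Pareto tail gives $\EE e^{\lambda Y_1}\le 1+\lambda V\int_1^K e^{\lambda x}x^{-1}dx+O(\lambda)$, and choosing $\lambda$ of order $1/K$ leads precisely to the stated bound $(\auxc/(L\log m))^{1-\auxl/L}$: the main factor $\auxc/(L\log m)$ is the size of the union-bound contribution, while the exponent $1-\auxl/L$ captures the Cram\'er-type loss incurred by ``chernoffizing'' a random variable whose moment generating function barely exists. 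The delicate balance between $\lambda$ and $K$ needed to extract this exponent is the crux of the proof, and what distinguishes this case from the straightforward Markov/Rosenthal arguments in the other two regimes.
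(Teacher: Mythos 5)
Your truncation-plus-union-bound skeleton is the same one-big-jump decomposition the paper uses, and your $\gamma<1$ argument (truncate at $K\asymp Lm^{1/\gamma}$, note $\EE Y_1=O(K^{1-\gamma})$, apply Markov) is correct and even more elementary than the paper's Chernoff computation. The problems are in the other two regimes. For $1<\gamma\le 2$ you dismiss the truncated sum with a ``first-moment estimate'': Markov applied to $\sum_i Y_i$ at level $Lm$ gives only $O(\EE Z_1/L)$, with no decay in $m$ at all, whereas the target is $\auxc L^{-\gamma}m^{-(\gamma-1)}$. You need at least a second-moment bound: with $K\asymp Lm$ one has $\EE Y_1^2=O(K^{2-\gamma})$, and Chebyshev then gives $O\big(m(Lm)^{2-\gamma}/(Lm)^2\big)=O(L^{-\gamma}m^{1-\gamma})$ for $1<\gamma<2$; but at $\gamma=2$ this still leaves a spurious $\log(Lm)$ factor, which is why the paper recenters by $mE_0$ and runs a Chernoff bound with $\lambda=\tfrac{\gamma}{x}\log(x/\sqrt m)$. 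Likewise, for $\gamma>2$ your appeal to a ``uniform bound on the $p$-th moments'' suggests $2<p<\gamma$, but that choice only yields a deviation term of order $L^{-p}m^{-p/2}$, which does not lie below $\auxc L^{-\gamma}m^{-\gamma/2}$ when $L$ is large at fixed $m$. Rosenthal does work, but with $p>\gamma$, where the $p$-th moment of the truncated variable is \emph{not} uniformly bounded — it grows like $K^{p-\gamma}$ — and it is precisely this growth, cancelled against $(Lm)^{-p}$, that restores the exponent $\gamma$ in $L$.

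The $\gamma=1$ case, which you rightly single out as the crux, is where the proposal fails outright as written. With $t=Lm\log m$, $K\asymp t$ and $\lambda\asymp 1/K$, the Chernoff bound cannot beat a constant: by Jensen, $(\EE e^{\lambda Y_1})^m\ge e^{\lambda m\EE Y_1}$ with $\EE Y_1=\Theta(\log K)$, so $e^{-\lambda t}(\EE e^{\lambda Y_1})^m\ge \exp\big({-}\lambda t(1-o(1))\big)=e^{-O(1)}$, nowhere near $(\auxc/(L\log m))^{1-\auxl/L}$. To make $e^{-\lambda t}$ itself of size $(L\log m)^{-1}$ you must take $\lambda\asymp\log(L\log m)/(Lm\log m)$ — the paper's choice $\lambda=\tfrac1x\log(x/m)$ — and then $\lambda K$ is no longer $O(1)$, so $\EE e^{\lambda Y_1}$ has to be estimated by splitting the integral at $M=2/\lambda$: the part below $M$ produces the $-m\lambda\log\lambda$ contribution responsible for the exponent $1-\auxl/L$, and the part above $M$ produces the $m e^{\lambda x}x^{-1}=O(1)$ contribution. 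This tuning and splitting is the actual content of the paper's proof in the $\gamma=1$ regime and is exactly what is missing from your sketch.
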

We can now prove the following result, which will allow us to control $\PP_{x_0}(\II{\ss}\ge\sigma)$:
\begin{proposition}\label{prop:merged-varianza}
For any $0<c<1$ there are $\auxc',\auxl'$ large depending on $\alpha$ and $\beta$ alone, such that for any $\ss\geq c$, $v_0>\frac{4}{c}$, and $x_0\in B_O(R)$ with $r_0>1$, defining $\sigma_0$ as
\[
\sigma_0 := \begin{cases}
\auxl'e^{-2\beta R}(v_0\ss)^{1\vee\frac{2\beta}{\alpha}}, &
\text{if $\alpha \neq 2\beta$,} \\
\auxl'e^{-2\beta R}v_0\ss\log(v_0\ss), &
\text{if $\alpha=2\beta$,}
\end{cases}
\]
the following statements hold for all $\sigma>\sigma_0$ and all $R$ sufficiently large:
\begin{enumerate}[(i)]
\item\label{prop:merged-varianza:itm1} $\PP_{x_0}(M(\ss)>\lceil v_0\ss\rceil )\leq e^{-\frac{1}{16} v_0^2\ss}$,
\item\label{prop:merged-varianza:itm2} $\PP_{x_0}(\tI^{(0)}\geq\tfrac12\sigma)\leq \frac{\log(\tanh(\alpha r_0/2))}{\log(\tanh(\alpha/2))}+\auxc'\sigma^{-\frac{\alpha}{2\beta}} e^{-\alpha r_0}$, and 
\item\label{prop:merged-varianza:itm3} $\PP_{x_0}\big(\sum_{i=1}^{\lceil v_0\ss\rceil}\tI^{(i)}\geq\tfrac12\sigma\big)\leq 2v_0\ss\frac{\log(\tanh(\alpha ((R-1)/2))}{\log(\tanh(\alpha/2))}+\big(\auxc'(v_0\ss)^{1\vee\frac{\alpha}{4\beta}}\sigma^{-\frac{\alpha}{2\beta}}e^{-\alpha R}\big)^{1-\mathfrak{e}}$
where
\[\mathfrak{e}:=\begin{cases}
0, &\mbox{ if }\alpha\neq 2\beta, \\[3pt]\frac{1}{\sigma e^{2\beta R}}\auxl'v_0\ss\log(v_0\ss), &\mbox{ if }\alpha=2\beta.
\end{cases}\]
\end{enumerate}
\end{proposition}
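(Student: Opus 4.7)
The plan is to treat the three parts separately, sharing the common theme that we couple the auxiliary process $\{\widetilde{r}_u\}_{u\ge 0}$ (or its excursion-generating copies $\{\widetilde{r}^{(i)}_u\}_{u\ge 0}$) with a drifted Brownian motion and then quantify tail behavior.

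For part \eqref{prop:merged-varianza:itm1}, note that $M(\ss)>\lceil v_0\ss\rceil$ is equivalent to $\sum_{i=1}^{\lceil v_0\ss\rceil}\tau_i\leq\ss$, where $\tau_1,\tau_2,\ldots$ are i.i.d.\ copies of the hitting time of $R$ starting from $R-1$ (with $\tau_1$ possibly starting from $r_0$, which only makes the bound easier). By Markov's inequality applied to $\exp(-\lambda\sum_i\tau_i)$,
\[
\PP_{x_0}\Bigl(\sum_{i=1}^{\lceil v_0\ss\rceil}\tau_i\leq\ss\Bigr)\leq e^{\lambda\ss}\bigl(\EE(e^{-\lambda\tau_1})\bigr)^{\lceil v_0\ss\rceil}.
\]
The Laplace transform $\EE(e^{-\lambda\tau_1})$ can be bounded by $e^{-c\sqrt{\lambda}}$ for some constant $c\ge 1/2$ depending on $\alpha$ by comparing $\widetilde{r}$ on $[R-1,R]$ with a Brownian motion of uniformly bounded drift and using the explicit Laplace transform $e^{M-\sqrt{M^2+2\lambda}}$ for such a process. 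Optimizing over $\lambda$ (choosing $\lambda^\star=\Theta(v_0^2)$) yields the claimed $e^{-v_0^2\ss/16}$. The lower bound on $v_0$ (namely $v_0>4/c$) is needed so that $\lambda^\star\ss$ is not spoiled by the rounding $\lceil v_0\ss\rceil$ when $\ss$ is near its lower bound.

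For part \eqref{prop:merged-varianza:itm2}, split according to the event $A$ that $\widetilde{r}^{(0)}$ ever dips below $1$. Sending the reflecting barrier to infinity in part~\eqref{radial:itm:phi3} of Lemma~\ref{lemmaradial} (with $\yabs_0=1$, $\auxy=r_0$), $\PP(A)=\log(\tanh(\alpha r_0/2))/\log(\tanh(\alpha/2))$, which is the first term. On $\overline{A}$, $\widetilde{r}^{(0)}_u>1$ for all $u$, so $\cosech^2(\beta\widetilde{r}^{(0)}_u)\leq C(\beta)\,e^{-2\beta\widetilde{r}^{(0)}_u}$ and thus $\tI^{(0)}\leq C\int_0^\infty e^{-2\beta\widetilde{r}^{(0)}_u}du$. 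Since $\coth(\alpha r)\geq 1$, a pathwise SDE comparison using the same Brownian motion yields $\widetilde{r}^{(0)}_u\geq W_u$ where $W$ is Brownian motion with drift $\alpha/2$ started at $r_0$. Writing $W_u=r_0+\widehat{B}_u+(\alpha/2)u$ and applying Dufresne's identity after a scaling $s=\beta^2 u$ gives $\int_0^\infty e^{-2\beta W_u}du\stackrel{d}{=}e^{-2\beta r_0}/(2\beta^2\gamma_{\alpha/(2\beta)})$ with $\gamma_\mu\sim\mathrm{Gamma}(\mu)$. Using $\PP(\gamma_\mu\leq t)=\Theta(t^\mu)$ as $t\to 0$ produces the polynomial tail $\auxc'\sigma^{-\alpha/(2\beta)}e^{-\alpha r_0}$.

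For part \eqref{prop:merged-varianza:itm3}, the $\tI^{(i)}$ for $i\ge 1$ are i.i.d.\ copies of $\tI$ starting at $R-1$. The first term is a union bound over the $\lceil v_0\ss\rceil$ excursions of the event that some excursion hits $1$; each such event contributes $G_1(R-1)=\log(\tanh(\alpha(R-1)/2))/\log(\tanh(\alpha/2))$. On the complementary event, by the argument of part (ii), each $\tI^{(i)}$ satisfies $\PP(\tI^{(i)}\geq x)\leq V x^{-\alpha/(2\beta)}e^{-\alpha R}$. Rescaling $Z_i:=(\text{constant})\cdot e^{2\beta R}\tI^{(i)}$ (after a mild shift to guarantee support in $[1,\infty)$, which only affects constants), the $Z_i$ satisfy the hypotheses of Lemma~\ref{lem:cotapower} with $V$ absolute and tail exponent $\gamma=\alpha/(2\beta)$. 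Setting $Lm$, $Lm\log m$, or $Lm^{1/\gamma}$ equal to $\sigma e^{2\beta R}$ (up to constants) in the three regimes $\gamma>1$, $\gamma=1$, $\gamma<1$ respectively, and using the hypothesis $\sigma>\sigma_0$ to ensure $L>\auxl$, the corresponding tail bounds match the stated expression, with the logarithmic correction $\mathfrak{e}$ arising precisely from the exponent $1-\auxl/L$ in the critical case $\alpha=2\beta$.

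The main technical obstacle lies in part \eqref{prop:merged-varianza:itm3}: one must carefully align the three scaling regimes of Lemma~\ref{lem:cotapower} with the three regimes of $\sigma_0$ and justify that the shift needed to put the $Z_i$'s in $[1,\infty)$ only perturbs constants, while also tracking how the conditioning on no excursion hitting $1$ interacts with the i.i.d.\ structure required by Lemma~\ref{lem:cotapower}. The critical case $\alpha=2\beta$ is particularly delicate because of the logarithmic factor, which forces the exponent $1-\mathfrak{e}$ and requires the stronger lower bound $\sigma_0\propto e^{-2\beta R}v_0\ss\log(v_0\ss)$.
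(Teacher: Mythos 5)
Your proposal is correct and follows essentially the same strategy as the paper's proof: Markov/Laplace-transform bound with optimization over $\lambda$ for part (i), splitting on the hitting-$1$ event plus Dufresne's inverse-gamma identity for the Brownian exponential functional for part (ii), and a union bound followed by applying Lemma~\ref{lem:cotapower} to i.i.d.\ Pareto-tailed variables in three regimes for part (iii). The only departures are cosmetic: the paper drops the first excursion $T^{(1)}_R$ outright rather than arguing it only helps, and it stochastically dominates the inverse gamma by a Pareto with scale $\omega>1$ to guarantee support in $[1,\infty)$ rather than your shift-then-rescale normalization; both devices lead to the same constants-only adjustments you anticipate.
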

\begin{proof}
We begin with the upper bound for the term $\PP_{x_0}(M(\ss)>\lceil v_0\ss\rceil)$. By definition, the event $\{M(\ss)>  \lceil v_0\ss \rceil\}$ is equal to $\{T^{(\lceil v_0\ss\rceil)}_R\leq\ss\}$. Writing $T^{(\lceil v_0\ss\rceil)}_R=T^{(1)}_R+\sum_{i=2}^{\lceil v_0\ss\rceil}(T^{(i)}_R-T^{(i-1)}_R)$,  by Markov's inequality, for any $\lambda>0$ we deduce
\[\PP_{x_0}\big(M(\ss)>\lceil v_0\ss\rceil\big)\,\leq\,\PP_{x_0}\Big(\sum_{i=2}^{\lceil v_0\ss\rceil}(T^{(i)}_R-T^{(i-1)}_R)\leq\ss\Big)\,\leq\,e^{\lambda \ss}\EE_{x_0}\Big(\exp\big({-}\lambda\sum_{i=2}^{\lceil v_0\ss\rceil}(T^{(i)}_R-T^{(i-1)}_R)\big)\Big)\]
where the differences $T^{(i)}_R-T^{(i-1)}_R$ are i.i.d.~random variables which indicate the time it takes for excursion $\widetilde{r}^{(i-1)}$ starting at $R-1$ to reach $R$. Since each $\{\widetilde{r}^{(i)}_u\}_{u\geq 0}$ is stochastically bounded from below by a Brownian motion with constant drift $\tfrac{\alpha}{2}$, we can use Formula 2.0.1 in~\cite{Borodin2002} to obtain
\[\PP_{x_0}\big(M(\ss)>\lceil v_0\ss\rceil\big)\,\leq\,e^{\lambda \ss}\EE\big(e^{-\lambda(T^{(2)}_R-T^{(1)}_R)}\big)^{\lceil v_0\ss\rceil-1}\,\leq\,e^{\lambda \ss}\big( e^{\frac{\alpha}{2}-\sqrt{\frac{\alpha^2}{4}+2\lambda}}\big)^{\lceil v_0\ss\rceil-1}.\]
The exponent $\lambda\ss+(\lceil v_0\ss\rceil-1)(\frac{\alpha}{2}-\sqrt{\frac{\alpha^2}{4}+2\lambda})$ appearing in the last term is minimized as a function of $\lambda$ when $\ss\sqrt{\tfrac{\alpha^2}{4}+2\lambda}=\lceil v_0\ss\rceil-1$, which defines a positive $\lambda$ since $v_0\ss>4$ and $\alpha,c<1$, giving an expression for the exponent of the right-hand side of the form 
\[-\frac{1}{2\ss}(\lceil v_0\ss\rceil-1)^2-\frac{1}{8}\alpha^2\ss+\frac{1}{2}\alpha(\lceil v_0\ss\rceil-1)\leq\frac12(\lceil v_0\ss\rceil-1)\Big(\alpha-\frac{1}{\ss}(\lceil v_0\ss\rceil-1)\Big)\,\leq\,-\frac{1}{16}\,v_0^2\ss\]
where in the last inequality we used that $\lceil v_0\ss\rceil-1\geq \frac12v_0\ss$ and that $\alpha-\frac12v_0\leq -\frac14 v_0$. This proves the bound in~\eqref{prop:merged-varianza:itm1}. In order to control the probabilities appearing in~\eqref{prop:merged-varianza:itm2} and~\eqref{prop:merged-varianza:itm3} it is convenient from the point of view of computations to work under the assumption that the $\widetilde{r}^{(i)}$ processes never get too close to $0$. To do so observe that by our assumption of $r_0>1$, all the $\widetilde{r}^{(i)}$ start in $(1,\infty)$, and denote by $\tau^{(i)}_1$ the hitting time of radius $1$ by $\widetilde{r}^{(i)}$. Observe that \[\PP_{x_0}(\tI^{(0)}\geq\tfrac12\sigma)\,\leq\,\PP_{x_0}(\tau_{1}^{(0)}<\infty)+\PP_{x_0}(\tI^{(0)}\geq\tfrac12\sigma, \tau^{(0)}_{1}=\infty).\]
Using Part~\eqref{radial:itm:phi3} from Lemma~\ref{lemmaradial}, with $\yabs_0:=1$ and $\auxY:=R$,  we obtain 
\[\PP_{x_0}(\tau_{1}^{(0)}<\infty)=\lim_{R \to \infty}\PP_{x_0}(\tau_{1}^{(0)}<T_R)=\frac{\log(\tanh(\alpha r_0/2))}{\log(\tanh(\alpha/2))},\]
which gives the first term of the bound for $R$ (and thus $n$) sufficiently large. For the second term, observe that on the event $\tau^{(0)}_1=\infty$ we have $\widetilde{r}^{(0)}_u>1$ for all $u\geq0$ so there is some constant $C$ depending on $\beta$ alone such that for all $u\geq 0$, we have $\cosech^2(\beta \widetilde{r}^{(0)}_u)\leq Ce^{-2\beta \widetilde{r}^{(0)}_u}$ and hence
\[\PP_{x_0}(\tI^{(0)}\geq\tfrac12\sigma,\tau^{(0)}_{1}=\infty)\,\leq\,\PP_{x_0}\Big(C\int_0^{\infty}e^{-2\beta\widetilde{r}^{(0)}_u}du\geq\tfrac12\sigma\Big).\]
Now, notice that $\{\widetilde{r}_u^{(0)}-r_0\}_{u\geq0}$ 
is stochastically bounded from below by $X_u$, a Brownian motion with constant drift $\tfrac{\alpha}{2}$ so we can bound the integral within the last term from above by $\int_0^{\infty}e^{-2\beta(X_u+r_0)}du$. This particular functional of Brownian motion was studied in~\cite[Proposition~4.4.4]{dufresne}, where it was shown that
\[\int_0^\infty e^{-2\beta X_u}du\,=\,W,\quad\text{where }W\text{ is a positive r.v.~with }\quad f_{W}(x):=\frac{(2\beta^2)^\frac{\alpha}{2\beta}}{\Gamma(\frac{\alpha}{2\beta})}x^{-\frac{\alpha}{2\beta}-1}e^{-\frac{2\beta^2}{x}}\]
that is, $W$ is distributed according to an inverse gamma distribution with parameters $\frac{\alpha}{2\beta}$ and~$2\beta^2$. From the density of the variable $W$ above the only relevant feature for our purposes is its heavy tail: in order to ease calculations we will bound $W$ stochastically from above by a random variable $Z$ following a Pareto distribution with a sufficiently large scale $\omega$ depending on $\alpha$ and $\beta$ alone (in particular we will assume $\omega>1$), and shape $\frac{\alpha}{2\beta}$. That is, $W \preccurlyeq Z$ with
\[f_{Z}(x)\,=\,\frac{2\beta}{\alpha\omega}\Big(\frac{\omega}{x}\Big)^{\frac{\alpha}{2\beta}+1}{\bf 1}_{(\omega,\infty)}(x).\]
Using this auxiliary variable we readily obtain 
\[\PP_{x_0}\Big(C\int_0^{\infty}e^{-2\beta\widetilde{r}^{(0)}_u}du\geq\tfrac12\sigma\Big)\leq\PP\big(Z\geq\tfrac{\sigma}{2C}e^{2\beta r_0}\big)\leq\Big(\frac{\sigma e^{2\beta r_0}}{2C\omega}\Big)^{-\frac{\alpha}{2\beta}},\]
giving the last term of~\eqref{prop:merged-varianza:itm2}. To obtain the bound in~\eqref{prop:merged-varianza:itm3}, define the event $\mathcal{E}:=\{\exists 1\leq i\leq \lceil v_0\ss\rceil,\,\tau_1^{(i)}<\infty\}$ where $\tau_1^{(i)}$ was defined previously as the hitting time of $r=1$ on the $i$-th excursion. By the same analysis as in the previous paragraph, we get 
\begin{align*}\PP_{x_0}\Big(\sum_{i=1}^{\lceil v_0\ss\rceil}\tI^{(i)}\geq\tfrac12\sigma\Big)&\leq\,\PP_{x_0}(\mathcal{E})+\PP_{x_0}\Big(\sum_{i=1}^{\lceil v_0\ss\rceil}\tI^{(i)}\geq\tfrac12\sigma,\,\overline{\mathcal{E}}\Big)\\&\leq\,\lceil v_0\ss\rceil\PP_{x_0}(\tau_{1}^{(1)}<\infty)+\PP_{x_0}\Big(C\sum_{i=1}^{\lceil v_0\ss\rceil}\int_0^{\infty}e^{-2\beta\widetilde{r}^{(i)}_u}du\geq\tfrac12\sigma\Big)\\&\leq\,\lceil v_0\ss\rceil\PP_{x_0}(\tau_{1}^{(1)}<\infty)+\PP_{x_0}\Big(\sum_{i=1}^{\lceil v_0\ss\rceil}Z^{(i)}\geq\tfrac{\sigma}{2C}e^{2\beta(R-1)}\Big)\end{align*}
where in this case we have used that each $\widetilde{r}^{(i)}$ begins at $R-1$ and the $Z^{(i)}$'s stand for i.i.d.~Pareto random variables with scale $\omega$ and shape $\frac{\alpha}{2\beta}$ as in the previous case. For the first term we use the same treatment as with $\PP_{x_0}(\tau_{1}^{(0)}<\infty)$ to obtain
\[\lceil v_0\ss\rceil\PP_{x_0}(\tau_{1}^{(1)}<\infty)\,\leq\,2v_0\ss\frac{\log(\tanh(\alpha (R-1)/2))}{\log(\tanh(\alpha/2))}\]
where we have used that in the first excursion the starting radius is $R-1$, and that $v_0\ss\geq 4$. For the second term recall that the $Z^{(i)}$ variables are bounded from below by $\omega\geq 1$, and that $1-F_Z(x)=(\frac{\omega}{x})^{\frac{\alpha}{2\beta}}$ for any $x\geq\omega$. Hence, we can apply Lemma~\ref{lem:cotapower} with $m:=\lceil v_0\ss\rceil$ and $S_m:=\sum_{i=1}^{\lceil v_0\ss\rceil} Z^{(i)}$ depending on the value of $\gamma:=\frac{\alpha}{2\beta}$ as follows:
\begin{itemize}
\item If $\frac{\alpha}{2\beta}<1$, then we can take $L:=\tfrac{\sigma}{2C}\lceil v_0\ss\rceil^{-\frac{2\beta}{\alpha}}e^{2\beta(R-1)}$ in the lemma so that 
\[\PP_{x_0}\Big(\sum_{i=1}^{\lceil v_0\ss\rceil}Z^{(i)}\ge\tfrac{\sigma}{2C}e^{2\beta(R-1)}\Big)=\PP_{x_0}\Big(S_{\lceil v_0\ss\rceil}\ge L \lceil v_0\ss\rceil^{\frac{2\beta}{\alpha}}\Big)\leq \auxc L^{-\frac{\alpha}{2\beta}}\]
as soon as $L> \auxl$ for some $\auxl$ and $\auxc$ depending on $\alpha,\beta$ and $\omega$ alone. Observing that $L^{-\frac{\alpha}{2\beta}}=(2C)^{\frac{\alpha}{2\beta}}\lceil v_0\ss\rceil\sigma^{-\frac{\alpha}{2\beta}}e^{-\alpha (R-1)}$ and that the condition $L> \auxl$ is equivalent to
$\sigma > 2\auxl C\lceil v_0\ss\rceil^{\frac{2\beta}{\alpha}}e^{-2\beta (R-1)}$, the result follows by defining $\auxl'$ and $\auxc'$ accordingly.
\item If $\frac{\alpha}{2\beta}=1$ we can take $L:=\tfrac{\sigma}{2C}(\lceil v_0\ss\rceil\log \lceil v_0\ss\rceil)^{-1} e^{2\beta(R-1)}$ in the lemma so that
\[\PP_{x_0}\Big(\sum_{i=1}^{\lceil v_0\ss\rceil}Z^{(i)}\ge\tfrac{\sigma}{2C}e^{2\beta(R-1)}\Big)=\PP_{x_0}\big(S_{\lceil v_0\ss\rceil}\ge L \lceil v_0\ss\rceil\log \lceil v_0\ss\rceil\big)\leq \Big(\frac{\auxc}{L\log \lceil v_0\ss\rceil}\Big)^{1-\frac{\auxl}{L}}\]
as soon as $L>\auxl$ for some $\auxl$ and $\auxc$ depending on $\alpha,\beta$ and $\omega$ alone, which is satisfied since by hypothesis $\sigma > \auxl'e^{-2\beta R}v_0\ss\log(v_0\ss)$ and we can choose $\auxl'>2Ce^{2\beta}\auxl$. Since $\alpha=2\beta$, we have $\tfrac{\auxc}{L\log\lceil v_0\ss\rceil}=\frac{\auxc'}{\sigma}e^{-\alpha R}\lceil v_0\ss\rceil $ for $\auxc':=2C\auxc e^{\alpha}$, giving the result.
\item Finally, assume that $\frac{\alpha}{2\beta}>1$ and take $L:=\tfrac{\sigma}{2C\lceil v_0\ss\rceil} e^{2\beta(R-1)}$ in the lemma so that
\[\PP_{x_0}\Big(\sum_{i=1}^{\lceil v_0\ss\rceil}Z^{(i)}\ge\tfrac{\sigma}{2C}e^{2\beta(R-1)}\Big)=\PP_{x_0}\big(S_{\lceil v_0\ss\rceil}\ge L \lceil v_0\ss\rceil\big)\leq \auxc L^{-\frac{\alpha}{2\beta}}\lceil v_0\ss\rceil^{-(\frac{\alpha}{2\beta}-1\wedge\frac{\alpha}{4\beta})}\]
as soon as $L> \auxl$ for some $\auxl$ and $\auxc$ depending on $\alpha,\beta$ and $\omega$ alone. The condition follows directly from the hypothesis $\sigma>\auxl' e^{-2\beta R}(v_0\ss)$ for $\auxl'$ adequately chosen, and the bound in~\eqref{prop:merged-varianza:itm3} is obtained from the previous inequality by noticing that $L^{-\frac{\alpha}{2\beta}}=(2C\lceil v_0\ss\rceil)^{\frac{\alpha}{2\beta}}\sigma^{-\frac{\alpha}{2\beta}}e^{-\alpha (R-1)}$ and defining $\auxc'$ accordingly.
\end{itemize}
\end{proof}

\subsubsection{The case $\ss$ small}
In this subsection we obtain upper bound for $\int_{\ndt}\PP_{x_0}(T_{det}\leq \ss)d\mu(x_0)$ under the assumption of $\beta\leq1/2$ and $\ss=O(1)$ as required by Theorem~\ref{thm:mixedSmall}. Since for $\beta=1/2$ we always have $\ss=\Omega(1)$ and the next subsection deals with all such values, here we even assume $\beta < 1/2$. The main result in this section is the following:
\begin{proposition}\label{generalschico}
Let $\beta<\frac{1}{2}$. If $\dt$ is as defined in Theorem~\ref{thm:mixedSmall} and $\ss=\Omega((e^{\beta R}/n)^{2})\cap O(1)$, then
\[\int_{\ndt}\P_{x_0}(T_{det}\leq \ss)d\mu(x_0) 
\;=\;O\big(ne^{-\beta R}\sqrt{\ss}\big).\]
\end{proposition}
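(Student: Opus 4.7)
The plan is to follow the box decomposition strategy outlined at the start of Section~\ref{sec:Upper}: use~\eqref{boxboundsNew} to bound $\PP_{x_0}(T_{det}\le\ss)$ for each starting point $x_0\in\ndt$ and then integrate. Since $\beta<\tfrac12$ and $\ss=O(1)$, the radial component of a particle's trajectory moves by only a bounded amount during time $\ss$, so the governing detection mechanism is angular and the argument closely parallels the integral upper bound carried out in the proof of Theorem~\ref{thm:mainangular}.

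Concretely, for each $x_0\in\ndt$ I plan to pick $\hatr$ slightly below $r_0$ and set $\hatt:=|\theta_0|-\phi(\hatr)$, so that the box $\mathcal{B}(x_0)\subseteq\overline{B}_Q(R)$. The argument has three ingredients. First, starting positions with $r_0\le r^\star$ for a sufficiently large constant $r^\star$ contribute at most $\mu(B_O(r^\star))=O(ne^{-\alpha R})$, and this is already $O(ne^{-\beta R}\sqrt{\ss})$ thanks to $\alpha>\beta$ together with the lower bound $\sqrt{\ss}=\Omega(e^{\beta R}/n)$ implied by the hypothesis on $\ss$. Second, the radial exit probability $\PP_{x_0}(\trad\le\ss)$ can be bounded via Part~\eqref{radial:itm:phi1} of Lemma~\ref{lemmaradial} (or by coupling with a Brownian motion of drift $\tfrac{\alpha}{2}$ toward the reflecting barrier at $R$) by a Gaussian-type tail of the form $O(\exp(-(r_0-\hatr)^2/(2\ss)))$, which after integration is negligible for a suitable choice of $\hatr$. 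Third, the angular exit probability is bounded by Proposition~\ref{prop:mainuppermixed}, giving $\PP_{x_0}(\tang\le\ss<\trad)\le 4\Phi(-\hatt/(\sqrt{\ss}\sinh(\beta\hatr)))$, and for $\hatr$ slightly below $r_0$ one has $\hatt=\Theta(|\theta_0|-\phi(r_0))$ while $\sinh(\beta\hatr)=\Theta(e^{\beta r_0})$. Integrating this angular bound over the principal region, via the change of variables $y=(|\theta_0|-\phi(r_0))e^{\beta r_0}/\sqrt{\ss}$, reduces the calculation to the one carried out in the proof of Lemma~\ref{lem:angular-muDt} and yields $O(ne^{-\alpha R}\sqrt{\ss})\cdot O(e^{(\alpha-\beta)R})=O(ne^{-\beta R}\sqrt{\ss})$, exactly matching the target bound.

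The main technical obstacle is the joint choice of $\hatr$: a smaller $\hatr$ makes $\PP_{x_0}(\trad\le\ss)$ decay faster but also shrinks $\hatt$ through the growth of $\phi(\hatr)$. Resolving this tension likely requires subdividing the range of $r_0$ into subregimes with $\hatr$ chosen adaptively (possibly also depending on $\theta_0$), and separately handling the thin strip $\{\phi(r_0)+\sqrt{\ss}e^{-\beta r_0}<|\theta_0|\le\phi(\hatr)\}$ on which the box degenerates, by a cruder bound that exploits the strip's narrow angular width $O(e^{-r_0/2})$ to absorb its contribution within the target order.
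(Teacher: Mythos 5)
Your framework coincides with the paper's: discard $B_O(C')$ using $\ss=\Omega((e^{\beta R}/n)^2)$, bound $\PP_{x_0}(T_{det}\le\ss)$ by the two exit probabilities of a box via~\eqref{boxboundsNew} and Proposition~\ref{prop:mainuppermixed}, and integrate with the change of variables from Lemma~\ref{lem:angular-muDt}. However, the step you defer --- the choice of $\hatr$ --- is precisely the content of the proof, and as written your two requirements on $\hatr$ are incompatible. If $\hatr$ is only ``slightly below $r_0$'', so that $\sinh(\beta\hatr)=\Theta(e^{\beta r_0})$ and the angular integral reduces to $\int\Phi(-y)\,dy$, then $r_0-\hatr=O(1)$ and, since the radial drift is bounded on $\overline{B}_O(C')$ and $\ss=\Theta(1)$, one has $\PP_{x_0}(\trad\le\ss)=\Omega(1)$ uniformly; integrating this over $\ndt$, whose measure is $\Theta(n)$, gives $\Theta(n)$, far above the target $O(ne^{-\beta R}\sqrt{\ss})$. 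Conversely, letting $r_0-\hatr$ grow with the angular excess (which is what is needed to make the radial term summable over $\theta_0$) destroys the identity $\sinh(\beta\hatr)=\Theta(e^{\beta r_0})$ on which your reduction to Lemma~\ref{lem:angular-muDt} rests. Your closing paragraph acknowledges this tension but only gestures at ``subregimes'' and a ``cruder bound on a thin strip''; note also that the radial term is not negligible in the correct argument --- it contributes exactly order $ne^{-\beta R}\sqrt{\ss}$. (A minor slip: on $\{\trad>\ss\}$ the angular variance is at most $\ss\cosech^2(\beta\hatr)$, so the bound reads $\Phi\big({-}\hatt\sinh(\beta\hatr)/\sqrt{\ss}\big)$; you wrote $\sinh(\beta\hatr)$ in the denominator, although your subsequent computation uses the correct version.)

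The paper resolves the tension with one explicit adaptive choice: $\hatt:=\tfrac13(|\theta_0|-\phi(r_0))$ and $\hatr:=r_0-\log\big(1+(|\theta_0|-\phi(r_0))e^{\beta r_0}\big)$, i.e.\ the radial depth of the box grows only logarithmically in the normalized excess $w:=(|\theta_0|-\phi(r_0))e^{\beta r_0}/\sqrt{\ss}$. Then (i) the radial exit probability is at most $4\Phi\big({-}\tfrac{1}{\sqrt{\ss}}\log(1+\sqrt{\ss}\,w)\big)$, which decays faster than any polynomial in $w$ and integrates over $\theta_0$ to $O(\sqrt{\ss}\,e^{-\beta r_0})$; (ii) the angular bound is degraded only by the factor $(1+\sqrt{\ss}\,w)^{-\beta}$ inside $\Phi$, and $\int_1^\infty\Phi\big({-}c\,w(1+\sqrt{\ss}\,w)^{-\beta}\big)dw=O(1)$ precisely because $\beta<\tfrac12$; and (iii) box containment $\phi(\hatr)\le\theta_0-\hatt$ is verified via $\phi(\hatr)-\phi(r_0)\le 32\,e^{(\beta-\frac12)r_0}(|\theta_0|-\phi(r_0))$, again using $\beta<\tfrac12$ and $r_0\ge C'$, which also eliminates the ``degenerate strip'' created by your parametrization $\hatt=|\theta_0|-\phi(\hatr)$. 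Without this quantitative choice of $\hatr$ (or an equivalent device) and the accompanying verifications, the proposal does not close.
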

\begin{proof}
Notice first that for any $C'>0$, Lemma~\ref{lem:muBall} gives $\mu(B_O(C'))=O(ne^{-\alpha R})=o(1)$ so the contribution of points in $\ndt\cap B_O(C')$ to the upper bound is $o(ne^{-\beta R}\sqrt{\ss})$ from our assumption $\ss=\Omega((e^{2\beta R}/n)^{2})$, and hence such points can be neglected. Take then $C'>0$ large (to be fixed later) so we only need to consider particles initially located at points with radial component in $[C',R]$. For such points we follow the proof argument described in the previous section, where we bound the detection time of the target by the exit time of the box
$\mathcal{B}(x_0)$ for $\hatt$ and $\hatr$ given in the next sentence, allowing us to bound $\PP_{x_0}(T_{det}\leq\ss)$ as in~\eqref{boxboundsNew}. More in detail, given $x_0\in\ndt$ we construct the box $\mathcal{B}(x_0)$ by choosing 
\[\hatt:=\tfrac{1}{3}(|\theta_0|-\phi(r_0))\qquad\text{and}\qquad\hatr:=r_0-\log\big(1+(|\theta_0|-\phi(r_0))e^{\beta r_0}\big).\]
Notice first that $\hatt>0$, $\hatr<r_0$, and assuming $C'>13$ we also have $\hatr>\frac13 r_0$: indeed, this last statement is equivalent to 
$e^{\frac23 r_0}>1+(|\theta_0|-\phi(r_0))e^{\beta r_0}$
which is satisfied for $r_0>2/(\frac{2}{3}-\beta)+1\geq 13$ since $\beta\leq1/2$ and $|\theta_0|\leq\pi$. The previous inequalities show that the box is well defined and contain the point $(r_0,\theta_0)$, but we still need to prove that $\mathcal{B}(x_0)\subseteq \overline{B}_Q(R)$. To do so 
assume, without loss of generality, that $\theta_0>0$  and 
observe that it is enough to show that the upper-left corner of the box, $(\hatr,\theta_0-\hatt)$ is in $\overline{B}_Q(R)$, or equivalently, that $\phi(\hatr)<\theta_0-\hatt=\frac{2}{3}(\theta_0-\phi(r_0))+\phi(r_0)$. Using that $\hatr<r_0$ and Part~\eqref{itm:phi1} of Lemma~\ref{lem:phi} we obtain that 
\begin{align*}\cos(\phi(r_0))-\cos(\phi(\hatr))&=(\tanh(\tfrac{r_0}{2})-\tanh(\tfrac{\hatr}{2}))\coth(R)=\coth (R)\frac{2(e^{r_0}-e^{\hatr})}{(e^{r_0}+1)(e^{\hatr}+1)}\\[3pt]&\leq4(e^{-\hatr}-e^{-r_0})=4e^{(\beta-1)r_0}(|\theta_0|-\phi(r_0)). 
\end{align*}
Moreover, for $C'$ sufficiently large, since $\cos(x)-\cos(x')=2\sin(\frac12(x+x'))\sin(\frac12(x-x'))$, 
\[\cos(\phi(r_0))-\cos(\phi(\hatr))\ge 2\sin(\tfrac12\phi(r_0))\sin(\tfrac12(\phi(\hatr)-\phi(r_0)))\geq\tfrac18\phi(r_0)(\phi(\hatr)-\phi(r_0)).\]
Using Part~\eqref{itm:phi4} of Lemma~\ref{lem:phi} we have $\phi(r_0)\geq e^{-\frac{r_0}{2}}$ for $r_0>C'$ so we conclude that 
\[\phi(\hatr)-\phi(r_0)\,\leq\,32e^{(\beta-\frac{1}{2})r_0}(|\theta_0|-\phi(r_0))\]
and since $\beta<\frac{1}{2}$, taking $C'$ sufficiently large the last term is bounded by $\frac{2}{3}(\theta_0-\phi(r_0))$ which proves that  $\mathcal{B}(x_0)\subseteq \overline{B}_Q(R)$ for all such $r_0$. 

Now, we follow the proof strategy presented in the previous section by splitting the detection probability as in~\eqref{boxboundsNew} and using the first bound in Proposition~\ref{prop:mainuppermixed}, which by setting $\Omega:=\ndt\cap\overline{B}_O(C')$ and recalling that $\JJ(\hatr):=\cosech^{2}(\beta\hatr)$, gives
\begin{equation}\label{eq:mainosmall}
    \int_{\Omega}\P_{x_0}(T_{det}\leq\ss)d\mu(x_0) \leq \int_{\Omega} \PP_{x_0}(T_{0}^{rad}\leq \ss)d\mu(x_0) +\int_{\Omega}4\Phi\Big({-}\frac{\hatt}{\sqrt{\ss \JJ(\hatr)}}\Big)d\mu(x_0)
\end{equation}
where $T_0^{rad}$ is the hitting time of $\hatr$. For the first term on the right of~\eqref{eq:mainosmall} we can stochastically dominate from above the trajectory of $\{r_s\}_{s\geq0}$ by that of a simple Brownian motion with a reflecting barrier at $R$ (thus removing the drift towards the boundary) which decreases the hitting time of $\hatr$. Since hitting $\hatr$ implies moving outside of an interval of length $2(r_0-\hatr)$, by the reflection principle, we obtain
\[\PP_{x_0}(T_{0}^{rad}\leq \ss)\,\leq\,4\Phi\Big({-}\frac{1}{\sqrt{\ss}}(r_0-\hatr)\Big)\,=\,4\Phi\Big({-}\frac{1}{\sqrt{\ss}}\log\Big(1+\frac{|\theta_0|-\phi(r_0)}{e^{-\beta r_0}}\Big)\Big).\]
For the term on the right we have
\[\int_{\phi(r_0)+\sqrt{\ss}e^{-\beta r_0}}^\infty\PP_{x_0}(T_{0}^{rad}\le\ss)d\theta_0=
4\sqrt{\ss}e^{-\beta r_0}\int_1^\infty \Phi\Big({-}\frac{1}{\sqrt{\ss}}\log(1+\sqrt{\ss}w)\Big)dw\]
but since the function $x\to\frac{1}{\sqrt{x}}\log(1+\sqrt{x}w)$ is decreasing and $\ss=O(1)$ we deduce that the term on the right-hand side is $O(\sqrt{\ss}e^{-\beta R})$, so
\begin{align*}
\int_{\Omega} \PP_{x_0}(T_{0}^{rad}\leq \ss)d\mu(x_0)&=\;O\Big(n\int_{C'}^R\int_{\phi(r_0)+\sqrt{\ss}e^{-\beta r_0}}^\infty\PP_{x_0}(T_{0}^{rad}\leq \ss)d\theta_0e^{-\alpha(R-r_0)}dr_0\Big)\\&=\;O\Big(n\int_{C'}^R\sqrt{\ss}e^{-\beta r_0}e^{-\alpha(R-r_0)}dr_0\Big)=O\big(n\sqrt{\ss}e^{-\beta R}\big)
\end{align*}
where in the last equality we have used that $\beta<\frac{1}{2}<\alpha$. Next, we must bound the second term on the right of~\eqref{eq:mainosmall}, for which we observe first that since $\hatr>\frac13 r_0$ and assuming that $C'>0$ is large we have $\sinh(\beta \hatr)>\frac{1}{4}e^{\beta\hatr}$ so that 
\[{-}\frac{\hatt}{\sqrt{\ss \JJ(\hatr)}}=\frac{\theta_0-\phi(r_0)}{3\sqrt{\ss J(\hatr)}}\leq {-}\frac{4(\theta_0-\phi(r_0))}{3\sqrt{\ss} e^{-\beta r_0}}\Big(1+\frac{\theta_0-\phi(r_0)}{e^{-\beta r_0}}\Big)^{-\beta}.\]
Evaluating at $\Phi(\cdot)$, integrating over $\Omega$ and using the change of variables $w:=\frac{\theta_0-\phi(r_0)}{\sqrt{\ss}e^{-\beta r_0}}$ gives
\[\int_{\Omega}\Phi\Big({-}\frac{\hatt}{\sqrt{\ss \JJ(\hatr)}}\Big)d\mu(x_0)\;=\;O\Big(n\int_{C'}^R\sqrt{\ss}e^{-\beta r_0}\Big(\int_{1}^\infty\Phi\Big({-}\frac{4w}{3(1+\sqrt{\ss}w)^{\beta}}\Big)dw\Big)e^{-\alpha(R-r_0)}dr_0\Big)\]
but since $\ss=O(1)$ we have $\int_{1}^\infty\Phi(-\tfrac{4w}{3(1+\sqrt{\ss}w)^{\beta}})dw=O(1)$ (it is finite since $\beta<1/2$) and hence
\[\int_{\Omega}\Phi\Big({-}\frac{\hatt}{\sqrt{\ss \JJ(\hatr)}}\Big)d\mu(x_0)\,=\,O\Big(n\sqrt{\ss}\int_{C'}^Re^{-\beta r_0+\alpha(R-r_0)}dr_0\Big)\,=\,O\big(n\sqrt{\ss}e^{-\beta R}\big)\]
where again we used that $\beta<\frac{1}{2}<\alpha$ in the last equality. 
\end{proof}

\subsubsection{The case $\ss$ large}
%
 In this subsection we will prove the upper bounds in Theorem~\ref{thm:mixedLarge} for both $\int_{\ndt}\PP_{x_0}(T_{det}\leq \ss)d\mu(x_0)$ as well as for $\sup_{x_0\in\ndt}\PP_{x_0}(T_{det}\leq \ss)$, where for convenience we use the shorthand notation $\ndt=\ndt(\KA)$ that will be used throughout this section. Recall that we may assume $\ss=\Omega(1)$ for this theorem. We handle this case with the help of Propositions~\ref{prop:mainuppermixed} and~\ref{prop:merged-varianza}, as well as with the results obtained in Section~\ref{sec:radial}.
The main result in this section is the following, which directly implies the corresponding upper bounds of Theorem~\ref{thm:mixedLarge}:
\begin{proposition}\label{generalsgrande}
    Let $\dt$ be as in Theorem~\ref{thm:mixedLarge}. Then there is a $C>0$ independent of $\ss$ and $n$ such that for $\ss=\Omega(1)$ satisfying the conditions of Theorem~\ref{thm:mixedLarge}, fixing $\KA:=C$ gives  
\[\int_{\ndt}\P_{x_0}(T_{det}\leq \ss)d\mu(x_0) 
\;=\;O(n\phis).\]
Furthermore, for $\KA\geq C$ and under the additional assumption $\ss=\omega(1)$ we have
\[
\sup_{x_0\in\ndt}\P_{x_0}(T_{det}\leq \ss) =
\begin{cases}
e^{-\Omega(\KA^2)},
& \text{ if $\alpha\geq2\beta$,} \\[2pt]
O(\KA^{-\alpha(\frac{1}{\beta}\vee 2)}),
& \text{ if $\alpha<2\beta$.} 
\end{cases}
\]
\end{proposition}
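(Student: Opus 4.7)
The plan is to apply the box technique from the beginning of Section~\ref{sec:Upper}. For each $x_0=(r_0,\theta_0)\in\ndt$ we construct a box $\mathcal{B}(x_0)=[\hatr,R]\times[\theta_0-\hatt,\theta_0+\hatt]\subseteq\overline{B}_Q(R)$ and use~\eqref{boxboundsNew}:
\[
\PP_{x_0}(T_{det}\leq\ss)\leq\PP_{x_0}(\trad\leq\ss)+\PP_{x_0}(\tang\leq\ss).
\]
Following Proposition~\ref{generalschico}, set $\hatt=\tfrac13(|\theta_0|-\phi(r_0))$ and $\hatr=r_0-\log(1+(|\theta_0|-\phi(r_0))e^{\beta r_0})$; the inclusion $\mathcal{B}(x_0)\subseteq\overline{B}_Q(R)$ is then verified via Part~\eqref{itm:phi1} of Lemma~\ref{lem:phi} exactly as in that proposition. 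Since $x_0\in\ndt$, at least one of the two defining conditions of $\dt(\KA)$ fails, forcing either $\hatt\gtrsim\KA\phis$ or $\hatr$ to be pushed deep enough into the disk so that the angular variance $\ss J(\hatr)$ stays under control.

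The radial term $\PP_{x_0}(\trad\leq\ss)$ is handled by stochastically dominating the radial process by a Brownian motion with constant drift $\alpha/2$ and applying Part~\eqref{radial:itm:phi1} of Lemma~\ref{lemmaradial} with $\lambda=1/(1+\ss)$, exactly as in Proposition~\ref{prop:rad-lowerBnd}. This produces a bound of order $(1+\ss)e^{-\alpha(r_0-\hatr)}$ which, in the regime $\beta\leq\tfrac12$, is absorbed by the angular term; in the regime $\beta>\tfrac12$ (which forces $\alpha<2\beta$) the angular variance is so small that the radial term becomes the dominant one, and the same argument as in Section~\ref{sec:radial} gives directly the $O(\KA^{-2\alpha})$ bound predicted by $\alpha/(\beta\wedge\tfrac12)=2\alpha$.

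For the angular term in the $\beta\leq\tfrac12$ regime, we use the second part of Proposition~\ref{prop:mainuppermixed} and split the resulting $\sigma$-integral at the threshold $\sigma_0$ of Proposition~\ref{prop:merged-varianza}, fixing $v_0$ as a large absolute constant so that parts~\eqref{prop:merged-varianza:itm1} and~\eqref{prop:merged-varianza:itm2} contribute negligibly under $\ss=\omega(1)$. After the change of variable $w=\hatt^2/(2\sigma)$, the resulting integral is proportional to
\[
v_0\ss e^{-\alpha R}\,\hatt^{-\alpha/\beta}\int_0^{\hatt^2/(2\sigma_0)}w^{-\frac12+\frac{\alpha}{2\beta}}e^{-w}dw.
\]
A direct check shows $\sigma_0=\Theta((\phis)^2)$ when $\beta\leq\tfrac12$ (up to the $\log$-correction in the critical case), so $\hatt^2/(2\sigma_0)=\Theta(\KA^2)$. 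In the regime $\alpha<2\beta$ the exponent of $w$ exceeds $-\tfrac12$, the $w$-integral converges to a finite limit, and the prefactor simplifies to $O(\KA^{-\alpha/\beta})$. In the regime $\alpha\geq 2\beta$, a Gaussian tail estimate on the $w$-integral combined with $\hatt^2/(2\sigma_0)=\Theta(\KA^2)$ yields the desired $e^{-\Omega(\KA^2)}$ decay.

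The integral bound follows by integrating the uniform estimate against $d\mu(x_0)$. Writing $d\mu(x_0)=\Theta(ne^{-\alpha R}\sinh(\alpha r_0))dr_0d\theta_0$, performing the change of variable $y_0=(|\theta_0|-\phi(r_0))e^{\beta r_0}/\sqrt{\ss}$, and using the uniform angular bound to reduce the $y_0$-integral to a constant, one is left with an expression of the form $O(ne^{-\alpha R}\sqrt{\ss})\int_0^R e^{-\beta r_0}\sinh(\alpha r_0)dr_0$, which by the same case analysis as in Lemma~\ref{lem:angular-muDt} yields $O(n\phis)$ in each of the three regimes. The principal technical obstacle will be the critical case $\alpha=2\beta$: Lemma~\ref{lem:cotapower}(ii) produces a tail for $\sum_i\tI^{(i)}$ with an intrinsic $\log$-correction, so the split of the $\sigma$-integral must be carried out precisely at $\sigma_0=\Theta(e^{-2\beta R}v_0\ss\log(v_0\ss))$ in order to recover the correct $\phis=e^{-\beta R}\sqrt{\ss\log\ss}$ scaling. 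Tracking these $\log$ factors cleanly through both the $\sigma$-integral and the radial integration is the most delicate bookkeeping of the proof.
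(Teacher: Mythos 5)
Your skeleton is the right one (box decomposition via~\eqref{boxboundsNew}, radial term via Lemma~\ref{lemmaradial}, angular term via Propositions~\ref{prop:mainuppermixed} and~\ref{prop:merged-varianza}, and Lemma~\ref{lem:cotapower} to control the heavy-tailed sums), but two of your concrete choices would derail the argument.

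First, you import the box from Proposition~\ref{generalschico}, namely $\hatt=\tfrac13(|\theta_0|-\phi(r_0))$ and $\hatr=r_0-\log\bigl(1+(|\theta_0|-\phi(r_0))e^{\beta r_0}\bigr)$, and claim the inclusion $\mathcal{B}(x_0)\subseteq\overline{B}_Q(R)$ is verified ``exactly as in that proposition.'' But that verification derives the bound $\phi(\hatr)-\phi(r_0)\leq 32e^{(\beta-\frac12)r_0}(|\theta_0|-\phi(r_0))$ and then \emph{uses $\beta<\frac12$} to make the prefactor small; for $\beta\geq\tfrac12$ the exponent is nonnegative and the factor $e^{(\beta-\frac12)r_0}$ grows with $r_0$, so the box can protrude into $B_Q(R)$ and inequality~\eqref{boxboundsNew} becomes unavailable. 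Theorem~\ref{thm:mixedLarge} explicitly covers $\beta>\tfrac12$ (and you yourself need the box decomposition there, since you still want to bound $\PP_{x_0}(\trad\leq\ss)$), so you cannot dodge this by appealing to ``Section~\ref{sec:radial}.'' The fix is the box actually used for large $\ss$: take $\hatt=\tfrac23(\theta_0-\phi(r_0))$ and \emph{define} $\hatr$ by $\phi(\hatr)=\theta_0-\hatt$, whence the closest corner $(\hatr,\theta_0-\hatt)$ lies exactly on $\partial B_Q(R)$ and monotonicity of $\phi$ gives the inclusion for every $\beta>0$.

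Second, fixing $v_0$ to be a large absolute constant works for the uniform bound but cannot deliver the integral bound $O(n\phis)$. Proposition~\ref{prop:merged-varianza}\,\eqref{prop:merged-varianza:itm1} contributes a term $e^{-\frac{1}{16}v_0^2\ss}$ which, for constant $v_0$, is \emph{independent of $x_0$}; integrating it against $d\mu$ over $\ndt$ gives something of order $n\cdot e^{-\Omega(\ss)}$. Since $\ss$ is allowed to be $\Theta(1)$ while $\phis$ has an $e^{-\Theta(R)}$ factor, $n\cdot e^{-\Omega(\ss)}$ is of order $n$ and hence is \emph{not} $O(n\phis)$. The integral bound requires a $v_0$ that grows with the angular distance, specifically $v_0(x_0)=\tfrac{4}{c}\bigl(\frac{|\theta_0|-\phi(r_0)}{\gamma(r_0)-\phi(r_0)}\bigr)^{\varepsilon}$ for a suitably small $\varepsilon>0$, so that the change of variable $w=(v_0\sqrt{\ss})^{1/\varepsilon}$ converts $\int e^{-\frac{1}{16}v_0^2\ss}d\theta_0$ into $O(\gamma(r_0)-\phi(r_0))$; this then integrates to $O(n\phis)$ by Fact~\ref{fact:uppermix}\,\eqref{itm:upp5}. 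Your single ``reduce the $y_0$-integral to a constant'' step hides exactly this difficulty. Once you swap in the correct box and a $\theta_0$-dependent $v_0$, the rest of your term-by-term plan (including the delicate $\alpha=2\beta$ bookkeeping you flag) is sound.
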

We begin by introducing some notation and deriving a few facts that will come in handy throughout this section. The points in $\partial \ndt$ with a positive angle define a curve in polar coordinates which can be parameterized by the radius as $(r,\gamma(r))$ with $\gamma(r):=\max\{\phi(R)+\KA\phis,\phi(r)+\KR e^{-(\beta\wedge\frac12) r}\}$, and where $r$ takes values between $r'$ and $R$, for $r'$ being the solution of (see Figure~\ref{fig:mixto}(b) for a depiction of $r'$)
\begin{equation}\label{eq:def:rprime}
\phi(r')+\KR e^{-(\beta\wedge\frac12) r'}=\pi.
\end{equation}
Observing that $\gamma$ is the maximum between two functions, a major role will be played by the intersecting point $(r'',\gamma(r''))$ which satisfies (see Figure~\ref{fig:mixto}(b) for a depiction of $r''$)
\begin{equation}\label{eq:def:r2prime}
\phi(R)+\KA\phis=\phi(r'')+\KR e^{-(\beta\wedge\frac12) r''}.
\end{equation}
\smallskip

Next we derive several inequalities that will be useful in the proof of Proposition~\ref{generalsgrande}:
\begin{fact}\label{fact:uppermix}
If the constant $C>0$ appearing in the statement of Proposition~\ref{generalsgrande} is sufficiently large, then under the assumption $\ss=\Omega(1)$ the following hold:
\begin{enumerate}[(i)]
\item\label{itm:upp1} $\phis=\Omega(\phi(R))$ and for all $0<r_0\leq R$, $e^{-(\beta\wedge\frac12) r_0}=\Omega(\phi(r_0))$. In particular, for any $C'>0$ by taking $C$ large we have $\KA\phis\geq C'\phi(R)$ and $\KR e^{-(\beta\wedge\frac12) r_0}\geq C'\phi(r_0)$.
\item\label{itm:upp2} $(\beta\wedge\frac12)^{-1}\log(\frac{1}{\pi} \KR)\leq r'\leq (\beta\wedge\frac12)^{-1}\log(\frac{2}{\pi}\KR)$.
\item\label{itm:upp3} $e^{-(\beta\wedge\frac12) r''}=\Theta(\tfrac{\KA}{\KR}\phis)$.
\item\label{itm:upp4} $\gamma(r)-\phi(r)$ is minimized at $r''$, and in particular 
$\gamma(r)-\phi(r)\geq(\KR e^{-(\beta\wedge \frac{1}{2})r}\vee\tfrac{1}{2}\KA\phis)$ for all $r'\leq r\leq R$.
\item\label{itm:upp5} For any fixed value of $\KA$ and $\KR$,  $\displaystyle\int_{r'}^R(\gamma(r)-\phi(r))e^{-\alpha(R-r)}dr=O(\phis).$
\end{enumerate}
\end{fact}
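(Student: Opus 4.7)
The plan is to reduce all five parts to elementary manipulations using Part~\eqref{itm:phi4} of Lemma~\ref{lem:phi} (so $\phi(r)=\Theta(e^{-r/2})$ for $r$ bounded away from zero) together with the definitions of $\phi^{(\ss)}$, $r'$, and $r''$.

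First, for Part~\eqref{itm:upp1} I will compare $\phi^{(\ss)}$ to $\phi(R)=\Theta(e^{-R/2})$ case by case. When $\alpha>2\beta$, $\phi^{(\ss)}/\phi(R)=\Theta(e^{R(1/2-\beta)}\sqrt{\ss})\to\infty$ since $\beta<1/2$; the case $\alpha=2\beta$ is analogous with an extra $\sqrt{\log\ss}$; and when $\alpha<2\beta$, $\phi^{(\ss)}/\phi(R)=\Theta(\ss^{(\beta\wedge\frac12)/\alpha}e^{R(1/2-\beta\wedge\frac12)})$ which is $\Omega(1)$ (and unbounded unless $\beta=\frac12$ and $\ss=O(1)$). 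Since $\beta\wedge\frac12\le\frac12$, comparing exponentials gives $e^{-(\beta\wedge\frac12)r_0}/\phi(r_0)=\Omega(1)$. The particular statements then follow by absorbing $C'$ into $C$.

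For Part~\eqref{itm:upp2}, I use that $0\le\phi(r')\le\pi/2$ in the defining equation~\eqref{eq:def:rprime}, which forces $\kappa_R e^{-(\beta\wedge\frac12)r'}\in[\pi/2,\pi]$, and solve for~$r'$. For Part~\eqref{itm:upp3}, Part~\eqref{itm:upp1} gives $\phi(R)=o(\kappa_A\phi^{(\ss)})$ and $\phi(r'')=o(\kappa_R e^{-(\beta\wedge\frac12)r''})$ (choosing $C$ large), so the defining equation~\eqref{eq:def:r2prime} collapses to $\kappa_R e^{-(\beta\wedge\frac12)r''}=(1+o(1))\kappa_A\phi^{(\ss)}$, which is the claim.

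For Part~\eqref{itm:upp4}, I exploit that $\phi(r)+\kappa_R e^{-(\beta\wedge\frac12)r}$ is strictly decreasing while $\phi(R)+\kappa_A\phi^{(\ss)}$ is constant, so by the defining equation of $r''$, $\gamma(r)=\phi(r)+\kappa_R e^{-(\beta\wedge\frac12)r}$ on $[r',r'']$ and $\gamma(r)=\phi(R)+\kappa_A\phi^{(\ss)}$ on $[r'',R]$. Consequently $\gamma(r)-\phi(r)$ is decreasing on $[r',r'']$ and increasing on $[r'',R]$, with minimum $\kappa_R e^{-(\beta\wedge\frac12)r''}=\Theta(\kappa_A\phi^{(\ss)})$. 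On $[r',r'']$, $\gamma(r)-\phi(r)=\kappa_R e^{-(\beta\wedge\frac12)r}\ge\kappa_R e^{-(\beta\wedge\frac12)r''}\ge\frac12\kappa_A\phi^{(\ss)}$ by Part~\eqref{itm:upp3}; on $[r'',R]$, the definition of $\gamma$ yields $\kappa_R e^{-(\beta\wedge\frac12)r}\le\gamma(r)-\phi(r)$, while $\gamma(r)-\phi(r)\ge\kappa_A\phi^{(\ss)}-\phi(r'')\ge\frac12\kappa_A\phi^{(\ss)}$ follows from Part~\eqref{itm:upp1} applied at $r''$. Finally for Part~\eqref{itm:upp5}, I split the integral at $r''$: on $[r',r'']$ I substitute $\gamma(r)-\phi(r)=\kappa_R e^{-(\beta\wedge\frac12)r}$ and use $\alpha>\frac12\ge\beta\wedge\frac12$ to evaluate, obtaining $O(\kappa_R e^{-(\beta\wedge\frac12)r''}e^{-\alpha(R-r'')})=O(\kappa_A\phi^{(\ss)})$ by Part~\eqref{itm:upp3}; on $[r'',R]$ I bound $\gamma(r)-\phi(r)\le\phi(R)+\kappa_A\phi^{(\ss)}$ and integrate $e^{-\alpha(R-r)}$ to get $O(\phi^{(\ss)})$.

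The only conceptual obstacle is that the constants hidden in the $\Theta$/$\Omega$ of Part~\eqref{itm:upp3} approach their limiting values only as $C\to\infty$, which is why the fact presumes $C$ large; this is precisely what forces the multiplicative slack ``$\frac12$'' in Part~\eqref{itm:upp4}. Everything else is bookkeeping across the three regimes for $\phi^{(\ss)}$ and a careful but mechanical verification that $\alpha-(\beta\wedge\frac12)>0$ makes the integral in Part~\eqref{itm:upp5} converge even at the boundary case $\beta=\frac12$.
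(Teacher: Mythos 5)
Your proof is correct and takes essentially the same route as the paper's: part~\eqref{itm:upp1} from the asymptotics of $\phi$ in Part~\eqref{itm:phi4} of Lemma~\ref{lem:phi}, parts~\eqref{itm:upp2} and~\eqref{itm:upp3} from the defining equations~\eqref{eq:def:rprime} and~\eqref{eq:def:r2prime}, part~\eqref{itm:upp4} from the monotonicity of $\gamma-\phi$ on $[r',r'']$ and $[r'',R]$ with the factor $\tfrac12$ supplied by taking $C$ large, and part~\eqref{itm:upp5} by splitting the integral at $r''$ and using~\eqref{itm:upp3} and~\eqref{itm:upp1} on the two pieces. The only difference is that you write out the three regimes of $\phis$ explicitly in part~\eqref{itm:upp1}, which the paper treats as an immediate consequence of the same lemma.
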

\begin{proof}
The first part follows directly from Part~\eqref{itm:phi4} of Lemma~\ref{lem:phi}, 
the second one from the definition of $r'$ and the fact that $0\leq\phi(\cdot)\leq\frac{\pi}{2}$, and 
the third part follows from~\eqref{itm:upp1} and the definition of $r''$. To prove part~\eqref{itm:upp4} observe that in $[r',r'']$ the function $\gamma(r)-\phi(r)=\KR e^{-(\beta\wedge \frac{1}{2})r}$ is decreasing while in $[r'',R]$ it is increasing (since $\gamma$ is constant and $\phi$ is decreasing) so the function is minimized at $r''$. The inequality $\gamma(r)-\phi(r)\geq(\KR e^{-(\beta\wedge \frac{1}{2})r}\vee\tfrac{1}{2}\KA\phis)$ follows from
\[\gamma(r'')-\phi(r'')=\KR e^{-(\beta\wedge \frac{1}{2})r''}\geq\tfrac{1}{2}(\KR e^{-(\beta\wedge \frac{1}{2})r''}+\phi(r''))=\tfrac{1}{2}(\KA\phis+\phi(R))\geq \tfrac{1}{2}\KA\phis.\]
To prove~\eqref{itm:upp5} we fix $\KA=C_A$ and $\KR=C_R$ equal to some constants and split the range of integration into two intervals, specifically $[r',r'']$ and $[r'',R]$. For the first one we have $\gamma(r)=\phi(r)+C_R e^{-(\beta\wedge\frac{1}{2})r}$ and since $\alpha>\frac{1}{2}$, using~\eqref{itm:upp3}, we get
\[\int_{r'}^{r''}(\gamma(r)-\phi(r))e^{-\alpha(R-r)}dr=O\big(e^{-\alpha R}e^{(\alpha-(\beta\wedge\frac{1}{2})r'')}\big)=O\big(\phis e^{-\alpha (R-r'')}\big)=O(\phis).
\]
To conclude the proof of~\eqref{itm:upp5} observe that for the second interval~\eqref{itm:upp1} gives
\[\int_{r''}^{R}(\gamma(r)-\phi(r))e^{-\alpha(R-r)}dr=O(\phis).
\]
\end{proof}

Following the strategy presented at the beginning of Section~\ref{sec:Upper}, for each $x_0\in\ndt$ with $\theta_0>0$ (the case $\theta_0<0$ is analogous) we define a box $\mathcal{B}(x_0)\subseteq \overline{B}_Q(R)$ where this time we set (see Figure~\ref{fig:mixto}(c) for an illustration of $\mathcal{B}(x_0)$)
\begin{equation}\label{def:hatthatr}
\hatt=\tfrac{2}{3}(\theta_0-\phi(r_0))\qquad\text{ and }\qquad \phi(\hatr)=\theta_0-\hatt.
\end{equation}
Since by Part~\eqref{itm:phi2} of Lemma~\ref{lem:phi} the function $\phi(\cdot)$ is decreasing, it can be easily checked that $x_0\in\mathcal{B}(x_0)\subseteq \overline{B}_Q(R)$ so in order to detect the target, a particle must first exit its respective box.
In particular, by~\eqref{boxboundsNew} and the second bound in Proposition~\ref{prop:mainuppermixed}, we obtain
\begin{equation}\label{eq:mainupper1}\PP_{x_0}(T_{det}\leq \ss)\,\leq\,\PP_{x_0}(\trad\leq \ss)+\sqrt{\frac{2}{\pi}}\int_{0}^{\infty}\frac{\hatt}{\sigma^{3/2}}e^{-\frac{\hatt^2}{2\sigma}}\PP_{x_0}(\II{\ss}\geq\sigma)d\sigma.\end{equation}
It suffices thus to bound the right-hand side. We first address the term $\PP_{x_0}(\trad\leq \ss)$, which depends on the radial trajectory of the particle alone, and hence we can make use of the results obtained in Section~\ref{sec:radial}. 
\begin{proposition}\label{generalschicorad}
Under the conditions of Proposition~\ref{generalsgrande} we have:
\begin{itemize}
    \item If $\KA=C$, then
    $\displaystyle\int_{\ndt}\P_{x_0}(\trad\leq \ss)d\mu(x_0) 
=O(n\phis)$.
\item If $\KA\geq C$ and $\ss=\omega(1)$, then
\[
\sup_{x_0\in\ndt}\P_{x_0}(\trad\leq \ss) =
\begin{cases}
e^{-\Omega(\KA^2)},
& \text{ if $\alpha\geq2\beta$,} \\[2pt]
O(\KA^{-\alpha(\frac{1}{\beta}\vee 2)}),
& \text{ if $\alpha<2\beta$.} 
\end{cases}
\]
\end{itemize}
\end{proposition}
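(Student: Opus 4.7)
The plan is to reduce the estimate to a one-dimensional radial calculation, since $\trad$ depends only on the radial component of the trajectory, a diffusion in $(0,R]$ with generator $\Delta_{rad}$ and reflecting barrier at $R$. Mirroring the opening of the proof of Proposition~\ref{prop:rad-lowerBnd}, one splits according to whether $\hatr$ is hit before or after the first visit to $R$:
\[
\PP_{x_0}(\trad \le \ss) \;\le\; \PP_{r_0}(T_{\hatr} < T_R) + \PP_R(T_{\hatr} \le \ss).
\]
The first term equals $G_{\hatr}(r_0) = O\bigl(e^{-\alpha(r_0 - \hatr)}\bigr)$ by Part~\eqref{radial:itm:phi3} of Lemma~\ref{lemmaradial}, and the second is controlled via Markov's inequality applied to the Laplace-transform bound of Part~\eqref{radial:itm:phi1} of the same lemma with $\lambda := 1/(1+\ss)$, giving $\PP_R(T_{\hatr} \le \ss) = O\bigl((1+\ss)\,e^{-\alpha(R - \hatr)}\bigr)$, exactly as in~\eqref{eqn:radial-upper-aux}--\eqref{eqn:radial-upper-aux2}.

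To translate these exponentials in $\hatr$ into quantities depending on $r_0$ and $\theta_0$, observe that the definition~\eqref{def:hatthatr} of $\hatt$ and $\hatr$ yields $\phi(\hatr) = \theta_0 - \hatt = \phi(r_0) + \tfrac{1}{3}(\theta_0 - \phi(r_0))$. Applying Part~\eqref{itm:phi4} of Lemma~\ref{lem:phi} to both $\hatr$ and $r_0$ gives $e^{-\hatr/2} = \Theta(\theta_0 - \hatt)$ and $e^{-r_0/2} = \Theta(\phi(r_0))$, which after substitution produces
\[
\PP_{x_0}(\trad \le \ss) \;=\; O\!\left(\Bigl(\tfrac{\phi(r_0)}{\theta_0 - \hatt}\Bigr)^{\!2\alpha}\right) + O\!\left((1+\ss)\,e^{-\alpha R}\,(\theta_0 - \hatt)^{-2\alpha}\right).
\]

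For the uniform upper bound, the hypothesis $x_0 \in \ndt(\KA)$ combined with Fact~\ref{fact:uppermix}(iv) supplies $\theta_0 - \hatt \ge \phi(r_0) + \tfrac{1}{6}\bigl(\KR e^{-(\beta \wedge \frac12)r_0} \vee \KA\phis\bigr)$, uniformly over $r_0 \in [r', R]$. Inserting this into the two error terms above and recalling the explicit forms of $\phis$, $\mathfrak{Z}$ and $\KR$ stipulated in Theorem~\ref{thm:mixedLarge}, a case analysis on whether $\alpha \ge 2\beta$ or $\alpha < 2\beta$ yields the stated bounds $e^{-\Omega(\KA^2)}$ and $O(\KA^{-\alpha(1/\beta \vee 2)})$ respectively: in the first regime the exponential dependence on $\KA^2$ comes from the choice $\KR = e^{\KA^2}$, while in the second regime the polynomial factor $\KR^{2\alpha} = \KA^{2\alpha}$ appears, sharpened to $\KA^{\alpha/\beta}$ when $\beta \le \frac12$ by evaluating the worst $r_0$ (which turns out to be $r_0 = r'$, by Fact~\ref{fact:uppermix}(ii)).

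For the integral bound, write $\mu$ in polar coordinates so that $d\mu = O(ne^{-\alpha R})\sinh(\alpha r_0)\,dr_0\,d\theta_0$ and integrate over $\ndt = \{(r_0,\theta_0) : r' \le r_0 \le R,\; \gamma(r_0) \le |\theta_0| \le \pi\}$. Substituting the two-term bound, the inner $\theta_0$-integral converges thanks to $2\alpha > 1$ and contributes a factor proportional to $\gamma(r_0) - \phi(r_0)$ times a bounded quantity; Fact~\ref{fact:uppermix}(v) then immediately yields $O(n\phis)$. The main obstacle is the uniform-bound case analysis: the three regimes of $\alpha$ versus $2\beta$ (each with its own $\phis$ and $\mathfrak{Z}$) must be juggled simultaneously with the two competing clauses in the definition of $\ndt(\KA)$—the first $\{|\theta_0| > \phi(R) + \KA\phis\}$ binding near the boundary of $B_O(R)$, the second $\{|\theta_0| > \phi(r_0) + \KR e^{-(\beta\wedge\frac12)r_0}\}$ binding elsewhere—and one must keep track of where the supremum over $r_0$ is actually attained so that the polynomial-in-$\KA$ exponent matches the stated one.
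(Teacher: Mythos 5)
Your pointwise estimate and the uniform upper bound follow the paper's route exactly: the same split $\PP_{x_0}(\trad\le\ss)\le\PP_{r_0}(T_{\hatr}<T_R)+\PP_{R}(T_{\hatr}\le\ss)$, the same use of Parts~\eqref{radial:itm:phi1} and~\eqref{radial:itm:phi3} of Lemma~\ref{lemmaradial}, the translation into $(\theta_0-\hatt)^{-2\alpha}$ via Lemma~\ref{lem:phi}\eqref{itm:phi4}, and the same case analysis driven by Fact~\ref{fact:uppermix} and the choices $\KR=e^{\KA^2}$ versus $\KR=\KA$. One compression to flag: $G_{\hatr}(r_0)=O(e^{-\alpha(r_0-\hatr)})$ is not literally Part~\eqref{radial:itm:phi3} of Lemma~\ref{lemmaradial}; you additionally need $r_0\ge r'=\Omega(1)$ and $R-\hatr=\Omega(1)$, the latter coming from $\phi(\hatr)\ge\phi(R)+\tfrac13\KA\phis$ with $\KA\ge C$ large, as in the paper.

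The integral bound as written has a gap. The inner $\theta_0$-integral of $(\theta_0-\hatt)^{-2\alpha}$ over $[\gamma(r_0),\pi]$ is $\Theta\bigl((\gamma(r_0)-\phi(r_0))^{1-2\alpha}\bigr)$, a \emph{negative} power of $\gamma-\phi$, so by itself it is not ``proportional to $\gamma(r_0)-\phi(r_0)$ times a bounded quantity,'' and Fact~\ref{fact:uppermix}\eqref{itm:upp5} does not apply immediately. Your claim becomes correct only after absorbing the prefactors and verifying that the resulting multiplier is $O(1)$: for the first term one needs $e^{-\alpha r_0}(\gamma(r_0)-\phi(r_0))^{-2\alpha}=O(1)$, which holds because $e^{-\alpha r_0}=\Theta(\phi(r_0)^{2\alpha})$ and $\phi(r_0)=O(\gamma(r_0)-\phi(r_0))$ by Parts~\eqref{itm:upp1} and~\eqref{itm:upp4} of Fact~\ref{fact:uppermix} with $\KA=C$ large; for the second term one needs $\ss e^{-\alpha R}(\gamma(r_0)-\phi(r_0))^{-2\alpha}=O(1)$, i.e.\ essentially $\ss e^{-\alpha R}(\KA\phis)^{-2\alpha}=O(1)$, and this is exactly where the hypotheses $\ss=O(\mathfrak{Z})$ and the regime-specific definition of $\phis$ are consumed (for $\alpha<2\beta$, $\beta<\tfrac12$ it reduces to $(\ss e^{-\alpha R})^{1-2\beta}=O(1)$; for $\beta\ge\tfrac12$ it is an identity; for $\alpha\ge 2\beta$ it uses the upper bounds $\ss=O(e^{2\beta R})$ resp.\ $\ss=O(e^{\alpha R}/(\alpha R))$). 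That regime-by-regime verification is the substantive content of the paper's integral bound, carried out there by splitting the $r_0$-range at $r''$ and checking that $ne^{-\alpha R}(\phis)^{-(2\alpha-1)}$ (with a logarithmic factor) and $\ss n e^{-\alpha R}(\phis)^{-(2\alpha-1)}$ are $O(n\phis)$. Once you supply it, your arrangement does work, and for the first term it is in fact slightly cleaner than the paper's, since keeping the factor $e^{-\alpha r_0}\asymp\phi(r_0)^{2\alpha}$ attached avoids the $r''$-split and the logarithmic bookkeeping.
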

\begin{proof}
Throughout this proof, let $x_0\in\ndt$ be such that $\theta_0>0$ (the case $\theta_0<0$ can be dealt with analogously).
As defined in Part~\eqref{radial:itm:phi3} of Lemma~\ref{lemmaradial},  with $\auxy=r_0$, $\yabs_0=\hatr$ and $\auxY=R$, let
\[
G_{\hatr}(r_0)
    := \frac{g(r_0)}{g(\hatr)}, \qquad \text{where}\qquad g(r):=\log(\tanh(\tfrac12\alpha R)/\tanh(\tfrac12\alpha r))
\]
Using the same arguments as in the proof of Proposition~\ref{prop:rad-lowerBnd} in Section~\ref{sec:radial}, bounding the term $\P_{R}(T_{\hatr}\leq\ss)$ analogously to what was done in~\eqref{eqn:radial-upper-aux2}, we have
\begin{equation}\label{eq:bound1radmix}\PP_{x_0}(\trad\leq \ss)\;\leq\;\P_{r_0}(T_{\hatr}<T_R)+\P_{R}(T_{\hatr}\leq\ss) \; = \; G_{\hatr}(r_0)+O(\ss e^{-\alpha(R-\hatr)}).
\end{equation}
To bound from above the numerator of $G_{\hatr}(r_0)$, note that the largest possible value of $\hatr$ among points in $\ndt$ is obtained at $r_0=R$ and $\theta_0=\phi(R)+\KA\phis$ so it follows from the definition of $\hatr$, $\hatt$, $\phi(\cdot)$, and Part~\eqref{itm:phi1} in Fact~\ref{fact:uppermix}, that
\[\phi(\hatr)\ge\phi(R)+\tfrac{1}{3}(\theta_0-\phi(R))=\phi(R)+\tfrac{1}{3}\KA\phis=\Omega(\KA e^{-R/2}).\]
Also note that $\phi(\hatr)=O(e^{-\hatr/2})$, so taking $\KA$ larger than a fixed constant gives $R-\hatr=\Omega(1)$, and hence
\[\log\Big(\frac{\tanh(\alpha R/2)}{\tanh(\alpha \hatr/2)}\Big)\;=\;\log\Big(1+\frac{2-2e^{\alpha(\hatr-R)}}{(e^{-\alpha R}+1)(e^{\alpha \hatr}-1)}\Big)\,=\,\log\big(1+O(e^{-\alpha\hatr})\big)=O(e^{-\alpha\hatr}).\]
From Part~\eqref{itm:phi2} in Fact~\ref{fact:uppermix} it follows that $r'=\Omega(\log\KR)=\Omega(1)$, and since $r_0\ge r'$ (because $x_0\in\ndt$) we can argue as in the proof of Fact~\ref{fct:radial-varphi2} that $g(r_0)=O(e^{-\alpha r_0})$, and hence combining with the previous bound we get $G_{\hatr}(r_0)=O(e^{-\alpha(r_0-\hatr)})$. 
Notice that both terms bounding $\PP_{x_0}(\trad\leq \ss)$ in~\eqref{eq:bound1radmix} involve a factor $e^{\alpha\hatr}$, which by definition of $\hatr$ and Part~\eqref{itm:phi4} of Lemma~\ref{lem:phi} is $O((\theta_0-\hatt)^{-2\alpha})$, giving
\begin{equation}\label{eq:mixradialuppermain}\PP_{x_0}(\trad\leq \ss)\;=\;O\big((\theta_0-\hatt)^{-2\alpha}e^{-\alpha r_0}+\ss(\theta_0-\hatt)^{-2\alpha}e^{-\alpha R}\big).\end{equation}
To obtain a uniform upper bound for $\PP_{x_0}(\trad\leq \ss)$ on $\ndt$, observe that for any fixed $r_0$, since $x_0\in\ndt$, we have $\theta_0\ge\gamma(r_0)$, where $\gamma$ was defined at the beginning of this section, giving
\begin{equation}\label{eq:diffangle}
\theta_0-\hatt=\tfrac13\theta_0-\tfrac23\phi(r_0)\ge\tfrac13\gamma(r_0)+\tfrac23\phi(r_0)\geq \tfrac13\KR e^{-(\beta\wedge\frac12) r_0}+\phi(r_0)\geq\tfrac13\KR e^{-(\beta\wedge\frac12) r_0}\end{equation}
and hence $(\theta_0-\hatt)^{-2\alpha}e^{-\alpha r_0}=O((\KR^{2}e^{(1-2(\beta\wedge\frac12))r_0})^{-\alpha})$.
For the case $\beta\geq\frac{1}{2}$ this bound is $O(\KR^{-2\alpha})$, while for the case $\beta<\frac{1}{2}$ the exponential term is equal to $e^{(1-2\beta)r_0}$ which is at least $e^{(1-2\beta)r'}$ since as already observed $r_0\ge r'$. By Part~\eqref{itm:upp2} in Fact~\ref{fact:uppermix} we have $r'=\frac{1}{\beta}\log(\KR)-O(1)$. Thus,
\[\KR^{-2\alpha}e^{-\alpha(1-2\beta)r_0}\leq \KR^{-2\alpha}e^{-\alpha(1-2\beta)r'}=O\big(\KR^{-2\alpha}\KR^{-\frac{\alpha}{\beta}(1-2\beta)}\big)=O\big(\KR^{-\frac{\alpha}{\beta}}\big)\]
so putting both cases together we obtain a bound of the form $O(\KR^{-\alpha/(\beta\wedge\frac12)})$. 
Next we address the term $\ss(\theta_0-\hatt)^{-2\alpha}e^{-\alpha R}$ appearing in~\eqref{eq:mixradialuppermain}. For $r'\le r_0\le r''$, by~\eqref{eq:diffangle}, we have
\[\ss(\theta_0-\hatt)^{-2\alpha}e^{-\alpha R}=O\big(\ss\KR^{-2\alpha}e^{2\alpha(\beta\wedge \frac12)r_0}e^{-\alpha R}\big)\]
which is an increasing function of $r_0$ and hence within $[r',r'']$ the bound is maximized at $r''$. Thus, it is enough to deal with points with radial component $r_0$ in $[r'',R]$. Now, for said points a similar argument to the one in~\eqref{eq:diffangle} gives $\theta_0-\hatt\geq\tfrac{1}{3}\KA \phis$ and hence
\[\ss(\theta_0-\hatt)^{-2\alpha}e^{-\alpha R}=O\big(\ss\KA^{-2\alpha}\big(e^{\frac{R}{2}}\phis\big)^{-2\alpha}\big).\]
The analysis of this term depends on $\alpha$ and $\beta$, and we begin addressing the case $\beta<\frac{1}{2}$ and $\alpha<2\beta$ since it is the most delicate. Notice that in this case $\phis=e^{-\beta R}\ss^{\frac{\beta}{\alpha}}$ so in particular $\ss(e^{\frac{R}{2}}\phis)^{-2\alpha}=(\ss e^{-\alpha R})^{1-2\beta}$ and hence the upper bound is an increasing function of $\ss$. Now, in order for $\ndt$ to be non-empty we need $\phi(R)+\KA\phis\leq\pi$ we deduce that $\ss e^{-\alpha R}=O(\KA^{-\frac{\alpha}{\beta}})$, which gives 
\[
\ss\KA^{-2\alpha}(e^{\frac{R}{2}}\phis)^{-2\alpha}=O\big(\KA^{-2\alpha}\big(\ss e^{-\alpha R}\big)^{1-2\beta}\big)=O\big(\KA^{-2\alpha}\KA^{-\frac{\alpha}{\beta}(1-2\beta)}\big)=O(\KA^{-\frac{\alpha}{\beta}}).\]
The remaining cases are easier: If $\beta\geq\frac{1}{2}$, then $e^{\frac{R}{2}}\phis=\ss^{\frac{1}{2\alpha}}$ so the term $\ss\KA^{-2\alpha}(e^{\frac{R}{2}}\phis)^{-2\alpha}$ is $O(\KA^{-2\alpha})$, while for the cases $\alpha=2\beta$ and $\alpha>2\beta$ it can be easily checked that since $\ss= O(e^{2\beta R})$ and $\ss= O(e^{2\beta R}/\log(e^{2\beta R}))$ respectively, we obtain $\ss(e^{\frac{R}{2}}\phis)^{-2\alpha}=o(1)$. Since $\KR=\KA$ whenever $\alpha<2\beta$, putting together the bounds for both terms in~\eqref{eq:mixradialuppermain} we finally obtain the sought after bound:
\[\PP_{x_0}(\trad\leq \ss)\;=\;O(\KR^{-\alpha/(\beta\wedge\frac12)})\;=\;\begin{cases}
e^{-\Omega(\KA^2)},
& \text{ if $\alpha\geq2\beta$,} \\[2pt]
O(\KA^{-\alpha/(\beta\wedge\frac12)}),
& \text{ if $\alpha<2\beta$.}
\end{cases}\]

Next, we show that for $\KA=C$ we have $\int_{\ndt}\PP_{x_0}(\trad\leq \ss)d\mu(x_0)=O(n\phis)$ by integrating both terms in~\eqref{eq:mixradialuppermain} over $\ndt$.
For the first one we use that $\theta_0-\hatt\geq\frac{1}{3}\theta_0$ to obtain 
\[
\int_{\ndt}(\theta_0-\hatt)^{-2\alpha}e^{-\alpha r_0}d\mu(x_0)= O(ne^{-\alpha R})\int_{r'}^R\int_{\gamma(r_0)}^{\infty}\theta_0^{-2\alpha}d\theta_0dr_0 =O(ne^{-\alpha R})\int_{r'}^{R}\frac{1}{\gamma(r_0)^{2\alpha-1}}dr_0.
\]
Splitting the range of integration of the last integral into $[r',r'']$ and $[r'',R]$ we obtain
\begin{align*}
\int_{\ndt}(\theta_0-\hatt)^{-2\alpha}e^{-\alpha r_0}d\mu(x_0)
& = O\big(ne^{-\alpha R}\big(e^{(2\alpha-1)(\beta\wedge\frac{1}{2})r''}+(R-r'')(\phis)^{-(2\alpha-1)}\big)\big) \\[2pt]
& = O\big(ne^{-\alpha R}(\phis)^{-(2\alpha-1)}\log(e^{(\beta\wedge\frac12)R}\phis)\big)
\end{align*}
where the last equality is by definition of $r''$ and since $e^{(\beta\wedge\frac{1}{2})R}\phis=\Omega(1)$ implies that $R-r''=\Omega(\log(e^{(\beta\wedge\frac{1}{2})R}\phis))$. To address this last bound suppose first that $\beta<\frac{1}{2}$ and observe that since $\phis=\Omega(e^{-\beta R})\cap O(1)$ we have
$e^{-\alpha R}(\phis)^{-2\alpha}\log(e^{(\beta\wedge\frac12)R}\phis)=O\big(Re^{-2\alpha(\frac{1}{2}-\beta)R}\big)=O(1)$,
while for $\beta\geq\frac{1}{2}$ we have $\alpha<2\beta$, hence $\phis=e^{-\frac{R}{2}}\ss^{\frac{1}{2\alpha}}$ so $e^{-\alpha R}(\phis)^{-2\alpha}\log(e^{(\beta\wedge\frac12)R}\phis)=O\left(\ss^{-1}\log\ss\right)=O(1)$ (since $\ss=\Omega(1)$) and we conclude that in any case scenario
\[\int_{\ndt}(\theta_0-\hatt)^{-2\alpha}e^{-\alpha r_0}d\mu(x_0)=O(n\phis).\]
For the second term in~\eqref{eq:mixradialuppermain} we use $\theta_0-\hatt\geq\frac{1}{3}\theta_0$ again, together with $\gamma(r_0)\geq C\phis$ to obtain 
\begin{align*}
\int_{\ndt}\ss (\theta_0-\hatt)^{-2\alpha}e^{-\alpha R}d\mu(x_0)&= O(\ss ne^{-\alpha R})\int_{r'}^R\int_{C\phis}^{\infty}\theta_0^{-2\alpha}d\theta_0e^{-\alpha(R-r_0)}dr_0
\\[2pt]&= O\big(\ss ne^{-\alpha R}(\phis)^{-(2\alpha-1)}\big),
\end{align*}
and it can be checked directly from the definition of $\phis$ and our assumption $\ss=\Omega(1)$ that this last expression is also $O(n\phis)$.
\end{proof}

\medskip

The previous proposition gave the uniform bound for the first term appearing in~\eqref{eq:mainupper1}, as well as a bound for its integral over $\ndt$. The following proposition allows us to bound from above the second term in~\eqref{eq:mainupper1} with the use of Proposition~\ref{prop:merged-varianza} to handle the function $\PP_{x_0}(\II{\ss}\geq\sigma)$. Recall that in order to apply said proposition we require $\sigma$ to be larger than $\sigma_0$ with
\begin{equation}\label{def:sigma0}
\sigma_0 := \begin{cases}
\auxl'e^{-2\beta R}(v_0\ss)^{1\vee\frac{2\beta}{\alpha}}, &
\text{if $\alpha \neq 2\beta$,} \\[2pt]
\auxl'e^{-2\beta R}v_0\ss\log(v_0\ss), &
\text{if $\alpha=2\beta$,}
\end{cases}
\end{equation}
where $\auxl'$ is a large constant that depends on $\alpha$ and $\beta$, and where $v_0$ is some function of $x_0$ for which we only ask to satisfy $v_0>\frac{4}{c}$ for $0<c< 1$ being a constant lower bound for $\ss$ (which exists since $\ss=\Omega(1)$).

\begin{proposition}\label{prop:splitterms}
Let $v_0:=v_0(x_0)$ be a function satisfying $v_0>\frac{4}{c}$, and let $\sigma_0$ and $\hatt$ be as in~\eqref{def:hatthatr} and~\eqref{def:sigma0}, respectively. Assume that $\KA\geq C$ where $C>0$ is as in the statement of Proposition~\ref{generalsgrande} and $\hatt^2=\Omega(\sigma_0)$, then
\begin{equation}\label{eq:mainterms}
\int_{0}^{\infty}\tfrac{\hatt}{\sigma^{3/2}}e^{-\frac{\hatt^2}{2\sigma}}\PP_{x_0}(\II{\ss}\geq\sigma)d\sigma=O\big(e^{-\frac{\hatt^2}{2\sigma_0}}+e^{-\frac{1}{16}v_0^2\ss}+\hatt^{-\frac{\alpha}{\beta}}e^{-\alpha r_0}+v_0\ss e^{-\alpha R}+\mathfrak{t}_5\big),
\end{equation}
where
\[\mathfrak{t}_5=\begin{cases}(v_0\ss)^{1\vee\frac{\alpha}{4\beta}}e^{-\alpha R}\hatt^{-\frac{\alpha}{\beta}}, &\text{ if $\alpha\neq2\beta$,} \\[2pt]
\int_{0}^{1}\sqrt{\tfrac{w\hatt^2}{\sigma_0}}e^{-\frac{w\hatt^2}{2\sigma_0}}\left(\auxl'\log(v_0\ss)\right)^{w-1}dw, &\text{ if $\alpha=2\beta$}.\end{cases}\]
\end{proposition}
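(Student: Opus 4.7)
The plan is to split the integral at $\sigma_0$ and treat the two pieces $[0,\sigma_0]$ and $[\sigma_0,\infty)$ separately. For the low part I will use the trivial bound $\PP_{x_0}(\II{\ss}\geq\sigma)\leq 1$; for the high part I will combine~\eqref{eq:division} with the three tail bounds provided by Proposition~\ref{prop:merged-varianza}.

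For the low-$\sigma$ piece, the substitution $u=\hatt/\sqrt{\sigma}$ converts $\frac{\hatt}{\sigma^{3/2}}e^{-\hatt^2/(2\sigma)}\,d\sigma$ into (up to signs and constants) $e^{-u^2/2}\,du$, so the integral reduces to a Gaussian tail starting at $a:=\hatt/\sqrt{\sigma_0}$. Combined with the hypothesis $\hatt^2=\Omega(\sigma_0)$ and the standard bound $\int_a^{\infty}e^{-u^2/2}\,du\leq a^{-1}e^{-a^2/2}$, this produces the term $O(e^{-\hatt^2/(2\sigma_0)})$.

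For the high-$\sigma$ piece, plugging in the three tail bounds yields five summands that I will integrate separately. Three of them are constant in $\sigma$ — namely $e^{-v_0^2\ss/16}$ from part~\eqref{prop:merged-varianza:itm1}, the quantity $\log(\tanh(\alpha r_0/2))/\log(\tanh(\alpha/2))=O(e^{-\alpha r_0})$ from part~\eqref{prop:merged-varianza:itm2}, and $2v_0\ss\log(\tanh(\alpha(R-1)/2))/\log(\tanh(\alpha/2))=O(v_0\ss e^{-\alpha R})$ from part~\eqref{prop:merged-varianza:itm3} — and each integrates trivially against $\frac{\hatt}{\sigma^{3/2}}e^{-\hatt^2/(2\sigma)}$ (whose total mass is $O(1)$); since $\hatt\leq\pi$, the $O(e^{-\alpha r_0})$ piece is absorbed into $O(\hatt^{-\alpha/\beta}e^{-\alpha r_0})$. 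The two remaining pieces have the shape $\sigma^{-\alpha/(2\beta)}$ and will be handled by the substitution $u=\hatt^2/(2\sigma)$, which turns each into a finite gamma-type integral $\int_0^{\hatt^2/(2\sigma_0)}u^{\alpha/(2\beta)-1/2}e^{-u}\,du=O(1)$ multiplied by $\hatt^{-\alpha/\beta}$. This produces $O(\hatt^{-\alpha/\beta}e^{-\alpha r_0})$ from part~\eqref{prop:merged-varianza:itm2} and, when $\alpha\neq 2\beta$ (so $\mathfrak{e}=0$), the $\mathfrak{t}_5$ contribution $O((v_0\ss)^{1\vee\alpha/(4\beta)}\hatt^{-\alpha/\beta}e^{-\alpha R})$ from part~\eqref{prop:merged-varianza:itm3}.

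The hard part will be the case $\alpha=2\beta$, where the exponent $1-\mathfrak{e}$ inside the bound of part~\eqref{prop:merged-varianza:itm3} itself depends on $\sigma$. The key observation that makes this tractable is that, by the definition of $\sigma_0$ in this regime, $\mathfrak{e}=\sigma_0/\sigma$ on $[\sigma_0,\infty)$. I would then substitute $w:=\sigma_0/\sigma\in(0,1]$, which converts $\frac{\hatt}{\sigma^{3/2}}e^{-\hatt^2/(2\sigma)}\,d\sigma$ into $\frac{\hatt}{\sqrt{\sigma_0}}w^{-1/2}e^{-w\hatt^2/(2\sigma_0)}\,dw$ and (using $\alpha=2\beta$ together with $\sigma_0=\auxl'e^{-2\beta R}v_0\ss\log(v_0\ss)$) rewrites $(\auxc'v_0\ss\sigma^{-1}e^{-\alpha R})^{1-\mathfrak{e}}$ as $(w/(\auxl'\log(v_0\ss)))^{1-w}$ up to absolute constants. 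Using $w^{1-w}=\Theta(w)$ on $(0,1]$ (since $w^{-w}\in[1,e^{1/e}]$) and combining with the surviving $w^{-1/2}$ gives a factor $\sqrt{w}\cdot\hatt/\sqrt{\sigma_0}=\sqrt{w\hatt^2/\sigma_0}$, producing exactly the defining integrand of $\mathfrak{t}_5$ up to an $O(1)$ factor. Summing all five contributions then yields~\eqref{eq:mainterms}.
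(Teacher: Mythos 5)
Your proposal is correct and follows essentially the same route as the paper's proof: split at $\sigma_0$, bound the low piece by a Gaussian tail using $\hatt^2=\Omega(\sigma_0)$, feed the tail bounds of Proposition~\ref{prop:merged-varianza} through~\eqref{eq:division} into the high piece, handle the $\sigma^{-\alpha/(2\beta)}$ terms via a gamma-type substitution, and in the $\alpha=2\beta$ case use $\mathfrak{e}=\sigma_0/\sigma$ with the substitution $w=\sigma_0/\sigma$ and $w^{1-w}=\Theta(w)$ to recover $\mathfrak{t}_5$, absorbing $e^{-\alpha r_0}$ into $\hatt^{-\alpha/\beta}e^{-\alpha r_0}$ exactly as the paper does. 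No gaps to report.
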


\begin{proof}
Fix any given $x_0\in\ndt$ and split $\int_{0}^{\infty}\frac{\hatt}{\sigma^{3/2}}e^{-\frac{\hatt^2}{2\sigma}}\PP_{x_0}(\II{\ss}\geq\sigma)d\sigma$ into
\begin{equation}\label{eq:splitted}\int_{0}^{\sigma_0}\frac{\hatt}{\sigma^{3/2}}e^{-\frac{\hatt^2}{2\sigma}}\PP_{x_0}\big(\II{\ss}\geq\sigma\big)d\sigma+\int_{\sigma_0}^{\infty}\frac{\hatt}{\sigma^{3/2}}e^{-\frac{\hatt^2}{2\sigma}}\PP_{x_0}(\II{\ss}\geq\sigma)d\sigma.\end{equation}
For the first integral, we can bound the probability inside it by $1$, which using that $\hatt^2/\sigma_0=\Omega(1)$
and the change of variables $w:=\hatt^2/\sigma$ gives a bound
\[\int_{0}^{\sigma_0}\frac{\hatt}{\sigma^{3/2}}e^{-\frac{\hatt^2}{2\sigma}}d\sigma\,=\,\int_{\frac{\hatt^2}{\sigma_0}}^\infty\frac{1}{\sqrt{w}}e^{-\frac{w}{2}}dw\,=\,O\big(e^{-\frac{\hatt^2}{2\sigma_0}}\big)\]
 For the second integral in~\eqref{eq:splitted}, we use~\eqref{eq:division} to bound the probability within   together with Proposition~\ref{prop:merged-varianza} (since $\sigma>\sigma_0$) and obtain
\begin{equation}\label{eq:boundPI}\PP_{x_0}(\II{\ss}\geq\sigma)\,=\,O\big(e^{-\frac{1}{16}v_0^2\ss}+e^{-\alpha r_0}+\sigma^{-\frac{\alpha}{2\beta}} e^{-\alpha r_0}+v_0\ss e^{-\alpha R}+
\big((v_0\ss)^{1\vee\frac{\alpha}{4\beta}}\sigma^{-\frac{\alpha}{2\beta}}e^{-\alpha R}\big)^{1-\mathfrak{e}}\big)\end{equation}
where $\mathfrak{e}:=\frac{1}{\sigma e^{2\beta R}}\auxl'v_0\ss\log (v_0\ss)=\frac{\sigma_0}{\sigma}$ if $\alpha=2\beta$, and $\mathfrak{e}=0$ otherwise. Grouping the first, second and fourth terms in~\eqref{eq:boundPI}, the assumption $\hatt^2/\sigma_0= \Omega(1)$ gives
\[\int_{\sigma_0}^{\infty}\frac{\hatt}{\sigma^{3/2}}e^{-\frac{\hatt^2}{2\sigma}}\big(e^{-\frac{1}{16}v_0^2\ss}+e^{-\alpha r_0}+v_0\ss e^{-\alpha R}\big)d\sigma=O\big(e^{-\frac{1}{16}v_0^2\ss}+e^{-\alpha r_0}+v_0\ss e^{-\alpha R}\big)\]
since the terms are independent of $\sigma$ and $\int_{\sigma_0}^{\infty}\frac{\hatt}{\sigma^{3/2}}e^{-\frac{\hatt^2}{2\sigma}}d\sigma=\int_0^{\hatt^2/\sigma_0}\frac{1}{\sqrt{w}}e^{-\frac{w}{2}}dw=O(1)$. For the term $\sigma^{-\frac{\alpha}{2\beta}} e^{-\alpha r_0}$ analogous computations give 
\[\int_{\sigma_0}^\infty\frac{\hatt}{\sigma^{3/2}}e^{-\frac{\hatt^2}{2\sigma}}\sigma^{-\frac{\alpha}{2\beta}} e^{-\alpha r_0}d\sigma\,=\,e^{-\alpha r_0}\hatt^{-\frac{\alpha}{\beta}}\int_0^{\frac{\hatt^2}{\sigma_0}} w^{\frac{\alpha}{2\beta}-\frac{1}{2}}e^{-\frac{w}{2}}dw\,=\,O\big(e^{-\alpha r_0}\hatt^{-\frac{\alpha}{\beta}}\big).\]
The final term in~\eqref{eq:boundPI} is a bit more delicate, and must be treated differently according to whether $\alpha= 2\beta$ or $\alpha\neq 2\beta$. In the latter case we can repeat the computations used in the previous term, giving
\[\int_{\sigma_0}^{\infty}\frac{\hatt}{\sigma^{3/2}}e^{-\frac{\hatt^2}{2\sigma}}(v_0\ss)^{1\vee\frac{\alpha}{4\beta}}\sigma^{-\frac{\alpha}{2\beta}}e^{-\alpha R}d\sigma=O\big((v_0\ss)^{1\vee\frac{\alpha}{4\beta}}e^{-\alpha R}\hatt^{-\frac{\alpha}{\beta}}\big),\]
whereas if $\alpha=2\beta$, we use the change of variables $w:=\frac{\sigma_0}{\sigma}$ and the fact that $v_0\ss\sigma^{-\frac{\alpha}{2\beta}}e^{-\alpha R}
= \sigma_0/(\sigma\auxl'\log(v_0\ss))=w/(\auxl'\log(v_0\ss))$ so that
\begin{align*}\int_{\sigma_0}^{\infty}\frac{\hatt}{\sigma^{3/2}}e^{-\frac{\hatt^2}{2\sigma}}\big((v_0\ss)^{1\vee\frac{\alpha}{4\beta}}\sigma^{-\frac{\alpha}{2\beta}}e^{-\alpha R}\big)^{1-\frac{\sigma_0}{\sigma}}d\sigma
&=\int_{0}^{1}\frac{\hatt}{\sqrt{\sigma_0 w}}e^{-\frac{w\hatt^2}{2\sigma_0}}\Big(\frac{w}{\auxl'\log(v_0\ss)}\Big)^{1-w}dw\\[2pt]
&=O\Big(\int_{0}^{1}\sqrt{\tfrac{w\hatt^2}{\sigma_0}}e^{-\frac{w\hatt^2}{2\sigma_0}}\Big(\auxl'\log(v_0\ss)\Big)^{w-1}dw\Big).\end{align*}
The result then follows by adding all the bounds and noticing that $e^{-\alpha r_0}=O(\hatt^{-\frac{\alpha}{\beta}}e^{-\alpha r_0})$.
\end{proof}

We are now ready to prove the main result of this section.

\begin{proof}[Proof of Proposition~\ref{generalsgrande}]
Using~\eqref{eq:mainupper1} and Proposition~\ref{generalschicorad} it will be enough to obtain a uniform bound for $\int_{0}^{\infty}\frac{\hatt}{\sigma^{3/2}}e^{-\frac{\hatt^2}{2\sigma}}\PP_{x_0}(\II{\ss}\geq\sigma)d\sigma$ on $\ndt$ as well as an upper bound for the integral of this term over this set. We will address the uniform bounds first, since calculations are easier, and then turn to the upper bound for the integrals over $\ndt$, whose analysis is similar.


\medskip

\noindent\textit{Uniform upper bound:} Our goal is to show that under the hypothesis $\ss=\omega(1)$, 
\begin{equation}\label{eq:targetuniform}
\sup_{x_0\in\ndt}\int_{0}^{\infty}\tfrac{\hatt}{\sigma^{3/2}}e^{-\frac{\hatt^2}{2\sigma}}\PP_{x_0}(\II{\ss}\geq\sigma)d\sigma =
\begin{cases}
e^{-\Omega(\KA^2)},
& \text{ if $\alpha\geq2\beta$,} \\[2pt]
O(\KA^{-\alpha/(\beta\wedge\frac12)}),
& \text{ if $\alpha<2\beta$}. 
\end{cases}
\end{equation}
Throughout this analysis, we take $v_0$ constant and equal to $\frac{4}{c}$. With this choice of $v_0$ we deduce from Part~\eqref{itm:upp4} in Fact~\ref{fact:uppermix} that $\sigma_0=\Theta((\phis)^{2\vee4\beta})=O((\gamma(r_0)-\phi(r_0))^2)=O(\hatt^2)$ and hence we have~\eqref{eq:mainterms} as in Proposition~\ref{prop:splitterms}, so we address each term appearing in the bound as follows:

We assume throughout the uniform upper bound proof that $x_0=(r_0,\theta_0)\in\ndt$, and hence this set is non empty so in particular we must have $\KA\phis+\phi(R)\leq\pi$.
\begin{itemize}\setlength\itemsep{1ex}
    \item For the term $e^{-\frac{\hatt^2}{2\sigma_0}}$ it follows from the definition of $\hatt$ that it is equal to $e^{-\frac{2}{9\sigma_0}(|\theta_0|-\phi(r_0))^2}$ and since $|\theta_0|\ge \gamma(r_0)$ by Part~\eqref{itm:upp4} in Fact~\ref{fact:uppermix} we deduce $(|\theta_0|-\phi(r_0))^2=\Omega(\KA^2(\phis)^2)$.
    Recalling that $\sigma_0=\Theta((\phis)^{2\vee4\beta})$ the term $e^{-\frac{\hatt^2}{2\sigma_0}}$ equals $e^{-\Omega(\KA^2)}$, which is at most of the same order than the one claimed in~\eqref{eq:targetuniform}.
    \item The term $e^{-\frac{1}{16}v_0^2\ss}$ appearing in~\eqref{eq:mainterms}  does not depend on $x_0$ and since we are assuming $\ss=\omega(1)$, it becomes $o(1)$, so it is negligible.
    \item Now, we consider the term $\hatt^{-\frac{\alpha}{\beta}}e^{-\alpha r_0}$. By definition $\hatt=\Theta(|\theta_0|-\phi(r_0))$.
    Since $|\theta_0|\ge\gamma(r_0)\ge\phi(r_0)$ for $x_0\in\ndt$, by 
    Part~\eqref{itm:upp4} in Fact~\ref{fact:uppermix}, we have
    \[
    \big((|\theta_0|-\phi(r_0))^{-\frac{\alpha}{\beta}}e^{-\alpha r_0}=O\big(\KR^{-\frac{\alpha}{\beta}}e^{\alpha((1\wedge\frac{1}{2\beta})-1)r_0}\big),\]
    If $\beta\leq\frac{1}{2}$, this bound is $O(\KR^{-\frac{\alpha}{\beta}})$ independently of $r_0$, whilst if $\beta>\frac{1}{2}$, using that $r_0\ge r'$, the bound is $O(\KR^{-\frac{\alpha}{\beta}}/e^{\frac{\alpha}{2\beta}(2\beta-1)r_0})=O(\KR^{-\frac{\alpha}{\beta}}/e^{\frac{\alpha}{2\beta}(2\beta-1)r'})$. Recalling that, by Part~\eqref{itm:upp2} in Fact~\ref{fact:uppermix}, we know that $r'=2\log\KR-O(1)$, replacing this value in the previous bound we obtain a term of order $\KR^{-2\alpha}$. Summarizing, we have shown that $\hatt^{-\frac{\alpha}{\beta}}e^{-\alpha r_0}=O(\KR^{-\alpha(2\vee\frac{1}{\beta})})$ and the result then follows by our choice of $\KR$.
    \item For the term $v_0\ss e^{-\alpha R}$ we observe that it is independent of $x_0$, and under the assumptions $\ss=O(e^{\alpha R}/R)$ if $\alpha=2\beta$ and $\ss=O(e^{2\beta R})$ if $\alpha>2\beta$ in the statement of Proposition~\ref{prop:mainuppermixed} we obtain that $\ss e^{-\alpha R}=o(1)$ and hence the term is negligible. In the case $\alpha<2\beta$, since $\KA\phis+\phi(R)\leq\pi$ we obtain $\ss e^{-\alpha R}=O(\KA^{-\alpha/(\beta\wedge\frac12)})$ by definition of $\phis$.
    \item To address the term $\mathfrak{t}_5$ we first treat the case $\alpha\neq2\beta$. Using the definition of $\hatt$ and the fact that $\theta_0\geq\gamma(r_0)\ge \phi(r_0)$ this term is of order 
    \[\ss^{1\vee\frac{\alpha}{4\beta}}\hatt^{-\frac{\alpha}{\beta}}e^{-\alpha R}=O\big(\ss^{1\vee\frac{\alpha}{4\beta}}(\gamma(r_0)-\phi(r_0))^{-\frac{\alpha}{\beta}}e^{-\alpha R}\big)\]
    and using Part~\eqref{itm:upp4} in Fact~\ref{fact:uppermix} we deduce
    \begin{equation}\label{eq:boundforlater}\ss^{1\vee\frac{\alpha}{4\beta}}\hatt^{-\frac{\alpha}{\beta}}e^{-\alpha R}=O\big(\ss^{1\vee\frac{\alpha}{4\beta}}(\KA\phis)^{-\frac{\alpha}{\beta}}e^{-\alpha R}\big)=O((\KA\ss^{-(\frac{\beta}{\alpha}\vee\frac14)}e^{\beta R}\phis)^{-\frac{\alpha}{\beta}}).\end{equation}    From the definition of $\phis$ we have
    \[
\ss^{-(\frac{\beta}{\alpha}\vee\frac{1}{4})}e^{\beta R}\phis =
\begin{cases}
(e^{\frac{R}{2} }\ss^{-\frac{1}{2\alpha}})^{0\vee (2\beta-1)},
& \text{ if $\alpha<2\beta$,}\\[2pt]
\ss^{(\frac12-\frac{\beta}{\alpha})\vee\frac{1}{4}},
& \text{ if $\alpha>2\beta$,}
\end{cases}
\]
where we directly observe that if $\alpha>2\beta$ this expression is $\omega(1)$ (since $\ss=\omega(1)$), and hence the upper bound is $o(1)$ so in particular it has the form $e^{-\Omega(\KA^2)}$ as desired. For the case $\alpha<2\beta$ we distinguish between the cases $\beta\leq\frac{1}{2}$ and $\beta>\frac{1}{2}$: In the former we directly obtain that $\ss^{1\vee\frac{\alpha}{4\beta}}(\KA\phis)^{-\frac{\alpha}{\beta}}e^{-\alpha R}=O(\KA^{-\frac{\alpha}{\beta}})$, whereas if $\beta>\frac12$, since $\phi(R)+\KA\phis\leq\pi$ and by definition of $\phis$, we have
$\ss^{-(\frac{\beta}{\alpha}\vee\frac14)}e^{\beta R}\phis=(1/\phis)^{2\beta-1}=\Omega(\KA^{-(2\beta-1)})$ and obtain an upper bound of $O\big(\KA^{-\frac{\alpha}{\beta}}\KA^{-\frac{\alpha}{\beta}(2\beta-1)}\big)=O(\KA^{-2\alpha})$. 

Assume now that $\alpha=2\beta$ so that $\mathfrak{t}_5=\int_{0}^{1}\sqrt{\tfrac{w\hatt^2}{\sigma_0}}e^{-\frac{w\hatt^2}{2\sigma_0}}\left(\auxl'\log(v_0\ss)\right)^{w-1}dw$ and notice that since the function $x\to xe^{-x^2}$ is $O(1)$ on $\RR^+$, and using that $\ss=\omega(1)$ we obtain
\[\mathfrak{t}_5=O\Big(\int_{0}^{1}(\auxl'\log(v_0\ss))^{w-1}dw\Big)=o(1)\]
and hence the term is negligible in this case.
\end{itemize}

\medskip

\noindent\textit{Integral upper bound:} Our goal is to show that under the hypothesis $\KA= C$ for $C$ large and $\ss=\Omega(1)$,
\[\int_{\ndt}\int_0^\infty\tfrac{\hatt}{\sigma^{3/2}}e^{-\frac{\hatt^2}{2\sigma}}\PP_{x_0}\big(\II{\ss}\geq\sigma\big)d\sigma dx_0 = O(n\phis).\]
In contrast to the choice in the uniform upper bound analysis, we will now choose 
\[v_0(x_0)\;:=\;\frac{4}{c}\Big(\frac{|\theta_0|-\phi(r_0)}{\gamma(r_0)-\phi(r_0)}\Big)^{\varepsilon}\]
for some fixed $\varepsilon>0$ satisfying $\varepsilon<(2\alpha-1)(1\wedge\frac{\alpha}{2\beta})$ (this is possible since $\alpha>\frac{1}{2}$) and where $0<c<1$ is a lower bound for $\ss$. It will be convenient to define $\varepsilon'=\varepsilon/(1\wedge\frac{\alpha}{2\beta})< 1$ since it will appear many times in subsequent computations. 
In order to apply Proposition~\ref{prop:splitterms} we need to show that $\hatt^2=\Omega(\sigma_0)$, which is less straightforward than the uniform bound case since $\sigma_0$ is also increasing with $\theta_0$. In the case $\alpha\neq2\beta$, observe that $|\theta_0|-\phi(r_0)=\Omega(v_0^{1/\varepsilon}(\gamma(r_0)-\phi(r_0)))$ while at the same time $\gamma(r_0)-\phi(r_0)=\Omega(\phis)=\Omega(e^{-\beta R}\ss^{\frac{1}{2}\vee\frac{\beta}{\alpha}})=\Omega(v_0^{-\frac{\varepsilon'}{2\varepsilon}}\sqrt{\sigma_0})$. We conclude that
\begin{equation}\label{eq:mixfracneq}
\frac{\hatt^2}{\sigma_0} = \frac{4}{9\sigma_0}(|\theta_0|-\phi(r_0))^2=\Omega(v_0^{\frac{2-\varepsilon'}{\varepsilon}}),
\end{equation}
which is $\Omega(1)$ since $v_0=\Omega(1)$ and $\varepsilon'<2$. Analogously, if $\alpha=2\beta$ we still have $|\theta_0|-\phi(r_0)=\Omega(v_0^{1/\varepsilon}(\gamma(r_0)-\phi(r_0)))$ whereas this time $\gamma(r_0)-\phi(r_0)=\Omega(\sqrt{\frac{\sigma_0\log\ss}{v_0\log(v_0\ss)}})$ so that 
\begin{equation}\label{eq:mixfraceq}\frac{\hatt^2}{\sigma_0}= \frac{4}{9\sigma_0}(|\theta_0|-\phi(r_0))^2=\Omega(v_0^{\frac{2-\varepsilon}{\varepsilon}}\tfrac{\log\ss}{\log(v_0\ss)}),\end{equation}
which is again $\Omega(1)$ since both $\ss=\Omega(1)$ and $v_0=\Omega(1)$. We have proved that in either case $\hatt^2=\Omega(\sigma_0)$. As a result we have~\eqref{eq:mainterms} where we address each term appearing in the upper bound as follows:

\medskip
\begin{itemize}\setlength\itemsep{1ex}
    \item 
    For the term $e^{-\frac{\hatt^2}{2\sigma_0}}$ assume first that $\alpha\neq2\beta$ and use \eqref{eq:mixfracneq} to obtain 
\[\int_{\ndt}\exp\Big({-}\frac{\hatt^2}{2\sigma_0}\Big)d\mu(x_0)=n\int_{r'}^R\int_{\gamma(r_0)}^\infty \exp\Big({-}\Omega\big(v_0^{\frac{2-\varepsilon'}{\varepsilon}}(\theta_0)\big)\Big)d\theta_0 e^{-\alpha(R-r_0)}dr_0,\]
so using the change of variable $w:=\frac{\theta_0-\phi(r_0)}{\gamma(r_0)-\phi(r_0)}$ we obtain 
\[\int_{\gamma(r_0)}^\infty \exp\Big({-}\Omega\big(v_0^{\frac{2-\varepsilon'}{\varepsilon}}(\theta_0)\big)\Big)d\theta_0=(\gamma(r_0)-\phi(r_0))\int_{1}^\infty e^{-\Omega(w^{2-\varepsilon'})}dw=O(\gamma(r_0)-\phi(r_0)),\]
and hence
\[\int_{\ndt}\exp\Big(-\frac{\hatt^2}{2\sigma_0}\Big)d\mu(x_0)=O\Big(n\int_{r'}^R(\gamma(r_0)-\phi(r_0))e^{-\alpha(R-r_0)}dr_0\Big)=O(n\phis),\]
where the last equality follows from Part~\eqref{itm:upp5} in Fact~\ref{fact:uppermix}.

\medskip

For the case $\alpha=2\beta$ we follow the same reasoning using \eqref{eq:mixfraceq} to deduce
\begin{align*}\int_{\ndt}\exp\Big({-}\frac{\hatt^2}{2\sigma_0}\Big)d\mu(x_0)&=n\int_{r'}^R\int_{\gamma(r_0)}^\infty \exp\left(-\Omega\left(v_0^{\frac{2-\varepsilon}{\varepsilon}}(\theta_0)\tfrac{\log(\ss)}{\log(v_0(\theta_0)\ss)}\right)\right)d\theta_0 e^{-\alpha(R-r_0)}dr_0\\[3pt]&=n\int_{r'}^R(\gamma(r_0)-\phi(r_0))\int_{1}^\infty \exp\left(-\Omega(w^{2-\varepsilon'}\tfrac{\log(\ss)}{\log(w^{\varepsilon}\ss)})\right)dwe^{-\alpha(R-r_0)}dr_0,\end{align*}
which is $O(n\phis)$ as before.

\item 
For the term $e^{-\frac{1}{16}v_0^2\ss}$ appearing in~\eqref{eq:mainterms} we use the change of variable $w=(v_0\sqrt{\ss})^{1/\varepsilon}$ so that 
\[\int_{\gamma(r_0)}^\infty e^{-\frac{1}{16}v_0^2\ss}d\theta_0=\ss^{-\frac{1}{2\varepsilon}}(\gamma(r_0)-\phi(r_0))\int_{\Omega(\ss^{\frac{1}{2\varepsilon}})}^{\infty}e^{-\Omega(w^{2\varepsilon})}dw=O(\gamma(r_0)-\phi(r_0)),\]
where the last equality follows from $\ss=\Omega(1)$. Hence,
\[
\int_{\ndt}e^{-\frac{1}{16}v_0^2\ss}d\mu(x_0)
=O\Big(n\int_{r'}^R (\gamma(r_0)-\phi(r_0))e^{-\alpha(R-r_0)}dr_0\Big)=O(n\phis),
\]
where the last equality follows from Part~\eqref{itm:upp5} in Fact~\ref{fact:uppermix}.
\item 
For the term $\hatt^{-\frac{\alpha}{\beta}}e^{-\alpha r_0}$, by definition of $\hatt$ we obtain  
\[
\int_{\ndt}\hatt^{-\frac{\alpha}{\beta}}e^{-\alpha r_0}d\mu(x_0)\,=\,O\Big(ne^{-\alpha R}\int_{r'}^R\int_{\gamma(r_0)}^\pi (\theta_0-\phi(r_0))^{-\frac{\alpha}{\beta}}d\theta_0dr_0\Big).\]
We treat first the case $\beta\leq\frac{1}{2}$, which implies in particular $\frac{\alpha}{\beta}>1$ and hence the right-hand side is of order
\[ne^{-\alpha R}\int_{r'}^R(\gamma(r_0)-\phi(r_0))^{1-\frac{\alpha}{\beta}}dr_0\,=\,O\big(ne^{-\alpha R}(e^{-(\beta\wedge\frac{1}{2})r''})^{1-\frac{\alpha}{\beta}}+ne^{-\alpha R}(R-r'')(\phis)^{1-\frac{\alpha}{\beta}}\big)\]
where we first integrate from $r'$ to $r''$, and then from $r''$ to $R$, and then used Part~\eqref{itm:upp4} in Fact~\ref{fact:uppermix} to bound $\gamma(r_0)-\phi(r_0)$ from below. Using Part~\eqref{itm:upp3} of Fact~\ref{fact:uppermix} in the first term on the right-hand side, we obtain a term of order $ne^{-\alpha R}(\phis)^{1-\frac{\alpha}{\beta}}=O(n\phis)$, while for the second term using that $\beta\leq\frac{1}{2}$ we have by definition that $r''=-\frac{1}{\beta}\log\phis+\Theta(1)=R-\Theta(\log\ss)$ so in particular $ne^{-\alpha R}(R-r'')(\phis)^{1-\frac{\alpha}{\beta}}=O(n\phis\ss^{-\delta}\log\ss)=O(n\phis)$ for some $\delta>0$ depending on $\alpha$ and $\beta$, giving the result in this case.
For the case $\alpha>\beta>\frac12$ we can repeat the calculations in the previous case to again obtain a bound of order $n\phis$.

\smallskip
For the case $\beta>\frac{1}{2}$ we have $\phis=e^{-\frac12 R}\ss^{\frac{1}{2\alpha}}$ and we still need to handle the case $\frac{\alpha}{\beta}\le 1$. If $\frac{\alpha}{\beta}<1$, we have
\[ne^{-\alpha R}\int_{r'}^R\int_{\gamma(r_0)}^\pi (\theta_0-\phi(r_0))^{-\frac{\alpha}{\beta}}d\theta_0dr_0\,=\,O\big(ne^{-\alpha R}R\big),\]
and in it is easily checked that since $\alpha>\frac12$ and by the definition of $\phis$  we have $ne^{-\alpha R}R=O(n\phis e^{(\frac{1}{2}-\alpha)R}R)=O(n\phis)$. Finally, if $\frac{\alpha}{\beta}=1$ we have
\[ne^{-\alpha R}\int_{r'}^R\int_{\gamma(r_0)}^\pi (\theta_0-\phi(r_0))^{-\frac{\alpha}{\beta}}d\theta_0dr_0\,=\,O\Big(ne^{-\alpha R}\int_{r'}^R\log\big(\tfrac{\pi}{\gamma(r_0){-}\phi(r_0)}\big)dr_0\Big)\]
where using Part~\eqref{itm:upp1} in Fact~\ref{fact:uppermix} we conclude that the right-hand term is at most of order $ne^{-\alpha R}R\log(1/\phis)=O(ne^{-\alpha R}R^2)=O(n\phis)$.

\item For the term $v_0\ss e^{-\alpha R}$ we observe that, using the change of variable
  $w:=v_0^{1/\varepsilon}$ together with Part~\eqref{itm:upp4} in Fact~\ref{fact:uppermix}, we get
\[
\int_{\gamma(r_0)}^{\pi} v_0\ss e^{-\alpha R}d\theta_0 
= O(\ss e^{-\alpha R}(\gamma(r_0)-\phi(r_0))^{-\varepsilon})\int_{0}^{\pi} w dw
= O(\ss e^{-\alpha R}(\phis)^{-\varepsilon}).
\]
Now, suppose first that $\beta>\frac{1}{2}$ so that $e^{-\alpha R}=\ss^{-1}(\phis)^{2\alpha}$ and hence $\ss e^{-\alpha R}(\phis)^{-\varepsilon}=(\phis)^{2\alpha-\varepsilon}=O(\phis)$ where the last equality 
is by our assumption of $\varepsilon<2\alpha-1$. Suppose now that $\beta\leq \frac{1}{2}$ and observe that in this case $e^{-\alpha R}=O((\phis)^{\frac{\alpha}{\beta}}\ss^{-(1\vee\frac{\alpha}{2\beta})})$ 
and $\frac{\alpha}{2\beta}\leq\alpha<1$ so using that $\ss=\Omega(1)$ we deduce again that
$\ss e^{-\alpha R}(\phis)^{-\varepsilon}=O\left((\phis)^{2\alpha-\varepsilon}\right)=O(\phis)$.
Summarizing, independent of the value taken by $\beta$ we have
\[
\int_{\ndt}v_0\ss e^{-\alpha R}d\mu(x_0) = O(n\phis)\int_{r'}^R e^{-\alpha(R-r_0)}dr_0 = O(n\phis).
\]

\item For the term $\mathfrak{t}_5$ it will be useful to notice from Part~\ref{itm:upp4} in Fact~\ref{fact:uppermix} that \begin{equation}\label{eq:mixaux}\left(\gamma(r_0)-\phi(r_0)\right)^{-\delta}=O((\phis)^{-\delta})\end{equation} for any $\delta>0$. We assume first that $\alpha\neq2\beta$ so that
\begin{equation}\label{eq:lastterm}\int_{\ndt}\mathfrak{t}_5(x_0)d\mu(x_0)\,=\,O\Big(ne^{-\alpha R}\int_{r'}^R\int_{\gamma(r_0)}^\pi (v_0\ss)^{1\vee\frac{\alpha}{4\beta}}\hatt^{-\frac{\alpha}{\beta}}d\theta_0e^{-\alpha(R-r_0)}dr_0\Big)\end{equation}
We analyze this expression first when $\alpha\leq 4\beta$ so using \eqref{eq:mixaux} the term on the right-hand side is of order
\begin{equation}\label{eq:mix:termfinal}ne^{-\alpha R}\ss(\phis)^{-\varepsilon}\int_{r'}^R\int_{\gamma(r_0)}^\pi (\theta_0-\phi(r_0))^{\varepsilon-\frac{\alpha}{\beta}}d\theta_0e^{-\alpha(R-r_0)}dr_0,\end{equation}
where the integral with respect to $\theta_0$ behaves differently according to whether $\varepsilon-\frac{\alpha}{\beta}$ is smaller, equal, or larger than ${-}1$. Suppose first that $\varepsilon-\frac{\alpha}{\beta}<{-}1$ so that the above term is of order
\[ne^{-\alpha R}\ss(\phis)^{-\varepsilon}\int_{r'}^R (\gamma(r_0)-\phi(r_0))^{1-\frac{\alpha}{\beta}+\varepsilon}e^{-\alpha(R-r_0)}dr_0\,=\,O\big(ne^{-\alpha R}\ss(\phis)^{1-\frac{\alpha}{\beta}}\big),\]
where the right hand side follows from \eqref{eq:mixaux}. Proceeding as in~\eqref{eq:boundforlater} we deduce that the bound is $O(n\phis)$. Suppose now that $\varepsilon-\frac{\alpha}{\beta}\geq-1$, which in particular implies $\beta\geq\frac{\alpha}{1+\varepsilon}\geq\frac{\alpha}{1+(2\alpha-1)}=\frac{1}{2}$ so in this case $\phis=e^{-\frac12 R}\ss^{\frac{1}{2\alpha}}$. Now, if $\varepsilon-\frac{\alpha}{\beta}>-1$ then the integral with respect to $\theta_0$ in~\eqref{eq:mix:termfinal} is $O(1)$, and thus the bound is of order
\[ne^{-\alpha R}\ss(\phis)^{-\varepsilon}\int_{r'}^Re^{-\alpha(R-r_0)}dr_0=O\big(ne^{-\alpha R}\ss(\phis)^{-\varepsilon}\big)=O\big(n(\phis)^{2\alpha-\varepsilon}\big)\]
where we have used the particular form of $\phis$. Since $\varepsilon$ satisfies $\varepsilon<2\alpha-1$ and also $\phis\le\pi=O(1)$, we conclude that the bound is $O(n\phis)$. Similarly, if $\varepsilon-\frac{\alpha}{\beta}=-1$ then~\eqref{eq:mix:termfinal} is of order 
\[ne^{-\alpha R}\ss(\phis)^{-\varepsilon}\int_{r'}^R\log\big(\tfrac{1}{\gamma(r_0)-\phi(r_0)}\big)e^{-\alpha(R-r_0)}dr_0=O\big(ne^{-\alpha R}\ss(\phis)^{-\varepsilon}\log\big(\tfrac{1}{\phis}\big)\big),\]
which is of order $n(\phis)^{2\alpha-\varepsilon}\log(\frac{1}{\phis})$ and again using that $\varepsilon<2\alpha-1$ and $\phis=O(1)$ we conclude that the bound is $O(n\phis)$.

\medskip

We now turn to the analysis of the right-hand side of~\eqref{eq:lastterm} when $\alpha>4\beta$, in which case we obtain a term of order
\begin{equation}\label{eq:mix:termfinal2}ne^{-\alpha R}\ss^{\frac{\alpha}{4\beta}}(\phis)^{-\frac{\alpha\varepsilon}{4\beta}}\int_{r'}^R\int_{\gamma(r_0)}^\pi (\theta_0-\phi(r_0))^{\frac{\alpha\varepsilon}{4\beta}-\frac{\alpha}{\beta}}d\theta_0e^{-\alpha(R-r_0)}dr_0.\end{equation}
Observe that the condition $\alpha>4\beta$ implies that $\phis=e^{-\beta R}\sqrt{\ss}$. Assume first that $\frac{\alpha\varepsilon}{4\beta}-\frac{\alpha}{\beta}<-1$ in which case~\eqref{eq:mix:termfinal2} is of order 
\[ne^{-\alpha R}\ss^{\frac{\alpha}{4\beta}}(\phis)^{-\frac{\alpha\varepsilon}{4\beta}}\int_{r'}^R (\gamma(r_0)-\phi(r_0))^{1-\frac{\alpha}{\beta}+\frac{\alpha\varepsilon}{4\beta}}e^{-\alpha(R-r_0)}dr_0=O\big(ne^{-\alpha R}\ss^{\frac{\alpha}{4\beta}}(\phis)^{1-\frac{\alpha}{\beta}}\big),\]
and from the particular form of $\phis$ we have $e^{-\alpha R}\ss^{\frac{\alpha}{4\beta}}(\phis)^{-\frac{\alpha}{\beta}}=\ss^{-\frac{\alpha}{4\beta}}=O(1)$ and hence the bound is $O(n\phis)$. Suppose next that $\frac{\alpha\varepsilon}{4\beta}-\frac{\alpha}{\beta}>-1$, in which case~\eqref{eq:mix:termfinal2} is of order 
\[ne^{-\alpha R}\ss^{\frac{\alpha}{4\beta}}(\phis)^{-\frac{\alpha\varepsilon}{4\beta}}\int_{r'}^R e^{-\alpha(R-r_0)}dr_0=O\big(ne^{-\alpha R}\ss^{\frac{\alpha}{4\beta}}(\phis)^{-\frac{\alpha\varepsilon}{4\beta}}\big),\]
and in order to show that the bound is $O(n\phis)$ it will be enough to prove that $e^{-\alpha R}\ss^{\frac{\alpha}{4\beta}}(\phis)^{-1-\frac{\alpha\varepsilon}{4\beta}}=O(1)$. It can be checked that using the particular form of $\phis$, we have $e^{-\alpha R}\ss^{\frac{\alpha}{4\beta}}(\phis)^{-1-\frac{\alpha\varepsilon}{4\beta}}=e^{-\alpha\frac{R}{2}}(\phis)^{\frac{\alpha}{2\beta}-1-\frac{\alpha\varepsilon}{4\beta}}$, and the result will follow as soon as the exponent of $\phis$ is positive. However, this is equivalent to $2-\varepsilon>\frac{4\beta}{\alpha}$ which holds since by definition $\varepsilon<1$, and we are assuming $\frac{4\beta}{\alpha}<1$. Finally, suppose that $\frac{\alpha\varepsilon}{4\beta}-\frac{\alpha}{\beta}=-1$, in which case~\eqref{eq:mix:termfinal2} is of order
$ne^{-\alpha R}\ss^{\frac{\alpha}{4\beta}}(\phis)^{-\frac{\alpha\varepsilon}{4\beta}}\log(1/\phis)$
and using the same analysis as in the case $\frac{\alpha\varepsilon}{4\beta}-\frac{\alpha}{\beta}>-1$ (with strict inequalities) we deduce that it is $O(n\phis)$.

\medskip
We now bound the integral of the term $\mathfrak{t}_5$ in the special case $\alpha=2\beta$ where
\begin{align*}\int_{\ndt}\mathfrak{t}_5(x_0)d\mu(x_0)&=\,O\Big(n\int_{r'}^R \int_{\gamma(r_0)}^\infty\int_{0}^{1}\sqrt{\tfrac{w\hatt^2}{\sigma_0}}e^{-\frac{w\hatt^2}{2\sigma_0}}(\auxl'\log(v_0\ss))^{w-1}dwd\theta_0e^{-\alpha(R-r_0)}dr_0\Big)\\[2pt]&=O\Big(n\int_{r'}^R \int_{0}^{1}\int_{\gamma(r_0)}^\infty\sqrt{\tfrac{w\hatt^2}{\sigma_0}}e^{-\frac{w\hatt^2}{2\sigma_0}}d\theta_0dwe^{-\alpha(R-r_0)}dr_0\Big).\end{align*}
Now, for any fixed $w\in(0,1)$, we can use the change of variables $u:=\frac{\theta_0-\phi(r_0)}{\gamma(r_0)-\phi(r_0)}$ in the inner integral, which gives
\[\int_{\gamma(r_0)}^\infty\sqrt{\tfrac{w\hatt^2}{\sigma_0}}e^{-\frac{w\hatt^2}{2\sigma_0}}d\theta_0=(\gamma(r_0)-\phi(r_0))\int_{1}^\infty\sqrt{\frac{wuy_0\log\ss}{\log(u\ss)}}\exp\Big({-}\Omega\left(\frac{wuy_0\log(\ss)}{2\log(u\ss)}\right)\Big)du\]
for $y_0:=\frac{(\gamma(r_0)-\phi(r_0))^2}{e^{-2\beta R}\ss\log(\ss)}$ which is $\Omega(1)$ since $\gamma(r_0)-\phi(r_0)=\Omega(\phis)$. On the other hand $\frac{\log(\ss)}{\log(u\ss)}=\Omega(\frac{1}{\log(u)})$ which allows us to conclude that
\[\int_0^1\int_{1}^\infty\sqrt{\frac{wuy_0\log\ss}{\log(u\ss)}}\exp\Big({-}\Omega\left(\frac{wuy_0\log(\ss)}{2\log(u\ss)}\right)\Big)dudw=O(1)\]
Therefore,
\[\int_{\ndt}\mathfrak{t}_5(x_0)d\mu(x_0)=O\Big(n\int_{r'}^R(\gamma(r_0)-\phi(r_0))e^{-\alpha(R-r_0)}dr_0\Big)=O(n\phis)\]
where the last equality follows directly from Part~\eqref{itm:upp5} in Fact~\ref{fact:uppermix}.
\end{itemize}
\end{proof}






\section{Conclusion and outlook}\label{sec:conclusion}
We studied a movement of particles in the random hyperbolic graph model introduced by~\cite{KPKVB10} and analyzed the tail of detection times of a fixed target in this model. To the best of our knowledge, the current paper is the first one in analyzing dynamic hyperbolic graphs. It is natural to ask similar questions to the ones addressed in~\cite{Peres2010} in the model of random geometric graphs: how long does it take to detect a target that is mobile itself? How long does it take to detect all (fixed or mobile) targets in a certain sector? How long does it take in order for a given vertex initially outside the giant component at some fixed location needs to connect to some vertex of the giant component (percolation time)? How long does it take to broadcast a message in a mobile graph? We think that the detection of a mobile target, using the same proof ideas as presented here, should accelerate the detection time by a constant factor (presumably by a factor $2$ in the angular case and perhaps by a different factor in general), and also the deletion time of all targets in a certain sector can presumably be done with similar techniques to the ones used in this paper. On the other hand, new proof ingredients might be needed for analyzing the percolation time and the broadcast time. We think that the same observation as the one made by Moreau et al.~\cite{Moreau} which show that for the continuous random walk on the square lattice the best strategy to detect (to end up right at the same location) is to stay put, holds in our model as well.

\appendix
\section{Appendix: on the sums of Pareto variables}

Here we prove Lemma \ref{lem:cotapower} that gives a large deviation result for sums of independent heavy tailed random variables. The lemma is a more precise version of the results in \cite{Omelchenko2019} (although here we treat only with positive random variables which are bounded away from zero), and the proof follows closely what was done there.

\begin{lemma}
Let $S_m=\sum_{i=1}^m Z_i$ where $\{Z_i\}_{i\in\NN}$ is a sequence of i.i.d. absolutely continuous random variables taking values in $[1,\infty)$ such that there are $V,\gamma>0$ with
\[1-F_Z(x)=\PP(Z_i\geq x)\leq Vx^{-\gamma}\] 
for all $x>0$. Then there are $\auxc,\auxl>0$ depending on $V$, $\gamma$ and $\EE(Z_1)$ (if it exists) alone such that:
\begin{itemize}
    \item If $\gamma<1$, then for all $L>\auxl$,
    $\displaystyle\PP(S_m\geq Lm^{\frac{1}{\gamma}})\leq \auxc L^{-\gamma}$.
    \item If $\gamma=1$, then for all $L>\auxl$, $\displaystyle\PP(S_m\geq Lm\log(m))\leq \left(\frac{\auxc}{L\log(m)}\right)^{1-\frac{\auxl}{L}}$.
    \item If $\gamma>1$, then for all $L>\auxl$,
    $\displaystyle\PP(S_m\geq Lm)\,\leq\,\auxc L^{-\gamma}m^{-((\gamma-1)\wedge\frac{\gamma}{2})}$.
\end{itemize}
\end{lemma}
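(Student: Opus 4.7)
The proof rests on a truncation argument followed by Markov-type inequalities, closely mirroring the approach of Omelchenko and Pralat~\cite{Omelchenko2019}. Given a truncation threshold $T$ to be chosen in each sub-case, I would define $\bar{Z}_i := \min(Z_i, T)$ and $\bar{S}_m := \sum_{i=1}^m \bar{Z}_i$, and start from the decomposition
$$\PP(S_m \geq x) \leq \PP\Big(\max_{1 \leq i \leq m} Z_i > T\Big) + \PP(\bar{S}_m \geq x) \leq m V T^{-\gamma} + \PP(\bar{S}_m \geq x),$$
where the first inequality combines a union bound with the tail hypothesis on the $Z_i$. The three cases differ in the choice of $T$ and in the concentration tool applied to $\bar{S}_m$.

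For $\gamma < 1$ the mean of $Z_1$ is infinite, but a direct integration of the tail yields $\EE(\bar{Z}) = O(T^{1-\gamma})$. Taking $T := Lm^{1/\gamma}$ makes the union-bound term equal to $VL^{-\gamma}$, and $\EE(\bar{S}_m) = O(mT^{1-\gamma}) = O(L^{1-\gamma}m^{1/\gamma})$. Applying plain Markov's inequality yields
$$\PP\bigl(\bar{S}_m \geq Lm^{1/\gamma}\bigr) \leq \frac{\EE(\bar{S}_m)}{Lm^{1/\gamma}} = O(L^{-\gamma}),$$
and the stated bound follows as soon as $L$ exceeds a threshold $\auxl$ depending only on $V$ and $\gamma$.

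For $\gamma > 1$ the mean $\mu := \EE(Z_1)$ is finite, and I would take $T := Lm$. The union-bound contribution becomes $VL^{-\gamma}m^{-(\gamma-1)}$, which already matches the target in the regime $1 < \gamma \leq 2$. In that regime, a Chebyshev estimate applied to the centred truncated sum, using $\EE(\bar{Z}^2) = O(T^{2-\gamma})$, gives $\PP(\bar{S}_m \geq Lm) \leq m\Var(\bar{Z})/(cLm)^2 = O(L^{-\gamma}m^{-(\gamma-1)})$ for $L$ large enough that $x - m\EE\bar{Z} \geq cLm$. For $\gamma > 2$ the variance of $Z_1$ is finite, and I would invoke a Rosenthal-type moment inequality at an exponent $p$ chosen slightly above $\gamma$: combining $\EE(\bar{Z}^p) = O(T^{p-\gamma})$ with $\EE(\bar{Z}^2) = O(1)$ yields $\EE|\bar{S}_m - m\mu|^p = O(m^{p/2})$, and Markov's inequality at the $p$-th moment then produces the sharper factor $L^{-\gamma}m^{-\gamma/2}$, in line with the minimum appearing in the stated exponent.

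The case $\gamma = 1$ is the main obstacle, because $\EE(\bar{Z})$ diverges logarithmically in $T$ and the sum lies in the boundary regime of the index-$1$ stable law. I would take $T := Lm\log m$ so that the union-bound term is exactly $V/(L\log m)$, matching the target order. Using $\EE(\bar{Z}) \leq 1 + V\log T = (V + o(1))\log m$ and $\EE(\bar{Z}^2) = O(T)$, for $L > 2V$ we have $x - m\EE(\bar{Z}) \geq (L/2)m\log m$, so Chebyshev's inequality yields
$$\PP(\bar{S}_m \geq x) \leq \frac{m\Var(\bar{Z})}{((L/2)m\log m)^2} = O\Big(\frac{1}{L\log m}\Big).$$
Combining with the union-bound term gives $\PP(S_m \geq Lm\log m) = O(1/(L\log m))$, which is stronger than the stated $(\auxc/(L\log m))^{1-\auxl/L}$ whenever $L$ is well above $\auxl$; the weaker exponent $1-\auxl/L$ stated in the lemma is a uniform formulation designed to cover the transitional regime $L$ just above $\auxl$, where the Chebyshev constant ceases to be comparable to the union-bound constant. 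The delicate part will be tracking constants through both the union-bound and the Chebyshev contributions so that they consolidate into a single $\auxc$, and to verify that the conservative form with the $\auxl/L$ correction in the exponent holds uniformly over all $L > \auxl$; this is the technical content of the arguments in~\cite{Omelchenko2019} that we mimic.
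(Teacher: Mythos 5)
Your decomposition---truncate the $Z_i$ at a threshold $T$, union-bound the event that some $Z_i$ exceeds $T$, and concentrate the truncated sum---is the same first step as in the paper; the difference lies in the concentration tool. The paper bounds the exponential moment of the truncated sum and carefully optimizes the Chernoff parameter $\lambda$, whereas you rely on polynomial moments (plain Markov for $\gamma<1$, Chebyshev for $1<\gamma\leq 2$, Rosenthal for $\gamma>2$). Your route is more elementary wherever it works, and for $\gamma=1$ it actually yields the stronger bound $O(1/(L\log m))$; the exponent $1-\auxl/L$ in the lemma's statement is an artifact of the paper's Chernoff optimization rather than an intrinsic loss.

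There is, however, a genuine gap at $\gamma=2$, which your outline places in the Chebyshev branch. Your formula $\EE(\bar{Z}^2)=O(T^{2-\gamma})$ reads $O(1)$ at $\gamma=2$, but in fact $\EE(\bar{Z}^2)=2\int_0^T y\,\PP(\bar{Z}>y)\,dy=\Theta(\log T)$; with $T=Lm$, Chebyshev then gives $\PP(\bar{S}_m\geq Lm)=O(\log(Lm)/(L^2m))$, which exceeds the target $\auxc L^{-2}m^{-1}$ by a factor $\log(Lm)$ that cannot be absorbed into $\auxc$, since it diverges with $m$ for fixed $L>\auxl$. (The paper handles this through the $-\lambda^2\log\lambda$ correction in its $\gamma=2$ Chernoff computation, where the resulting $\log^3(L^2m)/(L^2m)$ term is shown to stay bounded.) The clean fix in your framework is to move $\gamma=2$ into the Rosenthal branch: with $p=2+\epsilon$, the Rosenthal term $(m\Var(\bar{Z}))^{p/2}/(Lm)^p$ is $(\log(Lm))^{1+\epsilon/2}L^{-\epsilon}m^{-\epsilon/2}\cdot L^{-2}m^{-1}$, and the prefactor is uniformly bounded because $(\log u)^{a}u^{-b}$ is bounded on $[1,\infty)$; the other Rosenthal term reproduces the union-bound order $L^{-2}m^{-1}$. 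A smaller related point for $\gamma>2$: your intermediate claim $\EE|\bar{S}_m-m\mu|^p=O(m^{p/2})$ omits the other Rosenthal contribution $m\,\EE(\bar{Z}^p)=O(mT^{p-\gamma})$, which after dividing by $(Lm)^p$ yields $L^{-\gamma}m^{1-\gamma}$; this is still dominated by $L^{-\gamma}m^{-\gamma/2}$ when $\gamma>2$, so your conclusion survives, but the intermediate equality as written is not correct.
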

\begin{remark}
We remark that tighter results can be obtained for $\gamma \ge 1$ below, but the current result is sufficient for our purposes.
\end{remark}
\begin{proof}
We proceed as in \cite{Omelchenko2019} by assuming first that $\gamma\leq 1$. Fix $x>0$ and define the event $B=\{\forall 1\leq i< m,\,Z_i\leq x\}$ so that 
\[
    \PP(S_m\geq x)\;\leq\;\PP(\overline{B})+\PP(S_m\geq x\,|\,B)\PP(B)\;\leq\;mVx^{-\gamma}+\PP(S^{(x)}_m\geq x)\PP(B)
\]
where $S^{(x)}$ is the sum of $m$ i.i.d. random variables with c.d.f. $\frac{F_Z(y)}{F_Z(x)}$ for any $y \in [0,x]$. We can thus use a Chernoff bound to deduce
\[
    \PP(S_m\geq x)\;\leq\;mVx^{-\gamma}+e^{-\lambda x}\EE\big(e^{\lambda S^{(x)}_m}\big)\PP(B)\;=\;mVx^{-\gamma}+e^{-\lambda x}\left(\int_1^xe^{\lambda y}dF_Z(y)\right)^m,
\]
where we used independence of the random variables to conclude that $\PP(B)=(F_Z(x))^m$. Define now $M=\frac{2\gamma}{\lambda}$ for some $\lambda$ to be chosen below so that $M<x$ holds. Hence,
\[R(\lambda,x)\,:=\,\int_1^xe^{\lambda y}dF_Z(y)\,=\,\int_1^Me^{\lambda y}dF_Z(y)\,+\,\int_M^xe^{\lambda y}dF_Z(y),\]
which we bound separately. First notice that for some constant $C_1=C_1(\gamma, V)$ we have
\begin{align*}
    \int_1^Me^{\lambda y}dF_Z(y)&\leq\,e^{\lambda M}F_Z(M)-\lambda\int_1^Me^{\lambda y}F_Z(y)dy\\[3pt]
    &\leq\,e^{\lambda M}F_Z(M)-e^{\lambda M}+1+\lambda\int_1^Me^{\lambda y}(1-F_Z(y))dy\\[3pt]
    &\leq\,e^{\lambda M}F_Z(M)-e^{\lambda M}+1+\lambda Ve^{\lambda M}\int_1^My^{-\gamma}dy\;\leq\,1+C_1Q(\lambda),
\end{align*}
where $Q(\lambda)=\lambda^{\gamma}$ if $\gamma<1$ and $Q(\lambda)=-\lambda\log(\frac{\lambda}{2\gamma})$ if $\gamma=1$. For the integral between $M$ and $x$ observe that
\begin{align*}
    \int_M^xe^{\lambda y}dF_Z(y)&\leq\,e^{\lambda M}(1-F_Z(M))+\lambda\int_M^xe^{\lambda y}(1-F_Z(y))dy\\[3pt]
    &\leq\,Ve^{\lambda M}M^{-\gamma}+\lambda V\int_M^xe^{\lambda y}y^{-\gamma}dy\\[3pt]
    &=\,Ve^{2\gamma}\left(\frac{\lambda}{2\gamma}\right)^{\gamma}+ Ve^{\lambda x}x^{-\gamma}\int_0^{\lambda(x-M)}e^{-w}\left(1-\frac{w}{\lambda x}\right)^{-\gamma}dw,
\end{align*}
where in the last line we used the change of variables $w=\lambda(x-y)$. Now, since $M = 2\gamma/\lambda$, the function $f(w)=e^{\frac{w}{2}}(1-\frac{w}{\lambda x})^{\gamma}$ has a positive derivative for $w \in [0, \lambda(x-M)]$. Hence for $w\in[0,\lambda(x-M)]$ we have  $(1-\frac{w}{\lambda x})^{-\gamma}\leq e^{w/2}$  and hence the last integral is therefore smaller than $\int_0^\infty e^{-w/2}dw=2$, giving
\[\int_M^xe^{\lambda y}dF_Z(y)\,\leq\,Ve^{2\gamma}\left(\frac{\lambda}{2\gamma}\right)^{\gamma}+ 2Ve^{\lambda x}x^{-\gamma}\,=\,C_2\lambda^{\gamma}+C_3e^{\lambda x}x^{-\gamma}\]
for some constants $C_2, C_3$ depending only on $V$ and $\gamma$. Putting together both bounds for $R(\lambda,x)$ we arrive at
\begin{align*}\PP(S_m\geq x)&\leq\;mVx^{-\gamma}+e^{-\lambda x}\left(1+C_1Q(\lambda)+C_2\lambda^{\gamma}+ C_3e^{\lambda x}x^{-\gamma}\right)^m\\[3pt]
&\leq\;mVx^{-\gamma}+\exp\left(-\lambda x+mC_1Q(\lambda)+mC_2\lambda^{\gamma}+ mC_3e^{\lambda x}x^{-\gamma}\right).
\end{align*}
Our aim at this point to choose $\lambda$ such that the term on the right is small, which is achieved when taking
\[\lambda=\frac{1}{x}\log\left(\frac{x^{\gamma}}{m}\right),\]
so that $me^{\lambda x}x^{-\gamma}=1$. Assume first that $\gamma<1$ so $x=Lm^{\frac{1}{\gamma}}$ for $L$ large, for which $\lambda=\frac{\gamma\log(L)}{Lm^{\frac{1}{\gamma}}}$ is small, while $\lambda x=\gamma\log(L)$ is large so the assumption $M<x$ is justified. Now, since $\gamma<1$, $Q(\lambda)=\lambda^{\gamma}$ and hence we have
\[-\lambda x+mC_1Q(\lambda)+mC_2\lambda^{\gamma}+ mC_3e^{\lambda x}x^{-\gamma}\,=\,-\gamma\log(L)+(C_1+C_2)\left(\frac{\gamma\log(L)}{L}\right)^{\gamma}+C_3,\]
and since we are assuming $L$ large, we have $(\frac{\gamma\log(L)}{L})^{\gamma}\leq 1$ which finally gives
\begin{align*}\PP(S_m\geq x)&\leq\;mVx^{-\gamma}+\exp\left(-\lambda x+mC_1Q(\lambda)+mC_2\lambda^{\gamma}+ mC_3e^{\lambda x}x^{-\gamma}\right)\\[3pt]
&\leq\;VL^{-\gamma}+\exp\left(-\gamma\log(L)+C_1+C_2+C_3\right)\;=\;\auxc L^{-\gamma},
\end{align*}
which proves the first point of the theorem. Suppose now that $\gamma=1$ so that $x=Lm\log(m)$ for $L\geq \auxl$ for some $\auxl$ large, and hence $\lambda=\frac{1}{Lm\log(m)}\log(L\log(m))$ is small, while $\lambda x=\log(L\log(m))$ is large, so again the assumption $M<x$ is justified. For this choice of $\gamma$ we have $mQ(\lambda)=-m\lambda\log(\frac{\lambda}{2})$ which we can bound as 
\begin{align*}
-m\lambda\log(\tfrac{\lambda}{2})&=\frac{\log(L\log(m))}{L\log(m)}\log\left(\frac{2Lm\log(m)}{\log(L\log(m))}\right)\\[3pt]&\leq\frac{(\log(2L\log(m))^2}{L\log(m)}+\frac{\log(L\log(m))}{L}\leq C_4+\frac{\log(L\log(m))}{L}
\end{align*}
for some constant $C_4$, and hence we arrive at
\begin{align*}\PP(S_m\geq x)&\leq\;mVx^{-1}+\exp\left(-\lambda x+mC_1Q(\lambda)+mC_2\lambda+ mC_3e^{\lambda x}x^{-1}\right)\\[3pt]
&\leq\;\frac{V}{L\log(m)}+C_5\exp\Big({-}\log(L\log(m))+\frac{C_1}{L}\log(L\log(m))\Big)\;\leq\;\Big(\frac{\auxc}{L\log(m)}\Big)^{1-\frac{\auxl}{L}}
\end{align*}
for some constant $C_5$, and where the last inequality holds by choosing $\auxl$ larger than $C_1$ and also by choosing $\auxc$ large enough.

Suppose now that $\gamma>1$ so that $E_0:=\EE(Z_1)$ exists. In this case we can perform a similar computation to the one before to deduce that 
\[
    \PP(S_m-mE_0\geq x)\;\leq\;mVx^{-\gamma}+e^{-\lambda x}\Big(e^{-\lambda E_0}\int_1^xe^{\lambda y}dF_Z(y)\Big)^m,
\]
and we can divide the integral $\int_1^xe^{\lambda y}dF_Z(y)$ as before so that
\[\int_1^xe^{\lambda y}dF_Z(y)\,=\,\int_1^Me^{\lambda y}dF_Z(y)+\int_M^xe^{\lambda y}dF_Z(y)\]
where again $M=\frac{2\gamma}{\lambda}$ (and for our choice of small $\lambda$ below again we have $M < x$). Now, the main difference in this case is the treatment of the first term, for which we have
\begin{align*}
    \int_1^Me^{\lambda y}dF_Z(y)&=\,\int_1^MdF_Z(y)+\lambda\int_1^MydF_Z(y)+\int_1^M\left(e^{\lambda y}-1-\lambda y\right)dF_Z(y)\\[3pt]&\leq\,1+\lambda E_0-\left(e^{\lambda y}-1-\lambda y\right)(1-F_Z(y))\bigg|^M_1+\lambda\int_1^M\left(e^{\lambda y}-1\right)(1-F_Z(y))dy\\[3pt]&\leq 1+\lambda E_0+\left(e^{\lambda }-1-\lambda\right)+\lambda V\int_1^M\left(e^{\lambda y}-1\right)y^{-\gamma}dy\\[3pt]&\leq 1+\lambda E_0+\left(e^{\lambda }-1-\lambda\right)+\frac{\lambda V}{\gamma-1}\left(e^{\lambda}-1\right)+\frac{\lambda^2 V}{\gamma-1}e^{\lambda M}\int_1^My^{1-\gamma}dy\\[3pt]&\leq 1+\lambda E_0+\lambda^2+\frac{2\lambda^2 V}{\gamma-1}+C_1 W(\lambda),
\end{align*}
for some value $C_1$ depending on $\gamma$ alone, where we used that $\gamma>1$, that $\lambda$ is small, but also $\lambda M=2\gamma$, and where 
\[W(\lambda)=\left\{\begin{array}{cl}\lambda^{\gamma}&\text{ if }\gamma<2\\[3pt]-\lambda^2\log(\lambda)&\text{ if }\gamma=2\\[3pt]\lambda^2&\text{ if }\gamma>2\end{array}\right.\]
Since $\lambda$ is small we conclude that the $W(\lambda)$ is at least of the same order as the terms containing $\lambda^2$ and hence
\[\int_1^Me^{\lambda y}dF_Z(y)\;\leq\;1+\lambda E_0+3W(\lambda).\]
Treating the integral $\int_M^xe^{\lambda y}dF_Z(y)$ as in the case $\gamma\leq 1$ we finally obtain 
\begin{align*}\PP(S_m-mE_0\geq x)&\leq\;mVx^{-\gamma}+e^{-\lambda x-\lambda mE_0}\left(1+\lambda E_0+3W(\lambda)+C_2\lambda^{\gamma}+ C_3e^{\lambda x}x^{-\gamma}\right)^m\\[4pt]
&\leq\;mVx^{-\gamma}+\exp\left(-\lambda x+mC_4W(\lambda)+ mC_3e^{\lambda x}x^{-\gamma}\right).
\end{align*}
Now, since we are interested in the probability $\PP(S_m\geq Lm)$ for $L$ larger than some $\auxl$ which we can take larger than $2E_0$ we have
\[\PP(S_m\geq Lm)\,\leq\,\PP(S_m-mE_0\geq Lm/2),\]
and hence we can take $x=Lm/2$. For $\gamma<2$ we choose $\lambda=\frac{1}{x}\log(\frac{x^\gamma}{m})$ as before (which is small) for which $mC_3e^{\lambda x}x^{-\gamma}=C_3$ and hence
\[\PP(S_m-mE_0\geq x)\;\leq\;2^\gamma VL^{-\gamma}m^{1-\gamma}+\exp\left(-\log(L^\gamma m^{\gamma-1}/2^{\gamma})+3C_5m\lambda^\gamma+ C_3\right),  \]
but $m\lambda^\gamma=\frac{2^\gamma\log^\gamma(L^\gamma m^{\gamma-1}2^{-\gamma})}{L^\gamma m^{\gamma-1}}\leq 1$ for $L^\gamma m^{\gamma-1}$ large enough, and hence
\[\PP(S_m-mE_0\geq x)\;\leq\;\auxc L^{-\gamma}m^{1-\gamma}.\]
Suppose now that $\gamma\geq 2$ and choose $\lambda=\frac{\gamma}{x}\log(\frac{x}{\sqrt{m}})$ for which we have $mC_3e^{\lambda x}x^{-\gamma}=C_3m^{1-\frac{\gamma}{2}}\leq C_3$, giving 
\[\PP(S_m-mE_0\geq x)\;\leq\;2^\gamma VL^{-\gamma}m^{1-\gamma}+\exp\left(-\log(L^\gamma m^{\frac{\gamma}{2}}/2^{\gamma})+C_6mW(\lambda)+ C_3\right).\]
Now, if $\gamma=2$, then $W(\lambda)=\lambda^2\log(1/\lambda)$ so for some constant $C_7$
\[mW(\lambda)=\frac{16}{L^2m}\log^2(\tfrac{L \sqrt{m}}{2})\log\left(\frac{Lm}{4\log(L\sqrt{m}/2)}\right)\leq\frac{C_1}{L^2m}\log^3(L^2m)\leq 1\]
for large $L^2m$, while if $\gamma>2$, $W(\lambda)=\lambda^2$, and so 
\[mW(\lambda)=\frac{2\gamma^2}{L^2m}\log^2(\tfrac{L \sqrt{m}}{2})\leq 1\]
for large $L^2m$. In any case scenario, we obtain
\[\PP(S_m-mE_0\geq x)\;\leq\;2^\gamma VL^{-\gamma}m^{1-\gamma}+\auxc L^{-\gamma}m^{-\frac{\gamma}{2}},\]
but for $\gamma\geq 2$ we have $\frac{\gamma}{2}\leq\gamma-1$ and hence the second term dominates the first, giving the result.
\end{proof}

\small
\printbibliography

\end{document}